\documentclass[10pt]{amsart}
\usepackage{amsmath}
\usepackage[shortlabels]{enumitem}
\usepackage{graphicx}
\usepackage{amssymb,amsthm}
\usepackage{hyperref}
\usepackage{esint}
\usepackage{epstopdf}
\usepackage{color}
\usepackage{url}
\DeclareGraphicsRule{.tif}{png}{.png}{`convert #1 `dirname #1`/`basename #1 .tif`.png}

\setlength{\parindent}{0.25in}
\setlength{\parskip}{0.5\baselineskip plus2pt minus2pt}

\usepackage[top=.9in, left=.9in, right=.9in, bottom=.8in]{geometry}

\numberwithin{equation}{section}

\newtheorem{theorem}{Theorem}[section]
\newtheorem{lemma}[theorem]{Lemma}
\newtheorem{proposition}[theorem]{Proposition}
\newtheorem{corollary}[theorem]{Corollary}

\newtheorem{remark}[theorem]{Remark}
\newtheorem{definition}[theorem]{Definition}

\newtheorem{example}[theorem]{Example}


\def\R{{\mathbb R}}

\def\cH{{\mathcal H}}

\def\a{\alpha}
\def\b{\beta}
\def\e{\varepsilon}
\def\d{\delta}

\def\k{\kappa}
\def\l{\lambda}

\def\n{\nabla}
\def\p{\partial}
\def\r{\rho}

\def\w{\omega}
\def\W{\Omega}
\def\g{\gamma}
\def\z{\zeta}

\def\1{\left(}
\def\2{\right)}
\def\3{\left\{}
\def\4{\right\}}
\def\8{\infty}
\def\sm{\setminus}
\def\ss{\subseteq}
\def\cc{\subset\subset}

\newcommand{\mres}{\mathbin{\vrule height 1.6ex depth 0pt width
0.13ex\vrule height 0.13ex depth 0pt width 1.3ex}}

\DeclareMathOperator*{\dvg}{div}

\DeclareMathOperator*{\diam}{diam}
\DeclareMathOperator*{\supp}{supp}
\DeclareMathOperator*{\osc}{osc}

\DeclareMathOperator*{\sign}{sign}

\usepackage[scr=boondox]{mathalfa} 

\title[Linear Stability Implies Nonlinear Stability for Faber-Krahn]{Linear Stability Implies Nonlinear Stability\\ for Faber-Krahn Type Inequalities}

\author{Mark Allen}
\address[Mark Allen]{Department of Mathematics, Brigham Young University, Provo,  UT}
\email{allen@mathematics.byu.edu}

\author{Dennis Kriventsov}
\address[Dennis Kriventsov]{Department of Mathematics, Rutgers University,  Piscataway, NJ}
\email{dnk34@math.rutgers.edu}

\author{Robin Neumayer}
\address[Robin Neumayer]{Department of Mathematical Sciences, Carnegie Mellon University, Pittsburgh, PA}
\email{neumayer@cmu.edu}


\begin{document}

\def\Lap{\Delta}
\def\grad{\nabla}

\def\En{\mathcal{E}}

\def\tor{\mathrm{tor}}
\def\ei{\l_1}
\def\vpar{\eta}
\def\tpar{\mathfrak{T}}
\def\fv{f_{v, \vpar}}
\def\vmax{v_{\text{max}}}

\def\err{\tau}
\def\Ep{\mathcal{F}_{\err}}

\begin{abstract}
	For a domain $\W \ss \R^n$ and a small number $\tpar > 0$, let
	\[
		\En_0(\W) = \ei(\W) + \tpar \tor(\W) = \inf_{u, w \in H^1_0(\W)\sm \{0\}} \frac{\int |\grad u|^2}{\int u^2} + \tpar \int \frac{1}{2} |\grad w|^2 - w
	\]
	be a modification of the first Dirichlet eigenvalue of $\W$. It is well-known that over all $\W$ with a given volume, the only sets attaining the infimum of $\En_0$ are balls $B_R$; this is the Faber-Krahn inequality. The main result of this paper is that, if for all $\W$ with the same volume and barycenter as $B_R$ and whose boundaries are parametrized as small $C^2$ normal graphs over $\p B_R$ with bounded $C^2$ norm,
	\[
		\int |u_\W - u_{B_R}|^2 + |\W \triangle B_R|^2 \leq C [\En_0(\W) - \En_0(B_R)]
	\]
	(i.e. the Faber-Krahn inequality is \emph{linearly stable}), then the same is true for \emph{any} $\W$ with the same volume and barycenter as $B_R$ without any smoothness assumptions (i.e. it is \emph{nonlinearly stable}). Here $u_\W$ stands for an $L^2$-normalized first Dirichlet eigenfunction of $\W$. Related results are shown for Riemannian manifolds. The proof is based on a detailed analysis of some critical perturbations of Bernoulli-type free boundary problems. The topic of when linear stability is valid, as well as some applications, are considered in a companion paper.
\end{abstract}

\maketitle

\section{Introduction}

For a domain $\W \ss \R^n$, let
\[
	\ei(\W) = \inf_{u \in H^1_0(\W)} \frac{\int |\grad u|^2}{\int u^2}
\]
be the first Dirichlet eigenvalue, and let $u_\W$ be a nonnegative function with $\int u_\W^2 = 1$ attaining the infimum. The Faber-Krahn inequality asserts that
\begin{equation}\label{e:introfk}
	\ei(\W) \geq \ei(B_R),
\end{equation}
where $B_R$ is a ball with $|B_R| = |\W|$, and moreover this inequality is strict unless $\W$ is
equal to a translation of $B_R$, up to a set of zero capacity.
In recent years, the topic of \emph{stability} for this inequality has received considerable attention: if $\ei(\W) - \ei(B_R)$ is small, how closely must $\W$ resemble $B_R$? For a survey of the recent literature on this question, see \cite{BD17}. In particular, \emph{quantitative} stability refers to inequalities of the form
\begin{equation} \label{e:introqs}
	d^\a(B_R(x_\W), \W) \leq C [\ei(\W) - \ei(B_R)],
\end{equation}
where $d$ is some notion of distance between sets, $\a > 0$ is a positive number, and $x_\W = \fint_\W x$ is the barycenter. Here it is desirable to find the strongest $d$, as well as the smallest number $\a$, for which such an inequality may hold.

The best result of this type in the literature is due to Brasco, De Philippis, and Velichkov \cite{BDV15}, which establishes \eqref{e:introqs} with 
\begin{equation}\label{e: d0}
	d = d_0(E, E') = |E \triangle E'|
\end{equation}
and $\a = 2$. This theorem is sharp, in the sense that \eqref{e:introqs} is false with $d = d_0$ and $\a < 2$. However, it does not fully resolve the issue of quantitative stability, as it seems unlikely that $d_0$ is the ``strongest'' distance for which \eqref{e:introqs} holds. To understand why, it is useful to separate the stability question into two ``regimes'' where it may be studied: we say that \eqref{e:introfk} is \emph{linearly stable} with respect to $d, \a$ if \eqref{e:introqs} holds for any $\W$ whose boundary may be expressed as a $C^2$ normal graph over $\p B_R$, and we say \eqref{e:introfk} is \emph{nonlinearly stable} with respect to $d, \a$ if \eqref{e:introqs} holds for any $\W$, unconditionally. For example, the result of \cite{BDV15} is a nonlinear stability result.

It is not difficult to see that $d_0$ is not the strongest norm for which \emph{linear} stability is valid. Indeed, if $\p \W$ is given by a normal graph $g$, then $d_0$ is comparable to the $L^2$ norm of $\xi$, 
while, as shown in \cite{BDV15}, the right-hand side of \eqref{e:introqs} controls a term of the form $\int_{\p B_R} \xi L \xi$, where $L$ is a first-order differential operator.
 In particular, Sobolev norms of $\xi$ up to $\|\xi\|_{H^{1/2}(\p \W)}$ appear to be controllable. Unfortunately, this kind of observation does not easily lead to improved nonlinear stability statements, as it is unclear how to define a stronger Sobolev-type distance for arbitrary sets $\W$.

In the nonlinear context, $d_0$ has a different sort of drawback, which is discussed in \cite[Open Problem 3]{BD17}: it is possible to modify $\W$ by removing a set of measure $0$ in a way which strictly increases its eigenvalue (consider, for example, $B_1$ vs. $B_1 \sm \{t e_1 : t > 0 \}$, a slit domain in $\R^2$). Any change to $\W$ which modifies the local capacity of its compliment should be reflected in $\ei(\W)$. On the other hand, $d_0$ is clearly insensitive to any such changes.

We propose a distance
\begin{equation*}
	d_1(E, E') = \sqrt{\int |u_E - u_{E'}|^2}
\end{equation*}
where $u_E, u_{E'}$ are the normalized first eigenfunctions as above extended by zero to be defined on all of $\R^n$, partially in an attempt to formulate nonlinear stability statements for \eqref{e:introfk} which address these issues. This has advantages and disadvantages in comparison to $d_0$. In the nonlinear context, it is sensitive to capacity-zero perturbations of $\W$. 
It also turns out to be specifically relevant to some applications to monotonicity formulas in free boundary theory, which we discuss in \cite{QACF}. On the other hand, $d_1$ is a kind of indirect measure of set difference and may be difficult to use in practice; we also do not know whether $d_1(E, B_R)$ controls $d_0(E, B_R)$ for all sets.

The results of this paper, in conjunction with our companion paper \cite{QACF}, establish sharp quantitative stability for the Faber-Krahn inequality with respect to the distance $d_1$. The result is proven on the round sphere and hyperbolic space as well as in Euclidean space: we prove in \cite[Theorem 1.1]{QACF} that \eqref{e:introqs} holds with $d =d_1 +d_2$ and $\alpha =2$ on simply connected space forms. The proof is based on a {\it selection principle}. This scheme, introduced by Cicalese and Leonardi in \cite{CL12} in the context of the isoperimetric inequality,  has been implemented in various settings in recent years (including \cite{BDV15})  to establish  quantitive forms of geometric inequalities. A selection principle has two main steps:
\begin{enumerate}
	\item The reduction step to show that linear stability implies nonlinear stability, based on regularity theory for a penalized functional.
	\item The local step to prove that linear stability holds. 
\end{enumerate}
This paper is dedicated to carrying out  Step (1): if \eqref{e:introfk} is linearly stable with $d = d_1$ and $\a = 2$, then it is also nonlinearly stable. Step (2) on simply connected space forms is established in \cite{QACF}. Let us describe how the classical selection principle argument for Step (1) might go in this context, and where the significant challenges arise in this setting. Suppose by way of contradiction that  \eqref{e:introqs} fails.  We may thus find a sequence of sets $\W_k$ with $|\W_k| = |B_R|$ and $\ei(\W_k) - \ei(B_R)  \to 0$, while 
\begin{equation}\label{e: contra hp intro}
	c_k := d^2_1(B_R(x_{\W_k}), \W_k) \geq k [\ei(\W_k) - \ei(B_R)]
\end{equation}
The goal, then, is to replace each $\W_k$ with a set $U_k$ that also satisfies \eqref{e: contra hp intro}, but such that $\p U_k$  
is
a smooth normal graph over $\p B_R$, in this way contradicting the linear stability assumption. Such a set is obtained by choosing $U_k$ to be a minimizer of a functional that roughly takes the form
\[
	\Ep^0(U) = \ei(U) + \err \sqrt{c_k^2 + (c_k- d_1(U,B_R(x_U))^2)^2 }
\]
for a small parameter $\err > 0$ and establishing regularity estimates for such minimizers. The second term in this functional forces $d_1(U_k, B_R(x_{U_k}))$ to be close to $d_1(\W_k, B_R(x_{\W_k}))$ so that \eqref{e: contra hp intro} will be satisfied by $U_k$, while the first term is hopefully regularizing the boundary of $U_k$. 

The Euler-Lagrange equation for minimizers of $\Ep^0$ leads to a free boundary condition along $\p U_k$ of Bernoulli type, similar to the one studied in \cite{AC81}.
 When working with $d_0$ or other weaker distances, the second term in $\Ep^0$ is of lower order, and can easily be seen to be a minor perturbation to the functional. In particular, the regularity theory of Alt and Caffarelli \cite{AC81} applies more or less directly (at least in conjunction with more recent literature, such as \cite{DeSilva11, DT15}). 
 In the case of $d_1$, however, this is no longer a sufficient heuristic: the second term of $\Ep^0$ is now a \emph{critical}, or same-order, perturbation of $\ei(U)$. The key point to establish regularity becomes controlling the distance $d_1(U', U)$ between a minimizing set $U$ and various competitors $U'$. This amounts to needing estimates like
\begin{equation} \label{e:intronoest}
	\int |u_U - u_{U'}| \leq C |\ei(U) - \ei(U')|,
\end{equation}
at least for the competitors $U'$ necessary to obtain estimates on $U$. These competitors were essentially laid out in \cite{AC81}, and closely follow minimal surface theory; they consist of $U \cup B_r(x)$, $U \sm B_r(x)$, and $\phi_t(U)$ for smooth diffeomorphisms $\phi_t$.


We do not know if \eqref{e:intronoest} is valid for these competitors, and believe this is an interesting problem. This means that we are unable establish the necessary regularity theory for $\Ep^0$ to  carry out Step (1) of the selection principle as described above. In order to overcome this challenge, we \emph{modify the functional}: let
\begin{equation}\label{eqn: intro  functuonal}
	\Ep^*(U) = \ei(\W) + \tpar \tor(\W) + \err \sqrt{c_k^2 +  (c_k - d_1(U,B_R(x_U))^2)^2 },
\end{equation} 
where $\tpar>0$ is a small parameter and
\[
	\tor(\W) = \int \frac{1}{2} |\grad w|^2 - w
\]
is the torsional rigidity of $\W$. The torsional rigidity has a long history of appearing when considering spectral optimization problems (see \cite{Bucur12, BDV15}), and shares with $\ei$ the ball as the unique volume-constrained minimizer. The functional \eqref{eqn: intro  functuonal} corresponds to a {\it vectorial } free boundary problem. The crucial benefit of introducing this torsion term is that we {\it can} establish the analogue of the key estimate \eqref{e:intronoest} corresponding to this functional:
\begin{equation}\label{e:introkey}
	\int |u_U - u_{U'}| \leq C \1|\ei(U) - \ei(U')| + |\tor(U) - \tor(U')|\2
\end{equation}
for all the relevant competitors $U'$. This is shown in Proposition~\ref{lem:key} and is the starting point toward establishing an existence and regularity theory for $\Ep^*$ and its generalizations, which constitutes the  core analysis of this paper.
\begin{theorem}\label{t:intro}
	There exists  $\tpar_0 > 0$ such that for each $\tpar < \tpar_0$, there exists $\err_0(\tpar) > 0$ such that if $\err < \err_0$:
	\begin{enumerate}
		\item There exists a minimizer $U$ of $\Ep^*$ over the class of all open sets $U$ with $|U| = |B_R|$.
		\item This $U$ has $C^{2, \a}$ boundary for any $\a < 1$, and $\p U$ may be expressed as a normal graph $\xi$ over $\p B_R$ with $\|\xi\|_{C^2} = o_\err(1)$ (i.e. for every $\e > 0$, there is a $\err(\e)$ such that if $\err < \err(\e)$, then $\|\xi\|_{C^{2, \a}} \leq \e$).
	\end{enumerate} 
\end{theorem}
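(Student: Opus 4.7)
The plan is to carry out the direct method for existence and then develop the regularity theory for minimizers, treating the $d_1$-dependent term as a controlled perturbation via the key estimate \eqref{e:introkey}.

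For existence, I would take a minimizing sequence $U_n$ satisfying the volume constraint $|U_n| = |B_R|$. Since $\Ep^*(U_n)$ is bounded, so are $\ei(U_n)$ and $\tor(U_n)$, giving uniform $H^1$-bounds on the torsion functions $w_{U_n}$ and on the first eigenfunctions $u_{U_n}$ (extended by zero). After translating to fix the barycenters and appealing to concentration-compactness, one extracts an $H^1$-weak, $L^2$-strong limit, defines a candidate set $U$ (e.g. via $\{w > 0\}$, or more carefully via a $\g$-convergence / capacitary framework), and verifies that $\ei$ and $\tor$ are lower semicontinuous on such sequences while $d_1(U_n, B_R(x_{U_n}))$ is continuous — this last point being where the torsion regularization is used to rule out capacity concentration and make the $d_1$-term well-behaved. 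The volume constraint can be relaxed to a penalization $\L ||U|-|B_R||$ with a Lagrange multiplier, which is equivalent near the minimum.

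The heart of the argument is the regularity theory for $\Ep^*$. The Euler--Lagrange system is a vectorial Bernoulli-type free boundary problem in $(u_U, w_U)$, with a joint transmission condition on $\p U$ involving $|\grad u_U|^2 + \tpar |\grad w_U|^2$ and a contribution from the derivative of the $d_1$-term. The program follows the Alt--Caffarelli blueprint in the spirit of \cite{AC81, DeSilva11, DT15}: prove Lipschitz regularity and non-degeneracy of $u_U$ and $w_U$ near $\p U$, positive density of $U$ and its complement along $\p U$, classification of blow-ups as half-plane solutions, and then apply a flatness-implies-$C^{1,\a}$ theorem; Schauder bootstrap in the straightened problem then yields $C^{2,\a}$. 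The critical new ingredient is \eqref{e:introkey}: for every relevant competitor $U'$ (inner and outer perturbations $U \cup B_r(x)$, $U \sm B_r(x)$, and diffeomorphic images $\phi_t(U)$), the variation of the $d_1$-term in $\Ep^*$ is bounded by
\[
	C\err\bigl(|\ei(U) - \ei(U')| + |\tor(U) - \tor(U')|\bigr),
\]
and for $\err$ small this error can be absorbed into the standard Alt--Caffarelli-type estimates, so the $d_1$-term behaves as a genuine lower-order perturbation despite being formally critical.

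For part (2), I would argue by contradiction and compactness. If $\err_k \to 0$ and the corresponding minimizers $U_k$ failed to be $C^2$-close to $B_R$, the uniform $C^{2,\a}$ bounds from part (1) yield a $C^2$-convergent subsequence with limit $U_\infty$; by continuity of $\ei + \tpar\tor$ along this convergence and vanishing of the $\err$-term, $U_\infty$ minimizes $\ei + \tpar\tor$ at the fixed volume, hence is a translate of $B_R$ by Faber--Krahn, and equals $B_R$ after fixing the barycenter. The main obstacle is the regularity theory itself: although \eqref{e:introkey} tames the critical perturbation at the level of competitors, simultaneously proving Lipschitz and non-degeneracy for the vectorial pair $(u_U, w_U)$ — and classifying blow-ups so that the flatness step applies — does not reduce directly to existing frameworks and must be carried out with care, propagating the $\err$-dependent error uniformly as one zooms in on $\p U$.
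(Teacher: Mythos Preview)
Your outline captures the broad architecture, but there are two genuine gaps where the paper's approach diverges from what you propose and where your sketch would stall.

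\textbf{Existence.} The paper explicitly notes that the direct method fails here: $\Ep^*$ is \emph{not} lower semicontinuous along weak-$H^1$ limits because the $d_1$-term is of the same order as the leading terms, and weak convergence of $u_{U_n}$ does not by itself force the limit to be the eigenfunction of the limit set. Your claim that ``$d_1(U_n, B_R(x_{U_n}))$ is continuous'' along such sequences is exactly the unproven step. The paper's fix is structural: it first proves the a~priori Lipschitz bound (Section~\ref{s:ub}) for \emph{outward} minimizers, then shows (Lemma~\ref{l:outmin}) that any set can be enlarged to an outward minimizer without increasing $\Ep^*$, so the minimizing sequence may be taken to consist of outward minimizers. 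Equicontinuity then gives \emph{uniform} convergence of $u_{U_n}$, which is strong enough to identify the limit as $u_U$ and close the argument (Theorem~\ref{t:min}). The key estimate \eqref{e:introkey} is used in both the a~priori Lipschitz bound and in Lemma~\ref{l:outmin}, not to salvage lower semicontinuity directly.

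\textbf{Free boundary condition.} You correctly note that \eqref{e:introkey} absorbs the $d_1$-variation into the competitor estimates for Lipschitz and nondegeneracy. But this does \emph{not} handle the Euler--Lagrange equation. The first variation of the $d_1$-term contributes a quantity of the form $\int_\W a_\W\, v_\W^x$ at each boundary point $x$, where $v_\W^x$ solves an auxiliary PDE with singular boundary data (essentially a Poisson-kernel term); see Corollary~\ref{c:elpt}. This term has no favorable sign and is of the same order as $|\grad u_\W|^2$, so it cannot be absorbed by taking $\err$ small. The paper's resolution (Sections~\ref{s:measure}--\ref{s:el}) is to prove sharp two-sided Green's function bounds on the minimizer (Proposition~\ref{l:green}) and then apply an \emph{inhomogeneous} boundary Harnack principle to rewrite this term as $|\grad u_\W(x)|^2\,\rho(x)$ with $\rho\in C^{0,\a}$ (Theorem~\ref{thm:usefulEL}). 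Only then does the free boundary condition take the form $|\grad u_\W|^2(1+\rho)=A_0$ to which De~Silva's flatness theorem applies. Your sketch does not contain this idea, and without it the blow-up/flatness step has no usable transmission condition to work with.
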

Beyond establishing the key estimate \eqref{e:introkey}, further difficulties arise in proving Theorem~\ref{t:intro}, which we discuss below. Theorem~\ref{t:intro} holds for a more general class of critical perturbations of the Alt-Caffarelli functional that is introduced in Section~\ref{ss:mainprob}. One generalization is that Theorem~\ref{t:intro} holds for the functional \eqref{eqn: intro  functuonal} with $d_1 $ replaced by $\tilde{d}_0 +d_1$ where $\tilde{d}_0$ is an essentially equivalent regularization of $d_0$ in \eqref{e: d0} above.

Returning to the discussion of the selection principle, we use the functional $\Ep^*$ to establish Step (1) of the selection principle corresponding to the inequality $\lambda_1(\W) + \tpar \tor(\W ) \geq \lambda_1(B_R) + \tpar \tor(B_R)$ instead of \eqref{e:introfk}. This linear stability-implies-nonlinear stability result takes the following form. 
\begin{corollary}\label{c:intro} Assume that, for  some $ 0 < \tpar < \tpar_0$ and $\e > 0$, the inequality
	\[
		d_1^2(B_R(x_\W), \W) +d_0^2(B_R(x_\W), \W) \leq C [\ei(\W) - \ei(B_R) + \tpar(\tor(\W) - \tor(B_R))]
	\]
	holds for all $\W$ with $|\W| = |B_R|$ whose boundaries may be expressed as a $C^{2, \a}$ normal graph over $\p B_R$ with norm bounded by $\e$. Then it also holds for all open sets $\W$ with $|\W| = |B_R|$ (with a possibly larger constant).
\end{corollary}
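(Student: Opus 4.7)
My plan is to follow the standard selection-principle blueprint, using the generalized form of Theorem~\ref{t:intro} as the regularization tool. Suppose toward contradiction that the conclusion fails for some $\tpar < \tpar_0$ and $\e > 0$. Then I would extract a sequence of open sets $\W_k$ with $|\W_k| = |B_R|$ such that
\[
    c_k := d_1^2(B_R(x_{\W_k}), \W_k) + d_0^2(B_R(x_{\W_k}), \W_k) > k\bigl[\ei(\W_k) - \ei(B_R) + \tpar(\tor(\W_k) - \tor(B_R))\bigr].
\]
Since $c_k$ is bounded above in terms of $|B_R|$, the bracketed deficit tends to $0$; passing to a subsequence and using qualitative Faber-Krahn stability, one may assume $\W_k \to B_R(x_{\W_k})$ in, say, $L^1$, enough to validate the bookkeeping below.

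Next, for each sufficiently large $k$ I would apply the extension of Theorem~\ref{t:intro} indicated in the excerpt (with $d_1$ replaced by $\tilde d_0 + d_1$) to the functional
\[
    \Ep^*(U) = \ei(U) + \tpar \tor(U) + \err\sqrt{c_k^2 + \bigl(c_k - D^2(U)\bigr)^2}, \qquad D^2(U) := d_1^2(U, B_R(x_U)) + \tilde d_0^2(U, B_R(x_U)),
\]
with $\err$ chosen once and for all below $\err_0(\tpar)$, small enough that the $C^{2,\a}$ normal-graph norm in Theorem~\ref{t:intro}(2) stays below $\e$, and small enough that $\err < 1/(4C)$ where $C$ is the constant in the linear-stability hypothesis. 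Let $U_k$ be the resulting minimizer. By Theorem~\ref{t:intro}(2), $\p U_k$ is a $C^{2,\a}$ normal graph over $\p B_R(x_{U_k})$ with norm at most $\e$, so the linear-stability hypothesis applies and yields
\[
    D^2(U_k) \leq C\bigl[\ei(U_k) - \ei(B_R) + \tpar(\tor(U_k) - \tor(B_R))\bigr],
\]
using the equivalence of $d_0$ and $\tilde d_0$ to absorb the discrepancy into the constant.

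The contradiction then emerges from $\Ep^*(U_k) \leq \Ep^*(\W_k)$. Since $D^2(\W_k) = c_k$ up to the $d_0$--$\tilde d_0$ equivalence, the penalty at $\W_k$ is comparable to $\err c_k$; combined with $\ei(U_k) + \tpar \tor(U_k) \geq \ei(B_R) + \tpar \tor(B_R)$, this forces $\sqrt{c_k^2 + (c_k - D^2(U_k))^2} \leq c_k\bigl(1 + O(1/(k\err))\bigr)$, from which $D^2(U_k) = c_k(1 + o_k(1))$. Separately, dropping the (nonnegative) penalty from $\Ep^*(U_k) \leq \Ep^*(\W_k)$ gives $\ei(U_k) + \tpar\tor(U_k) - \ei(B_R) - \tpar\tor(B_R) \leq (1/k + O(\err)) c_k$, which plugged into the linear-stability inequality produces $D^2(U_k) \leq C(1/k + O(\err)) c_k \leq 2C \err c_k$ for $k$ large. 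Together these two bounds on $D^2(U_k)$ contradict the choice $\err < 1/(4C)$ once $k$ is large, provided $c_k > 0$ (the case $c_k = 0$ violates the contradiction hypothesis trivially).

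The genuinely hard part is not the contradiction chain above, which is routine once the players are in place, but rather setting them up: one needs the extension of Theorem~\ref{t:intro} to the $\tilde d_0 + d_1$ penalty, one needs the $C^{2,\a}$ graph bound on $\p U_k$ to be uniform in $k$ (which the $o_\err(1)$ formulation of Theorem~\ref{t:intro}(2) provides), and one needs the equivalence between $d_0$ and its regularization $\tilde d_0$ to ensure $D^2(\W_k) \simeq c_k$. All three are handled by the analysis in the proof of Theorem~\ref{t:intro}, so here they enter only as black boxes.
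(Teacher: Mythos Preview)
Your overall selection-principle strategy matches the paper's, but there is a genuine gap in how you handle the passage between $d_0$ and its regularization $\tilde d_0$. You assert that ``$D^2(\W_k) = c_k$ up to the $d_0$--$\tilde d_0$ equivalence,'' and at the end you say this equivalence is ``handled by the analysis in the proof of Theorem~\ref{t:intro}.'' But the paper is explicit that $\int_\W \psi_U - \int_U \psi_U$ is only \emph{comparable} to $|\W\triangle U|^2$ when $\p\W$ is a small normal graph over $\p U$; for general sets one only has the one-sided bound $\tilde d_0^2 \geq c\, d_0^2$. The sets $\W_k$ in your contradiction sequence are arbitrary open sets (not minimizers of anything), so Theorem~\ref{t:intro} says nothing about them, and the reverse inequality can fail badly: a ball with a thin spike of length $\ell$ and cross-section $\epsilon^{n-1}$ has $d_0 \approx \ell\epsilon^{n-1}$ but $\tilde d_0^2 \approx \ell^2\epsilon^{n-1}$, so $\tilde d_0^2/d_0^2 \approx \epsilon^{1-n}$ is unbounded. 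Without $D^2(\W_k)\leq C c_k$, your penalty at $\W_k$ is not $O(\err c_k)$, and both of your key estimates on $D^2(U_k)$ and on the deficit of $U_k$ collapse.

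The fix, which is exactly what the paper does, is to run the entire contradiction argument with the regularized distance. Set $d_j := D(\W_j)$ (not $d_0^2+d_1^2$) and use the penalty $\sqrt{d_j^4+(D^2(\W)-d_j^2)^2}-d_j^2$, so that at $\W_j$ the penalty vanishes identically. Then $\Ep^*(V_j)\leq\Ep^*(\W_j)=\En(\W_j)$ gives cleanly $\En(V_j)-\minE\leq d_j^2/j$ and $D^2(V_j)\geq d_j^2/2$ for $j$ large (no $O(\err)$ error and no need to choose $\err$ small relative to the linear-stability constant). The linear-stability hypothesis, stated for $d_0^2+d_1^2$, transfers to $D^2$ on graphs because there the two are comparable, yielding the contradiction. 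Finally, nonlinear stability for $D^2$ implies nonlinear stability for $d_0^2+d_1^2$ via the always-valid inequality $D^2\geq c(d_0^2+d_1^2)$. This is the content of Theorem~\ref{thm: quantitative stability} together with the corollary immediately following it.
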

Although it may appear that we have departed from the central aim of establishing Step (1) of the selection principle for the Faber-Krahn inequality, we show in the companion paper \cite{QACF} that the statement of Corollary~\ref{c:intro} actually implies that the same is valid with $\tpar = 0$ after combining with the Kohler-Jobin inequality (\cite{KJ2,KJ1}, see also \cite{KJ3, BrascoInvitation}), an inequality that relates $\tor(\W) - \tor(B_R)$ to $\ei(\W) - \ei(B_R)$. It is also shown in \cite{QACF} that \eqref{e:introfk} is linearly stable with $d = d_1 + d_2$ and $\a = 2$, which when combined with Corollary \ref{c:intro} gives nonlinear stability as well. We remark that the constant $C$ in Corollary~\ref{c:intro} is obtained through a compactness argument and is not explicit. It remains an open problem to establish a sharp quantitative form of the Faber-Krahn inequality, with any distance (see \cite[Open Problem 2]{BD17}); a  non-sharp quantitative estimate with an explicit constant was shown dimension 2 in \cite{HansenNadir94} and in general dimension in \cite[Theorem 2.10]{BD17}. 
 In a related direction, let us mention the paper \cite{BhatWeits}, in which the authors show that the deficit in the Faber-Krahn inequality controls the $L^\infty$ distance between {\it symmetrized} first eigenfunctions.

The idea of using the torsional rigidity to prove a quantitative form of the Faber-Krahn inequality was already present in \cite{BDV15}, though it plays a different role there. In \cite{BDV15}, the authors prove a quantitative version of the Saint-Venant inequality (the minimality of balls for the torsional rigidity), and then using the Kohler-Jobin inequality, they immediately obtain quantitative versions of the Faber-Krahn inequality and a whole family of shape optimization problems for the Poincar\'{e}-Sobolev constant (for which balls are always optimal). The introduction of the torsional rigidity in \cite{BDV15} simplifies some aspects of the proof and allows the authors to kill many birds with one stone, but does not appear essential to the argument for the stability of the Faber-Krahn inequality in particular. The role of the torsional rigidity in our context is plays a more pivotal role, and we do not know how to proceed without it.

In view of the results in \cite{BDV15}, one is naturally led to ask whether the results of the present paper can be extended to a more general family of inequalities for the optimization of the sharp Poincare-Sobolev constant. There is, however, a major difference between the stability result of Corollary \ref{c:intro} and the result in \cite{BDV15}: in our case the eigenfunction of the domain appears on left-hand side of the inequality. When considering stability statements like these for generalized inequalities, one must then carefully select which functions are reasonable to use in the distance, and see to what extent they can be controlled by the inequality deficit. 

As such, we leave the question of which exact generalizations of distance or Poincar\'{e}-Sobolev inequalities admit stability statements like Corollary \ref{c:intro} as an interesting and possibly challenging open problem, and believe that the methods developed here are applicable to treating it systematically. Perhaps of particular interest is the distance
\[
d_{\gamma, p}(U, V) = \|w_U - w_V\|_{L^p}
\]
where $w_U$ is the torsion function associated with $U$ (see below, Section \ref{s:basics}), both for the Faber-Krahn and Saint-Venant inequalities and $p \in [1, \frac{2n}{n-2})$. These distances metrize the notion of $\g$-convergence of sets frequently used in shape optimization (see \cite[Chapter 4]{BB} for a discussion of known relations between various such distances).

While the preceding discussion was carried out in $\R^n$ to more clearly illustrate the concepts in play, the results proved here also apply to Riemannian manifolds, and are stated accordingly (see Theorem~\ref{thm: summary for minimizers}  and Theorem~\ref{thm: quantitative stability}  for the general forms of Theorem~\ref{t:intro} and Corollary~\ref{c:intro} respectively). In particular, Corollary \ref{c:intro} is also valid for any manifold for which balls are the isoperimetric sets (i.e. they are the unique sets to attain the minimum in the isoperimetric inequality), or with suitable modifications to the statement, on any manifold on which minimizers to \eqref{e:introfk} are unique up to isometry. The former condition is restrictive and only known for a handful of manifolds (see \cite{V19} for a survey of the known results), but includes the round sphere and hyperbolic space with standard metrics, which will be studied in  \cite{QACF} in greater detail: 
\begin{remark}
	{\rm
	Corollary~\ref{c:intro} holds on  hyperbolic space and the round sphere, with the barycenter $x_\W$ replaced by the set centers defined in Examples~\ref{ex: hyp space} and \ref{ex: round sphere} respectively.
	}
\end{remark}


The outline of our approach and the structure of the rest of the paper is as follows. In Section \ref{s:setup} we give careful definitions of the quantities discussed here on manifolds and set up notation for everything to follow.  In Section \ref{s:selection}, we discuss how to pass from Theorem \ref{t:intro} to Corollary \ref{c:intro} (and state it more carefully and in greater generality).   In Section \ref{s:basics} we establish basic properties of the first eigenvalue and the torsional rigidity and discuss Faber-Krahn inequalities on manifolds to the extent needed. The introduction of the torsional rigidity is the first key idea in the paper, and in Section \ref{s:basics} we also prove \eqref{e:introkey}. 
As previously discussed, this inequality will be necessary to prove even basic estimates.  

Typical approaches to Bernoulli free boundary problems involve first proving existence of minimizers, usually via the direct method, and then proving nondegeneracy and growth estimates at the free boundary. 
Due to the presence of the eigenfunction penalization term, which is of the same order as the other terms in the functional, a direct approach to the existence of minimizers fails as the functional is not lower semicontinuous with respect to topologies for which compactness is available. For this reason, we must introduce a new approach to proving existence. In Section \ref{s:lb} we prove a priori nondegeneracy estimates for the free boundary for 
inward minimizers. In Section \ref{s:ub} we prove a priori Lipschitz growth estimates at the free boundary for outward minimizers. In Section \ref{s:exist} we 
  construct a minimizing sequence of outward minimizers and utilize the Lipschitz estimate to prove the limit $\Omega$ is a minimizer. This minimizer will then have both nondegeneracy and Lipschitz estimates since it is also both an inward and outward minimizer. One benefit of this approach is that the entire argument takes place in the class of open sets; the a priori estimates allow us to avoid relaxing the problem to the class of quasi-open sets as is common in the literature \cite{BuDM93}.

From here, we would like to follow the recent approaches of \cite{CSY18, MTV17} to establish regularity for vectorial free boundary problems. The first step in doing so is to apply the boundary Harnack principle in order to establish a scalar form of the free boundary condition. Substantial difficulties arise here due to the nonlinear term, because
the contribution from $d_1$ to the Euler-Lagrange equation along $\p \W$ is of the same order as that from $\ei(\W)$ or $\tor(\W)$: it may be expressed as $|\grad v|^2$ for some function $v$ satisfying an elliptic PDE on $\W$. 
 In particular, there is no favorable sign appearing in this term (it is not \emph{elliptic}, in some sense), and so it must be controlled by the other contributions. The key tool toward controlling this term is a careful analysis 
of the Green's function $G_{\Omega}$ of $\Omega$, which we carry out in Section~\ref{s:measure}. This involves utilizing an inhomogeneous boundary Harnack principle recently shown in \cite{AKS20}.

We begin Section \ref{s:el} by considering more carefully how $d_1$ and $u_\W$ change under smooth deformations of domains. Applying the estimates from 
the previous section we obtain several forms of the free boundary condition satisfied along $\p \Omega$ and arrive at a pointwise version. The final fundamental point in the paper is to 
again apply the inhomogeneous boundary 
Harnack principle to rewrite the free boundary condition in a form suitable to known methods. 

In Section \ref{s:reg} we apply recent free boundary theory to 
conclude that $\p \Omega$ is $C^{1,\alpha}$. In Section \ref{s:highreg} we apply a higher order inhomogeneous boundary Harnack principle \cite{DSS15} to prove Theorem \ref{t:intro} (that $\p \Omega$ may be parametrized as  a
$ C^{2,\alpha}$ normal graph over the sphere).  Appendix~\ref{app A} gives further examples of functionals to which our results apply.  Finally, Appendix \ref{s:appendixnta} shows that domains supporting solutions to elliptic PDE with linear growth away from $\p U$ are nontangentially accessible, a result which is known in the literature but, to the best of our knowledge, not in the generality required here. This is a technical point relevant in Section~\ref{s:measure}.\\

{\it Acknowledgements.} MA was partially supported by Simons Collaboration Grant ID 637757. During the course of this work, RN was partially supported by the National Science Foundation under Grant No. DMS-1901427, as well as Grant No. DMS-1502632 ``RTG: Analysis on manifolds'' at Northwestern University and Grant No. DMS-1638352 at the Institute for Advanced Study. 
This project originated from discussions at the 2018 PCMI Summer Session on Harmonic Analysis. We are grateful to the helpful anonymous referee who provided detailed and valuable feedback on this paper.


\section{Setup and definitions}\label{s:setup}

\def\Isom{G}
\def\Qb{Q}

Throughout this paper, we let $(M, g)$ be a  complete,  $n$-dimensional smooth Riemannian manifold without boundary that is connected and oriented. We further assume that $(M,g)$ has bounded geometry, that is, $\text{inj}_M := \inf_{x\in M} \text{inj}_x >0$ and $\sup_{x \in M} |\text{Rm}(x)|<\infty.$ 
Here $\text{inj}_x$ is the injectivity radius of $(M,g)$ at $x$, i.e. the largest radius $r$ such that the exponential map $\text{exp}_x: T_xM \to M$ is a diffeomorphism from $B(0,r)$ to the geodesic ball $B_g(x,r)$, and $|\text{Rm}(x)|$ denotes the norm of the Riemann curvature tensor at $x$ with respect to $g$.
All constants will depend on the metric $g$; it is likely that these constants are uniform for Riemannian manifolds with uniformly bounded geometry but we do not track this dependence.

Unless otherwise specified, all integrals are taken with respect to the volume measure $m$ associated to $g$. We use the notation $|\W|$ for the volume of a measurable set $\W \ss M$ and $\cH^{n-1}$ for $(n-1)$-dimensional Hausdorff measure.  We let $d(x,y)$ denote the geodesic distance between $x,y\in M$ and let  
\[
	B_r(x) = \{y \in M \,  :  \, d(y, x) < r \}
\]
denote a geodesic ball centered at $x$ of radius $r$. Given a set $\W$ and a point $x \in M$, we let $d(x, \W) = \inf \{d(x,y): y \in \W\}.$ 
We let $\grad $ and $\dvg$ denote the gradient and divergence with respect to $g$ and  let $\Lap u = \dvg \grad u$ be the Laplace-Beltrami operator with respect to $g$.

Let $\Isom$ denote the (possibly trivial) group of isometries of $(M,g)$. Several global existence results in the sequel will hold for Riemannian manifolds such that either $M$ is compact or $M/G$ is compact; examples of the latter to keep in mind are Euclidean space, hyperbolic space, cylinders $S^{n-k}\times \R^k$ equipped with the standard product metric, and more generally products of space forms.

\subsection{Base energy}

\def\minE{\En_{min}}
\def\Min{\mathcal{M}}
\def\compset{\mathcal{H}}

Given a bounded open set $\W \ss M$, we may define the \emph{torsional  rigidity} of $\W$:
\begin{equation}\label{e:torsion}
	\tor(\W) = \inf_{u \in H^1_0(\W)} \int_{\W}\frac{1}{2}|\grad u|^2 - u.
\end{equation}
We  note that the definition of   the torsional rigidity given  here differs by a sign from  the typical definition. 
This is a negative quantity, and it is straightforward to check (using the Sobolev inequality and the Lax-Milgram theorem) that the infimum is finite and uniquely attained for any open bounded $\W \neq M$ by a function $w_\W \geq 0$. We will refer to $w_\W$ as the \emph{torsion function}. Note also that $\tor(\W)$ is decreasing with respect to set inclusion.

The \emph{first (Dirichlet) eigenvalue} of $\W$ is defined as
\begin{equation}\label{e:evalue}
	\ei(\W) = \inf_{u \in H^1_0(\W)\sm \{0\}} \frac{\int_{\W} |\grad u|^2}{\int_{\W} u^2}.
\end{equation}
This infimum is also attained by a nonnegative function $u_\W$, the \emph{first eigenfunction}, and $\ei(\W)$ is also a decreasing function with respect to set inclusion. The first eigenfunction is unique up to scalar multiples so long as $\l_1(\W) < \l_2(\W)$, which in particular is true if $\W$ is connected. The notation $u_\W$ will be used for the unique first eigenfunction with $u_\W \geq 0$ and $\int u_\W^2 = 1$, whenever it exists and is unique. We frequently will  extend $u_\W$ and $w_\W$ by zero to  be  defined on all of $M$, using the same notation  to indicate the functions $u_\W , w_\W : M\to  \R$.

In principle, we are interested in minimizing  energy functionals of the type \eqref{eqn: intro  functuonal} among sets $\Omega\subset M$ of a  fixed volume $|\W| =v$. In practice, it is more convenient to  replace the volume constraint with a volume penalization term. Following \cite{AAC, BDV15}, we define the volume penalization  
\begin{equation}\label{eqn: volume penalization def}
	\fv(t) = \begin{cases}
		\vpar (t - v) & t \leq v\\
		\frac{1}{\vpar}(t - v) & t > v\,.
	\end{cases}
\end{equation}
The idea of the volume penalization \eqref{eqn: volume penalization def} is that by choosing the parameter $\vpar$ to be sufficiently small, a set $\W$ whose volume exceeds the prescribed volume $v$ will have a large energy contribution coming from the term $\fv(|\W|)$.
We define the \emph{base energy} of $\W$ as
\begin{equation}\label{e:baseenergy}
	\En(\W) = \ei(\W) + \tpar \tor(\W) + \fv(|\W|),
\end{equation}
where $\tpar \in [0, 1]$. The principal goal of this paper is to study certain critical perturbations of the base energy introduced in the next subsection. In order to do so, we will need to first consider the minimization problem of the base energy itself. We  discuss the problem of minimizing $\En(\W)$ over sets briefly here and in more detail in Section \ref{s:basics}. Many of the existence and regularity results in later sections will apply to this base energy problem as well.
Let us begin with two observations  regarding the existence theory for this problem:
\begin{remark}[A hard volume constraint is needed]\label{rmk: soft constraint}
	{\rm 
	If we attempt to minimize the base energy $\En$ among all bounded open subsets of $M$,  then the energy may fail to be bounded from below and minimizers may fail to exist or $\W = M$ may be a minimizer. For instance, on Euclidean space, a simple scaling computation shows that  $\En(B_R) \to -\infty$ as $R \to \infty$ for any fixed triple of parameters $v, \eta, \tpar>0$. Similarly, if $M$ is compact without boundary, then  $H^1(M) = H^1_0(M)$, and so the minimizing sequence $w_k =- k$ for $\tor(M)$ tells us that $\tor(M) = -\infty$. Since the other quantities in the energy are finite, this implies that $\En(M) = -\infty$.

	 To remedy this issue, we will introduce a ``hard volume constraint'' that is much larger than $v$. Given the desired volume constraint $v \in (0,|M|)$, we fix a larger number $\vmax \in (v, |M|)$ and  minimize with respect to competitors $\W$ with $|\W| \leq \vmax$. Ultimately this hard constraint will never be saturated: in Proposition~\ref{l:volumeisright} we  show that for small values of $\vpar$ (depending on $v$ and $\vmax$) and $\tpar$ (depending on $v, \vmax$ and $\vpar$), any minimizer over this class actually satisfies the original desired volume constraint $|\W| = v$. In particular, this means the volume-constrained problem (minimizing only over $|\W| = v$) is equivalent to the volume-penalized problem where $|\W| \leq \vmax$.
	}
\end{remark}
\smallskip

\begin{remark}[Compactness issues on non-compact manifolds]\label{rmk: issues at infinity}
	{\rm On noncompact manifolds, a minimizing sequence $\W_k$  for the base energy $\En$ can ``drift to infinity'' in the sense that $d(x_0, \W_k) \rightarrow \infty$ for any reference point $x_0$. In view of this issue, we first minimize over the class of $\W$ contained in (smoothed out) balls of large radius $R$.  Depending on the geometry of the manifold at infinity,  global minimizers may simply fail to exist when $R\to \infty$. However, when $M/G$ is compact, the compactness issues are solely caused by the symmetries of the metric and we may send the radius $R$ to infinity  using concentration compactness techniques (see \cite{Lions84}). This will be addressed in Section \ref{ss:globalbase} for the base energy and then more completely in Theorem \ref{t:globalexist}. 
		
	Since geodesic balls need not have smooth boundary on an arbitrary Riemannian manifold, it is more convenient from a technical standpoint to work with an exhaustion $\Qb_R$ of smooth bounded open sets that play the role of smoothed-out balls. More specifically, we let $\Qb_R$ be a fixed 1-parameter family of open sets which have the following properties:
\begin{enumerate}
	\item $\bar{\Qb}_R \ss \Qb_S$ for any $S > R$
	\item $\diam(\Qb_R) \leq 2 R$
	\item $\partial \Qb_R$ is either smooth (a finite union of disjoint $C^2$ codimension-$1$ submanifolds) or empty
	\item $M = \cup_R \Qb_R$.
\end{enumerate}
	}
\end{remark}
\smallskip

In view of Remarks~\ref{rmk: soft constraint} and \ref{rmk: issues at infinity}, we will fix parameters $R>0$ and $\vmax \in (0, |M|)$ and minimize the base energy (as well as the main energy) over open sets in $\Qb_R$ of volume at most $\vmax.$ We use the notation 
\begin{equation}\label{e: compset}
	\compset = \compset_{R, \vmax} = \{\W \ss M \text{ open}: |\W|\leq \vmax, \W \ss \Qb_R  \}
\end{equation}
to denote this collection of sets. We will assume throughout the paper, without further note, that $R$ is chosen to be large enough so that $|\Qb_R| \geq \vmax$. In this way, the container $\Qb_R$ does not obstruct the hard volume constraint and the collection $\compset $ is nonempty.  Moreover, we will always assume that $\vmax <|M|$ when the volume of $M$ is finite.

 Given parameters $0<v< \vmax < |M|, \eta>0$, $\tpar\in (0,1] $ and $R>0$, we consider the following minimization problem for the base energy:
\begin{equation}\label{e: emin def}
	\minE = \minE(v, \vmax, \eta,  \tpar , R) = 	\inf\{ \En(\W): \W \ss \compset \};
\end{equation}
we will drop the parameters unless ambiguous.  The collection of minimizers for this problem will be denoted by
\begin{equation}\label{eqn:min}
	\Min = \Min(v, \vmax, \eta,  \tpar , R) = \{ \W \ss \compset : \En(\W) = \minE\},
\end{equation}
which we will show in Lemma \ref{l:emin} is always nonempty. While several of our intermediate results will hold for larger parameter ranges, the reader should typically think of first fixing the desired and hard volume constraints $v, \vmax$ and the radius $R$, then fixing the volume penalization parameter $\vpar$, and finally fixing the coefficient $\tpar $ in front of the torsional rigidity.

In the following example, we consider the minimization problem \eqref{e: emin def} on  simply connected space forms; generally speaking these are  important motivating  examples to keep in mind throughout the paper. Since these spaces are compact after modding out by the isometry group, we may send the parameter $R$ to infinity and so the following example has no $R$ parameter.
\begin{example}\label{ex: balls}{\rm
Let $(M,g)$ be  Euclidean space, hyperbolic space, or the round sphere.	Fix parameters  $v, \vmax $ with $0< v <\vmax$, choosing  $\vmax < |M|$ in the case of the round sphere. For $\eta\leq \eta_0(v, \vmax)$ and $\tpar \leq \tpar_0(v, \vmax, \eta)$,   geodesic balls of volume $v$ are the unique minimizers of the base energy $\En$ among all open bounded subsets of $M$ with volume at most $\vmax$.
}
\end{example}

For general Riemannian manifolds, we cannot hope to explicitly characterize minimizers as in the example above. However, the following proposition summarizes the basic facts that will be proven about minimizers of the base energy in this paper. The proof follows from combining Lemma \ref{l:emin} and  Theorem~\ref{t:globalexist} (local and global existence respectively),  Lemma \ref{l:simpleevalaux} (connectedness), and Lemma~\ref{l:volumeisright} (volume constraint satisfied).
\begin{proposition}[Minimizers of the base energy] \label{prop: base summary}
	Fix $R>0$ and $v \in (0,|M|)$. Fix any $\vmax \in (v, |M|)$. There exists $\vpar_0 = (R, v, \vmax)$ such that for $\vpar \leq \vpar_0$, there exists  $\tpar_0 = \tpar_0 ( R, v, \vmax , \vpar)$ such that the following holds. If $\tpar \leq \tpar_0$, then $\minE >-\infty$ and  a minimizer of the base energy $\En$ exists among sets in $\compset,$ i.e. the set $\Min$ is nonempty. Moreover, any minimizer in $\Min$ is connected and has volume equal to $v$.
	
	If $M$ is compact or $M/\Isom$ is compact, then constants above may be taken independent of $R$ and the  minimization of $\En$ may be taken among all open bounded subsets of $M$ with volume at most $\vmax.$
\end{proposition}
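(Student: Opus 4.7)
For any $\W \in \compset$, combine $\ei(\W) \geq 0$, $\tor(\W) \geq \tor(\Qb_R) > -\infty$ (monotonicity of $\tor$ under $\W \ss \Qb_R$), and $\fv(|\W|) \geq -\vpar v$ to obtain $\minE \geq \tpar\tor(\Qb_R) - \vpar v > -\infty$. For existence, the plan is to apply the direct method in the Buttazzo--Dal Maso framework of capacitary measures on quasi-open subsets of $\Qb_R$. A minimizing sequence $\W_k$ yields normalized eigenfunctions $u_{\W_k}$ and torsion functions $w_{\W_k}$ bounded in $H_0^1(\Qb_R)$, and $\ei$, $\tor$, and $|\cdot|$ are lower semicontinuous in the associated $\gamma$-convergence topology, producing a quasi-open minimizer. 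Alt--Caffarelli-type regularity for the base functional then places this minimizer in $\compset$.

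\textbf{Connectedness.} If a minimizer splits as $\W = \W_1 \sqcup \W_2$ with $\ei(\W_1) \leq \ei(\W_2)$, so that $\ei(\W) = \ei(\W_1)$ and $\tor(\W) = \tor(\W_1) + \tor(\W_2)$, use a local diffeomorphism in a normal coordinate chart to dilate $\W_1$ to volume $|\W|$, discarding $\W_2$. The scaling estimates packaged in Lemma~\ref{l:simpleevalaux} yield a competitor $\W'$ with $\ei(\W') < \ei(\W)$ and $\tor(\W') < \tor(\W)$ (the latter by a power-mean comparison of the form $(|\W_1|^{n/(n+2)})^{1+2/n}$-type inequality), contradicting minimality.

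\textbf{Volume saturation, the main obstacle.} The condition $|\W| = v$ dictates the nested parameter choice $\vpar \leq \vpar_0(R, v, \vmax)$, then $\tpar \leq \tpar_0(R, v, \vmax, \vpar)$. In the case $|\W| > v$ the slope $\fv'(|\W|) = 1/\vpar$ is huge: shrinking $\W$ by a boundary layer of volume $\delta$ gains $\delta/\vpar$ in $\fv$ against $O(\delta)$ changes in $\ei + \tpar\tor$ whose constants are controlled by $\minE$, so the net change is strictly negative for $\vpar$ small. Alternatively, the Euler-Lagrange condition derived from diffeomorphism variations is incompatible with $\fv'(|\W|)$ being as large as $1/\vpar$. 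In the case $|\W| < v$, a local dilation in a chart produces a competitor $\W'$ with $|\W'| = v$ whose spectral gain scales like $(v - |\W|)\,\ei(\W)/|\W|$; a Faber--Krahn lower bound on $\ei(\W)/|\W|$ (uniform in $|\W| \leq \vmax$) defeats the $\vpar(v - |\W|)$ loss in $\fv$ once $\vpar$ is small. The parameter $\tpar$ is chosen last, so that the torsion contributions cannot disrupt either of these comparisons.

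\textbf{Global version.} When $M$ is compact, sending $R \to \infty$ is automatic; when $M/\Isom$ is compact, Lions-style concentration compactness eliminates drift-to-infinity by recentering minimizing sequences via isometries, giving the statement of Theorem~\ref{t:globalexist}. I expect the main obstacle to be the volume-saturation step in the regime $|\W| < v$, because it simultaneously requires a manifold-adapted scaling analysis, a Faber--Krahn-type uniform spectral lower bound, and the precise ordering of the small parameters $\vpar$ and $\tpar$ so that each in turn dominates the appropriate competing term.
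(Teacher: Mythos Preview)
Your existence sketch is a legitimate alternative route, but it differs from what the paper does: rather than relaxing to quasi-open sets and invoking $\gamma$-convergence, Lemma~\ref{l:emin} works directly in the class of open sets by showing that the limiting eigenfunction and torsion function are H\"older continuous (via an ``almost minimality'' harmonic replacement argument and Campanato estimates), so that their positivity set is automatically open. The paper explicitly avoids the quasi-open relaxation.

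Your connectedness and volume-saturation arguments, however, both rest on ``dilation in a normal coordinate chart,'' and this is a genuine gap on a general Riemannian manifold. There is no global dilation, and a minimizer need not sit inside a single chart; even locally, eigenvalues and torsional rigidity do not obey Euclidean scaling laws, so the claimed inequalities $\ei(\W') < \ei(\W)$ and $\tor(\W') < \tor(\W)$ after ``dilating $\W_1$ to volume $|\W|$'' are not justified. Lemma~\ref{l:simpleevalaux} does not package scaling estimates: for $\tpar = 0$ it simply replaces $\W$ by the positivity set of $u_\W$, and for $\tpar > 0$ it runs a compactness argument reducing to the $\tpar = 0$ case together with the bound $\tor(E) \geq -C|E|^{(n+2)/n}$ to rule out small components.

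The volume-saturation step is where your plan most seriously understates the difficulty. Your boundary-layer heuristic for $|\W| > v$ is morally the Euler--Lagrange computation, but making it rigorous requires knowing that $\p\W$ has finite perimeter and that the first variation is well-defined; your dilation argument for $|\W| < v$ fails for the reasons above. The paper's proof (Proposition~\ref{l:volumeisright}) is substantially deeper: it uses the pointwise free boundary condition of Corollary~\ref{c:elpt}, which in turn relies on the Lipschitz and nondegeneracy estimates, the NTA property, and the Green's function bounds developed in Sections~\ref{s:lb}--\ref{s:el}. If $|\W| \neq v$ one gets $|\nabla u_\W|^2 \approx \fv'(|\W|)$ along $\p^*\W$, and then an integration-by-parts plus isoperimetric argument rules out $|\W| > v$, while a Harnack-chain and Green's function barrier argument rules out $|\W| < v$. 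Neither case is handled by chart-dilation.
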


\subsection{Main energy functional}\label{ss:mainprob}
\def\nl{\mathfrak{h}}
Let us now introduce the class of energy functionals whose existence and regularity theory constitute the heart of this paper and using which  we can carry out a selection principle as described in the introduction (in Section~\ref{s:selection} below). Following the discussions in the introduction and in the previous subsection, we are interested in functionals of the type \eqref{eqn: intro  functuonal}, though as in the previous subsection we relax the volume constraint. 
So, the general form of the functionals  we consider is
\begin{equation}\label{eqn: main functional}
	\Ep(\W) = \En(\W) + \err \nl(\W).
\end{equation}
Here $\nl(\W)$ is a functional mapping open bounded sets to $\R$ and $\err < \err_0$ is a parameter that will be chosen sufficiently small depending on the parameters of the base energy $\En$. The existence and regularity results of this paper will hold whenever $\nl$ is an {\it admissible nonlinearity} as defined in Definition~\ref{def: nl} below. Before defining this general class of admissible nonlinearities, we motivate the definition with some  important concrete examples that will be used for our selection principle applications. In these examples, the  relevant nonlinearities take the form
\begin{equation}\label{eqn: SP nl}
\nl(\W) = \sqrt{c^2 +  (c - d_*(\W)^2)^2 } -c
\end{equation}
for a constant $c\in (0,1]$, where $d_*(\W)$ is a suitably defined distance of $\W$ to the class of minimizers $\Min$ of the base energy. Note that \eqref{eqn: intro  functuonal} takes this form with $d_*(\W) = d_1(\W, B_R(x_\W))$ on Euclidean space; the subtraction of the constant $c$ in \eqref{eqn: SP nl} is immaterial for the minimization problem but conveniently normalizes the functional. We are interested in generalizing this in two directions: (1) we want a distance $d_*$ that measures the size of the symmetric difference as well as the eigenfunction difference, and (2) we wish to suitably generalize this to certain Riemannian manifolds.

In order to establish Step (1) of the selection principle (and ultimately quantitative stability) in the form of Corollary~\ref{c:intro} in which both the eigenfunction distance $d_1$ and the asymmetry distance $d_0$ are controlled (say on Euclidean space), a first approach might be to let $d_*(\W) = d_1(\W, B_R(x_\W)) +d_0(\W, B_R(x_\W))$ in \eqref{eqn: SP nl}, where $d_0$ is the symmetric difference defined in \eqref{e: d0}. The  issue  here, already understood in \cite{BDV15},  is that the distance $d_0$ is not sufficiently smooth (as the integral of a characteristic function) and consequently minimizers of such a functional will not be smooth. 

To deal with this, we follow the approach of \cite{BDV15} and consider a smoothed out version of the asymmetry term $d_0$. More specifically, consider a bounded, open set $U$ with smooth boundary (say $\p U$ is a disjoint union of $C^2$ embedded submanifolds); in the relevant applications $U$ will be a minimizer of the base energy $\En$, so in Euclidean space, hyperbolic space, or the round sphere, take $U$ to be a geodesic ball. Take $f$ to be a smooth nondecreasing function with $f(t) = t$ for $|t| \leq c_0$ and $f(t)$ is locally constant for $|t| > 2 c_0$, where  $c_0$ is selected so that $d(x, \p U)$ is a $C^2$ function on $\{d(x, \p U) < 4c_0\}$. One can always find such a $c_0$; see \cite[Lemma 14.16]{GT}). Now, let $\psi_U$ be given by
\[
\psi_U(x) = \begin{cases}
f(d(x, \p U)) & x \in U \\
-f(d(x, \p U) )& x \in M \sm U.
\end{cases}
\]
The essential point of the function $\psi_U$ is that $\int_\W \psi_{U} - \int_{U} \psi_U $ is comparable to $|U \triangle \W|^2$
for $\W$ whose boundary is a (small) normal graph over $\p U$, and controls $|U \triangle \W|^2$ in all cases. More precisely, for any $\W$ we have
\[
\int_\W \psi_U - \int_U \psi_U = \int_{\W \triangle U} |\psi_U| \geq \int_{\W \triangle U} \min\{|d(x, \p U)|, c_0\} dx \geq c |\W \triangle U|^2,
\]
where the last step can be seen from the fact that the integral is minimized by tubular neighborhoods of $U$, and then by changing variables and using the smoothness of $\p U$.
 On the other hand, if the boundary of $\W$ is a $C^2$ normal graph $\xi$ over $\p U$ with $|\xi|\leq c_0$, we may change variables and integrate
\[
\int_{\W \triangle U} |\psi_U| \leq C \int_{\p U} \int_0^{|\xi(x)|} t dt d\cH^{n-1} = C \int_{\p U} \int_0^{|\xi(x)|^2/2} 1 dt d\cH^{n-1} \leq C|U \triangle \W|^2.
\]

So, given any two bounded open sets $\W$ and $U$ with $U$ smooth as above, we let
\begin{equation}\label{eqn: dstar two sets}
d_*(\W, U)^2 = \int_\W \psi_{U} - \int_{U} \psi_U + \int |u_\W - u_U|^2
\end{equation}
where $u_\W$ denotes the first eigenfunction of $\W$, normalized so that $u\geq 0$ and $\| u_\W\|_{L^2} = 1$ and extended by zero to be defined on all of $M$. In each of the following examples, we will use $d_*(\W, U)$ to define a distance $d_*(\W)$ to the collection $\Min$ of minimizers of the base energy and consider the functional \eqref{eqn: main functional} with the nonlinearity \eqref{eqn: SP nl} for this suitable definition of $d_*(\W)$.
\begin{remark}[Unique first eigenfunction]\label{rmk: unique eigenfunction}
	{\rm 
Lemma \ref{l:simpleeval} below guarantees that if $\En(\W) \leq \minE + \err_0$ for $\err_0$ is sufficiently small, then $\W$ has  a unique first eigenfunction and so $d_*(\W, U)$ is well-defined for such a set.
 This means that by choosing the parameter $\err$ in \eqref{eqn: main functional} accordingly, $u_\W$ and $d_*(\W, U)$ will be well-defined for any minimizing sequence or minimizer of \eqref{eqn: main functional}. As such, we will generally only define $\nl$ for sets with $\En(\W) \leq \minE + \err_0$.	There is no loss of generality, at least when considering minimizers, to define $\nl(\W) = 1$ if $\En(\W) \geq \minE + \err_0$.
	}
\end{remark}

The simplest example is the case when only one minimizer of the base energy exists. 
\begin{example}[Unique minimizer]\label{ex: one min}
	{\rm
	Suppose that the collection of minimizers $\Min$ of the base energy consists of a single set $U$, and $U$ has boundary of class $C^2$. Then the distance  $d_*(\W)$ of $\W$ to the collection of minimizers of the base energy with $v= |\W|$ is defined by $
d_*(\W) = d_*(\W, U).$
	}
\end{example}
  
In the case when minimizers of the base energy are not unique, to define a distance of a given set $\W$ to the collection of minimizers, we must select the nearest minimizer to $\W$. On Euclidean space, where translation invariance gives rise to nonuniqueness of minimizers, we do this by choosing the ball with the same barycenter as $\W$. 
\begin{example}[Euclidean space]\label{ex: functional rn}
\rm{
Let $(M,g)$ be Euclidean space. Given a bounded open set $\W$, let $R$ be the unique radius so that $|\W| =|B_R|$ and let $x_\W = \fint_\W x\,dx$ be the barycenter of $\W$. Then  the distance  $d_*(\W)$ of $\W$ to the collection of balls $B_R$ (i.e. the minimizers of the base energy with $v= |\W|$) is defined by 
$
d_*(\W) = d_*(\W, B_R(x_\W)).
$
}	
\end{example}

We saw in Example~\ref{ex: balls} that the situation on hyperbolic space and the round sphere is similar to Euclidean space: the collection of minimizers of the base energy comprises geodesic balls of suitable radius centered at {\it any } point. In these cases, we define an appropriate ``set center'' in analogy to the Euclidean barycenter.
\begin{example}[Hyperbolic space]\label{ex: hyp space}
{\rm 
Let $(M,g)$ be hyperbolic space. Given a set bounded open set $\W \subset M$, define the set center $x_\W$ of $\W$ by $x_\W= \text{argmin}_x \int_\W d^2(x,y) \, dm(y)$, which is well-defined.  Then the distance  $d_*(\W)$ of $\W$ to the collection of balls $B_R$ (i.e. the minimizers of the base energy with $v= |\W|$) is defined by 
$
d_*(\W) = d_*(\W, B_R(x_\W)).
$
}	
\end{example}

On the round sphere, we cannot define the notion of set center in analogy to Example~\ref{ex: hyp space} as the minimum may not be uniquely achieved.  We instead consider the following.
\begin{example}[The round sphere]\label{ex: round sphere}
{\rm 
Let $(M,g)$ be the round sphere. Consider its standard embedding  in $\R^{n+1}$. Given any open $\W \ss M$, let $
		y_\W = \fint_{\W} y d \cH^{n}(y),$
	where $y, y_\W \in \R^{n+1}$ and this is a (vector-valued) surface integral. If $y_\W \neq 0$, then define the set center of $\W $ as $x_\W = y_\W /|y_\W|$. The distance  $d_*(\W)$ of $\W$ to the collection of balls $B_R$ (i.e. the minimizers of the base energy with $v= |\W|$) is defined by $d_*(\W) = d_*(\W, B_R(x_\W)).$

}		
\end{example}
In all three of the preceding examples, uniqueness of minimizers of the base energy fails solely due to the isometries of the space, and modulo the isometry group $\Isom$, there is a unique minimizer $U$ with boundary of class $C^2$. In more general situations of this type, we  define a suitable notion of {\it set center}, which is a mapping $\W \mapsto x_\W $ for which there is a unique minimizer $U_\W$ of the base energy with the same set center as $\W$. Because the definition is somewhat technical, we postpone it until Definition~\ref{def: set center} in Appendix~\ref{app A}. In this situation, we define $d_*(\W)$ analogously to the examples above; the Euclidean barycenter and the sets centers for hyperbolic space and the round sphere above are examples of set centers in the sense of Definition~\ref{def: set center}.
\begin{example}[Minimizer is unique modulo isometries]\label{ex: general example}
	{\rm 
	Assume that $\Min$ consists of a single set $U$ up to the action of the isometry group $\Isom$.  Also assume that it has smooth boundary, and that there exists a set center $\W\mapsto x_\W$ adapted to $U$. We define
\begin{equation}\label{e:dstar}
d_*(\W) = d_*(\W, U_\W)
\end{equation}
where $U_\W$ the unique set in $\Min$ with the same set center as $\W$.
	}
\end{example}

 It may be tempting to give {generic} constructions of $d_*(\W)$, independent of set centers, by solving minimization problems along the lines of
$
	U_\W = \text{argmin}\left\{d_*(\W, U) : U \in \Min  \right\}
$ and letting $d_*(\W) = d_*(\W, U_\W).$
However, such variational problems have issues with uniqueness and regularity of the solution map: these are closely related to the convexity and smoothness of the ``distance'' functional being minimized, as well as the structure of the family $\Min$.

\smallskip

All of the examples given above fall into the general class of admissible nonlinearities $\nl$ for which our main existence and regularity results are valid.  Below, $u_\W$ is used to denote the normalized, nonnegative first eigenfunction of $\W$ (see \eqref{e:evalue}); recalling Remark~\ref{rmk: unique eigenfunction}, this function exists and is unique when $\En(\W) \leq \minE + \err_0$.
\begin{definition}[Admissible nonlinearities]\label{def: nl} Fix a (possibly trivial) closed subgroup  $\Isom_0 \leq \Isom$
	of the isometry group of $(M,g)$. A function $\nl$ mapping bounded open subsets of $M$ to $\R$ is said to be \emph{an admissible nonlinearity} with respect to $\Isom_0$ if the following properties hold.
	\begin{enumerate}[({N}1)]
	\item \label{a:nlbdd} $\nl \in [0, 1]$. 
	\item \label{a:nlinv} $\nl$ is invariant under isometries in $\Isom_0$.
	\item \label{a:nllip} If $\W, \W'$ are bounded open sets with $\En(\W) \leq \minE + \err_0$, then
	\[
		|\nl(\W) - \nl(\W')| \leq |\W \triangle \W'| + \int |u_\W - u_{\W'}|.
	\]
	\item \label{a:nlc1} For any $x_0\in M$ and $ r \leq r_0$, take a one-parameter family $\phi_t$  of diffeomorphisms with $\phi_0(x) = x$
	 and $|\p_t \phi_t| \leq 1$ such that $|\{x : \phi_t(x) \neq x \}| \ss B_r(x_0)$. Assume $\phi_t(\W)$ is a set with $\En(\phi_t(\W)) \leq \minE + \err_0$ and $\cH^{n-1}(\p \W) < \infty$; then
	\[
		\limsup_{t \rightarrow 0} \frac{1}{|t|} \left|\nl(\phi_t(\W)) - \nl(\W) - \int (u_{\phi_t(\W)} - u_\W) a_\W - \left[\int_{\phi_t(\W)} b_\W - \int_\W b_\W\right]\right| \leq C_{\nl} r^n,
	\]
	where $a_\W$, $b_\W$ are continuous functions $M \rightarrow \R$ with $|a_\W|, |b_\W| \leq 1$. Moreover, $\|b_\W\|_{C^{1, 1}(\Qb_R)} \leq 1$ and $\|a_\W\|_{C^{0, 1}(\W)}\leq 1$.
\end{enumerate}

\end{definition}
 Assumption~\ref{a:nlbdd} simply says that $\nl$ is nonnegative and uniformly bounded on bounded open sets, and it is convenient to normalize any such functional so that the upper bound is $1$. Assumption~\ref{a:nlinv} should be understood with Example~\ref{ex: general example} in mind; when the base energy $\En$ has a nonunique minimizer due only to a subgroup of isometries $G_0$, then \ref{a:nlinv} is used to compare a set $\W$ to the nearest minimizer of the base energy.
Assumption \ref{a:nllip} heuristically states that $\nl$ is a Lipschitz function $\W \mapsto \R$, where the topology placed on the space of sets is either the $L^1$ distance $|\W \triangle \W'|$ or a distance governed by the $L^1$ difference of their eigenfunctions.
Assumption \ref{a:nlc1} should be viewed as a higher regularity assumption on $\nl$; it is asking that $\nl$ is not only Lipschitz but also $C^1$, with derivative represented by the functions $a_\W$ and $b_\W$.	

Only the first three assumptions \ref{a:nlbdd}, \ref{a:nlinv}, and \ref{a:nllip} are needed for the existence theory and core estimates up through Section \ref{s:measure}. 
The last assumption \ref{a:nlc1} is used  to derive the Euler-Lagrange equation in Section~\ref{s:el} (indeed, the functions $a_\W$ and $b_\W$ appear in the first variation) and for the regularity of minimizers in Sections \ref{s:reg} and \ref{s:highreg}. In particular the extra regularity of $a_\W, b_\W$ is only needed in Section \ref{s:highreg} to obtain better smoothness of $\p \W$ for minimizers.

In Appendix~\ref{app A}, we verify  that the examples given above are admissible nonlinearities. Given an admissible nonlinearity, it is easy to construct others via composition with functions of one variable: 
\begin{remark}\label{rmk: admissible compositions}{\rm
 Given any admissible nonlinearity $\nl$, and any $C^1$ function $\phi : [0, 1] \rightarrow [0, 1]$   with $|\phi'|\leq 1$, the composition $\phi \circ \nl$ is again an admissible nonlinearity. No monotonicity or structure of $\phi$ is required here.
}
\end{remark}

\subsection{Minimizers of the main energy}

\def\UP{\text{UP}}
\def\DO{\text{DO}}
As we did for the base energy, we will fix $R>0$ and $\vmax$ and minimize the main energy over the set $\compset=\compset_{R,\vmax}$ defined in \eqref{e: compset}.
We say that a bounded open set $\W$ is a \emph{minimizer} if $\W \in \compset$ and for any $\W' \in \compset$,
\begin{equation}\label{e:min}
	\Ep(\W) \leq \Ep(\W').
\end{equation}
We say that $\W$ is an \emph{inward minimizer} if \eqref{e:min} holds for any $\W' \in \compset$ with $\W' \ss \W$. We say that $\W$ is an \emph{outward minimizer} if \eqref{e:min} holds for any $\W' \in \compset$ with $\W' \supset \W$.

The following theorem summarizes the main existence and regularity properties for $\Ep$ that are established in this paper. The proof follows by combining Theorem~\ref{t:min} and Theorem~\ref{t:globalexist}  (existence), Proposition~\ref{l:volumeisright} (volume constraint), Theorem~\ref{thm:usefulEL} (free boundary condition), Theorem~\ref{thm:C1a} ($C^{1,\alpha}$ regularity), and Corollary~\ref{c:higherreg} (higher regularity). 

We say that a collection $\mathcal{C}$ of bounded open subsets  of $M$ is \emph{uniformly} $C^{k}$ if there is a constant $C > 2/\text{inj}_M $ such that for every $U \in \mathcal{C}$ and every $x \in \p U$, the set  $\p U\cap B_{1/C}(x)$ may be expressed as a graph with $C^{k}$ norm at most $C$ over a hyperplane in normal coordinates.

\begin{theorem}\label{thm: summary for minimizers} 	Fix $R >0$ and $v \in (0, |M|)$. Fix any $\vmax \in (v, |M|).$ There exist positive constants $\vpar, \tpar$ $\err_0>0$ depending on $R, v,\vmax$ such that the following holds. Let $\nl$ be an admissible nonlinearity with respect to the isometry group $G$ of $(M,g)$ and for fixed $\err <\err_0$, consider the energy functional $\Ep$ defined in \eqref{eqn: main functional} with the parameters $v, \vpar ,$ and $\tpar$ in the base energy $\En$ of \eqref{eqn: main functional}. 

	\begin{enumerate}
		\item There exists an open set $\W$ that minimizes $\Ep$ among sets in $\compset =\compset_{R,\vmax}$. Any such minimizer satisfies the volume constraint $|\W| =v$, is a set of finite perimeter, and has a unique first eigenfunction $u_\W$ up to scaling. 
		\item There exist $C>0$ and $\alpha \in (0,1)$ depending only on $v, \vmax, R$,   a function $\r \geq - C \err$ with $\|\r\|_{C^{0, \a}(\p \W)} \leq C$, and a constant $A_0\in [1/C,C]$ such that the normalized first eigenfunction $u_\W$ satisfies the free boundary condition
	\[
		|\grad u_\W (x)|^2(1 + \r(x)) = A_0
	\]
	for $\cH^{n-1}$-a.e. $x \in \p^* \W \cap \Qb_R$.
		\item Assume further that the collection $\Min$ of minimizers of the base energy  is uniformly $C^{4}$. For every $\alpha \in (0,1)$ and  $r_1 > 0$ there is a $\err_1 = \err_1(v, \vmax, \vpar, r_1, \alpha) > 0$ such that the following holds. If $\err < \err_1$, then for any minimizer $\W$ of $\Ep$, the set $\p \W \cap \{ x \in \Qb_R: d(x, \p \Qb_R) > r_1 \}$ may be parametrized as a $C^{2, \a}$ normal graph (with $C^{2, \a}$ norm bounded by $r_1$) over $\p U$ for some $U \in \Min$. 
	\end{enumerate}

	If $M/G$ is compact, then all constants may be taken to be independent of $R$, the minimization of $\Ep$ may be taken among all bounded open subsets of $M$ with volume at most $\vmax$, and in (3) the entire boundary of any minimizer $\W$ may be written as a $C^{2,\alpha}$ graph over $\p U$ for some $U \in \Min$.
\end{theorem}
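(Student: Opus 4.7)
The proof naturally breaks into three independent strands corresponding to the three items of the theorem, though item (3) relies on item (2) which relies on the existence result of item (1). I would assemble the pieces referenced in the paper as follows.

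For item (1), a direct application of the direct method fails because $\Ep$ is not lower semicontinuous on any topology for which competitors enjoy compactness: the obstruction is exactly the eigenfunction penalization in $\nl$, which is critical rather than of lower order. The plan is therefore the two-step scheme sketched in the introduction: first, invoke the a priori Lipschitz growth estimate for \emph{outward} minimizers from Section~\ref{s:ub}, apply it along a minimizing sequence of outward minimizers to extract a limit $\W$ which automatically inherits Lipschitz growth of its state functions; second, use the a priori nondegeneracy from Section~\ref{s:lb} together with a perturbation argument to show $\W$ is an inward minimizer as well, hence a bona fide minimizer. Finiteness of perimeter then follows from uniform density estimates along $\p\W$ provided by the combination of nondegeneracy and Lipschitz growth. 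The volume identity $|\W|=v$ is Proposition~\ref{l:volumeisright} applied with $\vpar,\tpar$ small. Uniqueness of $u_\W$ follows from Remark~\ref{rmk: unique eigenfunction}: because $\Ep(\W)\leq \Ep(U)$ for any base-energy minimizer $U$, one has $\En(\W)\leq \minE + \err$, so the spectral gap $\l_1(\W)<\l_2(\W)$ of Lemma~\ref{l:simpleeval} applies.

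For item (2), I would compute the first variation of $\Ep$ along the families $\phi_t$ admitted in \ref{a:nlc1}. The variations of $\ei$ and $\tor$ produce, after integration by parts, the classical Bernoulli-type boundary integrand $|\grad u_\W|^2$ (and an analogous term in the torsion function $w_\W$), while $\fv(|\W|)$ contributes a constant Lagrange multiplier. The $\nl$-contribution splits via \ref{a:nlc1} into an interior piece depending on $b_\W$ and a piece involving $u_{\phi_t(\W)}-u_\W$ paired with $a_\W$. The critical step is to absorb this last piece into a pointwise condition along $\p\W$: this is where the key estimate \eqref{e:introkey} established in Proposition~\ref{lem:key} is essential, together with the Green's function bounds derived in Section~\ref{s:measure} using the inhomogeneous boundary Harnack principle of \cite{AKS20}. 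The outcome is a distributional condition along $\p^*\W$ which, after a second application of the inhomogeneous boundary Harnack principle to rewrite the $\nl$-contribution as a multiplicative perturbation of $|\grad u_\W|^2$, takes the scalar form $|\grad u_\W|^2(1+\r)=A_0$ with $\|\r\|_{C^{0,\a}}\leq C$ and $\r\geq -C\err$.

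For item (3), once the scalar free boundary condition of item (2) is available with Hölder-continuous coefficient $\r$, I would apply the $C^{1,\a}$ free boundary regularity theory developed in Section~\ref{s:reg} (based on \cite{DeSilva11, DT15} and adapted to the vectorial setting following \cite{CSY18, MTV17}). This upgrades $\p\W$ to $C^{1,\a}$. A second application of the inhomogeneous boundary Harnack principle, now in its higher-order form \cite{DSS15}, bootstraps this to $C^{2,\a}$. To obtain the normal graph structure over $\p U$ for some $U\in\Min$ with small $C^{2,\a}$ norm, I would argue by contradiction: if the statement failed along a sequence $\err_k\to 0$, then minimizers $\W_k$ of $\Ep_{\err_k}$ would converge (in Hausdorff distance, and of their eigenfunctions in $L^2$) to a base-energy minimizer $U\in\Min$, while the uniform $C^{2,\a}$ estimates would force eventual graphicality over $\p U$ with arbitrarily small norm, contradicting the assumed failure. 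The globalization under compactness of $M/\Isom$ is handled separately via Theorem~\ref{t:globalexist} and concentration compactness in the style of \cite{Lions84}.

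The principal obstacle is item (2): because the $\nl$-term is a critical perturbation of $\En$, the naive Alt--Caffarelli framework does not directly produce a scalar free boundary condition. The delicate point is that the linearization of $\nl$ along $\phi_t(\W)$ is controlled only through an integral of $u_{\phi_t(\W)}-u_\W$, and translating this into a pointwise condition along $\p\W$ requires showing that the eigenfunction difference can be compared, at leading order, to a Green's-function-weighted normal displacement. This is what forces the careful analysis of $G_\W$ in Section~\ref{s:measure}, and without the torsion term the corresponding estimate \eqref{e:introkey} would be unavailable, so the entire derivation hinges on the choice to include $\tpar\tor(\W)$ in $\En$.
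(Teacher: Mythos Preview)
Your proposal is essentially correct and tracks the paper's approach closely. Two minor clarifications are worth making.

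First, in item (1), the paper does not separately argue that the limit is an inward minimizer; rather, Theorem~\ref{t:min} shows directly that the limit of a minimizing sequence of outward minimizers (produced by Lemma~\ref{l:outmin}) is a full minimizer, using the uniform Lipschitz bound of Corollary~\ref{cor:lip} to upgrade weak $H^1$ convergence to uniform convergence of eigenfunctions, which is what permits passing $\nl$ to the limit. Nondegeneracy (Section~\ref{s:lb}) is used afterward, for the density and perimeter estimates in Section~\ref{s:measure}, not in the existence step itself.

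Second, in item (2), Proposition~\ref{lem:key} is not used in deriving the Euler--Lagrange equation; its role is confined to the a priori estimates of Sections~\ref{s:lb}--\ref{s:exist}. The pointwise free boundary condition is obtained instead by computing the shape derivative $\dot{u}_\W^T$ explicitly (Lemma~\ref{l:tderpde}), localizing $T$ to approximate $\delta_x\nu_x$, and identifying the limit of $\dot{u}_\W^{T_{x,r}}$ with $v_\W^x = h^x + q^x$ where $h^x = |\grad u_\W(x)|K(x,\cdot)$ is a Poisson-kernel multiple (Lemma~\ref{l:hconv}, Corollary~\ref{c:qconv}). The term $\int a_\W v_\W^x$ is then recast as $|\grad u_\W(x)|^2\rho(x)$ by recognizing the Poisson kernel as the normal derivative of $G_\W$ and applying the inhomogeneous boundary Harnack principle (Lemmas~\ref{l:hbh}, \ref{l:qbh}). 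Your high-level description of this mechanism is accurate; only the attribution to \eqref{e:introkey} is misplaced.
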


Theorem~\ref{thm: summary for minimizers}(1) implies that the minimization of $\Ep$ among sets in $\compset$ is equivalent to the minimization problem
\[
\inf\{ \Ep(\W) : \W \in \compset^*, |\W| =v\}
\]
where we let $\compset^*$ denote either the collection of open bounded subsets of $\Qb_R$ or, in the case that $M/G$ is compact, the collection of open bounded subsets of $M$.\\

In order to better track the dependence of constants in various estimates in the sections to follow, we define two quantities which measure growth properties of the eigenfunction and torsion function of $\W$. Let $\W$ be an open subset of $\Qb_R$ with $\ei(\W) < \l_2(\W)$, and $u_\W$ the first eigenfunction (normalized so that $\int u_\W^2 = 1$ and $u_\W \geq 0$ as usual), while $w_\W$ is the torsion function. Then
\begin{equation}\label{def: UP}
	\UP(\W) = \sup\left\{ \frac{u_\W(x) + \sqrt{\tpar} w_\W(x)}{d(x, \partial \W)}: x \in \W, d(x, \partial \W) \in (0, 1) \right\}
\end{equation}
and
\begin{equation}\label{def: DO}
	\DO(\W) = \inf \left\{ \frac{1}{r}\sup_{x \in B_r(y)} u_\W(x) + \sqrt{\tpar} w_\W(x) : y \in \bar{\W}, r \in (0, 1) \right\}.
\end{equation}
In general, $\UP(\W) \in (0, \infty]$ while $\DO(\W) \in [0, \infty]$, though we will show in Section \ref{ss:basics} that $\DO(\W) < \infty$ by showing that $u_\W$ and $w_\W$ are bounded. We also always have $\DO(\W) \leq \UP(\W)$. The core nonlinear estimates for this type of variational problem, following \cite{AC81}, are that outward minimizers have $\UP(\W) \leq C < \infty$, while inward minimizers have $\DO(\W) \geq c > 0$.

\subsection{Selection Principle}\label{s:selection}
\def\vol{{\textrm{vol}}}
\def\Om{{\Omega}}
\def\pa{{\partial}}

In this section, we establish the main linear stability-implies-nonlinear stability result of this paper, which is Theorem~\ref{thm: quantitative stability} below. This is a generalization of Corollary~\ref{c:intro}.  
 As we noted in the introduction and as will be apparent in the proof, the constant $c$ in Theorem~\ref{thm: quantitative stability} below is not explicit. The proof is essentially self-contained assuming Proposition~\ref{prop: base summary} and Theorem~\ref{thm: summary for minimizers}, with the exception of some continuity statements of Lemma~\ref{l:emin} and Lemma \ref{l:globalsimpleeval}. We operate under one of the following set of assumptions: 

{\it Case 1:} $M/\Isom$ is compact. 

{\it Case 2: } $M$ is arbitrary and $R>0$ is fixed.

Let $\compset^* = \{ \W \text{ open, bounded} \}$ in Case 1 and  $\compset^* = \{ \W \text{ open} : \W \ss \Qb_R \}$ in Case 2. Fix $0 < v < \vmax < |M|$, with $\vmax < |\Qb_R|$ in Case 2. 
Let $\vpar= \vpar(v, \vmax, R)$ be chosen small enough according to Proposition~\ref{prop: base summary} as well as Lemma \ref{l:globalsimpleeval} in Case 1 (Lemma~\ref{l:emin} in Case 2). Then choose  $\tpar_* = \tpar_*(v, \vmax, R, \vpar) > 0$ small enough according to Proposition~\ref{prop: base summary}. Fix any $ 0 < \tpar < \tpar_*$. All constants and parameters below depend on these fixed values.

We are interested in looking at the quantitative stability of minimizers of the base energy $\En$ with a volume constraint: 
\begin{equation}\label{eqn: Base Energy Min 1}
 \inf\{ \ei(\W) + \tpar \tor(\W)  : \W \in \compset^*, |\W | = v\},
\end{equation}
By Proposition~\ref{prop: base summary}, the infimum in \eqref{eqn: Base Energy Min 1} is equal to the infimum $\minE({v})$ of the unconstrained problem \eqref{e: emin def}, and the two minimization problems are equivalent  
in the sense that if a set $\W \in \compset^*$ attains the infimum in \eqref{e: emin def}, then it has $|\W| = v$ and it attains the infimum in \eqref{eqn: Base Energy Min 1}, and conversely.
Let $\Min^*$ denote the collection of minimizers of this variational problem, which is nonempty by Proposition~\ref{prop: base summary}.
In Theorem~\ref{thm: quantitative stability}, we make the following assumptions about $\Min^*$:

In Case 1, assume that $\Min^*$ may be represented as $\{e(U)\}_{e \in \Isom}$ for a set $U \in \Min^*$, the boundary $\p U$ is smooth, and there exists a set center adapted to $U$ defined for sets in $\{\W : \En(\W) \leq \minE(v) + \d \}$  for $\d$ small enough. Recall that set centers are defined in Definition~\ref{def: set center} and that examples to keep in mind are those on simply connected. space forms in Examples~\ref{ex: functional rn}--\ref{ex: round sphere}.

In Case 2,  assume that $\Min^*$ contains only one element $U$ and $U \cc \Qb_R$ and that $\p U$ is smooth.

 More general situations, such as finitely many minimizers (modulo isometries in Case 1) may be considered with suitable modifications to the arguments here.
Given an open bounded set $\W$, let $d_*(\W)$ be the distance to the collection of minimizers as defined in Example~\ref{ex: general example}. In Case 1, we let $U_\W$ denote the unique minimizer with the same set center as $\W.$

\begin{theorem}\label{thm: quantitative stability}
	For $M$ and $\Min^*$ as above and $\e > 0$ small, assume that the inequality
	\begin{equation}\label{eqn: hp local stability}
	\ei(\W) + \tpar \tor(\W) - \minE \geq c d_*(\Omega)^2
	\end{equation}
	holds for all $\Omega \in \compset^*$ with $|\W| = v$ and $\p \W$ expressible as a $C^2$ normal graph over $\p U_\W$ with $C^{2, \a}$ norm bounded by $\e$. Then, up to possibly replacing $c$ with a smaller constant, \eqref{eqn: hp local stability} holds for all $\Omega \in \compset^*$ with $|\W| = v$.
\end{theorem}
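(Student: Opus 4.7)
\textit{Approach.} I would argue by contradiction: suppose the conclusion fails, so there is a sequence $\W_k \in \compset^*$ with $|\W_k| = v$ and
\[
	c_k := d_*(\W_k)^2 > k \bigl[ \ei(\W_k) + \tpar \tor(\W_k) - \minE \bigr].
\]
The plan is to replace each $\W_k$ by a regularized competitor $U_k$ with $|U_k| = v$ and $\p U_k$ a small $C^{2,\a}$ normal graph over $\p U_{U_k}$, for which the same failure of stability persists; applying the local stability hypothesis to $U_k$ will then yield a contradiction. A preliminary reduction shows $c_k \to 0$: otherwise $\En(\W_k) - \minE \to 0$ along a subsequence while $d_*(\W_k) \geq \d > 0$, but the compactness/continuity behavior of minimizing sequences for $\En$ (Lemma~\ref{l:emin} in Case~2 and Lemma~\ref{l:globalsimpleeval} in Case~1) forces $\W_k$ to converge, modulo the action of $\Isom$ in Case~1, to some element of $\Min^*$ in a sense that drives $d_*(\W_k) \to 0$, a contradiction.

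\textit{Regularization via penalization.} Define
\[
	\nl_k(U) = \phi\Bigl( \sqrt{c_k^2 + (c_k - d_*(U)^2)^2} - c_k \Bigr),
\]
where $\phi : [0,\infty) \to [0,1]$ is smooth with $\phi(t) = t$ for $t$ near $0$ and $|\phi'| \leq 1$. I would verify that $\nl_k$ is an admissible nonlinearity in the sense of Definition~\ref{def: nl} (with respect to the full isometry group in Case~1 and trivially in Case~2): \ref{a:nlbdd} is forced by $\phi$; \ref{a:nlinv} reduces to the equivariance of the set center $\W\mapsto x_\W$ and of the pairing $d_*(\cdot,\cdot)$ under isometries; \ref{a:nllip} follows from the Lipschitz dependence of $d_*(U,U_U)^2$ on $U$ through \eqref{eqn: dstar two sets}, using boundedness of $\psi_{U_U}$; and \ref{a:nlc1} comes from direct differentiation of $\int_U \psi_{U_U} + \int |u_U - u_{U_U}|^2$ along the one-parameter flow $\phi_t$, packaging the volume contributions into $b_\W$ and the eigenfunction pairings into $a_\W$, in the spirit of the computations carried out in Appendix~\ref{app A}. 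Then apply Theorem~\ref{thm: summary for minimizers} to $\Ep := \En + \err_k \nl_k$, choosing $\err_k$ so that simultaneously (i) $\err_k \leq \err_1(\e)$, ensuring that any minimizer $U_k$ has $\p U_k$ a $C^{2,\a}$ normal graph over some $\p U \in \Min^*$ with $C^{2,\a}$ norm at most $\e$, and (ii) $\err_k \geq 100/k$. These are compatible for $k$ large because $\err_1(\e)$ is a fixed positive constant.

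\textit{Conclusion.} Let $U_k$ be such a minimizer. Since $\nl_k(\W_k) = 0$ and $\nl_k \geq 0$, minimality and $|U_k| = v$ give
\[
	0 \leq \En(U_k) - \minE + \err_k \nl_k(U_k) \leq \En(\W_k) - \minE \leq c_k/k.
\]
If $d_*(U_k)^2 \leq c_k/2$, a direct computation yields $\nl_k(U_k) \geq (\sqrt{5}/2 - 1)\, c_k$, so the above becomes $100\,(\sqrt{5}/2 - 1)\, c_k / k \leq c_k/k$, which is impossible. Hence $d_*(U_k)^2 \geq c_k/2$. Using \ref{a:nlbdd} also gives $\En(U_k) - \minE \leq c_k/k$, and equivariance of the set center forces the minimizer $U$ appearing in Theorem~\ref{thm: summary for minimizers}(3) to coincide with $U_{U_k}$, so the local stability hypothesis \eqref{eqn: hp local stability} applies:
\[
	\En(U_k) - \minE \geq c\, d_*(U_k)^2 \geq c\, c_k/2.
\]
Combined with $\En(U_k) - \minE \leq c_k/k$, this gives $c/2 \leq 1/k$, a contradiction for large $k$.

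\textit{Main obstacles.} The principal technical step is verifying that $\nl_k$ is admissible, and in particular establishing \ref{a:nlc1}: this requires a careful first-variation computation for $d_*(U, U_U)^2$ in which both the explicit integrals over $U$ and the implicit dependence of $U_U$ on $U$ through the set center must be tracked and packaged into the functions $a_\W, b_\W$ of Definition~\ref{def: nl}. This is the role of Appendix~\ref{app A}. A secondary, Case~2-specific issue is that Theorem~\ref{thm: summary for minimizers}(3) produces a normal graph only away from $\p \Qb_R$, so one must argue $U_k \cc \Qb_R$ in order to match the hypothesis of \eqref{eqn: hp local stability}; this follows for large $k$ from the same compactness argument as in the preliminary reduction, applied to the sequence $U_k$ using $\En(U_k) \to \minE$ and $U \cc \Qb_R$.
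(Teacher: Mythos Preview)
Your approach is essentially the same as the paper's: assume failure along a sequence, penalize the energy by a term vanishing exactly when $d_*(\cdot)^2 = c_k$, invoke Theorem~\ref{thm: summary for minimizers} to produce a smooth minimizer $U_k$, and use $\W_k$ as a competitor to conclude $d_*(U_k)^2 \approx c_k$ while $\En(U_k)-\minE \leq c_k/k$, contradicting the local hypothesis. The cosmetic differences (you let $\err_k$ vary with $k$ rather than fixing $\err$, and you insert a truncation $\phi$ to enforce \ref{a:nlbdd}) are harmless.

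One step is imprecise, however. You write that ``equivariance of the set center forces the minimizer $U$ appearing in Theorem~\ref{thm: summary for minimizers}(3) to coincide with $U_{U_k}$.'' Equivariance alone does not give this: Theorem~\ref{thm: summary for minimizers}(3) only asserts that $\p U_k$ is a small $C^{2,\a}$ graph over $\p U$ for \emph{some} $U \in \Min^*$, and there is no reason a priori that $x_U = x_{U_k}$. The paper closes this gap as follows: since $|U_k \triangle U| = o_k(1)$, property \ref{a:clip} of set centers gives $d_N(x_{U_k}, x_U) = o_k(1)$, and then \ref{a:cproj} gives $|U \triangle U_{U_k}| = o_k(1)$; finally the uniform $C^4$ smoothness of $\Min^*$ and the tubular neighborhood theorem let one re-express $\p U_k$ as a small $C^{2,\a}$ normal graph over $\p U_{U_k}$. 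This is a short argument but it is needed, and you should include it rather than appeal to equivariance.
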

\begin{proof}
	Suppose by way of contradiction that the theorem is false. We may find a sequence of sets $\{\W_j\}$ with $\W_j \in \compset^*$  and $|\W_j| = v$  such that 
	\begin{equation}\label{eqn: contradiction hp 1}
	\En(\W_j)= \ei(\W_j) + \tpar \tor(\W_j)   \leq  \minE +  \frac{d_j^2}{j}\,,
	\end{equation}
	where here and in the remainder of the proof, we let $d_j = d_*(\W_j)$. The quantity $d_*(\Omega)$ is bounded above uniformly  for any $\Omega \in \compset^*$,  so \eqref{eqn: contradiction hp 1} implies that
	\[
	\lim_{j \rightarrow \infty} \En(\W_j) = \minE(v).
	\]
	Applying Lemma \ref{l:globalsimpleeval} in Case 1 or Lemma \ref{l:emin} in Case 2, we see that $d_j \to 0$, and so there exists $U \in \Min^*$ such that, after passing to a subsequence, $|\W_j \triangle e_j(U)| \to 0$ as $j \to \infty$ for some $e_j \in \Isom$. 
	
	For each $j \in \mathbb{N}$, consider the minimization problem 
	\begin{equation}\label{eqn: minimization problem for penalized functional}
	\inf\{\Ep(\W) = \En(\W) + \err \nl(\W) : \Omega \in \compset^* \},
	\end{equation} 
	where we let  
	\begin{align*}
	\nl(\W) = \sqrt{d^4_j + \left(d_*(\Omega)^2 - d_j^2\right)^2} - d_j^2 
	\end{align*}
	where $\err < \err_0$ is a small parameter, and $\err_0= \err_0(v, \vmax, R, \vpar, \tpar)$ is chosen so that Theorem~\ref{thm: summary for minimizers} holds. 
	By Theorem~\ref{thm: summary for minimizers}, a minimizer $V_j$ of this variational problem exists and satisfies $|V_j| = v$. 
	Let us take  $\W_j$ as a competitor in \eqref{eqn: minimization problem for penalized functional} in order to relate the  deficits and distances of $\W_j$ and $V_j$. We find
	\begin{align}\label{eqn: competitor}
	\En(V_j)+ \err\1\sqrt{d_j^4 + \left(d_*(V_j)^2 - d_j^2\right)^2} - d_j^2\2
	& \leq \En(\W_j) \\
	\label{eqn: competitor2}
	& \leq \En(U) + \frac{d_j^2}{j}. 		
	\end{align}
	In \eqref{eqn: competitor2} we have 
	applied \eqref{eqn: contradiction hp 1}. Subtracting $\En(U) = \minE(v)$ throughout \eqref{eqn: competitor} and \eqref{eqn: competitor2} (and recalling that $\minE(v)\leq \En(V_j)$ by definition), we directly see that
	\begin{equation}\label{eqn: comparable deficits}
	\En(V_j) - \minE(v) \leq \En(\W_j) - \minE \to 0.
	\end{equation}
	Furthermore, \eqref{eqn: competitor2} tells us that 
	\[
	\sqrt{d_j^4 + \left(d_*(V_j)^2 - d_j^2\right)^2} \leq  \left(1+ \frac{1}{\tau  j}\right)d_j^2.
	\]
	After squaring both sides of this inequality, we find that $ (d_*(V_j)^2 - d_j^2)^2 \leq 3d_j^4/\err j$. This guarantees, first of all, that $V_j \not\in \Min^*$ for $j$ sufficiently large. Moreover, taking the square root of both sides, we find that for sufficiently large $j$, 
	\begin{equation}\label{eqn: comparable a}
	\frac{1}{2}d_j \leq d_*(V_j).
	\end{equation}
So, combining the contradiction hypothesis \eqref{eqn: contradiction hp 1} with the deficit and distance comparisons \eqref{eqn: comparable deficits} and \eqref{eqn: comparable a}, we see that
	\begin{equation}\label{eqn: contra a}
	\En(V_j) - \minE(v) \leq \En(\W_j) - \minE(v) \leq \frac{d_j^2}{j} \leq \frac{ 2d_*(V_j)^2}{j}\,.
	\end{equation}
	By Theorem~\ref{thm: summary for minimizers}, $V_j$ may be represented as a $C^{2, \a}$ normal graph over $\p U_j$ for some $U_j \in \Min$, with $C^{2, \a}$ norm controlled by $o_j(1)$. Moreover, the set centers of $U_j$ and $V_j$ converge, since $ 
		d(x_{U_j}, x_{V_j}) \leq C |U_j \triangle V_j| = o_j(1),$ which immediately implies that $|U_j \triangle U_{V_j}| = o_j(1)$ from the properties of set centers. Here $U_{V_j}$ is the unique minimizer with the same set center as $V_j$. Using the smoothness of the minimizers $U$ and the tubular neighborhood theorem, this means for $j$ large $\p U_j$ (and then also $\p V_j$) may be written as a $C^{2, \a}$ normal graph over $\p U_{V_j}$ with small $C^{2, \a}$ norm.
	Applying the hypothesis \eqref{eqn: hp local stability} to $V_j$, we see that
	\[
	c d_*(V_j)^2 \leq \En(V_j) - \minE,
	\]
	which for $j$ large is a contradiction.This completes the proof.
	%
	%
\end{proof}

Recall from the discussion in Section~\ref{ss:mainprob} that $d_*(\W)^2$ is comparable to
\[
\bar{d}_*(\W)^2 := |\W \triangle U_\W|^2 + \int |u_\W - u_{U_\W}|^2
\]
for $\W$ whose boundary is a (small) normal graph over $\p U$, and controls $\bar{d}_*(\W)^2$ in all cases. This leads to the following corollary, which in particular proves Corollary \ref{c:intro}.
\begin{corollary} Theorem \ref{thm: quantitative stability} remains valid with $\bar{d}_*$ in place of $d_*$.	
\end{corollary}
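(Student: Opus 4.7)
The plan is to deduce the corollary from Theorem~\ref{thm: quantitative stability} as a black box, using the two comparison estimates between $d_*$ and $\bar{d}_*$ that were established in the discussion preceding the definition \eqref{eqn: dstar two sets}.

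First, I would record the relevant comparisons. Since $d_*(\W)^2$ and $\bar{d}_*(\W)^2$ share the eigenfunction term $\int |u_\W - u_{U_\W}|^2$ and differ only in how they encode the symmetric difference, the bounds
\begin{align*}
c_1 |\W \triangle U_\W|^2 \;&\leq\; \int_\W \psi_{U_\W} - \int_{U_\W} \psi_{U_\W} \qquad \text{for every bounded open }\W,\\
\int_\W \psi_{U_\W} - \int_{U_\W} \psi_{U_\W} \;&\leq\; C_1 |\W \triangle U_\W|^2 \qquad \text{for small $C^2$ normal graphs over $\p U_\W$,}
\end{align*}
from the $\psi_U$ discussion translate directly into (a) $c\, \bar{d}_*(\W)^2 \leq d_*(\W)^2$ for every bounded open $\W$, and (b) $d_*(\W)^2 \leq C\, \bar{d}_*(\W)^2$ whenever $\p\W$ is a $C^2$ normal graph over $\p U_\W$ of sufficiently small $C^2$ norm.

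The argument is then a short sandwich. Shrinking the normal-graph threshold $\e$ appearing in the hypothesis (if necessary) so that (b) applies, the assumed local $\bar{d}_*$-stability $\En(\W) - \minE \geq c\, \bar{d}_*(\W)^2$ yields $\En(\W) - \minE \geq (c/C)\, d_*(\W)^2$ on the same restricted class. This is precisely the hypothesis of Theorem~\ref{thm: quantitative stability}, which therefore delivers $\En(\W) - \minE \geq c'\, d_*(\W)^2$ for every $\W \in \compset^*$ with $|\W| = v$. Invoking (a) one more time converts this back to the claimed unconditional bound $\En(\W) - \minE \geq c''\, \bar{d}_*(\W)^2$.

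I do not foresee any substantive obstacle: the corollary amounts to the observation that $\bar{d}_*$-local-stability and $d_*$-local-stability are equivalent hypotheses (via (b)), while $d_*$-global-stability automatically upgrades to $\bar{d}_*$-global-stability (via (a)). The only technical nuance is matching the admissible normal-graph norm $\e$ in the hypothesis of the corollary with the threshold $c_0$ used to construct $\psi_U$, which is a routine adjustment.
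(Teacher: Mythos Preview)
Your proposal is correct and follows exactly the approach indicated in the paper: the paper states (without further detail) that the comparison $c\,\bar{d}_*^2 \leq d_*^2$ in general and $d_*^2 \leq C\,\bar{d}_*^2$ for small normal graphs ``leads to'' the corollary, and your sandwich argument is precisely how this is made rigorous.
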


\section{Eigenfunctions, torsion functions, and the base energy}\label{s:basics}

This section collects some useful facts about eigenfunctions and torsion functions of bounded open subsets of $M$, as well some initial properties about the base energy $\En$ and its optimizers.  Section~\ref{ss:basics} is devoted to general properties of eigenfunctions and torsion functions. 

Section~\ref{ss: key property} contains Proposition~\ref{lem:key}, the key estimate that will allow us to control the nonlinear perturbation in the main energy in the remainder of the paper. This lemma estimates the difference of eigenfunctions on nested domains in terms of their eigenvalue and torsional rigidity differences.  Fundamentally, this lemma highlights why we are able to establish existence and regularity of minimizers of the main energy when the coefficient $\tpar$ in front of the torsional rigidity is positive (and suitably small), but not when $\tpar= 0$ and the leading term in the base energy is only the first eigenvalue.

In Section~\ref{ss:base energy mins}, we establish the existence and initial properties of minimizers of the base energy in $\Qb_R$, while Section~\ref{ss:globalbase} extends some of these properties of minimizers of the base energy globally.

We recall the set $\compset = \compset_{R,\vmax}$ defined in \eqref{e: compset}.
Recall that we always implicitly assume that $\vmax$ and $R$ are chosen so that  $\vmax \leq |\Qb_R|$  and, when $|M|$ is finite, so that $\vmax <|M|$. Finally, we remind the reader that, in addition to the specified dependence in each theorem statement, all constants will depend on $(M,g)$.

\subsection{Eigenfunctions and torsion functions}\label{ss:basics}
Our first lemma collects some basic facts about eigenfunctions and torsion functions. The proof is elementary and so we omit it.

\begin{lemma}\label{l:basic} Let $\W \subset M$ be a bounded open set. Then the following properties hold.
	\begin{enumerate}
		\item $w_\W$ is nonnegative and satisfies $\Lap w_\W \geq - 1_{\W}$ on $M$ in the sense of distributions.
		\item $\Lap w_\W = -1$ on $\W$.
		\item $\int_\W |\nabla w_\W |^2 =\int_\W w_\W =-2 \tor (\W)$.
		\item If $\l_1(\W) < \l_2(\W)$, the normalized eigenfunction $u_\W$, up to replacing with $- u_\W$, is nonnegative and $\Lap u_\W \geq - \l_1(\W) u_\W$ on $M$ in the sense of distributions.
		\item If $u_\W$ is as in (4), $\Lap u_\W = - \l_1(\W) u_\W$ on $\W$.
	\end{enumerate}
\end{lemma}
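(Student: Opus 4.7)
My plan is to derive each item from the variational definitions \eqref{e:torsion} and \eqref{e:evalue} using a standard toolkit: the direct method, first variations, a Lagrange multiplier for the eigenvalue problem, and the ``sign replacement'' trick. Existence and uniqueness of $w_\W$ would follow from strict convexity and coercivity of the Dirichlet functional via Lax-Milgram, while existence of an $L^2$-normalized first eigenfunction $u_\W$ would follow from the compactness of $H^1_0(\W) \hookrightarrow L^2(\W)$ for bounded $\W$.

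First I would establish the interior PDEs (2) and (5) by first variation: differentiating the torsion functional (resp.\ the Rayleigh quotient) at $w_\W$ (resp.\ $u_\W$) against an arbitrary $\phi \in C_c^\infty(\W)$ gives $\int \grad w_\W \cdot \grad \phi = \int \phi$ and $\int \grad u_\W \cdot \grad \phi = \l \int u_\W \phi$ for some multiplier $\l$; these are the weak forms of (2) and (5), and testing the second against $u_\W$ itself with $\int u_\W^2 = 1$ identifies $\l = \ei(\W)$. Item (3) would then fall out from (2) by using $w_\W$ itself as a test function to obtain $\int|\grad w_\W|^2 = \int w_\W$, which combined with \eqref{e:torsion} gives $\tor(\W) = -\tfrac{1}{2}\int w_\W$.

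Next, for the non-negativity claims in (1) and (4), I would use the sign-replacement trick: $|w_\W|$ and $|u_\W|$ both lie in $H^1_0(\W)$ with gradient equal in absolute value almost everywhere (Stampacchia), and replacing the minimizer by its absolute value does not increase the relevant functional (decreasing the $-\int u$ term for the torsion, and preserving both numerator and denominator of the Rayleigh quotient for the eigenfunction). For the torsion, uniqueness of the minimizer forces $w_\W = |w_\W| \geq 0$; for the eigenfunction, the hypothesis $\ei(\W) < \l_2(\W)$ makes the first eigenspace one-dimensional, so $u_\W$ and $|u_\W|$ are parallel, giving $u_\W \geq 0$ after possibly replacing by $-u_\W$.

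The only slightly subtle step, and the one I expect to be the main obstacle, is the distributional inequality on all of $M$ after extending $w_\W$ and $u_\W$ by zero, since no regularity is assumed on $\p \W$. My plan here is a truncation argument: for $\phi \in C_c^\infty(M)$ with $\phi \geq 0$, the function $\psi_\e := \min(\phi, w_\W/\e)$ vanishes on $\W^c$ (because $w_\W$ does) and therefore lies in $H^1_0(\W)$, so it is admissible in the weak form of (2); sending $\e \to 0^+$, the region $\{\phi > w_\W/\e\}$ shrinks to $\{w_\W = 0\}$, on which $\grad w_\W$ vanishes almost everywhere by Stampacchia, so its contribution disappears in the limit, yielding $\int \grad w_\W \cdot \grad \phi \leq \int_\W \phi$. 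This is precisely the desired distributional inequality $\Lap w_\W \geq -1_\W$ on $M$, and the identical argument applied to $u_\W$ gives the analogous statement in (4).
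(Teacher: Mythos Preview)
Your proposal is correct; the paper itself omits the proof entirely as elementary, so there is no approach to compare against. Your truncation argument for the distributional inequalities (testing against $\psi_\e=\min(\phi,w_\W/\e)$ and using that $\nabla w_\W=0$ a.e.\ on $\{w_\W=0\}$) is the standard way to handle the lack of boundary regularity and works as you describe.
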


Next, we show that eigenfunctions of a domain $\W \subset M$ are bounded uniformly in terms of an eigenvalue upper bound.
\begin{lemma}[Eigenfunction upper bound]\label{l:efbdd}
For each $\l_0 > 0$, there is a constant $C_1 = C_1(\l_0)$ such that the following holds. Let $\W$ be an open bounded set and fix $\l \leq \l_0$. For any function $u \in H^1_0(\W)$ with $\|u\|_{L^2} = 1$ satisfying $\Lap |u| \geq - \l |u|$ on $\W$,
\[
	|u| \leq C_1.
\]
In particular, this applies to any eigenfunction of $\W$. 
\end{lemma}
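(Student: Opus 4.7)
The plan is to reduce the claim to a standard local $L^\infty$ estimate for nonnegative weak subsolutions of a second-order elliptic inequality, applied in coordinate charts where the bounded geometry hypothesis provides uniform constants.

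Set $v = |u|$. By hypothesis $v \in H^1_0(\W)$, $v \geq 0$, $\|v\|_{L^2} = 1$, and $\Lap v \geq -\l v$ on $\W$ in the distributional sense. Since $v$ has zero trace on $\p \W$, extending $v$ by zero to all of $M$ preserves both its $H^1$ regularity and the subsolution inequality: for any nonnegative $\phi \in C^\infty_c(M)$,
\begin{equation*}
	\int_M \grad v \cdot \grad \phi \;=\; \int_\W \grad v \cdot \grad \phi \;\leq\; \l \int_\W v \phi \;=\; \l \int_M v \phi.
\end{equation*}
Thus $v$ is a nonnegative weak subsolution of $-\Lap v - \l v \leq 0$ on the whole of $M$, with $\|v\|_{L^2(M)} = 1$ and $\l \leq \l_0$.

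The bounded geometry assumption — positive injectivity radius and uniformly bounded curvature — yields a radius $r_0 > 0$ and a constant $C_S$, depending only on $g$, such that on every geodesic ball $B_{r_0}(x)$ the $L^2$-Sobolev inequality holds with constant $C_S$ and the components of $g$ in normal coordinates are uniformly comparable to the Euclidean metric. With these uniform data in hand, the standard Moser iteration — test the subsolution inequality against $\eta^2 v^{2k-1}$ for a smooth cutoff $\eta$ supported in $B_{r_0}(x)$, absorb the zeroth-order term using $\l \leq \l_0$, and iterate the resulting $L^{pq} \to L^p$ gain over $p = 2, 2q, 2q^2, \ldots$ where $q > 1$ is the Sobolev exponent — yields the mean-value bound
\begin{equation*}
	\|v\|_{L^\infty(B_{r_0/2}(x))} \;\leq\; C(\l_0)\, \|v\|_{L^2(B_{r_0}(x))}
\end{equation*}
for every $x \in M$, with $C(\l_0)$ independent of $x$, $\W$, and $v$.

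Since $\|v\|_{L^2(B_{r_0}(x))} \leq \|v\|_{L^2(M)} = 1$ and such balls cover $M$, taking the supremum over $x$ yields $|u| = v \leq C_1(\l_0)$ globally. The only substantive point is the uniformity of the Moser constant across $M$; this is precisely where the bounded geometry hypothesis enters, through the uniform local Sobolev inequality. Note that no global compactness of $M$ is required, so the constant $C_1$ depends only on $\l_0$ and the ambient metric $g$, as claimed.
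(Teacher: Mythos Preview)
Your argument is correct, but the paper takes a different and shorter route. Rather than Moser iteration in charts, the paper uses the heat kernel $K(x,y,t)$ on $M$: extending $|u|$ by zero, the function $e^{-t\l}|u|$ is a subsolution of the heat equation, so by comparison with the heat semigroup and Cauchy--Schwarz one gets
\[
e^{-t\l}|u(x)| \leq \int K(x,y,t)|u(y)|\,dy \leq \Big(\int K^2(x,y,t)\,dy\Big)^{1/2}\|u\|_{L^2} = \sqrt{K(x,x,2t)},
\]
and setting $t=1$ gives $|u|\leq e^{\l_0}\sup_x \sqrt{K(x,x,2)}$. Bounded geometry enters only through the uniform on-diagonal heat kernel bound. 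Your approach is purely elliptic and avoids any parabolic machinery, which is arguably more elementary and closer in spirit to the local maximum principle (e.g.\ \cite[Theorem~8.17]{GT}) that the paper invokes elsewhere; the price is that the iteration has to be carried out and the uniformity of the Sobolev constant in charts justified, whereas the heat-kernel argument packages all of this into a single well-known estimate. Both proofs exploit bounded geometry in essentially the same place: for you it is the uniform local Sobolev inequality, for the paper it is the equivalent uniform heat kernel bound.
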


\begin{proof}
	Let $K(x, y, t)$ be the heat kernel on $M$ (see \cite{Davies}), and set
	$
		C_1 = e^{\l_0}\sup_{x \in M} \sqrt{K(x, x, 2)}.
	$
	Since $(M,g)$ has bounded geometry, $C_1$ is finite.
	Extending $u$ by zero to be defined on all of $M$, the function $e^{-t \l}|u(x)|$ is a subsolution to the heat equation on $M$. Using the comparison principle,
	\begin{align*}
		e^{-t \l}|u(x)| \leq \int K(x, y, t) |u(y)| dy
		& \leq \left(\int K^2(x, y, t)dy\right)^{1/2} \|u\|_{L^2} = \sqrt{K(x, x, 2 t)}.
	\end{align*}
	Set $t = 1$ to conclude.
\end{proof}

Next, the torsion function of an open bounded set is bounded in terms of its torsional rigidity.

\begin{lemma}[Torsion function upper bound]\label{l:torsionfunctionbdd} For each $T>0,$ there exists a constant $C_2= C_2(T)$ such that the following holds. Let $\W$ be an open bounded set with $\tor (\W) \geq -T$. 
	\[
		w_\W \leq C_2.
	\]
\end{lemma}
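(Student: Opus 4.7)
The plan is to combine the $L^1$ bound from Lemma~\ref{l:basic}(3) with a local Moser-type $L^1$-to-$L^\infty$ estimate for nonnegative subsolutions, available uniformly on $M$ by the bounded geometry hypothesis. Directly from Lemma~\ref{l:basic}(3), $\int_\W w_\W = -2\tor(\W) \leq 2T$, and extending $w_\W$ by zero outside $\W$ yields $\|w_\W\|_{L^1(M)} \leq 2T$.

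The first step is to verify that the zero-extension of $w_\W$, which lies in $H^1(M)$, is a nonnegative distributional subsolution of $-\Delta u \leq 1$ on all of $M$. When $\p\W$ is smooth, this follows from Green's identity applied on $\W$: for any nonnegative $\phi \in C_c^\infty(M)$,
\[
	\int_M \nabla w_\W \cdot \nabla \phi \,=\, \int_\W \phi \,+\, \int_{\p \W} \phi \, \partial_\nu w_\W \, d\sigma \,\leq\, \int_M \phi,
\]
since $\partial_\nu w_\W \leq 0$ along $\p\W$ (as $w_\W \geq 0$ in $\W$ attains its minimum value $0$ on $\p\W$ from above). For a general bounded open $\W$, the same subsolution property persists and follows by a standard density/variational argument: approximate $\phi$ by functions in $H^1_0(\W)$ using a cutoff $\chi_n$ supported in $\{d(\cdot,\p\W)>1/n\}$, test the weak equation against $\chi_n\phi$, and observe that the error term $\int_\W \phi\, \nabla w_\W \cdot \nabla \chi_n$ has a favorable sign in the limit as $\chi_n$ increases to $1_\W$ along the inward direction.

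Once the subsolution property is in hand, the classical Moser mean-value inequality---which holds with uniform constants across $M$ thanks to bounded geometry---gives, for a fixed $r_0 \in (0,\text{inj}_M/4)$ and any $x \in M$,
\[
	w_\W(x) \,\leq\, \sup_{B_{r_0}(x)} w_\W \,\leq\, C \fint_{B_{2r_0}(x)} w_\W \,+\, C r_0^2 \,\leq\, \frac{C \|w_\W\|_{L^1(M)}}{|B_{2r_0}(x)|} \,+\, C r_0^2.
\]
Bounded geometry supplies a uniform lower bound $|B_{2r_0}(x)| \geq c > 0$, so taking the supremum over $x \in M$ gives $\|w_\W\|_\infty \leq C T + C$, and one may set $C_2(T) = C T + C$.

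I expect the main technical point to be rigorously establishing the distributional subsolution property for the zero-extension when $\W$ is only assumed bounded and open (so that $\p\W$ carries no regularity); once this is in place, the combination of the $L^1$ bound with the Moser estimate concludes the proof immediately.
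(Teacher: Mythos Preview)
Your proof is correct and follows essentially the same route as the paper: subsolution property of the zero-extension, integral bound from Lemma~\ref{l:basic}(3), then a local maximum principle (Moser / \cite[Theorem~8.17]{GT}) with uniform constants from bounded geometry. The only difference is cosmetic: the paper feeds in $\int |\nabla w_\W|^2$ while you feed in $\int w_\W$, and by Lemma~\ref{l:basic}(3) these are the same number $-2\tor(\W)$.

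One remark: the step you flag as ``the main technical point''---the distributional subsolution property $\Delta w_\W \geq -1_\W$ on $M$ for the zero-extension---is already recorded in the paper as Lemma~\ref{l:basic}(1), so there is no need to re-derive it here.
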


\begin{proof}	
In local coordinates on a ball $B_r(x)$ around any point $x\in M$, we have that $w_\W \geq 0$ and $
		\Lap w_\W \geq - 1,$
	so applying the local maximum principle (\cite{GT}, Theorem 8.17) gives 
	\[
		\sup_{B_{r/2}(x)} w_\W^2 \leq C(r, x)\bigg[\int_{B_{r}(x)} |\grad w_\W|^2 + 1\bigg] \leq C(r, x) [-\tor(\W) + 1].
	\]
In the second inequality we used Lemma~\ref{l:basic}(3).	Since $(M,g)$ has bounded geometry,  the values of $r$ and the constant $C(r, x)=C$ may be taken uniformly for all $x \in M$. We conclude by letting $C_2 = C[T +1]$.
\end{proof}
The next lemma shows that the eigenfunction is controlled by the torsion function.
\begin{lemma}[Torsion function controls eigenfunction] \label{l:torsionbound}
Let $\W $ be an open bounded set with $|\W| < |M|$ such that $\ei(\W) < \l_2(\W)$. Then there is a constant $C_3$ depending only on $C_1$ of Lemma~\ref{l:efbdd} and $\ei(\W)$ such that
	\[
		u_\W \leq C_3 w_\W.
	\]
\end{lemma}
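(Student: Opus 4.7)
The plan is to compare $u_\W$ to a suitable scalar multiple of $w_\W$ via a pointwise maximum principle argument on $\W$. Both functions lie in $H^1_0(\W)$, so they agree (and vanish) on $\p \W$ in the Sobolev trace sense, and inside $\W$ we have the PDEs $\Lap u_\W = -\ei(\W)\, u_\W$ and $\Lap w_\W = -1$ from Lemma~\ref{l:basic}.

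Set $C_3 = C_1 \ei(\W)$, where $C_1 = C_1(\ei(\W))$ is the eigenfunction bound of Lemma~\ref{l:efbdd}, and consider the auxiliary function $v := u_\W - C_3 w_\W \in H^1_0(\W)$. A direct computation using the two PDEs gives
\[
    \Lap v = -\ei(\W) u_\W + C_3 = \ei(\W)(C_1 - u_\W) \geq 0 \quad \text{on } \W,
\]
where the final inequality uses the uniform bound $u_\W \leq C_1$ from Lemma~\ref{l:efbdd} (applied with $\l = \ei(\W)$, noting that $u_\W \geq 0$ satisfies $\Lap u_\W \geq -\ei(\W) u_\W$ distributionally and $\|u_\W\|_{L^2}=1$).

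Thus $v$ is a subsolution of Laplace's equation on $\W$ with $v \in H^1_0(\W)$, so by the weak maximum principle for subharmonic functions with zero boundary trace, $v \leq 0$ on $\W$. This yields $u_\W \leq C_3 w_\W$ with $C_3 = C_1 \ei(\W)$ depending only on the quantities listed in the statement. I expect no serious obstacle: the only subtle point is justifying the maximum principle on a possibly rough bounded open set, which is standard for $H^1_0$ subsolutions and requires no regularity of $\p \W$.
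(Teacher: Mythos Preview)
Your proof is correct and essentially identical to the paper's: both set $C_3 = C_1\,\ei(\W)$, observe that $v = u_\W - C_3 w_\W \in H^1_0(\W)$ satisfies $\Lap v \geq 0$ on $\W$ (using $u_\W \leq C_1$ from Lemma~\ref{l:efbdd}), and conclude $v \leq 0$ by the weak maximum principle. The paper simply phrases this more tersely as a one-line comparison $-\Lap u_\W \leq \ei(\W) C_1 = -\Lap(C_3 w_\W)$.
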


\begin{proof}
	This follows directly by the maximum principle and Lemma \ref{l:efbdd}: $-\Lap u_\W \leq \ei(\W)C_1 = - \Lap \ei(\W)C_1 w_\W$ on $\W$, and both are in $H^1_0(\W)$, giving $u_\W \leq C_1 \l_1(\W) w_\W$ on $\W$.
\end{proof}

As a consequence of this lemma, if we write $\Lap u_\W =- \l_1(\W) u_\W + \mu$ in the sense of distributions, with $\mu$ a nonnegative measure, and similarly $ \Lap w_\W =- 1_{\W} + \nu$ where $\nu$ is a nonnegative measure, then $\mu \leq C \nu$. The opposite inequality $w_\W \leq C u_\W$ is much more subtle, and will be discussed for \emph{minimizing} $\W$ in Proposition~\ref{l:bdryharnackef}.

We now establish two forms of a Poincar\'{e} inequality on the sets $Q_R$ defined at the beginning of Section~\ref{s:setup}; the key point here is that they depend only on the parameters $R$ and $\vmax$. 

%

\begin{lemma}[Poincar\'{e} Inequality]\label{l:poincare}  

Fix $\vmax < |M|$ and $R>0$, and let $X$ be the completion of the space $ \{u \in C^\infty_0(\Qb_R) : |\{|u| > 0 \}| \leq \vmax\}$ with respect to the seminorm $\| \nabla u\|_{L^2(\Qb_R)}$.
Then the embedding 
\[
X  \rightarrow L^p(\Qb_R)
\] is continuous for $p \in [1, \frac{2n}{n-2}]$ and compact for $p < \frac{2n}{n-2}$. If $\vmax < |\Qb_R|$, then the same is true as well letting $X$ be the completion of the space $\{ u \in C^\infty(\Qb_R) : |\{|u| > 0 \}| \leq \vmax, \ \| \nabla u\|_{L^2(\Qb_R)} <+\infty\}$ with respect to the seminorm $\| \nabla u\|_{L^2(\Qb_R)}$.
\end{lemma}

\begin{proof}
 Let $H^1(Q_R)$ be the completion of the space 
$\{ u\in C^{\infty}(\Qb_R) : \| u\|_{H^1(\Qb_R)} < +\infty \}$ 
with respect to the norm $\| u\|_{H^1(\Qb_R)}:= \| u\|_{L^2(\Qb_R)} + \| \nabla u\|_{L^2(\Qb_R)}$. 
It is well known (see for instance \cite[Theorem 10.2]{HebeyBook} or  \cite[Theorems 2.30 and 2.34]{AubinBook}) that on $Q_R$ (or more generally, any compact Riemannian manifold with boundary), $H^1(\Qb_R)$ embeds continuously into $L^p(\Qb_R)$ for  $p \in [1, \frac{2n}{n-2}]$ and the embedding is compact for $p < \frac{2n}{n-2}$. 

Suppose by way of contradiction that the second embedding stated in the lemma fails for $p=2n/(n-2)$. Take a sequence of functions $\{u_k\}$ with $\| u_k\|_{L^p(\Qb_R)} = 1$ and $| \{ u_k >0\}| \leq \vmax$ such that $\| \nabla u_k\|_{L^2(\Qb_R)}\to 0$. We see from  H\"{o}lder's inequality that  $\| u_k\|_{L^2(\Qb_{R})} \leq C(\Qb_R)$. So, from the continuous embedding $H^1(\Qb_R)\hookrightarrow L^p(\Qb_R)$ we find that $u_k \to c$ in $H^1(\Qb_{R})$, $L^p(\Qb_R)$ and pointwise a.e. in $\Qb_{R}$ for some constant $c \in \mathbb{R}$. On one hand, $\| c\|_{L^p(\Qb_R)}=1$, while on the other hand, Fatou's lemma implies that $|\{|c|>0\}| \leq \vmax$  and so $c= 0$. We reach a contradiction. 

Now, for $p < 2n/(n-2)$, the continuous embedding  then follows by H\"{o}lder's inequality, while the compactness of the embedding follows from the compact embedding of  $H^1(\Qb_R)$ into $L^p(\Qb_R)$ and the continuous embedding of $\{u \in \dot{H}^1(\Qb_R) : |\{|u| > 0 \}| \leq \vmax\}$ into $L^2(\Qb_R)$ that we have just established.

We prove the first embedding of the lemma analogously. If $M$ is compact, then the argument above carries over directly. 
If $M$ is noncompact, we again argue by way of contradiction and take a sequence of functions with $\| u_k\|_{L^p(\Qb_R)} = 1$ and $\| \nabla u_k\|_{L^2(\Qb_R)}\to 0$. 
Extend each $u_k$ by zero to be defined on $\Qb_{2R}$. By H\"{o}lder's inequality, $\| u_k\|_{L^2(\Qb_{2R})} \leq C$ and so from the continuous embedding $H^1(\Qb_{2R})\hookrightarrow L^p(\Qb_{2R})$ we see that $u_k \to c$ in $H^1(\Qb_{2R}), L^p(\Qb_{2R})$ and pointwise a.e. in $\Qb_{2R}$. On one hand, $\| c\|_{L^p(\Qb_{2R})} =1,$ while on the other hand, $u_k \equiv 0$ on $\Qb_{2R}\setminus \Qb_R$ and so we see that $c=0$. We reach a contradiction. Again, the continuous embedding for $p < 2n/(n-2)$ then follows by H\"{o}lder's inequality, while the compactness of the embedding follows from the compact embedding of  $H^1_0(\Qb_R)$ embeds continuously into $L^p(\Qb_R)$ and the continuous embedding of $\dot{H}^1_0(\Qb_R)$ into $L^2(\Qb_R)$ that we have just established. 
\end{proof}

As an immediate consequence of Lemma~\ref{l:poincare}, we may bound the torsional rigidity $\tor(\W)$ from below uniformly for all domains in $\compset = \compset_{R, \vmax}$, the class of admissible domains defined in \eqref{e: compset}. Naturally, if $M$ has bounded diameter, then the lower bound is independent of $R$.

\begin{corollary}[Torsional rigidity lower bound]\label{c:tornotminusinf}
	Fix $R>0$ and $\vmax < |M|$. There is a constant $C(R, \vmax) < \infty$ such that for any $\W \in \compset$,
	\[
		\tor(\W) \geq - C(R, \vmax).
	\]
\end{corollary}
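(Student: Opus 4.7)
The plan is to bound $-\tor(\W)$ from above by exploiting the identity relating the torsional rigidity to the $L^1$-norm of the torsion function and then invoking the Poincar\'e inequality from Lemma~\ref{l:poincare}.

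First, by Lemma~\ref{l:basic}(2)--(3), the torsion function $w_\W \in H^1_0(\W)$ satisfies $-\Lap w_\W = 1$ on $\W$ and
\[
	-2\tor(\W) = \int_\W w_\W = \int_\W |\grad w_\W|^2.
\]
Thus the bound $\tor(\W) \geq -C(R,\vmax)$ is equivalent to an upper bound on $I := \int_\W w_\W$.

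Next, extend $w_\W$ by zero to $\Qb_R$. Since $w_\W \in H^1_0(\Qb_R)$ and $|\{w_\W > 0\}| \leq |\W| \leq \vmax$, Lemma~\ref{l:poincare} (applied with $p = 2$) gives a constant $C_0 = C_0(R, \vmax)$ such that
\[
	\|w_\W\|_{L^2(\Qb_R)} \leq C_0 \|\grad w_\W\|_{L^2(\Qb_R)}.
\]
Combining this with the Cauchy--Schwarz inequality (using $\supp w_\W \subseteq \W$ and $|\W|\leq \vmax$) and then with the identity above,
\[
	I = \int_\W w_\W \leq |\W|^{1/2}\|w_\W\|_{L^2} \leq C_0 \vmax^{1/2} \|\grad w_\W\|_{L^2} = C_0 \vmax^{1/2} \sqrt{I},
\]
so $\sqrt{I} \leq C_0 \vmax^{1/2}$, that is, $I \leq C_0^2 \vmax$. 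This yields $\tor(\W) \geq - \tfrac{1}{2}C_0^2 \vmax =: - C(R,\vmax)$, as desired.

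There is no serious obstacle here; the only point requiring care is verifying that the Poincar\'e inequality of Lemma~\ref{l:poincare} applies to $w_\W$ extended by zero with the correct dependence of constants on $R$ and $\vmax$ (and not on $\W$ itself), which is precisely what that lemma provides.
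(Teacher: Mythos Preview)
Your proof is correct and follows essentially the same approach as the paper: apply the Poincar\'e inequality of Lemma~\ref{l:poincare} to $w_\W$ extended by zero to bound $\int w_\W$ in terms of $\|\grad w_\W\|_{L^2}$, then close the estimate. The only cosmetic difference is that you invoke the identity $\int_\W w_\W = \int_\W |\grad w_\W|^2$ from Lemma~\ref{l:basic}(3) to obtain a self-bounding inequality $I \leq C\sqrt{I}$, whereas the paper uses Young's inequality and reabsorbs the gradient term; both arrive at the same conclusion with the same dependence of constants.
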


\begin{proof}
	Fix $\W \in \compset$ and let $w_\W$ be the corresponding torsion function, so that $\tor(\W) = \int_\W \frac{1}{2} |\nabla w_\W|^2 - w_\W$. Then for any $\e > 0$,
	\[
	\int |w_\W|  \leq C(R, \vmax) \|\grad w_\W\|_{L^2} \leq \e \int |\grad w_\W|^2 + C(\e, R, \vmax),
	\]
	where the first inequality used Lemma \ref{l:poincare} on $w_\W$ extended by $0$ and the fact that $|\{ w_\W > 0\}| \leq |\W| \leq \vmax$. Take $\e = \frac{1}{4}$ and reabsorb to get
	\[
	\tor(\W) = \int_\W \frac{1}{2} |\nabla w_\W|^2 - w_\W \geq - C(R, \vmax) + \int_\W \frac{1}{4} |\nabla w_\W|^2 .
	\]
	This concludes the proof.
\end{proof}

\begin{remark}\rm{
Notice that from  Corollary \ref{c:tornotminusinf}, for any $\Omega \in \compset =\compset_{R,\vmax}$, the constant $C_2$ in Lemma~\ref{l:torsionfunctionbdd}  depends only on $R$ and $\vmax.$ 
}
\end{remark}

The final lemma of this section is an elementary fact about the energy of the difference between any normalized nonnegative function and the first eigenfunction.
\begin{lemma}\label{l:poincarestab}
	Let $\W$ be an open bounded set with $\ei(\W) + \a \leq \l_2(\W) $ for some $\a > 0$, and $v \in H^1_0(\W)$ with $\int v^2 = 1$ and $v\geq 0$. Then
	\[
		\int |\grad (u_\W - v)|^2 \leq \Big(1 + \frac{2 \ei(\W)}{\a}\Big)\Big[ \int |\grad v|^2 - \ei(\W)\Big].
	\]
\end{lemma}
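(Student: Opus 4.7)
The plan is to work in the orthonormal basis of Dirichlet eigenfunctions. Let $\{u_k\}_{k\geq 1}$ be an $L^2$-orthonormal basis of $H^1_0(\W)$ consisting of eigenfunctions of $-\Lap$, with $u_1 = u_\W$ and corresponding eigenvalues $\l_k \nearrow \infty$; note $\l_1 = \ei(\W)$ and the spectral gap hypothesis reads $\l_k - \l_1 \geq \a$ for all $k \geq 2$. Expand
\[
v = \sum_{k\geq 1} c_k u_k, \qquad c_k = \int v\, u_k,
\]
so that $\sum_k c_k^2 = \int v^2 = 1$ and $\int|\grad v|^2 = \sum_k \l_k c_k^2$. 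Since $v \geq 0$ and $u_\W \geq 0$, we have $c_1 = \int v\, u_\W \geq 0$, and Cauchy--Schwarz gives $c_1 \leq 1$.

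Set $D := \int|\grad v|^2 - \ei(\W)$. First I would directly expand
\[
\int|\grad(u_\W - v)|^2 = \int|\grad u_\W|^2 - 2\int \grad u_\W\cdot \grad v + \int|\grad v|^2.
\]
Using that $u_\W$ is a normalized eigenfunction (so $\int|\grad u_\W|^2 = \l_1$) and integrating by parts to get $\int\grad u_\W\cdot\grad v = \l_1 c_1$, this simplifies to
\[
\int|\grad(u_\W - v)|^2 = D + 2\l_1(1 - c_1).
\]

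Next I would use the spectral gap to control $1-c_1$ by $D$. From the expansion,
\[
D = \sum_{k\geq 2}(\l_k - \l_1)c_k^2 \geq \a \sum_{k\geq 2} c_k^2 = \a(1 - c_1^2).
\]
Since $c_1 \in [0,1]$ we have $1 - c_1 \leq 1 - c_1^2 \leq D/\a$. Substituting into the identity above yields
\[
\int|\grad(u_\W - v)|^2 \leq D + \frac{2\l_1}{\a} D = \left(1 + \frac{2\ei(\W)}{\a}\right)\!\left[\int|\grad v|^2 - \ei(\W)\right],
\]
which is the claim. There is no substantive obstacle here: the only point that required the specific hypotheses on $v$ (namely $v \geq 0$ and $\int v^2 = 1$) is the two-sided bound $c_1 \in [0,1]$, which is exactly what lets us replace $1 - c_1^2$ by $1 - c_1$ without loss. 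The spectral gap assumption $\ei(\W) + \a \leq \l_2(\W)$ enters only through the lower bound $D \geq \a(1-c_1^2)$.
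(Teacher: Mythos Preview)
Your proof is correct and follows essentially the same approach as the paper's. The paper writes $v = \beta u_\W + v_\perp$ with $\beta = \int u_\W v$ rather than using the full eigenfunction expansion, and applies the variational characterization $\l_2 \int v_\perp^2 \leq \int |\grad v_\perp|^2$ in place of your termwise estimate $\sum_{k\geq 2}(\l_k - \l_1)c_k^2 \geq \a \sum_{k\geq 2} c_k^2$; but the two arguments are otherwise identical, including the key use of $c_1 \in [0,1]$ to pass from $1 - c_1^2$ to $1 - c_1$.
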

\begin{proof}
	Let us use the shorthand $\lambda_1 = \lambda_1(\W)$ and $\lambda_2 = \lambda_2(\W)$, and write $v = \beta u_\W +v_\perp $ where $\beta = \int u_\W v$. Because $\int u_\W v_\perp =0$ by definition, we observe that 
	\begin{align*}
	\lambda_2(1-\beta^2)  =\lambda_2 \int v_\perp^2 &\leq \int |\nabla v_\perp|^2 = \int |\nabla v|^2-  \lambda_1\beta^2 = \int |\nabla v|^2 - \lambda_1 + \lambda_1(1-\beta^2).
	\end{align*}
	In particular, $\alpha(1-\beta)^2 \leq \int |\nabla v|^2 - \lambda_1$ and $\int |\nabla v_\perp|^2 \leq (1 + \lambda_1/\alpha)(\int |\nabla v|^2 - \lambda_1)$. So, since $u_\W - v = (1-\beta) u_\W + v_\perp $ and $0 \leq \beta \leq 1$, we see that 
	\[
	\int |\nabla (u_\W - v)|^2 = (1-\beta)^2 \lambda_1 + \int| \nabla v_\perp|^2  \leq \Big( 1+ \frac{2\lambda_1}{\alpha}\Big) \Big( \int |\nabla v|^2 - \lambda_1 \Big)\,.
	\]
	This concludes the proof.
 \end{proof}

\subsection{The key estimate on the nonlinear perturbation}\label{ss: key property}
In order to study the existence and regularity of minimizers of the main energy $\Ep$, we will require some sharp estimates on how an admissible nonlinearity $\nl$ changes under a change in domain from $\W$ to $\W'$. From assumption \ref{a:nllip} in Definition~\ref{def: nl}, this is controlled by the sum of the symmetric difference $|\W \triangle \W'|$, which is easy to estimate, and the $L^1$ difference of eigenfunctions $\int |u_\W - u_{\W'}|$, which is not significantly more difficult to estimate. In fact, we do not know how to control this quantity in terms of the eigenvalue difference $|\ei(\W') - \ei(\W)|$ alone, at least in the case of outward perturbations. The following lemma does bound $\int |u_\W - u_{\W'}|$, but the difference $|\tor(\W) - \tor(\W')|$ appears on the right-hand side of the estimate as well.  This is the fundamental reason for introducing the torsional rigidity term into the main energy functional $\Ep$.

This lemma will be the crucial ingredient in proving the main estimates of Sections \ref{s:lb} and \ref{s:ub} and establishing our main existence result in Section \ref{s:exist}. At least for the estimates, the specific form of the lemma, with \emph{linear} dependence on $|\tor(\W) - \tor(\W')|$ and $|\ei(\W') - \ei(\W)|$, is essential; the (much) simpler sublinear versions of this inequality are insufficient.

\begin{proposition} \label{lem:key} Fix $0 < \vmax <|M|$,  $\a>0$ and $\lambda_0>0$. Let $\W \ss  \W'$ be bounded open sets with   $|\W'|\leq \vmax$, $\l_2(\W') \geq \ei(\W') + \a$, and $\lambda_1(\W) \leq \lambda_0$. Let $f :M \to \R$ be a bounded function. Then  there exists a constant $C_4 = C_4(\a, \vmax, \lambda_0)$ such that 
	\begin{equation}\label{e:key}
		\Big| \int f (u_{\W'} - u_\W) \Big| \leq C_4 \|f\|_{L^\infty} \left(\tor(\W) - \tor(\W') + \ei(\W) - \ei(\W')\right).
	\end{equation}
\end{proposition}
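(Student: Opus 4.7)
The strategy is by duality: it suffices to prove $|\int fv| \leq C\|f\|_{L^\infty}[\tor(\W)-\tor(\W')+\ei(\W)-\ei(\W')]$ for all bounded $f$, where $v = u_{\W'}-u_\W$ (extended by zero). I would first split $\int fv = \int_{\W'\setminus\W} fu_{\W'} + \int_\W f(u_{\W'}-u_\W)$. The exterior term is handled cleanly: Lemma \ref{l:torsionbound} gives $u_{\W'}\leq C_3 w_{\W'}$, while testing $-\Delta w = 1$ against the torsion functions yields $\int w_{\W'} - \int w_\W = 2(\tor(\W)-\tor(\W'))$; since $w_{\W'}\geq w_\W\geq 0$ by the maximum principle, $\int_{\W'\setminus\W} w_{\W'} \leq 2(\tor(\W)-\tor(\W'))$, giving the desired bound for the first piece.

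For the interior integral, I would first reduce to the regime $\ei(\W)-\ei(\W')\leq \alpha/2$ (the complementary case follows trivially from $\|v\|_{L^1}\leq 2\vmax^{1/2}$). In this regime, domain monotonicity gives $\lambda_2(\W)\geq \lambda_2(\W')\geq \ei(\W)+\alpha/2$, so the operator $-\Delta-\ei(\W)$ has a uniform spectral gap on $H^1_0(\W)\cap u_\W^\perp$. Let $\chi\in H^1_0(\W)\cap u_\W^\perp$ solve the projected resolvent equation
\[
(-\Delta-\ei(\W))\chi = f - \Bigl(\int f u_\W\Bigr)u_\W,
\]
so that $\|\chi\|_{L^2}+\|\chi\|_{L^\infty}\leq C\|f\|_{L^\infty}/\alpha$ (via spectral gap and Moser-De Giorgi, using bounded geometry). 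Using $\chi|_{\partial\W}=0$, $u_\W|_{\partial\W}=0$, $v|_{\partial\W}=u_{\W'}|_{\partial\W}$, and the classical equation $-\Delta v = \ei(\W')u_{\W'}-\ei(\W)u_\W$ on $\W$, a careful integration by parts gives the identity
\[
\int_\W fv = (\ei(\W')-\ei(\W))\int_\W \chi v - \int_{\partial\W} u_{\W'}\partial_\nu\chi\, d\mathcal{H}^{n-1} - \Bigl(\int fu_\W\Bigr)\tfrac{1}{2}\|v\|_{L^2}^2.
\]
Using $\|v\|_{L^2}^2\leq C(\ei(\W)-\ei(\W'))$ from Lemma \ref{l:poincarestab} (applied to $u_\W$ viewed in $H^1_0(\W')$), the first and third terms are immediately linear in $\ei(\W)-\ei(\W')$, with constant depending only on $\alpha$, $\vmax$, $\lambda_0$.

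The crux is the boundary integral $\int_{\partial\W} u_{\W'}\partial_\nu\chi$. Here the torsion function carries the estimate. Kato's inequality gives $-\Delta|\chi|\leq |\Delta\chi|\leq \|f\|_{L^\infty}+\ei(\W)\|\chi\|_{L^\infty}\leq C\|f\|_{L^\infty}$ distributionally, so the proof of Lemma \ref{l:torsionbound} (applied to $|\chi|$ and the barrier $C\|f\|_{L^\infty}w_\W$) yields $|\chi|\leq C\|f\|_{L^\infty}w_\W$ on $\W$. The corresponding distributional Hopf-type comparison for the nonnegative superharmonic function $C\|f\|_{L^\infty}w_\W - |\chi|$ (zero on $\partial\W$, positive inside) then gives the measure inequality $|\partial_\nu\chi|\, d\mathcal{H}^{n-1}|_{\partial\W}\leq C\|f\|_{L^\infty}\,\nu_\W$, where $\nu_\W=|\partial_\nu w_\W|\,d\mathcal{H}^{n-1}|_{\partial\W}$. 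Combining this with Lemma \ref{l:torsionbound} applied at $\partial\W$ (so $u_{\W'}|_{\partial\W}\leq C_3 w_{\W'}|_{\partial\W}$) and the identity $\int w_{\W'}\,d\nu_\W = \int_\W w_{\W'}-\int w_\W \leq 2(\tor(\W)-\tor(\W'))$ (from testing $w_{\W'}$ against $-\Delta w_\W$), yields
\[
\Bigl|\int_{\partial\W} u_{\W'}\partial_\nu\chi\,d\mathcal{H}^{n-1}\Bigr| \leq C\|f\|_{L^\infty} \int u_{\W'}\,d\nu_\W \leq C\|f\|_{L^\infty}(\tor(\W)-\tor(\W')),
\]
closing the estimate.

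The main obstacle is the distributional Hopf step: since $\partial\W$ need not be smooth, the pointwise interpretation of $|\partial_\nu\chi|\leq C\|f\|_{L^\infty}|\partial_\nu w_\W|$ must be replaced by the measure comparison above, which follows from the fact that $C\|f\|_{L^\infty}w_\W-|\chi|$ is a nonnegative supersolution vanishing on $\partial\W$, so the associated boundary measures satisfy the analogous inequality. This step is precisely where the torsion term is indispensable: an analogous argument with $u_\W$ alone in place of $w_\W$ would fail because the barrier $u_\W$ does not control $|\chi|$ uniformly near $\partial\W$, and it is the inequality $u_{\W'}\leq C_3 w_{\W'}$ on the boundary (not $u_{\W'}\leq Cu_\W$, which fails) that allows the eigenvalue and torsion quantities to be compared on the correct scale.
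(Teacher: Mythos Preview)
Your argument is essentially correct and runs parallel to the paper's, but via a genuinely different (in fact, dual) route. The paper decomposes $f = \beta u_{\W'} + f_\perp$ and solves the auxiliary potential $(-\Delta - \ei(\W'))q = f_\perp$ on the \emph{larger} domain $\W'$, obtaining $|q| \leq C\|f\|_{L^\infty} w_{\W'}$; the boundary contribution is then $\int q\,d\mu$, where $\mu$ is the boundary measure of $u_\W$, and the key step is the measure inequality $\mu \leq C\nu$ (an immediate consequence of Lemma~\ref{l:torsionbound}) followed by the computation $\int (w_{\W'}+w_\W)\,d\nu = 2(\tor(\W)-\tor(\W'))$. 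You instead split by region, solve $(-\Delta - \ei(\W))\chi$ on the \emph{smaller} domain $\W$, and your boundary contribution is $\int u_{\W'}\,d\sigma_\chi$, where $\sigma_\chi$ is the boundary measure of $\chi$; your key step is the Hopf-type bound $|\sigma_\chi| \leq C\nu_\W$, followed by $u_{\W'}\leq C_3 w_{\W'}$ and the same torsion identity. Both proofs pivot on the same torsion computation and the same pointwise comparison $u\leq Cw$; the paper's version is a bit cleaner because the measure inequality it needs ($\mu\leq C\nu$) involves only the \emph{given} functions $u_\W,w_\W$, whereas yours requires first establishing $|\chi|\leq Cw_\W$ and then transferring that to boundary measures.

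Two small points. First, in your integration-by-parts identity the first-order term should read $(\ei(\W')-\ei(\W))\int_\W \chi\, u_{\W'}$ rather than $\int_\W \chi v$; this does not affect the bound. Second, and more substantively, your ``distributional Hopf'' step is correct in spirit but not fully justified as written: the assertion that $C\|f\|_{L^\infty}w_\W - |\chi| \geq 0$ superharmonic forces $|\sigma_\chi| \leq C\nu_\W$ requires knowing that $\sigma_\chi$ is actually a measure (not merely an $H^{-1}$ distribution) and that the comparison passes to boundary measures for general $\p\W$. The cleanest way to close this is exactly what the paper does for its version: first assume $\p\W\cap\W'$ is smooth (where your Hopf argument applied to $Cw_\W \pm \chi$ is classical), and then approximate general $\W$ by the smooth level sets $\W_t = \{u_\W > t\}$ and pass to the limit in \eqref{e:key}. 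With that approximation step added, your proof goes through.
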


In the case that the first eigenvalue of $\W$ is not simple, then Proposition~\ref{lem:key} holds trivially for any first eigenfunction $u_\W$. To see this, note that $\ei(\W) \geq \ei(\W')$ and $\l_2(\W) \geq \l_2(\W')$ from $\W \ss \W'$. So, if $\ei(\W)$ is not simple, then $\ei(\W) = \l_2(\W)$ and so $\ei(\W) - \ei(\W') \geq \a$, meaning the estimate here holds automatically for \emph{every} normalized first eigenfunction $u_\W$ (using Lemma \ref{l:efbdd}). So, in the proof below, we will assume that $\ei(\W) < \l_2(\W)$, in which case $u_\W$ is uniquely defined.

\begin{proof}	
	We first consider the case of $\W$ compactly contained inside $\W'$ and having smooth boundary, but all constants will be independent of the nature or regularity of $\p \W$. Let $\beta = \int f u_{\W'}$ and  write $f = \beta u_{\W'} + f_\perp$. We estimate the contributions of $\beta u_{\W'}$ and  $f_\perp$ to the left-hand side of \eqref{e:key} separately. To estimate the contribution from the first term, use the normalization of the eigenfunctions $u_\W, u_{\W'}$:
	\begin{align*}
		\int \beta u_{\W'}(u_{\W'} - u_\W) &= \beta \int u_{\W'}^2 - u_{\W'} u_{\W} = \frac{\beta}{2} \int u_{\W'}^2 + u_{\W}^2 - 2 u_{\W'} u_{\W}=\frac{\beta}{2} \int |u_\W - u_{\W'}|^2.
	\end{align*}
	Applying Lemma \ref{l:poincarestab} and assuming without loss of generality that $\alpha \leq  \lambda_1(\W')$, we have
	\[
		\int |u_\W - u_{\W'}|^2 \leq \frac{1}{\ei(\W')} \int |\grad (u_\W - u_{\W'})|^2 \leq \frac{3}{\a} \left[\int |\grad u_\W|^2 - \ei(\W') \right] = \frac{3}{\a}\left[\ei(\W) - \ei(\W')\right]
	\]
	and therefore
	\begin{equation}  \label{e:1fbound}
	 \Big|\int \beta u_{\W'}(u_{\W'} - u_\W)\Big| \leq \frac{3}{\alpha} \|f\|_{L^2} \left[\lambda_1(\W) - \lambda_1(\W')\right].
	\end{equation}

	Now, the remainder $f_\perp$ has $\|f_\perp\|_{L^2} \leq \|f\|_{L^2} \leq \sqrt{\vmax} \|f\|_{L^\infty}$ and $\int f_\perp u_{\W'} = 0$. This means we may solve for the potential
	\begin{equation}\label{eqn: potential pde}
	\begin{cases}
		- \Lap q -\ei(\W') q = f_\perp & \text{ on } \W'\\
		q = 0 & \text{ on } \p \W',
	\end{cases}
	\end{equation}
	which has a unique solution $q \in H^1_0(\W')$ with $\int q u_{\W'} = 0$, and enjoys the estimate $\|q\|_{H^1_0(\W')} \leq C(\a)\|f_\perp\|_{L^2}$ (this follows from the Fredholm alternative). From Lemma \ref{l:efbdd} we have that $\|u_{\W'}\|_{L^\infty} \leq C$, so
	\[
		\|f_\perp\|_{L^\infty} \leq \|f\|_{L^2} \|u_{\W'}\|_{L^\infty} + \|f\|_{L^\infty} \leq C \|f\|_{L^\infty}.
	\]
	So, from \eqref{eqn: potential pde}, we see that $- \Lap q - \ei(\W') q \in L^{\infty}$, we have from applying \cite[Theorem 8.17]{GT} in charts that
	\[
		\|q\|_{L^\8} \leq C \big[\|f_\perp\|_{L^\infty} + \|q\|_{L^2}\big] \leq C \|f\|_{L^\infty}.
	\]
	Now rewriting the PDE as $- \Lap q = \ei(\W') q + f_\perp \in L^\8$, applying the comparison principle with a multiple of the torsion function $w_{\W'}$ gives
	\[
		|q| \leq C\|f\|_{L^\infty} w_{\W'}.
	\]
	This is the key fact about $q$; note also that from standard elliptic regularity $q$ lies in $C(V) \cap W^{2, 2}(V)$ for any $V$ compactly contained in $\W'$.
	
	We proceed in our estimate:
	\begin{equation}\label{e:keyproof2}
		\int f_\perp (u_{\W'} - u_\W) = \int f_\perp u_\W = \int\left(-\Lap q - \ei(\W') q\right) u_\W.
	\end{equation}
	From Lemma \ref{l:basic}, we have $\Lap u_\W = -\ei(\W) u_\W + \mu$ in the sense of distributions, where $\mu$ is a nonnegative measure supported on $\p \W$: i.e. for any function $\phi \in C_c^\infty(M)$, we have that
	\begin{equation}\label{e:keyproof1}
		\int \left(\Lap \phi + \ei(\W)\phi\right) u_\W = \int \phi d\mu.
	\end{equation}
	Take an open set $V$ with $\W \cc V \cc \W'$ and a sequence $\phi_k \in C^\infty_c(M)$ with $\|\phi_k - q\|_{W^{2, 2}(V) \cap C(V)} \rightarrow 0$; then passing to the limit in \eqref{e:keyproof1} leads to
	\[
		\int \left(\Lap q + \ei(\W)q\right) u_\W = \int q d\mu.
	\]
	Continuing on and applying this to \eqref{e:keyproof2},
	\[
	\int q \left(-\Lap u_\W - \l_1(\W') u_\W\right) = \int -q d\mu + \left(\l_1(\W) - \l_1(\W')\right)\int q u_\W.
	\]
	The second term is controlled by
	\[
		\left(\l_1(\W) - \l_1(\W')\right)\,\Big|\int q u_\W \Big| \leq \left(\l_1(\W) - \l_1(\W')\right)\|q\|_{L^2}\,\|u_\W\|_{L^2} \leq C (\l_1(\W) - \l_1(\W')) \|f\|_{L^\infty},
	\]
	so we need only focus on the first. Recall from Lemma \ref{l:torsionbound} that writing $\Lap w_{\W} = -1_{\W} + \nu$ on $\W'$ for a nonnegative measure $\nu$, then $\mu \leq C \nu$. As such, 
	\[
	\Big|\int q d\mu\Big| \leq \int |q|d \mu \leq C\|f\|_{L^\infty}\int w_{\W'} d\nu = C\|f\|_{L^\infty}\int \left( w_{\W'} + w_{\W}\right)  d\nu,
	\]
	in the final identity using that $w_\W$ vanishes on the support of $\nu$.
		
		Let $\psi$ be a $C^2_c(M)$ function with $\psi = w_\W + w_{\W'}$ on $\bar{\W} \cc \W'$; this exists as $\W$ is smooth and so $w_\W$ is smooth up to the boundary $\p \W'$, and then by using e.g. Whitney's extension theorem. Applying the distributional definition of $\nu$, we have that
		\[
			\int \left( w_{\W'} + w_{\W}\right)  d\nu = \int \psi d\nu = \int w_\W \Lap \psi + 1_\W \psi = \int -2 w_\W + 1_\W (w_\W + w_{\W'}),
		\]
		using that $ \Lap \psi = \Lap (w_\W + w_{\W'}) = -2$ on the support of $w_\W$ in the last step. Using the positivity of $w_{\W'}$,
		\[
			\int -2 w_\W + 1_\W (w_\W + w_{\W'}) \leq \int w_{\W'} - w_\W.
		\]

	Recall that $\tor(\W)  = - \frac{1}{2}\int |\n w_{\W}|^2 = - \frac{1}{2} \int w_\W$ and similarly for $\W'$, so we have shown that
	\[
		\Big|\int f_\perp (u_{\W'} - u_\W)\Big| \leq C\|f\|_{L^\infty}[\ei(\W) - \ei(\W') +  \tor(\W) - \tor(\W') ].
	\]
	Together with \eqref{e:1fbound} this gives the conclusion.
	
For the case of a general $\W$ without restrictions on the smoothness or location of $\p \W$, let $\W_k \ss \W_{k+1} \cc \W$ be an exhaustion of $\W$ by smooth sets (i.e. $\W = \cup_k \W_k$; see e.g. \cite[Chapter 5, Theorem 4.20]{EE}). Then apply \eqref{e:key} to $\W_k$. We have that $\ei(\W_k) \geq \ei(\W)$ and $\tor(\W_k) \geq \tor(\W)$ from set inclusion. Take first eigenfunctions $u_{\W_k} \rightharpoonup u \in H^1_0(\W)$ weakly (and therefore strongly in $L^2(\W)$) and locally in $C^2$ topology for some $u \in H^1_0(\W)$ along a subsequence, and similarly $w_{\W_k} \rightharpoonup w \in H^1_0(\W)$. Passing the PDEs satisfied by these to the limit, we see that $- \Lap u  = \l u$ and $-\Lap w = 1$ on all of $\W$, where $\l = \lim_k \l(\W_k) \geq \ei(\W)$ is some number; moreover, $u, w \geq 0$ on $\W$ and $\int u^2 = 1$. This implies that $u$ is an eigenfunction of $\W$ and $\l$ is an eigenvalue; however, as $\int u u_\W > 0$ it must be the case that $u = u_\W$ and $\l = \ei(\W)$ (recall we are assuming that $\ei(\W)$ is simple). Similarly, $w_\W = w$ as the solutions to the corresponding PDE are unique. This means that 
	\[
		\tor(\W_k) = - \frac{1}{2}\int w_{\W_k} \rightarrow - \frac{1}{2} \int w_{\W} = \tor(\W)
	\]
	while
	\[
		\int f u_{\W_k} \rightarrow \int f u_\W,
	\]
	both using convergence in $L^2$. In particular, both sides of \eqref{e:key} pass to the limit.
\end{proof}

\subsection{Base energy minimizers in $\Qb_R$}\label{ss:base energy mins}
We now move toward minimizing the base energy $\En$ among subsets of a fixed $\Qb_R$. In this section we establish existence, volume bounds and connectedness properties of minimizers in the class $\compset = \compset_{R, \vmax}.$

We start with an existence and compactness theorem for minimizers. The statement is formulated in a way to give information on $w_\W$ if and only if $\tpar > 0$. We recall that the base energy $\En$, the collection of sets over which we minimize $\compset$, and the collection of minimizers $\Min$ are defined in \eqref{e:baseenergy}, \eqref{e: compset}, and \eqref{eqn:min} respectively. Recall from \eqref{e: emin def} that $\minE = \minE(v, \vmax, \eta, \tpar,  R)$ denotes the infimum of the base energy.

\begin{lemma}\label{l:emin}  Fix  $R>0$ and $v, \vmax $ with  $0<v < \vmax$, as well as  the parameters $\eta>0$ and $ \tpar >0$ in the base energy. Then:
	\begin{enumerate}
		\item $\minE > -\infty$.
		\item Let $\W_k \in \compset$ and $u_k, w_k \in H^1_0(\W_k)$ with $\int u_k^2 = 1$ and
		\begin{equation}\label{e: min sequence}
		\lim_k \int |\grad u_k|^2 + \tpar \int \frac{1}{2}|\grad w_k|^2 - w_k + \fv(|\W_k|) = \minE.
		\end{equation}
		Then there is a subsequence $\W_{k_j}$ and a set $\W \in \compset$ such that
		\[
		\|u_{k_j} - u_\W \|_{H^1_0(\Qb_R)} + \sqrt{\tpar} \|w_{k_j} - w_\W \|_{H^1_0(\Qb_R)} + |\W \triangle \W_{k_j}|  \rightarrow 0,
		\]
		where $u_{\W}$ is a normalized first eigenfunction of $\W$ and $w_\W$ is the torsion function of $\W$.
		\item $\Min$ is nonempty, and for any $\W \in \Min$, the functions $u_\W$ and $\sqrt{\tpar} w_\W$ are  continuous on $M$ after extension by $0$ to be defined on all of $M$ (for any choice of $u_\W$ normalized first eigenfunction).
	\end{enumerate} 
\end{lemma}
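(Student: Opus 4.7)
\emph{Plan.} Parts (1) and (2) follow the direct method. Part (3) combines the compactness of (2) with interior elliptic regularity; the subtle point is continuity up to $\p\W$. For (1), three uniform lower bounds combine: $\ei(\W) \geq c(R,\vmax) > 0$ from Lemma~\ref{l:poincare} applied to any competitor in $H^1_0(\Qb_R)$ supported in a set of volume $\leq \vmax$; $\tor(\W) \geq -C(R,\vmax)$ by Corollary~\ref{c:tornotminusinf}; and $\fv(|\W|) \geq -\vpar v$ since $\fv$ attains its minimum at $0$. Thus $\En(\W) \geq c > -\infty$ on all of $\compset$.

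\emph{Part (2).} Given a minimizing sequence as in \eqref{e: min sequence}, the Part~(1) bounds imply each summand is individually bounded, so $\|u_k\|_{H^1_0(\Qb_R)}$ and $\|w_k\|_{H^1_0(\Qb_R)}$ are bounded (using Lemma~\ref{l:poincare} to control $\int w_k$ by $\|\grad w_k\|_{L^2}$ as in the proof of Corollary~\ref{c:tornotminusinf}). Extract weakly convergent subsequences $u_k \rightharpoonup u$, $w_k \rightharpoonup w$ in $H^1_0(\Qb_R)$ (strongly in $L^2$ and a.e.), and a further subsequence with $1_{\W_k} \rightharpoonup \chi$ weakly in $L^2$ for some $\chi : M \to [0,1]$. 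Define $\W$ as the interior of $\{u>0\} \cup \{w>0\}$ using the $C^{1,\a}$ representatives obtained a posteriori from the PDEs. Because $u_k, w_k$ are supported in $\W_k$, the a.e.\ convergence forces $\chi = 1$ on $\W$, so $|\W| \leq \int\chi = \lim|\W_k| \leq \vmax$ and hence $\W \in \compset$. Weak lower semicontinuity of the Dirichlet terms, strong $L^2$ convergence of $w_k$, and monotonicity of $\fv$ give
\[
\En(\W) \leq \int |\grad u|^2 + \tpar \Big[\int \tfrac{1}{2}|\grad w|^2 - w\Big] + \fv(|\W|) \leq \minE,
\]
using that $u, w \in H^1_0(\W)$ with $\int u^2 = 1$ are admissible competitors for $\ei(\W)$ and $\tor(\W)$. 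Since $\En(\W) \geq \minE$ by definition, every inequality must be an equality. This forces $u$ to be a normalized nonnegative first eigenfunction of $\W$, $w = w_\W$ by uniqueness of the torsion function, and $|\W| = \lim|\W_k|$; combined with $\chi \geq 1_\W$ the latter gives $\chi = 1_\W$, and convergence of $L^2$ norms upgrades weak to strong convergence of $1_{\W_k}$, i.e.\ $|\W \triangle \W_k| \to 0$. Convergence of Dirichlet energies similarly upgrades weak to strong convergence of $u_k$ and $w_k$ in $H^1_0(\Qb_R)$.

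\emph{Part (3) and the main obstacle.} Existence of a minimizer is immediate from (2) applied to any minimizing sequence. For continuity, Lemma~\ref{l:torsionbound} gives $u_\W \leq C w_\W$ for every nonnegative normalized first eigenfunction, so it suffices to show that $w_\W$ extended by $0$ is continuous on $M$. Interior $C^{1,\a}$ regularity is standard from $-\Lap w_\W = 1$ on $\W$; the only issue is points $x_0 \in \p\W$, and the plan is to rule out any pointwise jump by using the outward minimality $\En(\W) \leq \En(\W \cup B_r(x_0))$. Concretely, a positive limsup $L = \limsup_{x \to x_0, x \in \W} w_\W(x)$ would, via comparison of $w_{\W \cup B_r(x_0)}$ with the Newtonian potential of $1_{B_r(x_0) \sm \W}$ augmented by a harmonic majorant governed by $L$, force a decrease $\tor(\W \cup B_r(x_0)) - \tor(\W)$ much larger than the cost $O(r^n)$ coming from $\fv(|\W \cup B_r(x_0)|) - \fv(|\W|)$ for small $r$, contradicting minimality. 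The hard part is to make this potential-theoretic estimate rigorous without any a priori regularity of $\p\W$: at this point in the paper the nondegeneracy and Lipschitz estimates of Sections~\ref{s:lb}--\ref{s:ub} are not yet available, so the comparison must be carried out solely via capacity and comparison arguments on general open sets. The factor $\sqrt{\tpar}$ in the statement reflects that this leverage is available precisely because $\tpar > 0$.
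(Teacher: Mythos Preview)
Your Part~(1) is fine and matches the paper. The real issue is in Part~(2), and it propagates into Part~(3).

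You define $\W$ as ``the interior of $\{u>0\}\cup\{w>0\}$ using the $C^{1,\a}$ representatives obtained a posteriori from the PDEs.'' This is circular. At the moment you take weak limits, $u$ and $w$ are only $H^1_0(\Qb_R)$ functions; you do not yet know they solve any PDE, because you do not yet have a domain on which to pose one. To say $u=u_\W$ is a first eigenfunction requires $\W$ to be an \emph{open} set with $u\in H^1_0(\W)$, but openness of $\{u>0\}\cup\{w>0\}$ is exactly what you need continuity for. Taking the interior of an arbitrary representative does not help: you would then need to check that $u,w\in H^1_0$ of that interior, which can fail if $u$ or $w$ is positive on a set of positive measure with empty interior. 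The subsequent lower-semicontinuity chain and the upgrade to strong convergence all rest on $\W$ being a legitimate competitor in $\compset$, so the whole argument stalls here.

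The paper closes this gap by proving continuity of $u$ and $\sqrt{\tpar}\,w$ \emph{before} defining $\W$. The mechanism is an almost-minimality property: for any small ball $B_r(x)$, one compares against competitors obtained by gluing in arbitrary $a,b$ on $B_r(x)$ (using a cutoff and the minimizing sequence $\W_k$, not the limit), and obtains
\[
\int_{B_r(x)}|\grad(u-a)|^2+\tpar|\grad(w-b)|^2\leq Cr^n
\]
when $a,b$ are harmonic replacements. Campanato's criterion then gives $u,\sqrt{\tpar}\,w\in C^{0,\a}(\Qb_R)$, so $\W:=\{|u|+\tpar|w|>0\}$ is open, and everything else follows as you outlined. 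This same argument, applied with $\W_k\equiv\W\in\Min$, already yields the continuity claimed in Part~(3); no separate outward-minimality or capacity comparison is needed. Your sketched Part~(3) argument via $\En(\W)\leq\En(\W\cup B_r(x_0))$ is a plausible heuristic, but as you yourself note, making the potential-theoretic comparison rigorous without any boundary regularity is delicate, and the paper avoids it entirely.
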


\begin{proof}	
	Notice that $\int |\grad u_k|^2 \geq 0$ and $\fv(|\W_k|) \geq -\eta v$ by definition. Moreover,  $\tor(\W)$ is bounded from below uniformly in $\W \in \compset$ by Corollary \ref{c:tornotminusinf}. Summing these terms and taking the infimum shows that $\minE >-\infty,$ thus establishing (1).
	\\

{\it Step 1: Basic compactness.}	Let $\W_k$ be a minimizing sequence as in claim (2) of the lemma. 
As we noted above,  each of the terms $\int |\grad u_k|^2, \tpar \int \frac{1}{2}|\grad w_k|^2 - w_k, $ and $\fv(|\W_k|)$ in the energy are bounded from below individually.
 Because $\minE$ is also bounded from above,  these three terms are bounded above individually as well. This immediately gives that $\|u_k\|_{H^1_0(\Qb_R)}$ is uniformly bounded, while applying Lemma \ref{l:poincare} as in the proof of Corollary \ref{c:tornotminusinf} gives that
	\[
	C \geq \tpar \int \frac{1}{2}|\grad w_k|^2 - w_k \geq \tpar \left[ - C(R, \vmax) + \frac{1}{4}\int |\grad w_k|^2 \right],
	\]
	giving $\sqrt{\tpar} \|\grad w_{k}\|_{L^2(\Qb_R)} \leq C$ and so $\sqrt{\tpar} \|w_{k}\|_{H^1_0(\Qb_R)} \leq C$ from Lemma \ref{l:poincare} again.

	Passing to subsequences, we have that $u_{k} \rightarrow u$ weakly in $H^1_0(\Qb_R)$, strongly in $L^2$, and a.e., and similarly for $ w_{k} \rightarrow w$ when $\tpar > 0$. This implies that $1_{\{ |u| + \tpar |w| > 0 \}} \leq \liminf_k 1_{\{ |u_{k}| + \tpar |w_{k}| > 0 \}}$, so by Fatou's lemma $|\{ |u| + \tpar |w| > 0 \}| \leq \liminf_k |\W_k|$ and
	\[
		\fv(|\{ |u| + \tpar |w| > 0 \}|) \leq \liminf_{k \rightarrow \infty} \fv(|\W_k|).
	\]
	Using the lower semicontinuity of the norm under weak convergence, we have that
	\begin{equation}\label{eqn: LSC a}
		\int |\grad u|^2 + \tpar \int \frac{1}{2}|\grad w|^2 - w + \fv(|\{ |u| + \tpar |w| > 0 \}|) \leq \minE
	\end{equation}
	and $\int u^2 = 1$. 	From Lemmas \ref{l:efbdd} and \ref{l:torsionfunctionbdd}, we have $|u_k|\leq C_1$ and $|w_k|\leq C_2$; these pass to the limit to give $|u| + \sqrt{\tpar} |w|\leq C$. 
	\\
	
	{\it Step 2: Continuous representatives. }We will now show that $u, w$ admit continuous representatives; this will imply that the set $\W := \{ |u| + \tpar |w| > 0 \}$  is open (and hence in $\compset$), $u$ is a first eigenfunction of $\W$, $w = w_\W$, and the inequality in \eqref{eqn: LSC a} is an equality.\\

{\it Step 2a. }	To this end, we first claim that $u$ and $w$ have the following ``almost minimality'' property. Fix a small ball $B_{2r}(x)$, $x \in \Qb_R$. We claim that 
		\begin{equation}\label{eqn: almost min}
	\int |\grad u|^2 + \tpar\int \frac{1}{2}|\grad w|^2 - w \leq \frac{\int |\grad a|^2}{\int a^2} + \tpar \int \frac{1}{2}|\grad b|^2 - b + Cr^n
	\end{equation}
	for any pair of functions $a, b$ with $a=u$ and $b=w $ on $B_r(x)^c$ and such that $u -a$ and $w - b$  are in $H^1_0(B_r(x) \cap \Qb_R)$.
	
	Indeed, let $\psi$ be a smooth cutoff function which is $1$ on $B_r(x)$ and supported on $B_{2r}(x)$. Let $\W_k' = \W_k \cup B_{r}(x)$ and  define the functions $a_k, b_k \in H^1_0(\W_k')$ by  
	\begin{align*}
	a_k &= \psi a + (1 - \psi) u_{k},\\
	b_k &= \psi b + (1 - \psi) w_{k}.
	\end{align*} Note that $a_k \to a$ and $b_k \to b$ in $L^2$. Moreover,
	$
		\fv(|\W'_k|) \leq \fv(|\W_k|) + C r^n.
	$
	So, we may use $a_k/\|a_k\|_{L^2}$ as a competitor for $u_{\W'_k}$ and $b_k$ for $w_{\W'_k}$ inside of \eqref{e:evalue} and \eqref{e:torsion} to give
	\[
		\minE \leq \En(\W'_k) \leq \frac{\int |\grad a_k|^2}{\int a_k^2} + \tpar \int \frac{1}{2}|\grad b_k|^2 - b_k + \fv(|\W_k|) + C r^n.
	\]
	Recalling that $u_k, w_k$ were chosen to satisfy \eqref{e: min sequence}, this gives that
	\begin{equation}\label{e:emin1}
		0 \leq \liminf_{k \rightarrow \infty} \frac{\int |\grad a_k|^2}{\int a_k^2} - \int |\grad u_k|^2 + \tpar \left[\int \frac{1}{2}|\grad b_k|^2 - b_k  - \int \frac{1}{2}|\grad w_k|^2 - w_k\right] + C r^n,
	\end{equation}
	We first focus on the term in brackets. We have
		\begin{align*}
		\int \frac{1}{2}&|\grad b_k|^2   - \int \frac{1}{2}|\grad w_k|^2 = \frac{1}{2}\int |\grad w_{k}|^2 \left[( 1 - \psi)^2 - 1\right] + \psi^2 |\grad b|^2 + 2 \psi (1 - \psi) \grad b \cdot \grad w_{k}   + o_k(1),
	\end{align*}
%
	where the $o_k(1)$ term contains all error terms containing $\grad \psi$ and can be seen to converge to $0$ using the weak convergence of $\grad w_k \rightarrow \grad w$, the strong convergence of $w_k \to w$, and the fact that $b=w$ on the support of $\grad \psi$. Therefore, taking limits in the term in brackets in \eqref{e:emin1}, since $w =b$ when $1-\psi$ is nonzero, we have
	\begin{align*}
	\limsup_{k \rightarrow \infty} &\int \frac{1}{2}|\grad b_k|^2 - b_k  - \int \frac{1}{2}|\grad w_k|^2 - w_k\\
	& \leq \frac{1}{2}\int |\grad w|^2 \left[( 1 - \psi)^2 - 1\right] + \psi^2 |\grad b|^2 + 2 \psi (1 - \psi) |\grad w|^2  + \int w - b\\
	& =\frac{1}{2} \int \psi^2 [|\grad b|^2 - |\grad w|^2] + \int w - b\\
	& = \int \frac{1}{2}|\grad b|^2 - b - \int \frac{1}{2}|\grad w|^2 - w
	,
	\end{align*}
	In the inequality we have used that $\int \phi |\grad w|^2 \leq \liminf_{k \rightarrow \infty} \int \phi |\grad w_{_k}|^2$ for $\phi \geq 0$ from weak convergence (and setting $\phi = 1 - (1 - \psi^2)$).  
	A similar computation for the $a_k$ terms gives
	\[
	\limsup_{k \rightarrow \infty} \frac{\int |\grad a_k|^2}{\int a_k^2} - \int |\grad u_k|^2 \leq \frac{\int |\grad a|^2}{\int a^2} - \int |\grad u|^2.
	\]
	Substituting these conclusions into \eqref{e:emin1}, we arrive at \eqref{eqn: almost min}, thus proving the claim.
\\

	{\it Step 2b.} Now let $a$ be the harmonic replacement of $u$ on $B_r(x) \cap \Qb_R$ (i.e. the minimizer of $\int_{B_r(x) \cap \Qb_R} |\grad a|^2$ with data $u$ on $\partial [B_r(x) \cap \Qb_R]$), and $b$ the harmonic replacement of $w$. Then
	\[
	\int_{B_r(x)} |\grad u|^2 + \tpar \frac{1}{2} |\grad w|^2 \leq C r^n + \int_{B_r(x)} |\grad a|^2 + \tpar \frac{1}{2} |\grad b|^2,
	\]
	estimating the non-gradient terms using $|u|, |w|, |a|, |b| \leq C$ and absorbing them into the $Cr^n$ term. Rewriting and using that   $\int \langle \grad a, \grad (u - a)\rangle  = 0$  from the equation on $a$ (and similarly with $b$),
	\[
	\int_{B_r(x)} |\grad (u - a)|^2 + \tpar \frac{1}{2} |\grad (w - b)|^2 \leq C r^n.
	\]
	This is valid for any $B_{2r}(x)$, and implies that $u, \sqrt{\tpar} w$ are $C^{0, \a}(\Qb_R)$ (see \cite{Camp65}).\\
	
{\it Step 3: Conclusion.}	Finally, as $\W$ is open, $\W \in \compset$ and $\minE \leq \En(\W) \leq \int |\grad u|^2 + \int \frac{1}{2} |\grad w|^2 - w + \fv(|\W|) \leq \minE$. This implies that $w = w_\W$, $u$ is a first eigenfunction of $\W$, and 
	\[
		\lim_{k \rightarrow \infty} \int |\grad u_k^2|  = \int |\grad u|^2 \qquad \lim_{k \rightarrow \infty} \int |\grad w_k^2|  = \int |\grad w|^2 \qquad \lim_{k \rightarrow \infty} |\W_k| = |\W|.
	\]
	This gives the strong convergence of $u_k$ and $w_k$ in $H^1_0(\Qb_R)$. To see the convergence of $\W_k$, recall that $1_{\W} \leq \liminf_k 1_{\W_k}$; integrating over $\W$ and applying Fatou's lemma gives $|\W|\leq \liminf_k |\W \cap \W_k|$, or $|\W \sm \W_k|\rightarrow 0$. Together with $|\W_k|\rightarrow |\W|$ this guarantees $|\W_k \sm \W| \rightarrow 0$ as well.
	
	For (3), (1) and (2) immediately give that $\Min$ is nonempty by applying with $u_k = u_{\W_k}$, an eigenfunction of $\W_k$, and $w_k = w_{\W_k}$, where $\W_k$ is any sequence in $\compset$ with $\En(\W_k) \rightarrow \minE$. Choosing $\W_k = \W$ and $u_{\W_k} = u_{\W}$ any first eigenfunction and applying the continuity argument above gives that $u_\W$ and $\sqrt{\tpar} w_\W$ are continuous.
\end{proof}

Recall that the volume penalization term $f_{v,\eta}$ in the base energy does not immediately guarantee that a minimizer $\Omega \in \Min$  satisfies the desired volume constraint $|\Omega| \leq v$. However, the following lemma provides an initial upper bound on the volume of a minimizer, provided that the parameter $\vpar$ in the volume penalization term is taken to be sufficiently small. We will eventually show in Proposition~\ref{l:volumeisright} that and $\W \in \Min$ actually satisfies the desired volume constrain $|\W| = v$.

\begin{lemma}\label{l:volumenottoobig} Fix $R>0$ and $v, \vmax$ with $0 < v < \vmax$, and $0<\tpar \leq 1$.  There exists an $\vpar_0 = \vpar_0(v, \vmax, R) > 0$ so that for $\vpar < \vpar_0$, if $\W \in \Min$ then
		\[
			|\W| < \frac{\vmax + v}{2}.
		\]
\end{lemma}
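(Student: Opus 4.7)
The plan is a direct comparison argument exploiting the fact that the volume penalization term $\fv(|\W|) = (|\W|-v)/\vpar$ blows up as $\vpar \to 0$ whenever $|\W|$ stays strictly above $v$, while the other terms in $\En$ remain bounded.

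First I would fix a reference competitor. Since $v < \vmax \leq |\Qb_R|$ by standing assumption, I can pick any bounded open $\W_0 \in \compset$ with $|\W_0| = v$ (for instance, a small smooth open set inside $\Qb_R$ of volume exactly $v$). Then $\fv(|\W_0|) = 0$, and
\[
\En(\W_0) = \ei(\W_0) + \tpar \tor(\W_0) \leq \ei(\W_0) =: M_0,
\]
using $\tor(\W_0) \leq 0$. Note that $M_0$ depends only on $v$ and $R$ (and the fixed choice of $\W_0$), and in particular is independent of $\vpar$ and $\tpar \in (0,1]$.

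Second, I would uniformly bound $\En$ from below on the ``large volume'' regime. Suppose $\W \in \Min$ satisfies $|\W| \geq (\vmax + v)/2$. Since $\ei(\W) \geq 0$ and Corollary~\ref{c:tornotminusinf} gives $\tor(\W) \geq -C_*(R,\vmax)$, and since $\tpar \leq 1$, I obtain
\[
\En(\W) \geq -C_*(R,\vmax) + \fv(|\W|) \geq -C_*(R,\vmax) + \frac{\vmax - v}{2\vpar}.
\]
Now choose
\[
\vpar_0 = \vpar_0(v, \vmax, R) := \frac{\vmax - v}{2(M_0 + C_*(R,\vmax) + 1)}.
\]
For any $\vpar < \vpar_0$, the right-hand side above exceeds $M_0 \geq \En(\W_0)$, which contradicts $\W \in \Min$ (since $\W_0 \in \compset$ is an admissible competitor). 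Hence $|\W| < (\vmax+v)/2$.

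The argument is essentially routine once the ingredients are in place; there is no real obstacle, since the bound on $\tor$ from Corollary~\ref{c:tornotminusinf} together with the nonnegativity of $\ei$ allow us to reduce the question to a one-variable estimate on $\fv$. The only modest subtlety is ensuring that the constants $M_0$ and $C_*$ are independent of the parameters $\vpar$ and $\tpar$, which is immediate from their definitions and the restriction $\tpar \leq 1$.
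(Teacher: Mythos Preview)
Your proof is correct and follows essentially the same approach as the paper's: both use Corollary~\ref{c:tornotminusinf} to bound $\En$ from below, a competitor of volume exactly $v$ to bound $\minE$ from above, and then choose $\vpar$ small so that the penalty term $\fv(|\W|)$ forces $|\W| - v$ to be small. The paper phrases it as a direct bound $\frac{|\W|-v}{\vpar} \leq \fv(|\W|) \leq C$ rather than by contradiction, but the content is identical.
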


\begin{proof}
	We have already verified in the proof of Lemma \ref{l:emin} that
	$
	\En(\W) \geq - C + \fv(|\W|)
	$
	for any $\W \in \compset$; this estimate is uniform for any $\tpar\leq 1$. By choosing some smooth $\W$ with precisely $|\W| = v$, we see that $\En(\W) \leq C$ with $C$ independent of $\eta$ (and $\tpar$). We therefore have, for $\W \in \Min$,
	\[
	\frac{|\W| - v}{\vpar}	\leq \fv(|\W|) \leq C.
	\]
	Choosing $\vpar \leq \frac{\vmax - v}{2C}$ gives the conclusion.
\end{proof}
 
\begin{remark}\label{rmk: eta parameter}
	{\rm 
	In the remainder of the paper, we will {\it always} assume, without further comment, that the parameter in the base energy $\En$ and the main energy $\Ep$ is taken such that    $\eta \leq \eta_0$, where $\eta_0 = \eta_0(v,\vmax, R)$ is the constant obtained in Lemma~\ref{l:volumenottoobig}. 
	}
\end{remark} 

In the next lemma, we show that the coefficient $\tpar$ in front of the torsional rigidity in the base energy can be taken to be  sufficiently small to guarantee that a minimizer $\Omega \in \Min$ is connected and its first eigenvalue is simple. Although the simplicity of the first eigenvalue follows from the connectedness, we prove these two properties separately so that the proof can be immediately generalized to prove Lemma~\ref{l:simpleeval} below.

\begin{lemma}\label{l:simpleevalaux}  Fix $R>0$, $v < \vmax $, and $\vpar>0$. There exists a $\tpar_0(v, \vmax, \vpar, R) > 0$ such that if $\W \in \Min$ with $\tpar < \tpar_0$, then $\W$ is connected, $u_\W > 0$ on $\W$ (up to changing sign), and
	\[
		\l_2(\W) > \ei(\W).
	\]
\end{lemma}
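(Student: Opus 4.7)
The plan is to argue by contradiction: I would assume $\W \in \Min$ is disconnected and show that one of its connected components, viewed as a competitor, has strictly smaller base energy provided $\tpar$ is small compared to $\vpar$. Once connectedness is established, the other two conclusions follow from the strong maximum principle and the classical proof of simplicity of the first Dirichlet eigenvalue on connected open sets.

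First I would write $\W = \W_1 \sqcup \W_2$ with both $\W_i$ open and nonempty, choosing $\W_1$ to be a connected component of $\W$ realizing $\ei(\W_1) = \ei(\W)$ and letting $\W_2$ denote the union of the remaining components. The candidate competitor is $\W' := \W_1 \in \compset$, and I would compare the three terms of $\En$ separately. The eigenvalue term is unchanged: $\ei(\W') = \ei(\W_1) = \ei(\W)$. Since $H^1_0(\W) = H^1_0(\W_1) \oplus H^1_0(\W_2)$, the torsion splits as $\tor(\W) = \tor(\W_1) + \tor(\W_2)$, giving $\tor(\W') - \tor(\W) = -\tor(\W_2) \geq 0$. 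Using the identity $-\tor(\W_2) = \tfrac{1}{2}\int w_{\W_2}$ from Lemma~\ref{l:basic}(3), together with the uniform bound $w_{\W_2} \leq C_2(R, \vmax)$ from Lemma~\ref{l:torsionfunctionbdd} and Corollary~\ref{c:tornotminusinf} (applied since $\tor(\W_2) \geq \tor(\W) \geq -C(R,\vmax)$ by set monotonicity), the torsion contribution to $\En$ can increase by at most $\tpar \tfrac{C_2}{2}|\W_2|$ under this replacement.

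For the volume penalization, $\fv$ is piecewise linear and increasing with slope everywhere at least $c(\vpar) := \min(\vpar, \vpar^{-1}) > 0$, so that $\fv(|\W'|) - \fv(|\W|) \leq -c(\vpar)|\W_2|$. Combining the three comparisons yields
\[
\En(\W') - \En(\W) \leq \left( \tpar \frac{C_2}{2} - c(\vpar) \right) |\W_2|.
\]
Since $|\W_2| > 0$, setting $\tpar_0 := 2 c(\vpar)/C_2 = \tpar_0(v, \vmax, \vpar, R)$ forces the right-hand side to be strictly negative whenever $\tpar < \tpar_0$, contradicting the minimality of $\W$. Hence $\W$ must be connected.

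Once connectedness is in hand, Lemma~\ref{l:basic}(4) lets me take $u_\W \geq 0$ up to sign, and the strong maximum principle applied to $-\Lap u_\W = \ei(\W) u_\W \geq 0$ on the connected open set $\W$ gives $u_\W > 0$ on $\W$. For the simplicity $\l_2(\W) > \ei(\W)$, I would invoke the classical argument: any two positive normalized first eigenfunctions $u_1, u_2$ are proportional, because choosing $c = u_1(x_0)/u_2(x_0)$ at some $x_0 \in \W$ makes $u_1 - c u_2$ a first eigenfunction vanishing at an interior point, and its absolute value (also a nonnegative first eigenfunction, since taking absolute value preserves the Dirichlet energy on $H^1_0$) must then vanish identically by the strong maximum principle. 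The main point to track is the energy balance in the contradiction step: the torsion increase $-\tpar \tor(\W_2)$ must be dominated by the volume penalization decrease, which is precisely why the threshold $\tpar_0$ must scale as $\vpar/C_2(R,\vmax)$.
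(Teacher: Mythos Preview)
Your argument is correct, and it is genuinely different from the paper's. The paper first treats the case $\tpar = 0$ directly, and then for $\tpar > 0$ argues by contradiction and \emph{compactness}: it takes a sequence $\W_k \in \Min_{\tpar_k}$ with $\tpar_k \to 0$, passes to a limit in $\Min_0$ via Lemma~\ref{l:emin}, and derives a contradiction there. For connectedness in particular, the paper first uses compactness to show that the extra components have volume $t_k \to 0$, and only then makes an energy comparison using the sharper Sobolev-type bound $-\tor(E) \leq C|E|^{(n+2)/n}$. Your route bypasses both the compactness step and the Sobolev bound: the crude estimate $-\tor(\W_2) = \tfrac{1}{2}\int w_{\W_2} \leq \tfrac{C_2}{2}|\W_2|$, coming from the uniform $L^\infty$ bound on $w_{\W_2}$, already beats the linear gain $c(\vpar)|\W_2|$ from the volume penalization once $\tpar$ is small. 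This is more elementary and gives an explicit threshold $\tpar_0 = 2c(\vpar)/C_2$. The paper's compactness machinery, on the other hand, is set up so that it can be repeated essentially verbatim for \emph{approximate} minimizers in Lemma~\ref{l:simpleeval}, where one also wants the quantitative spectral gap $\l_2(\W) > \ei(\W) + c$; your connectedness argument alone would not deliver that uniform gap.

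One small circularity to clean up: you invoke Lemma~\ref{l:basic}(4) to take $u_\W \geq 0$, but that lemma already assumes $\ei(\W) < \l_2(\W)$, which is what you are proving. The fix is standard and you essentially have it later in the paragraph: for \emph{any} first eigenfunction $u$, the function $|u|$ has the same Rayleigh quotient and is therefore also a first eigenfunction; the strong maximum principle on the connected $\W$ then forces $|u| > 0$, so $u$ has a sign. From there your proportionality argument goes through and yields simplicity.
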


\begin{proof}
We first prove that these three properties holds when $\tpar =0$.

{\it Spectral gap when $\tpar =0$:} First, suppose by way of contradiction that $\l_2(\W) = \ei(\W)$.  Then there are two functions $u_1, u_2$ with $\int |\grad u_i|^2 = \ei(\W)$, $\int u_i^2 = 1$, and $\int u_1 u_2 = 0$.  By Lemma \ref{l:emin},  they are both continuous, and we may assume without loss of generality that $u_i \geq 0$ by replacing $u_i$ with $|u_i|$ in the Rayleigh quotient characterization of eigenfunctions. 
	 So, the orthogonality shows that $\{u_1 >0\}$ and $\{ u_2>0\}$ are disjoint open sets.	Now let $\W' = \{u_1 > 0\}$, noting that $\ei(\W') \leq \int |\grad u_1|^2 = \ei(\W)$. On the other hand, $|\W'| < |\W|$, so $\fv(|\W'|) < \fv(|\W|)$ and $\En(\W') < \En(\W) = \minE$. This is a contradiction, and shows that $\l_2(\W) > \l_1(\W)$.

{\it Connectedness and positivity of the eigenfunction when $\tpar =0$:} Next, suppose by way of contradiction that $\W$ is not connected. Then, there is some connected component  $\Om'$ on which the first  eigenfunction $u_\W$ of $\W$  (which is unique by the previous step) is positive. Then $u_{\Om}|_{\Omega'}$ is a first eigenfunction of $\W'$, so $\lambda_1(\W) = \lambda_1(\W')$, and $|\W'| < |\W|$.
So, just as in the previous step, we take $\W'$ as a competitor and find that $\En(\W')< \En(\W)$. This is a contradiction, showing that $\W$ is connected. Then, the strong maximum principle implies that $u_\W> 0$ in $\W.$
\\

{\it Setup for $\tpar >0$:}
Now we prove these three properties when $\tpar > 0$ is sufficiently small. We prove the spectral gap and the connectedness again arguing by contradiction. As in the previous part of the proof, the positivity of the first eigenfunction follows immediately from connectedness and the maximum principle. Suppose by way of contradiction that there is a sequence of numbers $\tpar_k \rightarrow 0$ and a corresponding sequence of sets $\W_k \in \Min_{\tpar_k}$ such that either $\l_2(\W_k) = \l_1(\W_k)$ or $\W_k$ is disconnected. 

Let us begin by making some general convergence and compactness observations about this sequence. By minimality, for any fixed competitor $\W \in \compset$, we have 
	\begin{align*}
	\ei(\W_k) + \tpar_k \tor(\W_k) + \fv(\W_k) & \leq \ei(\W) + \tpar_k \tor(\W) + \fv(\W)\\
	& \rightarrow \ei(\W) + \fv(\W)
	\end{align*}
	 From Corollary \ref{c:tornotminusinf} we know that  $\tor(\W) \geq - C$, and so in  each of $\ei(\W_k), \tor(\W_k),$ and $ \fv(\W_k)$ are bounded uniformly in $k$, so the middle term $\tpar_k \tor(\W_k) \rightarrow 0$. This means
	\begin{align*}
		\minE(\tpar_k)& = \ei(\W_k) + \fv(\W_k) + o_k(1)\\ & \leq \inf\{\ei(\W) + \fv(\W) : \W \in \compset \} + o_k(1) = \minE(0) + o_k(1).
	\end{align*}
	In particular, $\minE(\tpar_k) \rightarrow \minE(0)$ and $\lim_k \ei(\W_k) + \fv(\W_k) = \minE(0)$. For each $k$, fix  a first eigenfunction $u_{\W_k}$ for $\W_k$; as above we may take them nonnegative.  Applying the compactness claim of Lemma \ref{l:emin}, there is an $\W \in \Min_0$ with $\|u_{\W_k} - u_{\W}\|_{H^1_0(\Qb_R)} \rightarrow 0$ and $|\W_k \triangle \W| \rightarrow 0$. Since $\Omega \in \Min_0$, i.e. it is a minimizer for $\tpar =0$,  it satisfies the three properties of the lemma shown above in the case  $\tpar =0$.
	
	{\it Spectral gap for $\tpar >0$ sufficiently small:} 	Returning to our main contradiction argument, first suppose that $\l_2(\W_k) = \ei(\W_k)$ along some subsequence (that we do not relabel). So, for each $k$ we have a first eigenfunction $v_k\in H^1_0(\W_k)$ with $v_k \geq 0$ and  $\int v_k^2 = 1$ that is orthogonal to the previously selected first eigenfunction: $\int v_k u_{\W_k} = 0$, and $\int |\grad v_k|^2 = \ei(\W_k) \rightarrow \ei(\W)$. We may apply Lemma \ref{l:emin} to $v_k$ instead of $u_{\W_k}$ to obtain another set $\W' \in \Min_0$ and a continuous first eigenfunction $u_{\W'}$ of $\W'$ such that $\|v_k - u_{\W'}\|_{H^1_0(\Qb_R)} \rightarrow 0$ and $|\W_k \triangle \W'| \rightarrow 0$.  In particular,  this tells us that $|\W \triangle \W'|= 0$.  So the set $\W'':=\W\cup \W'$ is in the competitor class $\compset$, and satisfies $\ei(\W'') \leq \ei(\W)$, and $|\W''| = |\W|$. In particular, $\W'' \in \Min_0$ and $\ei(\W'') = \ei(\W)$.  In turn, this means both $u_{\W'}$ and $u_{\W}$ are first eigenfunctions of $\W''$. Moreover, they are orthogonal because 
	\[
		\int u_\W u_{\W'} = \lim_{k\rightarrow \infty} \int u_{\W_k} v_k = 0.
	\]
	Thus $\l_2(\W'') = \ei(\W'')$, which contradicts the previously established spectral gap for the case when $\tpar =0$.

{\it Connectedness for $\tpar >0$ sufficiently small:}	Next, let us assume that along some (unrelabeled) subsequence,  the sets $\W_k$ are disconnected. For each $k$, denote by $E^i_k$ the connected components of $\W_k$. 

As a first step, we notice that $|E^i_k|$ converges either to $|\W|$ or to $0$. Indeed, let $u_k \geq 0$ be the first eigenfunction of $\W_k$, and $E^1_k$ be the (unique) connected component of $\W_k$ on which it is nonzero. We know that $u_k \rightarrow u_{\W}$ a.e. Since $\W \in \Min_0$, i.e. $\W$ is a minimizer for $\tpar =0$, we know from the first part of the proof that $u_\W$ is strictly positive on $\W$. This gives that $|E_k^1| \rightarrow |\W|$ while $|E_k^i| \rightarrow 0$ for any other $i$.

	Next, we claim that for $k$ sufficiently large, i.e. $\tpar_k$ sufficiently small, we have $E_k^1 = \W_k$ , and thus $\W_k$ is connected.
	Set $|\W_k| = |E_k^1| + t_k$; we have shown that  $t_k \rightarrow 0$. Assume for contradiction that $t_k > 0$ for all $k$ along some subsequence. The basic idea  of the arguement is that the torsional rigidity is additive on connected components, and a component of small nonzero volume contributes too much torsional rigidity for a set to be minimizing.

More specifically,  take $E_k^1$ as a competitor for $\W_k$ in the minimization of $\En_{\tpar_k}$.
	Since $\ei(\W_k) = \ei(E_k^1)$,  we find
	\[
		\tpar_k \, \tor(\W_k) + \fv(|\W_k|) \leq \tpar_k\,  \tor(E_k^1) + \fv(|E_k^1|).
	\]
	Moreover, $\tor(A \cup B) = \tor(A) + \tor(B)$ for any disjoint sets $A, B$. Applying this fact to $A= E_k^1$ and $B = \W_k \setminus E_k^1$, we find that 
	\begin{equation}
		\label{rev 1}
		\tpar_k\,  \tor (\W_k \sm E_k^1)  +  \fv(|\W_k|) \leq \fv(|E_k^1|).
		\end{equation}

	Consequently, if we define the quantity
	\[
		a(t) := \inf \{\tor(E) : E \in \compset, |E|\leq t\},
	\]
	then rearranging  \eqref{rev 1} gives us an upper bound for $a(t_k)$:
	\begin{equation}
		\label{rev 2}
		\tpar_k\,  a(t_k) \leq \fv(|E_k^1|) - \fv(|\W_k|) \leq - \vpar t_k\,.
		\end{equation}
	
	On the other hand,  we establish a lower bound for $a(t)$ that will ultimately give us a contradiction.

Indeed, from Lemma~\ref{l:basic}(3), we have $\int |\grad w_E|^2 = \int w_E =-2\tor(E)$ (using $w_E$ as a test function for itself). Therefore,  we have 
	\[
		- \tor(E) = \frac{1}{2}\int w_E \leq |E|^{1/2} \|w_E\|_{L^{2}} \leq C |E|^{1/2} \|\grad w_E\|_{L^2} \leq C|E|^{1/2} \sqrt{-\tor(E)},
	\]
	where the constant $C=C(R,\vmax)$ introduced in the second-to-last inequality comes from the Poincar\'e inequality of Lemma \ref{l:poincare}. Dividing and squaring,
	$
		\tor(E) \geq - C |E|,
	$
	so
	\begin{equation}\label{e:simpleevalaux1}
		a(t) \geq - C t.
	\end{equation}
	Together \eqref{rev 2} and  \eqref{e:simpleevalaux1} tell us that $
	\eta  t_k\leq  - \tpar_k \, a(t_k) \leq C\, \tpar_k\, t_k$, and thus 
	$0< \eta \leq  C \tpar_k.$
	For large $k$, this is a contradiction. We conclude that $\W_k$ is connected.
\end{proof}

The same compactness argument can be applied to ``approximate'' minimizers instead, except for the full connectedness conclusion. We omit the proof.

\begin{lemma}\label{l:simpleeval} Fix $R>0,$ $0<v < \vmax$,  $\vpar >0$ and $\e > 0$. There exist  $\tpar_0 = \tpar_0(v, \vmax, \vpar, R) > 0$ and  $\d = \d(v, \vmax, R, \vpar, \e) > 0$ such that if $\tpar < \tpar_0$, $\W \in \compset$, and
	\[
	\En(\W) \leq \minE + \d,
	\]
	then:
	\begin{enumerate}
		\item There is an $\W' \in \Min$ with $|\W' \triangle \W| < \e$, $\|u_{\W'} - u_\W\|_{H^1} \leq \e$, and $\sqrt{\tpar} \|w_{\W'} - w_\W\|_{H^1} \leq \e$.
		\item $\l_2(\W) > \ei(\W) + c(v, \vmax, R)$.
		\item One connected component $E$ of $\W$ has $|E| \geq |\W| - \e$, while every other (if any) has $|E|\leq \e$.
	\end{enumerate}
\end{lemma}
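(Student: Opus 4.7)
The plan is to argue by compactness and contradiction, extending the scheme of Lemma~\ref{l:simpleevalaux} from exact to approximate minimizers. Fix $v, \vmax, R, \vpar$ and $\tpar \leq \tpar_0$ (to be specified), and suppose for contradiction that for some $\e > 0$ there is a sequence $\W_k \in \compset$ with $\En(\W_k) \leq \minE + \d_k$, $\d_k \to 0$, for which one of (1)--(3) fails. Choose any nonnegative $L^2$-normalized first eigenfunction $u_{\W_k}$ and the torsion function $w_{\W_k}$; then $(\W_k, u_{\W_k}, w_{\W_k})$ is a minimizing sequence in the sense of Lemma~\ref{l:emin}(2). Along a subsequence, this produces a set $\W' \in \Min$ with $|\W_k \triangle \W'| \to 0$, $\|u_{\W_k} - u_{\W'}\|_{H^1_0(\Qb_R)} \to 0$, and $\sqrt{\tpar}\|w_{\W_k} - w_{\W'}\|_{H^1_0(\Qb_R)} \to 0$; the eigenfunction $u_{\W'}$ is unambiguous because Lemma~\ref{l:simpleevalaux} gives $\ei(\W') < \l_2(\W')$. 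For large $k$ this $\W'$ verifies (1), ruling out its failure.

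For (3), let $\tilde E_k$ be a connected component of $\W_k$ with $\ei(\tilde E_k) = \ei(\W_k)$, which exists because some component supports $u_{\W_k}$. If (3) fails for $\W_k$, the largest component has volume less than $|\W_k| - \e$ and hence $|\W_k \setminus \tilde E_k| > \e$. Compare $\tilde E_k$ to $\W_k$: the $\fv$-term drops by at least $\vpar |\W_k \setminus \tilde E_k|$ (since $\fv$ has slope $\geq \vpar$ when $\vpar \leq 1$), while by additivity of $\tor$ on disjoint opens and \eqref{e:simpleevalaux1} the torsion penalty grows by at most
\[
-\tpar \tor(\W_k \setminus \tilde E_k) \leq \tpar C(R, \vmax)\, |\W_k \setminus \tilde E_k|^{(n+2)/n} \leq \tpar C \vmax^{2/n}\, |\W_k \setminus \tilde E_k|.
\]
Choosing $\tpar_0 \leq \vpar/(2 C \vmax^{2/n})$ makes the torsion loss at most half the $\fv$ gain, so $\En(\tilde E_k) \leq \En(\W_k) - (\vpar/2)|\W_k \setminus \tilde E_k|$. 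Combined with $\En(\tilde E_k) \geq \minE$ and $\En(\W_k) \leq \minE + \d_k$ this forces $\d_k \geq \vpar \e/2$, contradicting $\d_k \to 0$ as soon as one takes $\d < \vpar \e/2$.

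For (2), assume $\l_2(\W_k) - \ei(\W_k) \to 0$. Take an $L^2$-normalized second eigenfunction $u^2_k \perp u_{\W_k}$ and decompose $u^2_k = p_k - n_k$ into its positive and negative parts, supported on disjoint open sets $\W_k^\pm \subset \W_k$. Boundedness of $\int |\grad u^2_k|^2 = \l_2(\W_k)$ gives, along a further subsequence, $u^2_k \rightharpoonup v$ weakly in $H^1_0(\Qb_R)$ and strongly in $L^2$, with $\|v\|_{L^2} = 1$, $\int v\, u_{\W'} = 0$, and $v = 0$ a.e.\ outside $\W'$ (from $u^2_k \equiv 0$ on $\W_k^c$ and $|\W_k \triangle \W'| \to 0$). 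The crucial point is that both $v^+$ and $v^-$ are nontrivial: if, say, $v^- = 0$, then $v \geq 0$ is supported in $\W'$ with $\int v u_{\W'} = 0$, which together with $u_{\W'} > 0$ on the connected set $\W'$ (Lemma~\ref{l:simpleevalaux}) forces $v = 0$, contradicting $\|v\|_{L^2} = 1$. Therefore $\int p_k^2, \int n_k^2 \geq c_0 > 0$ eventually, and Lemma~\ref{l:efbdd} gives $|\W_k^\pm| \geq c_0/C_1^2 =: c_1$. Assuming WLOG $\int p_k^2 \geq 1/2$, the identity $\l_2(\W_k) = \int|\grad p_k|^2 + \int|\grad n_k|^2$ combined with $\l_1(\W_k^-) \geq \ei(\W_k)$ yields $\ei(\W_k^+) \leq \ei(\W_k) + 2[\l_2(\W_k) - \ei(\W_k)] = \ei(\W_k) + o_k(1)$. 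Using $\W_k^+$ as a competitor with $|\W_k \setminus \W_k^+| \geq c_1$ and the same torsion-vs-$\fv$ trade-off as in (3) yields $\En(\W_k^+) \leq \En(\W_k) + o_k(1) - \vpar c_1/2$, contradicting $\En(\W_k^+) \geq \minE$ for large $k$. The main delicate point throughout is the positive lower bound on both parts of $u^2_k$, which ultimately rests on the strict positivity of the limiting first eigenfunction $u_{\W'}$ on the connected minimizer $\W'$.
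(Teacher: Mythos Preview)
Your arguments for (1) and (3) are correct and are essentially the compactness scheme the paper intends (Lemma~\ref{l:emin}(2) for (1), the component-removal estimate from Lemma~\ref{l:simpleevalaux} for (3)).

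For (2) there is a genuine gap in the final step. When you use $\W_k^+=\{u_k^2>0\}$ as a competitor and invoke ``the same torsion-vs-$\fv$ trade-off as in (3)'', you are implicitly using an estimate of the form $\tor(\W_k^+)-\tor(\W_k)\leq C|\W_k\setminus\W_k^+|^{(n+2)/n}$. In (3) this came from the \emph{additivity} $\tor(\W_k)=\tor(\tilde E_k)+\tor(\W_k\setminus\tilde E_k)$ together with \eqref{e:simpleevalaux1}, which is valid because $\tilde E_k$ is a union of connected components. For $\W_k^+$ this fails: the nodal set of $u_k^2$ has positive capacity, so the torsion function of $\W_k$ does not split over $\W_k^+$ and its complement. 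The only available bound is the crude $\tor(\W_k^+)-\tor(\W_k)\leq -\tor(\W_k)\leq C(R,\vmax)$, and then the contradiction requires $\tpar<\vpar c_1/(2C)$. But $c_1$ comes from the particular limit $v$, i.e.\ from the contradicting sequence itself, so you cannot fix $\tpar_0$ in advance.

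The fix, and what the paper's ``same compactness argument'' points to, is to apply Lemma~\ref{l:emin}(2) a second time with $u_k:=|u_k^2|$. Since $\int|\grad|u_k^2||^2=\l_2(\W_k)=\ei(\W_k)+o_k(1)$, the triple $(\W_k,|u_k^2|,w_{\W_k})$ is again a minimizing sequence, yielding $\W''\in\Min$ with $|u_k^2|\to u_{\W''}$ strongly in $H^1$ and $|\W'\triangle\W''|=0$. Exactly as in the proof of Lemma~\ref{l:simpleevalaux}, $\W'\cup\W''\in\Min$ and hence $u_{\W'}=u_{\W''}$. Now let $v$ be the weak $H^1$ limit of $u_k^2$; compact embedding gives $u_k^2\to v$ in $L^2$ and a.e., so $|v|=u_{\W'}$ a.e. In particular the quasi-continuous representative of $v$ vanishes q.e.\ on $(\W')^c$, so $v\in H^1_0(\W')$. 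Since $\|v\|_{L^2}=1$, $\int v\,u_{\W'}=0$, and $\int|\grad v|^2\leq\liminf\l_2(\W_k)=\ei(\W')$, this gives $\l_2(\W')\leq\ei(\W')$, contradicting Lemma~\ref{l:simpleevalaux}. No energy comparison with $\W_k^+$ (and hence no torsion estimate) is needed.
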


Note that under the assumptions of this lemma, it is simply false that $\W$ must be connected: one may always add a ball of small measure to $\W$, which perturbs $\En$ continuously.

\subsection{Global minimizers of the base energy}\label{ss:globalbase}
In general, the constants in the previous section necessarily depend on $R$, and there is no reason to expect, for instance, existence of global minimizers on noncompact Riemannian manifolds with arbitrary (bounded geometry) behavior at infinity. In this section, we turn our attention to Riemannian manifolds $(M,g)$ where $M/\Isom$ is compact. In this case, most of the above estimates remain valid with constants independent of $R$, and minimizers of the base energy $\En$ among all open bounded sets exist and have bounded diameter.  For the present section, we focus only on estimates which will be relevant in later sections; in particular, the existence of minimizers is not treated here (but does follow from Theorem \ref{t:globalexist} later). 


Of course, in the case when $(M,g)$ is compact, then for $M \subset Q_R$ for sufficiently large $R$ , and thus the constants in the previous section may be taken independent of $R$ trivially. So, to simplify statements, in this section we will assume that $(M,g)$ is noncompact but $(M/G,g )$ is compact. Note in this case that $M$ has infinite volume.

For our first estimate, we show some initial bounds on the infimum of the energy and on the torsion function of any open bounded set in terms of its volume. Observe that $\minE(R)$ is a nonincreasing function of $R$, as the class $\compset$ is increasing in $R$. Set $\minE(\infty) = \lim_{R \rightarrow \infty} \minE(R)$. 

\begin{lemma} \label{l:globalElb} Assume $M /\Isom$ is compact and fix $v < \vmax < \infty$. Then there exists $C= C(\vmax)<\infty$ such that for any bounded open set $\W$ with $|\W|\leq \vmax$, we have $\minE(\infty) \geq - C$ and $w_\W \leq C$.
\end{lemma}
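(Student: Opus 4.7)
The proof adapts Corollary~\ref{c:tornotminusinf} and Lemma~\ref{l:torsionfunctionbdd} to the global setting by replacing the Poincar\'e inequality on $Q_R$ with a uniform Sobolev-type inequality on $M$. Specifically, using that $(M,g)$ has bounded geometry and $M/G$ is compact, there is a constant $C_S$ so that
\[
\|u\|_{L^{2n/(n-2)}(M)} \le C_S \|\nabla u\|_{L^2(M)}
\]
for every compactly supported $u \in \dot H^1(M)$. On the motivating examples (Euclidean space, hyperbolic space, products of space forms) this is classical; in general it follows by combining the local Sobolev inequality on balls of radius comparable to $\text{inj}_M$ (available by bounded geometry) with a patching argument permitted by a compact fundamental domain of the $G$-action.

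Once this Sobolev inequality is in hand, the bound on the torsional rigidity follows exactly as in Corollary~\ref{c:tornotminusinf}, except the Sobolev inequality is applied in place of Poincar\'e. Using $\int |\nabla w_\W|^2 = \int w_\W$ from Lemma~\ref{l:basic}(3), together with H\"older's inequality and $|\{w_\W > 0\}| \le |\W| \le \vmax$,
\[
\int |\nabla w_\W|^2 = \int w_\W \le \vmax^{(n+2)/(2n)} \|w_\W\|_{L^{2n/(n-2)}} \le C_S \vmax^{(n+2)/(2n)} \|\nabla w_\W\|_{L^2},
\]
giving $\|\nabla w_\W\|_{L^2} \le C(\vmax)$ and hence $-\tor(\W) \le C(\vmax)$. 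The pointwise bound then proceeds as in Lemma~\ref{l:torsionfunctionbdd}: bounded geometry furnishes a fixed radius $r_0 > 0$ so that, in normal coordinates on $B_{r_0}(x)$, the local maximum principle applied to the nonnegative subsolution $w_\W$ of $\Delta w_\W \ge -1$ yields
\[
\sup_{B_{r_0/2}(x)} w_\W^2 \le C\Big( \int_{B_{r_0}(x)} |\nabla w_\W|^2 + 1 \Big) \le C(\vmax),
\]
with constants independent of the basepoint $x$. Hence $\|w_\W\|_{L^\infty(M)} \le C(\vmax)$.

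The lower bound on $\minE(\infty)$ is then immediate. Since $\ei(\W) \ge 0$, $\tpar \le 1$, and $\fv(|\W|) \ge -\vpar v$ (the volume penalization is $\vpar(t-v) \ge -\vpar v$ for $t \le v$ and nonnegative for $t > v$), we obtain
\[
\En(\W) \ge 0 + \tpar \cdot (-C(\vmax)) - \vpar v \ge -C(\vmax)
\]
for every $\W \in \compset_{R,\vmax}$, with the bound independent of $R$. Taking the infimum first over $\W$ and then over $R$ yields $\minE(\infty) \ge -C(\vmax)$. The main obstacle, compared with the earlier local results, is establishing the $R$-independent Sobolev input at the outset; once it is available, the rest is a direct transcription of the Section~\ref{ss:basics} arguments with $Q_R$ replaced by $M$.
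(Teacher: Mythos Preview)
Your overall strategy—obtain a global, $R$-independent functional inequality for $w_\W$, then repeat the arguments of Corollary~\ref{c:tornotminusinf} and Lemma~\ref{l:torsionfunctionbdd}—is exactly the paper's strategy. But the Sobolev inequality you invoke is false as stated. You claim
\[
\|u\|_{L^{2n/(n-2)}(M)} \le C_S \|\nabla u\|_{L^2(M)} \qquad \text{for every compactly supported } u,
\]
but this fails on, e.g., $M = S^{n-1}\times\R$ with the product metric (which has bounded geometry and $M/G$ a point): a bump function equal to $1$ on $S^{n-1}\times[-L,L]$ has $\|\nabla u\|_{L^2}$ bounded while $\|u\|_{L^{2n/(n-2)}}\to\infty$. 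Bounded geometry only gives the Sobolev inequality with an additive $\|u\|_{L^2}$ term, and that term cannot be absorbed without a smallness assumption you do not have.

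What you actually need—and what the paper proves—is the restricted inequality for functions with $|\{|u|>0\}|\le \vmax$. The paper obtains this by choosing $S$ large enough that $|Q_S|>\vmax$, covering $M$ by a finite-overlap family $\{e(Q_S)\}_{e\in L}$ (this is where compactness of $M/G$ enters), applying the Poincar\'e inequality of Lemma~\ref{l:poincare} on each piece (where the volume constraint $|\{|u|>0\}|\le \vmax<|Q_S|$ is essential), and summing. This yields $\|w_\W\|_{L^2}\le C(\vmax)\|\nabla w_\W\|_{L^2}$ globally, after which your remaining steps go through verbatim. Your phrase ``patching argument permitted by a compact fundamental domain'' points in the right direction, but the argument must be carried out for the volume-constrained class, not asserted for all compactly supported functions.
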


\begin{proof}
	First, we claim there exists a large $S \gg 0$ such that
	\[
		\bigcup_{e \in \Isom} e(\Qb_S) = M.
	\]
	If not, for every $k$ there is an $x_k \in M$ such that $e(x_k) \notin \Qb_k$ for any $e \in \Isom$. There exists, however, a subsequence $x_{k_j}$ and isometries $e_j$ with $e_j(x_{k_j}) \rightarrow x \in M$. If $x \in \Qb_J$ for some $J$, then as $\Qb_J$ is open $e_j(x_{k_j}) \in \Qb_J$ for $j$ large enough; this is impossible if $k_j \geq J$, so $x \notin \Qb_J$. This contradicts $M  = \cup_R \Qb_{R}$. Using the Vitali covering lemma and possibly taking $S$ larger, there exists a countable collection $L \ss \Isom$ such that $\cup_{e \in L} e(\Qb_S) = M$ and $\{e(\Qb_S)\}_{e \in L}$ have finite overlap. Up to possibly increasing $S$ depending on $\vmax,$ we also assume that $|\Qb_S| > \vmax$.

	Now, fix any $\W$ open, bounded, with $|\W|\leq \vmax$. For any $e \in L$, we apply the Poincar\'{e} inequality of Lemma \ref{l:poincare} to $e(\Qb_S)$ to obtain that
	\begin{equation}\label{e:globalElb}
		\|w_\W \|_{L^2(\W \cap e(\Qb_S))} \leq C \|\grad w_\W\|_{L^2(\W \cap e(\Qb_S))}
	\end{equation}
	for a constant $C= C(S, \vmax) = C(\vmax)$, using that $|\W \cap e(\Qb_S)|\leq \vmax < |\Qb_S|$. Summing over all $e \in L$ and using the finite overlapping property gives
	$
		\int_\W w_\W^2 \leq C \int_\W |\grad w_\W|^2.
	$
	Then
	\[
		\int_\W w_\W \leq  C \, |\W|^{1/2}  \|\grad w_\W\|_{L^2(\W)} \leq C \, \|\grad w_\W\|_{L^2(\W)},
	\]
	so
	$
		\tor(\W) = \int \frac{1}{2}|\grad w_\W|^2 - w_\W \geq - C.
	$
	On the other hand $\ei(\W) > 0$, while $\fv(\W) \geq - \vmax$, so $\En(\W) \geq - C(\vmax)$. Taking the infimum over all $\W$ gives the first conclusion.
	
	To see that $w_\W \leq C$, we now have from \eqref{e:globalElb}, elliptic estimates \cite[Theorem 8.17]{GT}, and a basic covering argument that
	\[
		\|w_\W\|_{L^\infty(\W \cap e(\Qb_S))} \leq C[\|w_\W\|_{L^2(\Qb_{2S})} + 1] \leq C.
	\]
	Applying to every $e$ gives the estimate.
\end{proof}

We now establish an analogue of Lemma~\ref{l:simpleeval} that is independent of $R$:  that any low energy set can be well approximated by a minimizer (on some $\Qb_S$ for $S$ uniform) and has one ``large'' connected component. We use a concentration compactness argument to handle the loss of compactness coming from the isometries of the space. 

\begin{lemma} \label{l:globalsimpleeval} Assume $M /\Isom$ is compact. Fix  $v < \vmax < \infty$, $\e > 0$, and $\vpar >0$. There exist  $\tpar_0 = \tpar_0(v, \vmax, \vpar) > 0$, and a $\d = \d(v, \vmax, \vpar, \e) > 0$ such that if $\tpar < \tpar_0$, then the following holds. For any open bounded set  $\W \subset M$ with $|\W |\leq \vmax$  and $\En(\W) \leq \minE(\infty) + \d$:
	\begin{enumerate}
		\item There is an $S = S(v, \vmax, \vpar, \e)$ and a $U \in \Min(S)$ (i.e. a minimizer of $\En$ over open $\W \ss \Qb_S$ with $|\W|\leq \vmax$) such that $|U \triangle \W| < \e$, $\|u_{U} - u_\W\|_{H^1} \leq \e$, and $\sqrt{\tpar }\|w_{U} - w_\W\|_{H^1} \leq \e$.
		\item $\l_2(\W) > \ei(\W) + c(v, \vmax)$
		\item One connected component $E$ of $\W$ has $|E| \geq |\W| - \e$, while every other (if any) has $|E|\leq \e$.
	\end{enumerate}
\end{lemma}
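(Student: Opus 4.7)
The plan is to argue by contradiction, paralleling the proof of Lemma~\ref{l:simpleeval} but using concentration compactness to compensate for the lack of a fixed container $\Qb_R$. Assume the conclusion fails: there exist $\e_0 > 0$ and sequences $\tpar_k \to 0$, $\d_k \to 0$, and open bounded $\W_k$ with $|\W_k| \leq \vmax$ and $\En_{\tpar_k}(\W_k) \leq \minE(\infty) + \d_k$ violating one of (1)--(3) with tolerance $\e_0$. By Lemma~\ref{l:globalElb}, each of $\ei(\W_k)$, $\tor(\W_k)$, and $\fv(|\W_k|)$ is individually bounded, so $u_{\W_k}$ and $\sqrt{\tpar_k}\,w_{\W_k}$ are uniformly bounded in both $L^\infty$ (by Lemmas~\ref{l:efbdd} and~\ref{l:torsionfunctionbdd}) and in the relevant local $H^1$ seminorms.

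Next I would perform a Lions-type concentration compactness analysis on the measures $u_{\W_k}^2\,dm$, using the $\Isom$-action to relocate mass. Setting $Q_k(r) = \sup_{x \in M} \int_{B_r(x)} u_{\W_k}^2$, the three standard alternatives are vanishing, dichotomy, and compactness up to isometries. Vanishing is precluded by the bound $|\{u_{\W_k}>0\}| \leq \vmax$: combined with the uniform $L^\infty$ bound and the Poincar\'e inequality of Lemma~\ref{l:poincare} applied on isometric images of $\Qb_S$, the hypothesis $Q_k(r) \to 0$ for every fixed $r$ would force $\int u_{\W_k}^2 \to 0$. Dichotomy is precluded by the same subadditivity mechanism as in Lemma~\ref{l:simpleevalaux}: if the eigenfunction mass splits into two pieces whose supports drift infinitely far apart, a cutoff construction produces admissible competitors whose energies sum to strictly less than $\En(\W_k)$, using the quantitative torsion lower bound $\tor(E) \geq -C|E|^{(n+2)/n}$ from the proof of Lemma~\ref{l:simpleevalaux} to rule out a vanishingly small companion piece carrying nontrivial torsional rigidity.

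Thus the compactness alternative holds: there exist isometries $e_k \in \Isom$ and a radius $S_1 = S_1(v, \vmax, \vpar, \e)$ such that, after replacing $\W_k$ by $e_k(\W_k)$, one has $\int_{\Qb_{S_1}} u_{\W_k}^2 \geq 1 - \e$ and $w_{\W_k}$ is likewise concentrated in $\Qb_{S_1}$. Pick $S \geq S_1$ large enough that also $\minE(S) \leq \minE(\infty) + \d_k$. The truncation $\W'_k := \W_k \cap \Qb_S$ then costs only a small amount of energy (using the uniform $L^\infty$ bounds on $u_{\W_k}$ and $w_{\W_k}$ together with the tightness), so $\W'_k$ is a minimizing sequence for $\En$ over the class of open sets in $\Qb_S$ of volume at most $\vmax$. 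The compactness claim of Lemma~\ref{l:emin} then supplies a limit $U \in \Min(S)$ satisfying the eigenfunction, torsion function, and set convergence required in (1). Items (2) and (3) are inherited at the limit via Lemma~\ref{l:simpleevalaux}; the spectral gap and component-size constants depend only on $(v, \vmax)$ by repeating the torsion-comparison argument with the uniform estimate of Lemma~\ref{l:globalElb}.

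The main obstacle is executing the dichotomy step rigorously in this Riemannian setting: while the first eigenvalue is controlled by the minimum over disjoint pieces (which is favorable for splitting), the torsional rigidity is \emph{additive} across widely separated components, so extracting a definite energy gap requires combining the torsion lower bound with the volume penalization to show that any split into two nontrivial pieces strictly raises the energy. A secondary technical point is that $\W_k$ is a near-minimizer rather than an exact minimizer, so the ``exactly disconnected'' argument of Lemma~\ref{l:simpleevalaux} must be replaced by a quantitative version that absorbs both the energy tolerance $\d_k$ and the small parameter $\tpar_k$ simultaneously.
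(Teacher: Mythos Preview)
Your overall plan---contradiction plus a Lions trichotomy on $u_{\W_k}^2\,dm$---is a legitimate alternative, but it differs substantially from the paper's argument and your execution of the dichotomy step is not right as stated.

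The paper does \emph{not} argue by contradiction or invoke the vanishing/dichotomy/compactness trichotomy. For a fixed $\W$ with low energy it works directly: (i) locate $x_0$ where $u_\W$ has nontrivial $L^2$ mass (via the Poincar\'e inequality on isometric copies of $\Qb_S$); (ii) by pigeonhole over disjoint annuli $B_{16kS}(x_0)\setminus B_{8kS}(x_0)$, find a radius $8Sj$ with $j\leq \bar J(\d)$ across which the $H^1$-energy of $u_\W$ and $w_\W$ is $\leq C/J$; (iii) compare $\W$ with the truncation $\W\cap B_{8Sj}(x_0)$. The comparison yields
\[
0 \leq \tfrac{C}{J} + \d - (\vpar - C\tpar)\,|\W\setminus B_{8Sj}(x_0)|,
\]
so choosing $\tpar_0 < \vpar/(2C)$ forces the exterior piece to have small \emph{volume}. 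This single estimate replaces your entire dichotomy analysis: it is the volume penalization $\fv$, not subadditivity and not the bound $\tor(E)\geq -C|E|^{(n+2)/n}$, that kills mass at infinity. Conclusions (1)--(3) then follow by applying Lemma~\ref{l:simpleeval} to the truncated set and using the exterior volume bound.

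Your dichotomy sketch has a specific error: you claim a cutoff yields ``competitors whose energies sum to strictly less than $\En(\W_k)$,'' but the eigenvalue term does not sum across pieces---$\ei$ of a disjoint union is a minimum, so only one piece inherits $\ei(\W_k)+o(1)$. The correct mechanism (which you half-identify in your last paragraph) is that discarding the far piece lowers $\fv$ by at least $\vpar|\W_2|$ while the torsion term rises by at most $C\tpar|\W_2|$ (via $w_\W\leq C$ from Lemma~\ref{l:globalElb}); for $\tpar<\vpar/C$ this already contradicts near-minimality, with no role for the torsion lower bound you invoke. Relatedly, your claim that ``$w_{\W_k}$ is likewise concentrated in $\Qb_{S_1}$'' is not free: the torsional rigidity has worse concentration-compactness behavior than $\ei$, and the paper controls it only \emph{after} establishing $|\W\setminus B|\leq C\d$ and then using $w_\W\leq C$. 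In your framework you would need to first extract set-level concentration from eigenfunction concentration (again via $\fv$) before anything can be said about $w_{\W_k}$.
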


\begin{proof}
	Let $L \ss \Isom$ and $S$ be as in the proof of Lemma \ref{l:globalElb}. We use the shorthand $\bar{Q} =\Qb_{S}$. Let $\{\psi_e\}_{e \in L}$ a partition of unity subordinate to the cover of $M$ by $\{e(\bar\Qb) \}_{e \in L}$ with $\psi_e = \psi \circ e^{-1} \in C^\infty_c(\bar\Qb)$ translates of each other.	
	Throughout this proof, $C$ and $c$ denote  constants depending only on $\vmax$ (and, as usual, on $g$) whose values change from line to line. Let $\delta \leq 1$ be a fixed number to be specified in the proof and let $\W$ be as above. Applying Lemma~\ref{l:globalElb}, we know that 
	\begin{equation}\label{e: global bound}
	\int_\W |\grad u_\W|^2 + u_\W^2 + |\grad w_\W|^2 + w_\W^2 \leq C.
\end{equation}
	The first step of the proof is a concentration compactness argument that allows us to replace $\Omega$ by a uniformly bounded set, losing only a small amount of mass and increasing the base energy only a small amount. \\

{\it Step 1:} There exist $\tpar_0$ and $\bar J = \bar J(\vmax, \delta) \in \mathbb{N}$ such that we may choose $x_0 \in M$ and  $j \leq \bar J$ depending on $\W$  such that
	\begin{equation}\label{e:globalsimpleeval1}
		\begin{cases}
			\En(\W \cap B_{8 S j}(x_0)) \leq \minE +  2 \d \\
			|\W \sm B_{8 S j}(x_0)| \leq 4 \vpar \d.
		\end{cases}
	\end{equation} 
	Because this step is rather involved, we divide it into several substeps.
\\	
	
{\it Step 1a: Selection of $x_0$.} We show that $\Omega$ has a (uniformly) nontrivial amount of mass in $e_0 (\bar \Qb)$ for some $e_0 \in L$, and choose $x_0$ in this set. More specifically, for any $e \in L$, we apply Lemma \ref{l:poincare} to $\W \cap e(\bar\Qb)$ to find
	\[
		\int_{e(\bar\Qb)} u_\W^2 \leq \left|\W \cap e(\bar\Qb)\right|^{\frac{2}{n}} \Big(\int_{e(\bar\Qb)} u_\W^\frac{2n}{n-2}\Big)^{\frac{n-2}{n}} \leq C \left|\W \cap e(\bar\Qb)\right|^{\frac{2}{n}} \int_{e(\bar\Qb)} |\grad u_\W|^2.
	\]
	Summing over $e \in L$, recalling the finite overlap property of $L$ and \eqref{e: global bound}, we have
	\[
		1 = \int_{\W} u_\W^2 \leq C\, \Big(\sup_{e \in L}|\W \cap e(\bar\Qb)|^{\frac{2}{n}}\Big) \int_\W |\grad u_\W|^2 \leq C \, \Big(\sup_{e \in L}|\W \cap e(\bar\Qb)|^{\frac{2}{n}}\Big).
	\]
 Consequently, $|\W \cap e_0(\bar \Qb)| \geq c$ for some $e_0$. We let $x_0$ be any fixed point in $ e_0(\bar\Qb)$.
\\

{\it Step 1b: Selecting $j$ for each $J \in \mathbb{N}$.} Next, we show how, for each $J\in \mathbb{N}$, to suitably choose $j \leq J$ that will ultimately lead to estimates of the form \eqref{e:globalsimpleeval1} but with a $C/J$ error; we will later choose $J$ depending on $\delta $ to absorb this term.
For any $k \in \mathbb{N}$, we denote by $L_k$  the finite  collection  of isometries such that $e(\bar \Qb)$ lies in the annulus $B_{16 k S}(x_0) \setminus B_{8 k S}(x_0) $. That is,
	\[
	L_k = \left\{e \in L: e(\bar\Qb) \cap \left[B_{8 k S}(x_0) \cup (M \sm B_{16 k S}(x_0))\right] = \emptyset \right\}.
	\]
Since $\diam \bar\Qb \leq 2S$ by assumption, the collections $L_k$ are  pairwise disjoint, and so from \eqref{e: global bound} we find that 
	\[
		\sum_{k = 1}^\infty  \ \sum_{e\in L_k} \int_{e(\bar \Qb)\cap \W} |\grad u_\W|^2 + u_\W^2 + |\grad w_\W|^2 + w_\W^2  \leq C.
	\]
	Since this sum is finite, it follows that for every $J > 0$, we may find some $j \leq J$ such that
	\[
		\sum_{e\in L_j} \int_{e(\bar\Qb)\cap \W} |\grad u_\W|^2 + u_\W^2 + |\grad w_\W|^2 + w_\W^2 \leq \frac{C}{J}.
	\]
	In other words, the functions $u_\Omega$ and $w_\Omega$ have most of their mass and energy outside of the annulus corresponding to $L_j$. Let this $ j=j(J)$ be fixed.\\

	{\it Step 1c: Truncation and intermediate bounds.} In this step, we consider $J \in \mathbb{N}$ fixed and $j=j(J)$ as determined in the previous step. Let us now define two smooth truncations of the eigenfunction $u_\W$, the ``inner part'' $u_I$ in the ball $B_{8jS}$ and the ``outer part'' $u_O$ in the complement of $B_{16Sj}$, by setting 
	\begin{align*}
	u_I = \sum_{e \in L_I} \psi_e u_\W & \qquad \qquad	L_I =\left\{e \in L \  :\  e(\bar\Qb) \cap B_{8j S}(x_0) \neq \emptyset\right\},\\
	u_O = \sum_{e \in L_O} \psi_e u_\W & \qquad \qquad	L_O = \left\{e \in L\  : \ e(\bar\Qb) \cap (M \sm B_{16j S}(x_0)) \neq \emptyset\right\}\,.
	\end{align*}
	Notice that the functions $u_I,$ $u_O$ have disjoint support and the remaining ``annular part'' of $u_\W$ is given by $ u_\W -u_I + u_O = \sum_{e \in L_k} \psi_e u_\W$. The truncations $u_I$ and $u_O$ capture most of the energy of $u_\W$ in the following sense: 
	\begin{align*}
		\left|\int |\grad u_\W|^2 - |\grad u_O|^2 - |\grad u_I|^2\right| = \Big|\int \langle \grad  [u_\W - u_O & - u_I], \grad [u_\W + u_O + u_I] \rangle\Big| \\
		& \leq \sum_{e \in L_j}\int_{e(\bar \Qb)} \1|\grad \psi_e| |u_\W| + \psi_e |\grad u_\W|\2 \cdot C |\grad u_\W|\\
		& \leq C \sum_{e \in L_j} \int_{e(\bar \Qb)} |u_\W|^2 + |\grad u_\W|^2 \leq \frac{C}{J}.
	\end{align*} 
	In particular, this together with the Poincar\'{e} inequality applied to $u_O$ provide the following upper bound for the energy of $u_I$:
	\begin{equation}\label{e: uI energy bound}
	\int| \grad u_I|^2 \leq \lambda_1(\W) - \int |\grad u_O|^2+ \frac{C}{j} \leq \lambda_1(\W)\Big(1 - \int u_O^2\Big) + \frac{C}{J}.
	\end{equation}
	In a similar fashion, we find that 
	\begin{equation}\label{e: mass}
		\int u_\W^2 - u_O^2 - u_I^2 \leq \frac{C}{J}. 
	\end{equation}	
The first main estimate of this sub-step is the following: there exists $C$ such that		 
\begin{equation}\label{e: eval truncation a}\begin{split}
\text{ if }\quad  \int u_I^2\geq \, \frac{1}{4} \qquad \text{ then }	\quad \ei\left(\W \cap B_{8 S j}(x_0)\right) &  \leq \ei(\W) + \frac{C}{J}\,,\\
\text{ if }\quad  \int u_O^2\geq\,  \frac{1}{4} \qquad \text{ then }	\quad \ei\left(\W \setminus B_{8 S j}(x_0)\right) &  \leq \ei(\W) + \frac{C}{J}\,.
	\end{split}\end{equation}
Note that \eqref{e: mass} guarantees that one of the two alternatives in \eqref{e: eval truncation a} must hold provided $J$ is sufficiently large. 
	If  $\int u_I^2\geq 1/4$ then \eqref{e: mass} implies  that $(1- \int u_O^2 )/\int u_I^2 \leq 1+ C/J$. So, if we multiply \eqref{e: uI energy bound} by $(1- \int u_O^2 )/\int u_I^2$ and divide both sides by $1- \int u_0^2$, we arrive at
	\[
		\frac{\int |\grad u_I|^2}{\int u_I^2} \leq \ei(\W) + \frac{C}{J}.
	\]
	Here we have also used the fact that $\ei(\W)$ is uniformly bounded; recall \eqref{e: global bound}.
So,  the first estimate of \eqref{e: eval truncation a} follows by taking $u_I$ as a test function for $\ei(\W \cap B_{8 S j}(x_0))$. The proof of the second estimate of \eqref{e: eval truncation a} is the same, using the analogous estimate to \eqref{e: uI energy bound} for $u_O$.

The second main estimate of this substep is the following:
\begin{equation}	
\label{e: torsion truncation}\begin{split}
\tor(\W \cap B_{8 S j}(x_0)) &\leq\tor(\W) + \frac{C}{J} + C |\W \sm B_{8 S j}(x_0)|,\\
\tor(\W \setminus B_{8 S j}(x_0)) &\leq\tor(\W) + \frac{C}{J} + C |\W \cap B_{8 S j}(x_0)|
\end{split}\end{equation}
The estimate \eqref{e: torsion truncation} follows similarly to \eqref{e: eval truncation a}. Splitting the torsion function $w_\W$ into an ``inner part'' $w_I,$, and ``outer part'' $w_O$, and an ``annular part'' $w_\Omega - w_I - w_O$ exactly as above leads to
	\[
		- \frac{1}{2}\int w_I + w_O \leq \tor(\W) +  \frac{C}{J};
	\]
	we omit the details. 
	 Recalling that $w_\W \leq C$ from Lemma \ref{l:globalElb}, we find that
	\begin{align*}
		\tor(\W \cap B_{8 S j}(x_0)) \leq -\frac{1}{2} \int w_I 
		& \leq \tor(\W) + \frac{C}{J} + \frac{1}{2} \int w_O\\
		& \leq \tor(\W) + \frac{C}{J} + C |\W \sm B_{8 S j}(x_0)|,
	\end{align*}
thus proving the first estimate in \eqref{e: torsion truncation}. The second estimate in \eqref{e: torsion truncation} is proven in the same way. \\

{\it Step 1d: Energy comparison and selection of $\bar J$ and $\tpar_0$.} Fix any $J \in \mathbb{N}$ and let $j=j(J)$ be as determined in Step 1b. Let us first consider the case that $
\int u_I^2 \geq \frac{1}{4}.$
In this case, we may use the first  estimates in \eqref{e: eval truncation a} and \eqref{e: torsion truncation} to compare $\En(\W)$ to $\En(\W \cap B_{8 S j}(x_0))$, which gives
	\begin{align*}
		\En(\W) \leq \minE + \d
		&\leq  \En(\W \cap B_{8 S j}(x_0)) + \d \\
		& \leq \ei(\W) + \tpar \tor(\W) + \frac{C}{J} + \fv (|\W \cap B_{8 S j}(x_0)|) + \d + C \tpar|\W \sm B_{8 S j}(x_0)| \\
		& \leq \En(\W) + \frac{C}{J} + \fv(|\W \cap B_{8 S j}(x_0)|) - \fv(|\W|) + \d + C \tpar |\W \sm B_{8 S j}(x_0)|\\
		& \leq \En(\W) + \frac{C}{J} + \d - [\vpar - C \tpar]|\W \sm B_{8 S j}(x_0)|.
	\end{align*}
At this point, we can fix our remaining parameters. Choosing $\tpar_0 < \frac{\vpar}{2C}$, the last term is negative and this leads to
	\begin{equation}\label{e:globalsimpleeval1 b}
		\begin{cases}
			\En(\W \cap B_{8 S j}(x_0)) \leq \minE + \frac{C}{J} + \d \leq 2 \d \\
			|\W \sm B_{8 S j}(x_0)| \leq 4 \vpar \d.
		\end{cases}
	\end{equation}
Finally, we choose $J = \bar J$ large enough in terms of $\d$ so that $\frac{C}{J} \leq \d$, and \eqref{e:globalsimpleeval1} follows. This concludes the proof of Step 1 in the case when $
\int u_I^2 \geq \frac{1}{4}.$ holds. Now, assume instead that $
\int u_O^2 \geq \frac{1}{4}.$ We may argue in the analogous fashion to find that 
	\begin{equation}\label{e:globalsimpleeval1 c}
		\begin{cases}
			\En(\W \sm B_{8 S j}(x_0)) \leq \minE + \frac{C}{J} + \d \leq 2 \d \\
			|\W \cap B_{8 S j}(x_0)| \leq 4 \vpar \d.
		\end{cases}
	\end{equation}
This contradicts Step 1a for $\delta$ chosen sufficiently small. We conclude that $
\int u_I^2 \geq \frac{1}{4}$ holds and thus have completed the proof of Step 1. \\

{\it Step 2: Conclusion.}	We are now in a position to prove the first conclusion of the lemma. Let $\W' = \W \cap B_{8 S j}(x_0)$ be the truncation of $\W$ obtained in Step 1. Let $S'$ be chosen large enough that $e_0^{-1}( B_{8 S J}(x_0)) \ss \Qb_{S'}$; note that this depends only on $J$ and $S$, so only on $\vmax$ and $\d$. Applying Lemma \ref{l:emin} to $e_0^{-1}(\W')$ on  $\Qb_{S'}$, there is a $U\in \Min(S')$ with $\|u_{U} - u_I\|_{H^1}, \sqrt{\tpar} \|w_U - w_I\|_{H^1} \leq \e/2$. On the other hand, from the Caccioppoli inequality and letting $L'=\{e \in L : e(\Qb_S) \cap B_{8j S}(x_0) = \emptyset\}$,
	\[
		\int |\grad (u_\W - u_I)|^2 \leq C \sum_{e \in L'} \int |\grad \psi_e|^2 |u_\W|^2 + \psi_e^2 |\grad u_\W|^2 \leq C \int_{\W \sm B_{8 S j}(x_0)} |u_\W|^2  \leq C \d,
	\]
	using that $|u_\W|\leq C$ and the volume estimate in \eqref{e:globalsimpleeval1} at the end. Similarly, $\int |\grad (w_\W - w_I)|^2 \leq C \d$, and we obtain conclusion (1) from the triangle inequality.
	
	To prove  (2), note that applying Lemma \ref{l:simpleeval} gives that $\l_2(\W') \geq \ei(\W) + c_0$ for $c_0 = c_0(S', v, \vmax)$, and so it remains to show that $\l_2(\W') \leq \l_2(\W) + c_0/2$. To this end, let $u_2$ be a second eigenfunction for $\W$.  from Lemma \ref{l:efbdd} we have that $|u_2|\leq C(\ei(\W)) \leq C$ unless $\l_2(\W) > 1 + \ei(\W)$ (in which case we would be done). Then set
	$
	u_{2,I} = \sum_{e \in L_I} \psi_e u_2
	$
	to be the ``inner part,'' i.e. the analogue of $u_I$ for $u_2$. As $ \Lap |u_2| \geq - \l_2(\W) |u_2|$ in the sense of distributions, the same Caccioppoli inequality argument gives that
	\[
		\int |\grad (u_2 - u_{2,I})|^2 \leq C \int_{\W \sm B_{8 S j}(x_0)} |u_2|^2  \leq C \d.
	\]
	Hence
	\[
		\frac{\int |\grad u_{2,I}|^2}{\int u_{2,I}^2} \leq \l_2(\W) + C \d \qquad \text{and} \qquad \frac{|\int u_{2,I}\, u_I |}{\|u_{2,I}\|_{L^2}\|u_I\|_{L^2}}\leq C \d,
	\]
	implying that $\l_2(\W') \leq \l_2(\W) + C \d.$
	Taking $\d$ small enough completes the proof of (2).
	
	To check the final conclusion (3) of the lemma, note that from Lemma \ref{l:simpleeval}, we know that one connected component $A$ of $\W'$ has $|A| > |\W'| - \e/2$, while every other has $|A| \leq \e/2$ (choosing $\d$ small enough in terms of $\e$). As $|\W \sm B_{8 S j}(x_0)| \leq C \d$, we may conclude by choosing $C\d < \e/2$.
\end{proof}

\section{The lower bound} \label{s:lb}
In this section we consider inward minimizers of the main functional $\Ep$ defined in \eqref{eqn: main functional}. The main result of this section is Theorem~\ref{t:lb}, which provides a linear lower bound for the growth of the function $ u_\W + \sqrt{\tpar} w_\W$ away from the boundary of an inward minimizer $\W$. The theorem is phrased in terms of  a lower bound on the quantity $\DO(\W)$ defined in \eqref{def: DO}. One important consequence of this theorem is a lower volume density estimate for inward minimizers shown in Corollary~\ref{c:badldensitybd}. Throughout this section, we fix $R>0$, $0 < v< \vmax$, and $0<\eta\leq \eta_0$ (recall Remark~\ref{rmk: eta parameter}). 

\begin{theorem} \label{t:lb}
	 There are  constants  $\d_m, \tpar_m, c_m> 0$ depending only on $R, v, \vmax,$ and $ \vpar$ and a constant $ \err_m(R, v, \vmax,\vpar, \tpar)>0$   such that if we fix  $\tpar < \tpar_m$ and then $\err < \err_m$, the following holds. Let $\W$ be an inward minimizer of $\Ep$ on $\Qb_R$ satisfying  $\En(\W) \leq \minE + \d_m$. Then we have
	\[
		\DO(\W) \geq c_m
	\]
	where $\DO(\W)$ is defined in \eqref{def: DO}. If $M/\Isom$ is compact, all constants may be taken independent of $R$.
\end{theorem}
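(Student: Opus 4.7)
The plan is to argue by contradiction via a standard Alt-Caffarelli-style inward perturbation, with Proposition~\ref{lem:key} as the crucial ingredient needed to absorb the nonlinear term $\err \nl$.

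First I set up the regime. Shrinking $\d_m$ and $\tpar_m$ so that Lemma~\ref{l:simpleeval} applies, I may assume $\W$ has simple first eigenvalue with a uniform spectral gap $\l_2(\W) - \ei(\W) \geq \a$, uniform $L^\infty$ bounds on $u_\W$ and $w_\W$ via Lemmas~\ref{l:efbdd} and~\ref{l:torsionfunctionbdd}, and $|\W| < (\vmax+v)/2$ via Lemma~\ref{l:volumenottoobig}, so that $\fv$ has slope at least $\vpar$ near $|\W|$. Suppose for contradiction that there exist $y \in \bar\W$ and $r \in (0,1)$ with
\[
	M := \sup_{B_r(y)}\bigl(u_\W + \sqrt\tpar\, w_\W\bigr) < c_m r
\]
for a small $c_m$ to be fixed. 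Pick a Lipschitz cutoff $\phi \geq 0$ with $\phi \equiv 2 c_m r$ on $B_{r/4}(y)$, $\supp \phi \ss B_{r/2}(y)$, and $|\grad \phi| \leq C c_m$, and form the inward competitor
\[
	u' := (u_\W - \phi)_+, \qquad w' := (w_\W - \phi/\sqrt\tpar)_+, \qquad \W' := \{u' + w' > 0\} \cup \bigl(\W \sm \overline{B_{r/2}(y)}\bigr).
\]
Then $\W' \ss \W$ is open, $u', w' \in H^1_0(\W')$, and the hypothesis $M < c_m r < \phi$ on $B_{r/4}(y)$ forces $u' \equiv w' \equiv 0$ there.

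Using $u'/\|u'\|_{L^2}$ and $w'$ as test functions for $\ei(\W')$ and $\tor(\W')$ respectively, and combining with a Caccioppoli bound $\int_{B_{3r/4}(y)} |\grad u_\W|^2 + \tpar|\grad w_\W|^2 \leq C c_m^2 r^n$ obtained directly from $M < c_m r$ (taking $\tpar_m$ small enough that $\sqrt\tpar \leq c_m$), the base-energy change satisfies
\[
	[\ei(\W') - \ei(\W)] + \tpar[\tor(\W') - \tor(\W)] \leq C c_m^2 r^n.
\]
Property~\ref{a:nllip} of Definition~\ref{def: nl} together with Proposition~\ref{lem:key} applied to $\W' \ss \W$ then yields
\[
	\err \,|\nl(\W') - \nl(\W)| \leq \err |\W \sm \W'| + C \err \tpar^{-1} c_m^2 r^n,
\]
since $\int |u_\W - u_{\W'}|$ is controlled linearly by $[\ei(\W')-\ei(\W)] + [\tor(\W')-\tor(\W)]$. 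The volume penalization contributes $\fv(|\W'|) - \fv(|\W|) \leq -\vpar |\W \sm \W'|$. Altogether,
\[
	\Ep(\W') - \Ep(\W) \leq C\bigl(1 + \err \tpar^{-1}\bigr) c_m^2 r^n - (\vpar - \err)\,|\W \sm \W'|.
\]
Since $\W \cap B_{r/4}(y) \ss \W \sm \W'$, once one knows $|\W \cap B_{r/4}(y)| \geq c_0 r^n$, fixing $c_m$ small in terms of $c_0$ and $\vpar$ and then $\err_m$ small in terms of $\tpar, c_m, \vpar$ produces $\Ep(\W') < \Ep(\W)$, contradicting inward minimality.

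The residual case $|\W \cap B_{r/4}(y)| < c_0 r^n$ is handled by shifting the base point: since $y \in \bar\W$ and $u_\W + \sqrt\tpar w_\W$ is continuous with positivity set exactly $\W$, a Vitali-type covering or a descent in scale produces a new pair $(y', r')$ still satisfying the sup hypothesis up to a fixed constant and with positive lower density of $\W$ in $B_{r'/4}(y')$, reducing to the main case. The main obstacle is exactly the control of $\int |u_\W - u_{\W'}|$ inside the nonlinear term via~\ref{a:nllip}: absent the torsion contribution to $\En$, there would be no linear bound on this integral by the eigenvalue change, and the $\err\nl$ term would swamp the volume gain. Proposition~\ref{lem:key} --- valid only because $\En$ includes the torsion term --- is the linchpin that makes this inward perturbation argument viable.
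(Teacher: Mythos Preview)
Your energy comparison is essentially the same as the paper's, but you read the resulting inequality in the wrong direction. From your display
\[
	\Ep(\W') - \Ep(\W) \leq C\bigl(1 + \err \tpar^{-1}\bigr) c_m^2 r^n - (\vpar - \err)\,|\W \sm \W'|
\]
together with inward minimality $\Ep(\W') \geq \Ep(\W)$, the correct conclusion is the \emph{upper} bound $|\W \sm \W'| \leq C c_m^2 r^n$, not a contradiction. You instead try to force $\Ep(\W') < \Ep(\W)$ by demanding $|\W \cap B_{r/4}(y)| \geq c_0 r^n$, but no such lower density bound is available for an inward minimizer a priori---it is in fact a consequence of the theorem (Corollary~\ref{c:badldensitybd}). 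Your ``residual case'' handling is circular: descending to a point $y' \in \W$ with $d(y', \p\W) \approx r'$ gives full density on $B_{r'/4}(y')$, but you only know $\sup_{B_{r'}(y')}(u_\W + \sqrt\tpar w_\W) < c_m r$, not $< c_m r'$; when $\W$ is thin near $y$ every nearby point has $r' \ll r$ and the hypothesis is lost.

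The paper closes the argument differently. The upper volume bound $|\W \cap B_{3r/4}(y)| \leq C c_m^2 r^n$ (this is \eqref{e:davidtoroiteration2}) feeds into the local maximum principle for the subsolution $u_\W + \sqrt\tpar w_\W$ (which has $\Lap \geq -C$ on $M$), yielding
\[
	\sup_{B_{r/2}(y)}(u_\W + \sqrt\tpar w_\W) \leq C\bigl[r^{-n/2}\|u_\W + \sqrt\tpar w_\W\|_{L^2(B_{3r/4}(y))} + r^2\bigr] \leq C c_m^2 r \leq \tfrac14 c_m r
\]
for $c_m$ small. This decay from scale $r$ to $r/2$ (Lemma~\ref{l:davidtoroiteration}) is then iterated to show the function vanishes identically on a neighborhood of any point where the sup hypothesis holds, which---after one final comparison with $\W \sm B_{r/4}(y)$---contradicts $y \in \bar\W$. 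The ingredient missing from your proof is precisely this local maximum principle plus iteration; a one-shot contradiction relying on a lower density bound cannot work here.
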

 It is worth noting that Theorem~\ref{t:lb} and Lemma~\ref{l:torsionbound} show that the torsion function $w_\W$ grows at least linearly away from the boundary, but we cannot immediately deduce an analogous lower bound for the growth of the eigenfunction $u_\W$. It will require some delicate Green's function estimates to eventually show in Section~\ref{s:measure} that, for minimizers, the first eigenfunction also satisfies this type of linear lower bound. 

The core arguments in the proof of Theorem~\ref{t:lb} below are from David and Toro \cite{DT15}. We present the details to show how to apply the key estimate of Proposition~\ref{lem:key} in order to handle the nonlinear term $\nl$, as well as to verify the dependence on all of the parameters.  The proof will be established by iteratively applying the following lemma.

\begin{lemma} \label{l:davidtoroiteration} There exist constants  $\d_m, \tpar_m , c_m> 0$ depending only on $R, v, \vmax,$ and $\vpar$ and a constant  $\err_m(R, v, \vmax, \vpar , \tpar) > 0$  such that if  we fix  $\tpar < \tpar_m$ and then $\err < \err_m$,   the following holds.
	 Let $\W$ be an inward minimizer on $\Qb_R$ satisfying  $\En(\W) \leq \minE + \d_m$. For any $x\in \Qb_R$, and $\sqrt{r} \leq c_m$, if
	\begin{equation}\label{eqn: dt hyp}
		\sup_{B_r(x)} u_\W + \sqrt{\tpar} w_\W \leq c_m r
	\end{equation}
	then
	\[
		\sup_{B_{r/2}(x)} u_\W + \sqrt{\tpar} w_\W \leq \frac{1}{4} c_m r.
	\]
	If $M/\Isom$ is compact, all constants may be taken independent of $R$.
\end{lemma}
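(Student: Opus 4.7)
I would adapt the David--Toro competitor argument in the spirit of \cite{DT15}, with Proposition~\ref{lem:key} providing the new input needed to absorb the nonlinear perturbation. The plan has three stages: a one-sided competitor that yields a measure-density estimate for $\W$ on $B_{3r/4}(x)$; a subsolution sup estimate on $B_{r/2}(x)$; and a careful balancing of the parameters $c_m$, $\tpar$, and $\err$.

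For the competitor, fix a smooth cutoff $\varphi$ with $\varphi\equiv 0$ on $B_{3r/4}(x)$, $\varphi\equiv 1$ off $B_r(x)$, $|\grad \varphi|+r|\Lap\varphi|\leq C/r$, and set $\W':=\W\sm \overline{B_{3r/4}(x)}$, which lies in $\compset$ and is contained in $\W$. Using $\tilde u=\varphi u_\W/\|\varphi u_\W\|_{L^2}$ as a test function in \eqref{e:evalue} and $\tilde w=\varphi w_\W$ in \eqref{e:torsion}, integration by parts on $\W$ using $-\Lap u_\W=\ei(\W)u_\W$ and $-\Lap w_\W=1$ gives the clean identities
\[
\int|\grad(\varphi u_\W)|^2=\ei(\W)\int\varphi^2 u_\W^2+\int u_\W^2|\grad\varphi|^2,\qquad \int|\grad(\varphi w_\W)|^2=\int \varphi^2 w_\W+\int w_\W^2|\grad\varphi|^2.
\]
Combined with the hypothesis $u_\W,\sqrt{\tpar}w_\W\leq c_m r$ on $B_r(x)$, these yield the one-sided bounds $\ei(\W')-\ei(\W)\leq C c_m^2 r^n$ and $\tpar[\tor(\W')-\tor(\W)]\leq Cc_m^2 r^n$, the latter once $\sqrt{r}\leq c_m$.

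Next I would invoke inward minimality $\Ep(\W)\leq \Ep(\W')$. The volume penalty contributes $f_{v,\vpar}(|\W|)-f_{v,\vpar}(|\W'|)\geq \vpar(|\W|-|\W'|)$, while axiom \ref{a:nllip} together with Proposition~\ref{lem:key}---which applies because $\W'\ss \W$ inherits a spectral gap from $\W$ (cf.\ Lemma~\ref{l:simpleeval}) provided $\d_m$ is small---controls $|\nl(\W)-\nl(\W')|$ by $(|\W|-|\W'|)+C[\ei(\W')-\ei(\W)+\tor(\W')-\tor(\W)]$. Rearranging yields
\[
(1+C\err)[\ei(\W')-\ei(\W)]+(\tpar+C\err)[\tor(\W')-\tor(\W)]\geq (\vpar-\err)(|\W|-|\W'|).
\]
The raw bound $\tor(\W')-\tor(\W)\leq C c_m^2 r^n/\tpar$ (no $\tpar$ prefactor) is potentially large, but because $\err_m$ is allowed to depend on $\tpar$ we impose $\err_m\leq c\min\{\tpar,\vpar\}$, which keeps the left-hand side $\lesssim c_m^2 r^n$ and leaves a net $\vpar(|\W|-|\W'|)$ on the right. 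This gives the density estimate $|\W\cap B_{3r/4}(x)|\leq Cc_m^2 r^n/\vpar$.

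For the final step, extended by zero, $u_\W$ and $w_\W$ are nonnegative subsolutions of $-\Lap v=\ei(\W)v$ and $-\Lap v\leq 1_\W$ on $M$, so the local maximum principle (e.g.\ \cite[Thm.~8.17]{GT} in geodesic charts, with uniform Moser constants via bounded geometry) yields $\sup_{B_{r/2}(x)} u_\W\leq C(\fint_{B_{3r/4}(x)} u_\W^2)^{1/2}$ and the analogous estimate for $w_\W$ with an additional $Cr^2$ term. Since $u_\W,\sqrt{\tpar}w_\W\leq c_m r$ pointwise and these functions are supported in $\W$, the density estimate gives $\fint_{B_{3r/4}(x)} u_\W^2\leq Cc_m^4 r^2/\vpar$ and likewise with $\sqrt{\tpar} w_\W$ in place of $u_\W$. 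Consequently
\[
\sup_{B_{r/2}(x)}\bigl(u_\W+\sqrt{\tpar}w_\W\bigr)\leq Cc_m^2 r/\sqrt{\vpar}+C\sqrt{\tpar}\,r^2,
\]
which is at most $\tfrac14 c_m r$ once $c_m$ is small relative to $\sqrt{\vpar}$ and $\sqrt{r}\leq c_m$ absorbs the $r^2$ correction. The main subtlety---and the reason $\err_m$ must be allowed to depend on $\tpar$---is precisely that the torsion difference scales as $c_m^2 r^n/\tpar$, and the same quantity reappears through Proposition~\ref{lem:key} in the nonlinear contribution; without the torsion term in $\En$ this estimate would be out of reach. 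When $M/\Isom$ is compact, the whole argument is local and Lemma~\ref{l:globalsimpleeval} supplies the spectral gap, so all constants may be taken independent of $R$.
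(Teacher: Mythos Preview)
Your proposal is correct and follows essentially the same David--Toro competitor strategy as the paper: remove $B_{3r/4}(x)$ to form $\W'$, bound $\ei(\W')-\ei(\W)$ and $\tpar[\tor(\W')-\tor(\W)]$ by $Cc_m^2 r^n$ via cutoff test functions, absorb the $\nl$ contribution using \ref{a:nllip} and Proposition~\ref{lem:key} (taking $\err_m\lesssim\tpar$), extract the density bound $|\W\cap B_{3r/4}(x)|\leq Cc_m^2 r^n$ from inward minimality, and finish with the local maximum principle. One small slip in your phrasing: Proposition~\ref{lem:key} requires the spectral gap on the \emph{larger} domain, which here is $\W$ itself (supplied directly by Lemma~\ref{l:simpleeval}), not something ``inherited'' by $\W'$; your application is nonetheless correct.
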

\begin{proof} The basic idea of the proof is make use of the inward minimality property of $\W$ using an energy  competitor $\W'$ obtained by removing a small ball from $\W$.

{\it Step 1: The competitor $\Omega'$.}
	Let $c_m \leq \min\{ \text{inj}_M, 1\}$ be a fixed  number to be specified later in the proof. For $r \leq c_m^2$ and $x \in \Qb_R$, we consider the competitor 
	$
	\W' = \W \sm B_{3r/4}(x).
	$
 Choose $\d_m,$ and $\tpar_m$ according to Lemma \ref{l:simpleeval}, so that $\W$ has a spectral gap $\l_2(\W) > \ei(\W) + c(R, \vmax, v)$ of a definite size. This allows us to apply the key estimate of Proposition~\ref{lem:key} to $\W' \ss \W$, which yields
	\[
		\|u_{\W'} - u_{\W}\|_{L^1} \leq C\big[\tor(\W') - \tor(\W) + \ei(\W') - \ei(\W)\big],
	\]
	where the constant depends only on $R, \vmax, v$. (Note that the constant a priori also depends on an upper bound for  $\ei(\W')$; a basic cutoff function argument like the one in step 3 below and the assumption on $\En(\Omega)$ show that $\ei(\W')\leq C(R, \vmax , v).$)   If $M/\Isom$ is compact, we may take the constants here independent of $R$ by using Lemma \ref{l:globalsimpleeval} in place of Lemma~
	\ref{l:simpleeval}. This leads to
	\[
		|\nl(\W) - \nl(\W')| \leq |\W \triangle \W'| + \int |u_{\W'} - u_{\W}| \leq |\W \sm \W'| + C \left[\tor(\W') - \tor(\W) + \ei(\W') - \ei(\W)\right]
	\]
	using property \ref{a:nllip} of $\nl$. From the inward minimality of $\W$, we have $\Ep(\W) \leq \Ep(\W')$ and so
	\begin{align*}
		0 &\leq \Ep(\W') - \Ep(\W) \\
		& \leq  \ei(\W') - \ei(\W) + \tpar [\tor(\W') - \tor(\W)] + \fv(\W') - \fv(\W) \\
		&\qquad \qquad + \err \Big(\left|\W \sm \W'\right| + C \big[\tor(\W') - \tor(\W) + \ei(\W') - \ei(\W)\big]\Big) \\
		& \leq 2 \big[\tpar (\tor(\W') - \tor(\W)) + \ei(\W') - \ei(\W)\big] + (\err - \vpar)  |\W \sm \W'|,
	\end{align*}
	where the final inequality holds provided  $\err_m$ is chosen small enough in terms of $C$ and $\tpar$.  So long as $\err < \vpar/2$, we may rewrite this as
	\begin{equation} \label{e:davidtoroiteration}
		|\W \cap B_{3r/4}(x)| = |\W \sm \W'| \leq C \big[\tpar (\tor(\W') - \tor(\W)) + \ei(\W') - \ei(\W)\big].
	\end{equation}

{\it Step 2: Estimates for the eigenfunction and torsion function. }
	First, note that $\Lap u_\W, \Lap w_\W \geq - C$ on $M$ for a constant $C$ depending only on $R, v, \vmax, $ and $\eta$. Indeed,  the lower bound for $\Lap w_\W $ comes directly from Lemma \ref{l:basic}. For $\Lap u_\W$, the assumed upper bound on the energy of $\W$ paired with the torsional rigidity lower bound of Corollary~\ref{c:tornotminusinf} and the fact that $f_{v,\eta}(|\W|) \geq -v\,\eta$ implies that $\lambda_1(\W) \leq \lambda_0(R, v, \vmax , \eta)$. Then applying   Lemma \ref{l:basic} and the bounds from Lemmas \ref{l:efbdd}, we deduce the statement for $\Lap u_\W$. 

Fix $\phi$ a cutoff function which is $1$ on $B_{\frac{7 r}{8}}(x)$, compactly supported on $B_1$, and has $|\grad \phi|\leq C/r$. Applying the Caccioppoli inequality to $w_\W$ (i.e. use $\phi^2 w_\W$ as a test function for $w_\W$ and rearrange terms) and the assumption \eqref{eqn: dt hyp},
	\[
		\tpar \int \phi^2 |\grad w_\W|^2 \leq  \tpar C \int\left(  |\grad \phi|^2 w_\W^2 + \phi^2 w_\W \right) \leq C [r^{n - 2} c_m^2 r^2 + r^n \sqrt{\tpar} c_m r  ] \leq C r^n c_m^2.
	\]
	In the final inequality we have used that $r \leq \sqrt{r} \leq c_m$. We may estimate $u_\W$ in a similar manner:
	\begin{equation}
		\label{rev 3}
		\int \phi^2 |\grad u_\W|^2  \leq C\int \left( |\grad \phi|^2 u_\W^2 + \phi^2 u_\W \right) \leq C r^n c_m^2.
		\end{equation}
	All together, this implies
	\[
		\int_{B_{7r/8}(x)} |\grad u_\W|^2 + \tpar |\grad w_\W|^2 \leq C r^n c_m^2, \qquad \int_{B_{7r/8}(x)} u_\W^2 + \tpar w_\W^2 \leq C r^{n + 2} c_m^2.
	\]

{\it Step 3: Estimates for the eigenvalue and torsional rigidity of $\W'$.}
	Now take another cutoff function $\phi_1$ which is $1$ on $M \sm B_{7r/8}(x)$, vanishes on $B_{3r/4}(x)$, and has $|\grad \phi_1|\leq C/r$. We use $\phi_1 u_\W$ as a competitor for $\ei(\W')$ to find
	$
		\ei(\W') \leq {\int |\grad (\phi_1 u_\W)|^2}/{\int (\phi_1 u_\W)^2}.
	$
	Estimating the denominator,
	\[
		\int \phi_1^2 u_\W^2 \geq \int u_\W^2 - \int_{\W \sm B_{7 r/8}(x)} u_\W^2 \geq 1 - C r^n c_m^2.
	\]
	For the numerator, 
	we use the Caccioppoli inequality \eqref{rev 3} with $\phi_1$ in place of $\phi$ and the Cauchy Schwarz inequality to find
	\begin{align*}
			\int |\grad (\phi_1 u_\W)|^2  & =  \int |\grad u_\W|^2 \phi_1^2 + \int |\grad \phi_1|^2 u_\W^2  + 2 \int \langle \grad u_\W, \grad \phi_1 \rangle\,  u_\W \phi_1\\
			&\leq \ei(\W)  +2  \int |\grad u_\W|^2 \phi_1^2 + |\grad \phi_1|^2 u_\W^2 \leq \ei(\W) +  C r^n c_m^2,	
	\end{align*}
	so
	\[
		\ei(\W') \leq \ei(\W) + C r^n c_m^2.
	\]
	An identical estimate on $w_\W$ gives
	\[
		\tpar \tor(\W') \leq  \tpar \tor(\W) + C r^n c_m^2.
	\]

{\it Step 4: Conclusion. }	Substituting the estimates of Step 3 into \eqref{e:davidtoroiteration}, we arrive at
	\begin{equation} \label{e:davidtoroiteration2}
		|\W \cap B_{3r/4}(x)| \leq C r^n c_m^2.
	\end{equation}
	We now apply the local maximum principle for subsolutions, \cite[Theorem 8.17]{GT}, to $u_\W + \sqrt{\tpar}w_\W$:
	\begin{align*}
		\sup_{B_{r/2}(x)} u_\W + \sqrt{\tpar}w_\W &\leq C \left[r^{-n/2} \|u_\W + \sqrt{\tpar}w_\W\|_{L^2(B_{3r/4}(x))} + r^2 \right] \\
		&\leq C\left[ c_m r r^{-n/2} |\W \cap B_{3r/4}(x)|^{1/2} + r c_m^2  \right]\leq Cr c_m^2,
	\end{align*}
	where we used \eqref{e:davidtoroiteration2}, \eqref{eqn: dt hyp}, and that $r \leq c_m^2$ by assumption. Finally, we choose $c_m$ small enough so that $C c_m \leq \frac{1}{4}$ to conclude the proof.
\end{proof}
Theorem~\ref{t:lb} now follows by iteratively applying Lemma~\ref{l:davidtoroiteration}:
\begin{proof}[Proof of Theorem~\ref{t:lb}]
	Set all constants as in Lemma \ref{l:davidtoroiteration}. Take any $y \in \bar{\W}$ and any $r < r_0 := c_m^2/16$. We claim that
	\[
		\sup_{B_r(y)} u_\W + \sqrt{\tpar} w_\W \geq c_m r.
	\]
	Indeed, if this is not the case, we apply Lemma \ref{l:davidtoroiteration} to obtain
	\[
		\sup_{B_{r/2}(y)} u_\W + \sqrt{\tpar} w_\W \leq \frac{c_m}{2} \frac{r}{2}.
	\]
	Then take any $x \in B_{r/4}(y)$:
	\[
		\sup_{B_{r/4}(x)} u_\W + \sqrt{\tpar} w_\W \leq c_m \frac{r}{4},
	\]
	so repeatedly applying Lemma \ref{l:davidtoroiteration} gives
	\[
		\sup_{B_{r/4 \cdot 2^{-k}}(x)} u_\W + \sqrt{\tpar} w_\W \leq \frac{c_m}{2}  \frac{r}{4} 2^{-k}.
	\]
	In particular, $x$ is a Lebesgue point of $u_\W$ and of $w_\W$, $w_\W$ and $u_\W(x) = w_\W(x) = 0$. This is true of all points in $B_{r/4}(y)$, so $u_\W \equiv w_\W \equiv 0$ there. Using $\W \sm B_{r/4}(y)$ as a competitor for $\W$ implies that $|\W \cap B_{r/4}(y)| = 0$, contradicting that $y \in \bar \W$ and $\W$ open.
	
	For $r \in [r_0, 1]$, we simply use
	$
		\sup_{B_r(y)} u_\W + \sqrt{\tpar} w_\W \geq c_m r_0 \geq c_m r_0 r.
	$
	This gives $\DO(\W) \geq c_m r_0$.
\end{proof}

Note that while for an absolute minimizer (should one exist) $\En(\W) \leq \minE + \d$ follows from $\err$ being small (see Section \ref{s:exist}), there is no reason for that to be the case for an inward minimizer in general.

\begin{remark}{\rm
	The careful reader may observe that it is possible to modify the argument in this section to give
	$
	\sup_{B_r(y)} w_\W \geq c_m r
	$
	as long as $\err < \err_m(R, v, \vmax)$ independent of $\tpar$ (by using Lemma \ref{l:torsionbound} to estimate $u_\W \leq C w_\W$). Estimates in this spirit were used by Bucur \cite{Bucur12} for other problems. We do not pursue this point further here because the opposite estimate on $\UP(\W)$, discussed in the next section, cannot be altered in this way without a version of Proposition~\ref{lem:key} which omits the torsional rigidity terms on the right-hand side.
	}
\end{remark}

The following corollary will help us bound the diameter of minimizers when taking $R \rightarrow \infty$. We will discuss much sharper estimates in Section \ref{ss:density}, using the upper bound $\UP(\W)$ as well.

\begin{corollary}\label{c:badldensitybd}
	Let $\W$ be as in Theorem \ref{t:lb}. Then for any $x \in \W$ and $r < c_m^4$,
	\[
		|\W \cap B_r(x)| \geq c(r, v, \vmax, \vpar, R).
	\]
	If $M/\Isom$ is compact, $c$ may be taken independent of $R$.
\end{corollary}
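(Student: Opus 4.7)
The strategy is to convert the pointwise non-degeneracy $\DO(\W) \geq c_m$ from Theorem~\ref{t:lb} into a volume density lower bound via the local maximum principle applied to $f := u_\W + \sqrt{\tpar}\,w_\W$. The inputs we need are: (i) the lower bound $\sup_{B_\rho(z)} f \geq c_m \rho$ for every $z \in \overline\W$, $\rho \in (0,1)$, which is precisely $\DO(\W) \geq c_m$; (ii) a uniform pointwise bound $f \leq C_* = C_*(v, \vmax, R)$, obtained by combining Lemma~\ref{l:efbdd} (applicable because $\ei(\W) \leq \En(\W) + C \leq \minE + \d_m + C(v,\vmax,R)$ using Corollary~\ref{c:tornotminusinf} and $\fv \geq -v$) with Lemma~\ref{l:torsionfunctionbdd} and $\tpar \leq 1$; and (iii) the distributional inequality $\Lap f \geq -C_0$ on all of $M$ with $C_0 = C_0(v, \vmax, R)$, obtained from Lemma~\ref{l:basic}(1,4) together with the bounds in (ii).

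Fix $x \in \W$ and $r < c_m^4$, and set $\rho = r/2$. Applying Theorem~\ref{t:lb} at $x$ with radius $\rho < 1$, there exists $y \in \overline{B_\rho(x)} \cap \overline{\W}$ with $f(y) \geq c_m \rho/2 = c_m r/4$, and $B_\rho(y) \subset B_r(x)$. Since $f \geq 0$ satisfies $\Lap f \geq -C_0$, the local maximum principle (\cite[Theorem 8.17]{GT}) applied in normal coordinates (exactly as in the proof of Lemma~\ref{l:davidtoroiteration}, and valid for $\rho$ below the injectivity radius, which holds for $r < c_m^4$ after possibly shrinking $c_m$) yields
\[
    \frac{c_m r}{4} \leq f(y) \leq \sup_{B_{\rho/2}(y)} f \leq C_g \fint_{B_\rho(y)} f + C_g\, \rho^2,
\]
where $C_g$ depends only on the bounded geometry of $(M,g)$.

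Using $f \leq C_*\mathbf{1}_\W$, we have $\fint_{B_\rho(y)} f \leq C_*\, |\W \cap B_\rho(y)|/|B_\rho(y)|$, and therefore
\[
    \frac{c_m r}{4} \leq C_g C_* \frac{|\W \cap B_\rho(y)|}{|B_\rho(y)|} + C_g\, \rho^2.
\]
Shrinking $c_m$ further if necessary so that $2 C_g c_m^3 \leq 1$, we have $C_g \rho^2 = C_g r^2 /4 \leq C_g c_m^4 r /4 \leq c_m r/8$, and this error may be absorbed on the left. Combined with the bounded-geometry lower bound $|B_\rho(y)| \geq c_n \rho^n$, we conclude
\[
    |\W \cap B_r(x)| \geq |\W \cap B_\rho(y)| \geq \frac{c_m\, r}{8\, C_g C_*}\, c_n \rho^n \geq c(v, \vmax, \vpar, R)\, r^{n+1},
\]
which gives the corollary with $c(r, v, \vmax, \vpar, R) = c_0(v, \vmax, \vpar, R)\, r^{n+1}$. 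The main (and only) delicate step is the local maximum principle on the manifold, but this is entirely standard under the bounded geometry assumption and was already invoked in the proof of Lemma~\ref{l:davidtoroiteration}, so it poses no genuine obstacle. In the $M/\Isom$-compact case, $R$-independence follows because Theorem~\ref{t:lb} and the upper bound on $f$---via Lemma~\ref{l:globalElb} in place of Corollary~\ref{c:tornotminusinf}---are then $R$-independent.
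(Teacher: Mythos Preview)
Your proposal is correct and follows essentially the same approach as the paper: combine the non-degeneracy lower bound $\DO(\W)\geq c_m$ with the local maximum principle for the subsolution $u_\W+\sqrt{\tpar}\,w_\W$ and the uniform $L^\infty$ bound to extract a volume lower bound. The only cosmetic differences are that the paper applies the maximum principle directly on $B_r(x)$ in its $L^2$ form (squaring to get an $r^4$ error), whereas you recenter at an auxiliary point $y$ and use the $L^1$ form (giving an $r^2$ error); neither the recentering nor the ``shrinking $c_m$'' step is actually needed, but they do no harm.
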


\begin{proof}
	As in the proof of Lemma \ref{l:davidtoroiteration}, apply the local maximum principle on $B_r(x)$ to obtain
	\begin{align*}
		\sup_{B_{r/2}(x)} u_\W^2 + \tpar w_\W^2  \leq Cr^{-n}\int_{B_r(x) \cap \W} (u_\W^2 + \tpar w_\W^2) + Cr^4 
		& \leq C|\W \cap B_r(x)| r^{-n} \sup_{\W} (u_\W^2 + \tpar w_\W^2) + Cr^4\\
		& \leq C|\W \cap B_r(x)| r^{-n} + Cr^4,
	\end{align*}
	with the last step using Lemmas \ref{l:efbdd} and \ref{l:torsionfunctionbdd}. The quantities $|\tor(\W)|$ and $\ei(\W)$ are controlled by $\En(\W)$, which is bounded from the assumption $\minE \leq \En(\W) \leq \minE + \d$ and Lemma \ref{l:emin} or \ref{l:globalElb}. The constant $C$ can be taken to depend only on $M, v, \vmax, \vpar$ if $M/\Isom $ compact, and otherwise also depends on $R$.
	Then applying Theorem \ref{t:lb} to the left-hand side,
	$
	c_m^2 r^2/2 \leq C|\W \cap B_r(x)| r^{-n} + Cr^4,
	$
	so reabsorbing the $r^4$ implies the conclusion.
\end{proof}

\section{The upper bound}\label{s:ub}
In this section we consider outward minimizers of the main functional $\Ep$ defined in \eqref{eqn: main functional}. The main result of the section is Theorem~\ref{l:upper} below, which shows that the function $u_\W + \sqrt{\tpar} w_\W$ grows at most linearly away from the boundary of an outer minimizer $\W$. The result is phrased in terms of the quantity $\UP(\W)$ defined in \eqref{def: UP}. 
	Throughout this section, we fix $R>0$, $0 < v< \vmax$, and $0<\eta\leq \eta_0$ (recall Remark~\ref{rmk: eta parameter}). 
\begin{theorem}\label{l:upper}
	 There are constants $\d_M, \tpar_M, C_M > 0$ depending only on $R, v, \vmax,$ and $\vpar$ and a constant $\err_M(\tpar, R, v, \vmax, \vpar) > 0$ such that if we fix $\tpar < \tpar_M$ and then $\err < \err_M$, the following holds. Let $\W$ be an outward minimizer on $\Qb_R$ and suppose $\En(\W) \leq \minE + \d_M$,  Then we have
	\[
	\UP(\W) \leq C_M
	\]
	where $\UP(\W)$ is defined in \eqref{def: UP}.
\end{theorem}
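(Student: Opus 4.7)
The plan is to adapt the classical Alt--Caffarelli outward-minimality argument to the combined nonnegative function $\phi := u_\W + \sqrt{\tpar}\, w_\W$, which by Lemma~\ref{l:basic} satisfies $\Lap \phi \geq -C$ on $M$ in the distributional sense. The central role of the torsional rigidity in the base energy is to make Proposition~\ref{lem:key} available; this allows the nonlinear perturbation $\err\nl$ to be absorbed into the eigenvalue and torsional rigidity gaps, both of which appear with favorable signs in the outward-minimality inequality.

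\textbf{Setup for contradiction and energy comparison.} Suppose $\UP(\W) > M$ for a large $M$ to be chosen. Then there exist $x_0 \in \W$ with $r := d(x_0, \p\W) \in (0,1)$ and $\phi(x_0) > Mr$, together with $y \in \p\W$ satisfying $d(x_0, y) = r$. Consider the outer competitor $\W' = \W \cup B_{2r}(y)$, and let $v_u, v_w$ denote the harmonic replacements of $u_\W, w_\W$ in $B_{2r}(y)$. Using $v_u/\|v_u\|_{L^2}$ and $v_w$ as test functions in the variational characterizations of $\ei(\W')$ and $\tor(\W')$, and accounting for normalization via the $L^\infty$-bounds of Lemmas~\ref{l:efbdd} and \ref{l:torsionfunctionbdd}, one obtains
\begin{align*}
\ei(\W) - \ei(\W') &\geq \int_{B_{2r}(y)} |\grad(u_\W - v_u)|^2 - Cr^n,\\
\tor(\W) - \tor(\W') &\geq \tfrac{1}{2}\int_{B_{2r}(y)} |\grad(w_\W - v_w)|^2 - Cr^n,
\end{align*}
with $\fv(|\W'|) - \fv(|\W|) \leq Cr^n/\vpar$. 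The uniform spectral gap from Lemma~\ref{l:simpleeval}, stable under the $O(r^n)$ volume perturbation, makes Proposition~\ref{lem:key} applicable to $\W \ss \W'$; applied with $f = \mathrm{sign}(u_{\W'} - u_\W)$ and combined with property~\ref{a:nllip} of $\nl$, it yields
\[
\err |\nl(\W') - \nl(\W)| \leq \err\, C r^n + \err\, C_4 \bigl[\ei(\W) - \ei(\W') + \tor(\W) - \tor(\W')\bigr].
\]
Choosing $\err_M$ so that $\err\, C_4 \leq \tpar/4$, the nonlinear contribution is absorbed into the squared-gradient terms. Outer-minimality $\Ep(\W) \leq \Ep(\W')$ then gives
\begin{equation}\label{eqn: UP upper}
\int_{B_{2r}(y)} |\grad(u_\W - v_u)|^2 + \tpar |\grad(w_\W - v_w)|^2 \leq Cr^n.
\end{equation}

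\textbf{Quantitative lower bound via $\phi$.} Let $v := v_u + \sqrt{\tpar}\, v_w$, the harmonic replacement of $\phi$ in $B_{2r}(y)$. Comparison of $\phi - v$, which vanishes on $\p B_{2r}(y)$ and satisfies $\Lap(\phi - v) \geq -C$, with the explicit paraboloid barrier solving $\Lap h = C$ with zero boundary data gives $\phi \leq v + Cr^2$ on $B_{2r}(y)$. Hence $v(x_0) \geq Mr - Cr^2 \geq Mr/2$ for $M$ large. Since $v \geq 0$ is harmonic in $B_{2r}(y)$, Harnack's inequality on the ball $B_{3r/4}(z)$ centered at $z = (x_0+y)/2$ (with $B_{3r/2}(z) \ss B_{2r}(y)$) yields $v(y) \geq c\, v(x_0) \geq c M r$, and so $(v - \phi)(y) \geq cMr$ using $\phi(y) = 0$. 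The function $(v - \phi)_+$ lies in $H^1_0(B_{2r}(y))$ and satisfies $\Lap (v - \phi)_+ \leq C$ weakly, because $\Lap(v - \phi) = -\Lap\phi \leq C$ and this inequality passes to positive parts of $H^1$ functions. The local maximum principle \cite[Thm.~8.17]{GT} therefore gives
\[
cMr \leq \sup_{B_r(y)} (v - \phi)_+ \leq C\bigl[ r^{-n/2}\|(v-\phi)_+\|_{L^2(B_{2r}(y))} + r^2\bigr],
\]
which for $r$ small in $M$ forces $\|(v - \phi)_+\|_{L^2} \geq c' M r^{1 + n/2}$. The Poincar\'e inequality on $H^1_0(B_{2r}(y))$ then gives $\|\grad(v - \phi)\|_{L^2}^2 \geq c'' M^2 r^n$. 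Since $|\grad(v - \phi)|^2 \leq 2|\grad(u_\W - v_u)|^2 + 2\tpar |\grad(w_\W - v_w)|^2$, this contradicts \eqref{eqn: UP upper} once $M$ exceeds a universal constant $C_M$ (the case $r$ bounded away from $0$ is handled trivially by the uniform bound on $\phi$ from Lemmas \ref{l:efbdd} and \ref{l:torsionfunctionbdd}).

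\textbf{Main obstacle.} The essential step is the absorption of $\err\nl$ via Proposition~\ref{lem:key}: the torsional rigidity in the base energy is what allows $\|u_\W - u_{\W'}\|_{L^1}$ to be controlled \emph{linearly} by both $\ei(\W) - \ei(\W')$ and $\tor(\W) - \tor(\W')$, each of which has the correct sign in the outer-minimality inequality. Without the torsion term in $\En$, this absorption is unavailable, and the $H^1$-gap in \eqref{eqn: UP upper} could not be extracted; this is precisely the motivation for the form of the base energy. A secondary technical point is converting the pointwise bound $(v - \phi)(y) \geq cMr$ into the $L^2$-gradient lower bound, which is done without invoking a density lower bound on $\W^c$ (unavailable from outward minimality alone) by leveraging the zero Dirichlet data of $v - \phi$ on $\p B_{2r}(y)$ together with the subsolution property of $(v - \phi)_+$.
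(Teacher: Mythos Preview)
Your proof has a genuine gap at the step where you apply the local maximum principle to $(v-\phi)_+$. You claim that $\Lap(v-\phi)_+ \leq C$ follows from $\Lap(v-\phi) \leq C$; this is false. Taking positive parts does not preserve the supersolution property---a positive singular measure appears on $\partial\{v>\phi\}$ (e.g.\ for $w = 1-|x|$ on $\R$ one has $\Lap w_+ = -2\delta_0 + \delta_1 + \delta_{-1}$). More importantly, even if $\Lap(v-\phi)_+ \leq C$ held, Theorem~8.17 in Gilbarg--Trudinger bounds $\sup u^+$ for \emph{subsolutions} $\Lap u \geq -C$, not for supersolutions. What you actually have is $\Lap(v-\phi) = -\Lap\phi = (\text{bounded}) - \mu$ on $B_{2r}(y)$, with $\mu \geq 0$ the singular boundary measure supported on $\partial\W$; since $y\in\partial\W$ and $(v-\phi)(y)>0$, the set $\{v>\phi\}$ meets $\operatorname{supp}\mu$, which obstructs any subsolution bound on $(v-\phi)_+$. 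There is no obvious repair: converting $(v-\phi)(y)\geq cMr$ into an $L^2$ lower bound of order $Mr^{1+n/2}$ would follow from an \emph{outer} density estimate $|B_r(y)\setminus\W|\geq cr^n$ (since $v-\phi=v\geq cMr$ there by Harnack), but as you correctly note, outward minimality alone does not supply this.

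The paper's argument circumvents this by a more indirect route. It fixes $x\in\partial\W$, sets $S=\sup\{u(z):z\in\partial B_r(x),\ d(z,\partial\W)=r\}$, and uses a Green's function barrier from a point where $u\approx S$ to obtain the spherical lower bound $\int_{\partial B_s(x)} u \geq c\,s^{n}\,S/r$ for every $s\leq r$. The harmonic-approximation estimate (your \eqref{eqn: UP upper}, which is the paper's Lemma~\ref{l:harmonicapprox}) is then applied at \emph{every} scale $s\leq r$, and combined with the previous bound gives $|\Lap u|(B_s)\leq C\,r\,s^{n-1}/S$. A mean-value-type inequality for nonnegative subsolutions (Lemma~\ref{l: MVP}) yields $\frac{1}{r^{n-1}}\int_{\partial B_{r/4}(x)}u \leq C\int_0^{r/4} s^{1-n}|\Lap u|(B_s)\,ds \leq Cr^2/S$; comparing with the lower bound $\geq cS$ forces $S^2\leq Cr^2$. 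The point is that the singular part of $\Lap\phi$ is not sidestepped but \emph{quantified} scale by scale via the harmonic-approximation inequality and then integrated against the Green's-function weight---precisely the mechanism your single-scale argument lacks.
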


The proof of Theorem~\ref{l:upper} will require a few initial lemmas. We start by checking that, for an outward minimizer $\W$, $u_\W$ and $w_\W$ are close to their harmonic replacements in small balls. By the harmonic replacement of $u_\W$ on $B_r(x) \cap \Qb_R$, we mean the unique function $u$ with $u - u_\W \in H^1_0(B_r(x) \cap \Qb_R)$ and $\Lap u = 0$ on $B_r(x) \cap \Qb_R$ in the weak sense, extended so that $u = u_\W$ on $\Qb_R\sm B_r(x)$.

\begin{lemma}\label{l:harmonicapprox}
 There are constants  $\d_M, \tpar_M, C_M, r_0 > 0$ depending only on $R, v, \vmax,$ and $\vpar$ and a constant $\err_M(\tpar, R, v, \vmax) > 0$ such that if we fix $\tpar < \tpar_M$ and then  $\err < \err_M$,  the following holds.  	Let $\W$ be an outward minimizer on $\Qb_R$ satisfying $\En(\W) \leq \minE + \d_M$. Then for any $x \in \Qb_R$ and $r < r_0$,
	\[
	\int |\grad (u_\W - u)|^2 + \tpar |\grad (w_\W - w)|^2 \leq C_M \Big[|B_r(x) \sm \W| + r^n \sup_{B_r(x)} (u_\W^2 + \tpar w_\W^2) \Big] \leq C_M r^n,
	\]
	where $u, w$ are the harmonic replacements of $u_\W$, $w_\W$ on $B_r(x) \cap \Qb_R$.
\end{lemma}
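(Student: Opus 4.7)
The plan is to test the outward minimality of $\W$ against the competitor $\W' = \W \cup (B_r(x) \cap \Qb_R)$, use the harmonic replacements $u, w$ of $u_\W, w_\W$ on $B_r(x) \cap \Qb_R$ as admissible test functions for $\ei(\W')$ and $\tor(\W')$, and absorb the $\nl$-contribution via Proposition~\ref{lem:key}. The central obstacle---just as in Lemma~\ref{l:davidtoroiteration}---is that the nonlinear term is of the same order as the leading terms in $\En$, and it is precisely the torsion contribution to the base energy that allows Proposition~\ref{lem:key} to supply this absorption.

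To set up the competitor, I would first verify $\W' \in \compset$: by Lemma~\ref{l:volumenottoobig} we have $|\W| \leq (\vmax+v)/2$, so $|\W'| \leq \vmax$ for $r \leq r_0$ small, while $\W' \ss \Qb_R$ by construction. Using $u_\W, w_\W \in H^1_0(\W')$ as test functions gives $\ei(\W') \leq \ei(\W)$ and $\tor(\W') \leq \tor(\W)$, so $\En(\W') \leq \En(\W) + Cr^n/\vpar \leq \minE + 2\d_M$. Lemma~\ref{l:simpleeval} (or Lemma~\ref{l:globalsimpleeval} in the compact-quotient case) then yields a uniform spectral gap $\l_2(\W') \geq \ei(\W') + c(v,\vmax,R)$, which is exactly the hypothesis needed to invoke Proposition~\ref{lem:key} for the pair $\W \ss \W'$.

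Outward minimality $\Ep(\W) \leq \Ep(\W')$ rearranges to
\begin{equation*}
[\ei(\W) - \ei(\W')] + \tpar[\tor(\W) - \tor(\W')] \leq [\fv(|\W'|) - \fv(|\W|)] + \err[\nl(\W') - \nl(\W)].
\end{equation*}
Property~\ref{a:nllip} controls $|\nl(\W') - \nl(\W)| \leq |B_r(x) \sm \W| + \int |u_\W - u_{\W'}|$, and Proposition~\ref{lem:key} with $f = \sign(u_{\W'} - u_\W)$ bounds the $L^1$ eigenfunction difference by $C([\ei(\W) - \ei(\W')] + [\tor(\W) - \tor(\W')])$. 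Choosing $\err_M < \tpar/(2C)$ absorbs the $\err$-contribution into the left side and produces
\begin{equation*}
[\ei(\W) - \ei(\W')] + \tfrac{\tpar}{2}[\tor(\W) - \tor(\W')] \leq C\,|B_r(x) \sm \W|.
\end{equation*}

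To complete the proof, I would use $u/\|u\|_{L^2}$ and $w$ as competitors in \eqref{e:evalue} and \eqref{e:torsion} for $\W'$. Since $u_\W - u, w_\W - w \in H^1_0(B_r(x) \cap \Qb_R)$ and $u, w$ are harmonic there, the orthogonality identity $\int |\grad u_\W|^2 = \int |\grad u|^2 + \int |\grad (u_\W - u)|^2$ holds, with its analogue for $w$. Coupling this with $\int u^2 = 1 + O(r^n \sup u_\W^2)$ (Cauchy-Schwarz) and the Poincar\'e-plus-Young bound $|\int (w - w_\W)| \leq Cr^{n/2 + 1}\|\grad (w_\W - w)\|_{L^2} \leq \tfrac{1}{4}\int |\grad (w_\W - w)|^2 + Cr^{n+2}$ yields
\begin{equation*}
\ei(\W) - \ei(\W') \geq \tfrac{1}{2}\!\int\!|\grad(u_\W - u)|^2 - Cr^n \sup u_\W^2, \quad \tor(\W) - \tor(\W') \geq \tfrac{1}{4}\!\int\!|\grad(w_\W - w)|^2 - Cr^{n+2}.
\end{equation*}
Combining with the previous display proves the stated bound on $\int|\grad(u_\W - u)|^2 + \tpar\int|\grad(w_\W - w)|^2$, and the concluding bound $\leq C_M r^n$ is immediate from the uniform $L^\infty$ bounds on $u_\W$ and $\sqrt{\tpar}\,w_\W$ provided by Lemmas~\ref{l:efbdd} and~\ref{l:torsionfunctionbdd}. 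In the compact-quotient case, Lemma~\ref{l:globalsimpleeval} makes all constants uniform in $R$.
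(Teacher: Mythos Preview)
Your proof is correct and follows essentially the same route as the paper: the same competitor $\W' = \W \cup (B_r(x) \cap \Qb_R)$, the same use of Proposition~\ref{lem:key} with \ref{a:nllip} to absorb the $\nl$-term (choosing $\err_M \ll \tpar$), and the same use of the harmonic replacements $u, w$ together with the orthogonality identity $\int|\grad u_\W|^2 = \int|\grad u|^2 + \int|\grad(u_\W - u)|^2$. The only cosmetic difference is that you handle the torsion linear term $\int(w - w_\W)$ via Poincar\'e--Young (error $Cr^{n+2}$), whereas the paper bounds it directly from the maximum principle and Lemma~\ref{l:torsionfunctionbdd} (error $C r^n \sup_{B_r(x)} \tpar w_\W^2$); both are adequate.
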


\begin{proof} The basic idea of the proof is make use of the outward minimality property of $\W$ using an energy  competitor $\W'$ obtained by adding a small ball to $\W$. 

{\it Step 1: The competitor $\W'$. } 
	Set $
	\W' = \W \cup (B_r(x) \cap \Qb_R)$ and $A = \sup_{B_r(x)} u_\W^2 + \tpar w_\W^2$. First, we may use that $\ei(\W') \leq \ei(\W)$ and $\tor(\W') \leq \tor(\W)$ to deduce that
	\[
		\En(\W') \leq \En(\W) + \frac{C}{\vpar} r^n \leq \d_M + \frac{C}{\vpar}r_0^n \leq 2 \d_M
	\]
	if $r_0$ is small enough. Choosing $\d_M$ and $\tpar_M$ sufficiently small and applying Lemmas  \ref{l:volumenottoobig} and \ref{l:simpleeval} shows that $\W' \in \compset$ and $\l_2(\W') > \ei(\W') + c(v, \vmax, R)$. The latter fact allows us to apply  the key Proposition~\ref{lem:key} to $\W, \W'$ to find  that
	\[
	\int |u_\W - u_{\W'}| \leq C[\ei(\W) - \ei(\W') + \tor(\W) - \tor(\W')],
	\]
	where $C$ depends only on $R$ and $v, \vmax$. So, from assumption \ref{a:nllip} on $\nl$, we know that
	\[
		|\nl(\W) - \nl(\W')| \leq |\W \triangle \W'| + \int |u_\W - u_{\W'}| \leq |B_r(x) \sm \W| + C[\ei(\W) - \ei(\W') + \tor(\W) - \tor(\W')].
	\]
Now, we use $\W'$ as an energy competitor for $\W'$; the outward minimizing property gives that
	\begin{align*}
		0 \leq \Ep(\W') - \Ep(\W) 
		&\leq \ei(\W') - \ei(\W) + \tpar (\tor(\W') - \tor(\W)) + \frac{C}{\vpar} |\W' \sm \W| + \err |\nl(\W) - \nl(\W')| \\
		&\leq [\ei(\W') - \ei(\W) + \tpar ( \tor(\W') - \tor(\W) )] \left(1 - C \frac{\err}{\tpar}\right) + \frac{C}{\vpar} |B_r(x) \sm \W|.
	\end{align*}
	Select $\err_M$ small enough that $1 - C \frac{\err_M}{\tpar} < \frac{1}{2}$; then
	\begin{equation}\label{e:harmonicapprox}
		\ei(\W) - \ei(\W') + \tpar ( \tor(\W) - \tor(\W') ) \leq C |B_r(x) \sm \W|.
	\end{equation}

{\it Step 2: Estimates for the eigenvalue and torsional rigidity of $\W'$.}
	Next, we use $u$ as a competitor for $\ei(\W')$ to find  $\ei(\W') \leq {\int |\grad u|^2}/{\int u^2}$.
	To estimate this quantity further, from the maximum principle we know that $u^2 + \tpar w^2 \leq C T$ on $B_r(x)$ as well. This can be used to bound the denominator:
	\[
		\int u^2 = \int u_\W^2 + \int_{B_r(x)\cap \Qb_R} u^2 - u_\W^2 \geq 1 - C A |B_r(x)| \geq 1 - C A r^n.
	\]
	For the energy term,
	\begin{align*}
		\int |\grad u|^2 &= \ei(\W) + \int_{B_r(x) \cap \Qb_R} |\grad u|^2 - |\grad u_\W|^2 \\
		 &= \ei(\W) + \int_{B_r(x) \cap \Qb_R} g(\grad (u - u_\W), \grad (u + u_\W)) \\
		 &= \ei(\W) - \int_{B_r(x) \cap \Qb_R} g(\grad (u - u_\W), \grad (u - u_\W)) = \ei(\W) - \int_{B_r(x) \cap \Qb_R} |\grad (u - u_\W)|^2,
	\end{align*}
	where the second-to-last step used that $\int g(\grad u, \grad (u - u_\W)) = 0$ since $\Lap u = 0$ and $u - u_\W \in H^1_0(B_r(x)\cap \Qb_R)$. Putting these together leads to
	\[
		\ei(\W') \leq \ei(\W) - \int |\grad (u - u_\W)|^2 + C Ar^n.
	\]
	A similar computation (using Lemma \ref{l:torsionfunctionbdd}) gives
	\[
		\tpar \tor(\W') \leq \tpar \tor(\W) - \frac{\tpar}{2} \int |\grad (w - w_\W)|^2 + C A r^n.
	\]
	
	{\it Step 3: Conclusion.}
	Plugging these into \eqref{e:harmonicapprox},
	\[
		\int |\grad (u - u_\W)|^2 + \tpar \int |\grad (w - w_\W)|^2 \leq C [A r^n + C |B_r(x) \sm \W|].
	\]
	Recalling from Lemmas \ref{l:efbdd} and \ref{l:torsionfunctionbdd} that $A\leq C(R,  \vmax)$ and $|B_r(x) \sm \W| \leq |B_r(x)|\leq C r^n$ concludes the proof.
\end{proof}

Lemma~\ref{l:harmonicapprox} immediately gives that $u_\W$, $w_\W$ are H\"{o}lder continuous functions from \cite{Camp65}, and also satisfy a Morrey-type estimate:

\begin{corollary}\label{c:badreg}
	Let $\W$ be as in Lemma \ref{l:harmonicapprox}. Then $u_\W, \tpar w_\W \in C^{0, \a}(\Qb_R)$ for any $\a < 1$, with
	\begin{equation}\label{e:calpha}
		[u_\W]_{C^{0, \a}} + \sqrt{\tpar} [w_\W]_{C^{0, \a}} \leq C \sup_{x \in \Qb_R, r < r_0} r^{-n/2 + 1 - \a} \left[\|\grad u_\W\|_{L^2(B_r(x))} + \tpar \|\grad w_\W\|_{L^2(B_r(x))} \right] \leq C
	\end{equation}
	for $C=C(R, v, \vmax, \vpar, \a)$.
\end{corollary}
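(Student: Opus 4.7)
The plan is to prove the Morrey-type energy decay
\[
	\phi(\rho) := \int_{B_\rho(x) \cap \Qb_R} |\grad u_\W|^2 + \tpar |\grad w_\W|^2 \;\leq\; C\, \rho^{n-2+2\a}
\]
for each $\a \in (0,1)$, every $x \in \Qb_R$, and every $0 < \rho \leq r_0$. By the classical Morrey--Campanato characterization of H\"older spaces \cite{Camp65}, this automatically yields both inequalities asserted in \eqref{e:calpha}: the first is Morrey's embedding, and the second is just the supremum bound that the iteration produces.

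First I would record the standard energy decay for harmonic functions on $\Qb_R$: if $u$ is harmonic on $B_r(x) \cap \Qb_R$ with zero Dirichlet data on the (possibly empty) portion $\p \Qb_R \cap B_r(x)$, then for $0 < \rho \leq r \leq r_0$,
\[
	\int_{B_\rho(x) \cap \Qb_R} |\grad u|^2 \;\leq\; C \Big(\frac{\rho}{r}\Big)^n \int_{B_r(x) \cap \Qb_R} |\grad u|^2.
\]
In the interior this is the (approximate) subharmonicity of $|\grad u|^2$ in normal coordinates, modulo controlled lower-order curvature terms; near $\p \Qb_R$ one uses the assumed $C^2$ regularity of $\p \Qb_R$ and standard boundary $C^{1,\a}$ estimates. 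Now let $u, w$ be the harmonic replacements of $u_\W, w_\W$ on $B_r(x) \cap \Qb_R$ as in Lemma~\ref{l:harmonicapprox}. Since $u, w$ minimize their respective Dirichlet energies, $\int_{B_r} |\grad u|^2 \leq \int_{B_r} |\grad u_\W|^2$ and similarly for $w$; combining with the triangle inequality and Lemma~\ref{l:harmonicapprox} gives
\[
	\phi(\rho) \;\leq\; 2 \int_{B_\rho} |\grad u|^2 + \tpar |\grad w|^2 + 2 \int_{B_r} |\grad(u_\W - u)|^2 + \tpar |\grad(w_\W - w)|^2 \;\leq\; C \Big[\Big(\frac{\rho}{r}\Big)^n \phi(r) + r^n\Big].
\]

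Second I would apply the standard Campanato iteration lemma (e.g.~Giaquinta, \emph{Introduction to Regularity Theory for Nonlinear Elliptic Systems}, Ch.~III, Lemma~2.1): the inequality above self-improves to $\phi(\rho) \leq C_\a\, \rho^\beta$ for any exponent $\beta < n$. Taking $\beta = n-2+2\a$ for $\a \in (0,1)$ yields the claimed Morrey decay. Morrey's theorem then produces the H\"older continuity of $u_\W$ and $\sqrt{\tpar}\,w_\W$ on $\Qb_R$ together with the estimate \eqref{e:calpha}; the identical argument treats $u_\W$ and $\sqrt{\tpar}\, w_\W$ simultaneously because only the harmonic approximation of Lemma~\ref{l:harmonicapprox} enters, not the specific PDEs they satisfy.

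The only delicate technical point is the harmonic decay estimate near $\p \Qb_R$; this is classical but relies essentially on the smoothness assumption on the exhaustion $\{\Qb_R\}$ built into the setup of Section~\ref{s:setup}. Aside from that, the argument is the textbook Campanato scheme, with Lemma~\ref{l:harmonicapprox} playing the role of the harmonic approximation step.
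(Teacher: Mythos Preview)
Your proposal is correct and is essentially the same approach as the paper's: the paper simply asserts that the H\"older estimate and Morrey bound follow immediately from Lemma~\ref{l:harmonicapprox} together with Campanato's theory \cite{Camp65}, and you have spelled out that standard argument (harmonic energy decay, triangle inequality plus Lemma~\ref{l:harmonicapprox}, then the Campanato iteration lemma for any exponent $\beta<n$).
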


Below we show how to instead obtain the more precise estimate $u_\W, w_\W \in C^{0,1}$, which will follow easily from the growth estimate of Theorem~\ref{l:upper}. Before proving Theorem~\ref{l:upper}, let us recall some basic facts about Green's functions and prove a  mean value-type inequality.
Consider the (positive) Green's function $G(x, \cdot)$ for $B_r(x)$ (with pole at the center of the ball). For any continuous function $\phi$ which is smooth on a neighborhood of $x$ and has $\Lap \phi$ represented by a finite Borel measure, we have that
	\[
	\phi(x) + \int_{B_r(x)} G(x, y)  d \Lap \phi(y) = \int_{\p B_r(x)} g(\grad G(x, y),  \nu_y) \phi(y) d\cH^{n-1}(y)
	\]
	where $\nu_y$ is the outward unit normal to $\p B_r(x)$. From, say, \cite[Theorem 1.2.8]{Kenig}, we have the following standard bounds on $G$:
	\begin{equation}\label{e: gradient greens}
	-C r^{1 - n} \leq  g(\grad G(x, y),  \nu_y) \leq -c r^{1 - n} \qquad  y \in \p B_r(x)
	\end{equation}
	and
	\begin{equation}\label{e:upper5}
	c \left( |y - x|^{2 - n} - r^{2 - n} \right) \leq G(x, y) \leq C\left( |y - x|^{2 - n} - r^{2 - n} \right) \qquad y \in B_r(x)\sm \{x\}
	\end{equation}
	if $n \geq 3$, while
	\[
	c ( \log |y - x| - \log r ) \leq G(x, y) \leq C( \log |y - x| - \log r )
	\]
	instead when $n = 2$, with constants depending only on $R$ (i.e. they are uniform in $x$ and $r$).
	 
\begin{lemma}[Mean value-type inequality]\label{l: MVP}
 Let $\phi $ be a nonnegative continuous function that is either (1) smooth in a neighborhood of $x$ or (2) satisfies the Morrey-type estimate \eqref{e:calpha}, has $\phi(x)=0$,  and whose distributional Laplacian $\Delta \phi$ is a Radon measure with locally finite total variation. Then
		\begin{equation}\label{e:upper2}
		\frac{1}{r^{n-1}}\int_{\p B_r(x)} \phi \leq C \left[\phi(x) + \int_0^r s^{1 - n} |\Lap \phi|(B_s(x)) ds\right].
	\end{equation}
\end{lemma}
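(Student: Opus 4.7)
\emph{Plan.} The starting point is the Green's function representation on $B_r(x)$. In case (1), where $\phi$ is smooth near $x$, Green's identity combined with $G(x,\cdot)=0$ on $\p B_r(x)$ gives
\begin{equation*}
\phi(x) \;=\; \int_{\p B_r(x)} \phi(y)\,(-g(\grad G(x,y),\nu_y))\,d\cH^{n-1}(y) \;-\; \int_{B_r(x)} G(x,y)\,d\Lap\phi(y).
\end{equation*}
Applying the lower bound $-g(\grad G,\nu_y)\geq c\,r^{1-n}$ on $\p B_r(x)$ from \eqref{e: gradient greens} together with $\phi\geq 0$ yields
\begin{equation*}
\frac{c}{r^{n-1}}\int_{\p B_r(x)} \phi \;\leq\; \phi(x) + \int_{B_r(x)} G(x,y)\,d\Lap\phi(y).
\end{equation*}

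Next I would decompose $\Lap\phi = (\Lap\phi)^+ - (\Lap\phi)^-$ into its Jordan parts and drop the nonpositive term $-\int G\,d(\Lap\phi)^-$, since $G\geq 0$, to bound the right-hand side by $\phi(x) + \int G\,d|\Lap\phi|$. Using the upper bound $G(x,y)\leq C(|y-x|^{2-n}-r^{2-n})$ and the elementary identity
\begin{equation*}
|y-x|^{2-n}-r^{2-n} \;=\; (n-2)\int_{|y-x|}^r s^{1-n}\,ds
\end{equation*}
(with the logarithmic analogue when $n=2$), Fubini's theorem gives
\begin{equation*}
\int_{B_r(x)} G(x,y)\,d|\Lap\phi|(y) \;\leq\; C\int_0^r s^{1-n}\,|\Lap\phi|(B_s(x))\,ds,
\end{equation*}
which closes case (1).

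For case (2), where $\phi$ is only H\"older continuous with $\phi(x)=0$, the issue is the pole of $G$ at $y=x$. My plan is a cutoff approximation: pick a smooth radial cutoff $\eta_\d$ vanishing on $B_\d(x)$ and equal to $1$ outside $B_{2\d}(x)$, with $|\grad\eta_\d|\leq C\d^{-1}$ and $|\Lap\eta_\d|\leq C\d^{-2}$, and set $\phi_\d = \eta_\d\,\phi$. Then $\phi_\d\equiv 0$ in a neighborhood of $x$ (and in particular is smooth there), so case (1) applies with $\phi_\d(x)=0$; and for $r>2\d$ the boundary integral is unchanged. Writing
\begin{equation*}
\Lap\phi_\d \;=\; \eta_\d\,\Lap\phi \;+\; 2\,g(\grad\eta_\d,\grad\phi) \;+\; \phi\,\Lap\eta_\d,
\end{equation*}
the commutator terms are supported on the annulus $B_{2\d}(x)\setminus B_\d(x)$. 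The Morrey estimate \eqref{e:calpha} together with $\phi(x)=0$ give $|\phi|\leq C\d^{\a}$ there and $\|\grad\phi\|_{L^2(B_{2\d}(x))}\leq C\d^{n/2-1+\a}$, so each commutator contributes at most $O(\d^{n-2+\a})$ to $|\Lap\phi_\d|(B_s(x))$ uniformly in $s\in[2\d,r]$; after integrating against $s^{1-n}$ these errors tend to zero as $\d\to 0$, and passing to the limit recovers the inequality for $\phi$.

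The main obstacle I anticipate is the rigorous handling of the cross term $g(\grad\eta_\d,\grad\phi)$ in case (2), since a priori $\grad\phi$ only exists distributionally for $\phi\in C^{0,\a}$. I would resolve this by first mollifying $\phi$ to a smooth $\phi^{(\e)}$, applying the cutoff argument at the smooth level, and then passing to the limit $\e\to 0$ using uniform continuity of $\phi$ on $\p B_r(x)$ and weak-$*$ convergence of $\Lap\phi^{(\e)}$ to $\Lap\phi$ as Radon measures on compact subsets away from $x$; the Morrey control on $\grad\phi^{(\e)}$ survives the mollification uniformly, so the annular error estimates above apply. Once this approximation is in place, everything reduces to the Green's function computation of case (1).
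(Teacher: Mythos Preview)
Your proposal is correct and follows essentially the same approach as the paper: Green's representation on $B_r(x)$, the Poisson-kernel lower bound to control the spherical average, the layer-cake/Fubini rewriting of $\int G\,d|\Lap\phi|$, and in case (2) the cutoff $\eta_\d$ with the product-rule expansion of $\Lap(\eta_\d\phi)$ and the annular error bounded via the Morrey estimate.

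One remark: the mollification step you flag as an obstacle is unnecessary. The Morrey-type estimate \eqref{e:calpha} is not merely a $C^{0,\a}$ bound---it is precisely a bound on $r^{-n/2+1-\a}\|\grad\phi\|_{L^2(B_r(x))}$, so $\grad\phi$ already exists in $L^2_{\mathrm{loc}}$. Hence $g(\grad\eta_\d,\grad\phi)$ is an honest $L^2$ function, the product rule $\Lap(\eta_\d\phi)=\eta_\d\Lap\phi+2g(\grad\eta_\d,\grad\phi)+\phi\Lap\eta_\d$ holds in the sense of distributions with the last two terms absolutely continuous, and your annular error estimate $\|\grad\phi\|_{L^2(B_{2\d}(x))}\leq C\d^{n/2-1+\a}$ applies directly without approximation. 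The paper proceeds exactly this way.
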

\begin{proof}
Let us first assume that $\phi \geq 0$ is smooth in a neighborhood of $x$.	When $n\geq 3$, making use of the upper bound in \eqref{e:upper5}, we have 
	\begin{align*}
	  \int_{B_r(x)} G(x, y) d \Lap \phi(y) 
	&\leq \int_{B_r(x)} G(x, y) d |\Lap \phi|(y) \\
	&\leq C\int_0^r \int_{\p B_s(x)} \left[s^{2 - n} - r^{2 - n}\right] d |\Lap \phi| ds \\
	& =  C\int_0^r \left[s^{2 - n} - r^{2 - n}\right] \int_{\p B_s(x)}  d |\Lap \phi|ds  =  C (n - 2) \int_0^r s^{1 - n} \int_{B_s(x)} d |\Lap \phi| ds.
	\end{align*}
	The last step was an integration by parts in one variable, using that $ s^{2 - n} \int_{B_s(x)} |\Lap \phi| \rightarrow 0$ as $s \rightarrow 0$ (recall that $\Lap \phi$ is smooth near $0$). 
	From \eqref{e: gradient greens}, we have 
	 \[
	 \frac{1}{r^{n-1}}\int_{\p B_r(x)} \phi - C \phi(x) \leq C \left[- \phi(x) - \int_{\p B_r(x)} g_y(\grad G(x, y),  \nu_y) \phi(y)d\cH^{n-1}(y)\right] =C \int_{B_r(x)} G(x, y) d \Lap \phi(y).
	 \]
Together these two estimates give \eqref{e:upper2} for $n\geq 3$. A similar computation gives an identical estimate when $n = 2$.

Now, suppose that instead of $\phi$ being smooth in a neighborhood of $x$, we simply know that $\phi$ satisfies \eqref{e:calpha} and $\phi(x)=0$.
	For fixed $t \in (0,r/2)$, let $\eta_t$ be a cutoff function which vanishes on $B_t(x)$, is $1$ outside of $B_{2t}(x)$, and has $|\grad \eta_t| \leq C t^{-1}$, $|D^2 \eta_t|\leq C t^{-2}$. In this way, $\phi \eta_t $ is smooth in a neighborhood of $x$ and we may apply \eqref{e:upper2} to $\phi \eta_t $.
	\[
		\Lap (\phi\eta_t) = \eta_t \Lap \phi +  2 g(\grad \phi, \grad \eta_t) + \phi \Lap \eta_t
	\]
	as distributions (with the second two terms absolutely continuous), so
	\begin{align*}
		|\Lap (\phi \eta_t)|(B_s(x)) &\leq |\Lap \phi|(B_s(x) \sm B_t(x)) + \int_{B_{2t}(x)} 2 g(\grad \phi, \grad \eta_t) + \phi \Lap \eta_t\\
		 &\leq |\Lap \phi|(B_s(x)) + C t^{n/2 + \a - 1} t^{n/2 - 1} + C t^{\a} t^{n - 2}\\
		 &\leq |\Lap \phi|(B_s(x)) + C t^{n - 2 + \a}
	\end{align*}
	for $ s \geq t$, and $0$ otherwise.  Then \eqref{e:upper2} applies to $\eta_t \phi$, to give
	\begin{align*}
		\frac{1}{r^{n-1}}\int_{\p B_{r}(x)} \phi &\leq C \left[\eta_t \phi (x) + \int_t^{r}s^{1 - n}  |\Lap \phi|({B_s(x)})  + C s^{1 - n} t^{n - 2 + \a}  ds\right] \\
		&\leq C \left[0 + \int_t^{r}s^{1 - n}  |\Lap \phi|({B_s(x)})  ds + Ct^{\a} \right]\,.
	\end{align*}
	Sending $t \rightarrow 0$ completes the proof.	
\end{proof}

We are now ready to prove Theorem~\ref{l:upper}.
\begin{proof}[Proof of Theorem~\ref{l:upper}]
{\it Step 1: Setup. }
	Fix a point $x \in \partial \W$ and an $r < r_0$, with $r_0\leq \text{inj}_M/4$ to be chosen below. We will show that
	\[
		S = S(x, r) := \sup\{ u(y) :  y \in \p B_r(x) , \ d(y, \p \W) = r \}  \leq C_M r,
	\]
	where $u$ is either $u_\W$ or $\sqrt{\tpar} w_\W$.	As we know that $u_\W, w_\W$ are bounded, the same inequality is automatic for $r \in [r_0, 1]$ with constant $C/r_0$. This will imply the conclusion, as for every $y \in \W$ with $d(y, \p \W) \leq 1$, there is an $x \in \p \W$ and $r$ with $r = d(y, \p \W) = d(y, x)$, so $u \leq S(x, r)$.

If $B_{2r}(x)$ is not fully contained in $\Qb_R$, we have that
$
		u_\W \leq C w_\W \leq C w_{\Qb_R}
$
	on $\W$ (using Lemma \ref{l:torsionbound}), and from elliptic regularity and the smoothness of $\Qb_R$, $w_{\Qb_R}(y) \leq C(R) d(y, \partial \Qb_R)$. This gives
	\[
		S \leq C \sup_{\p B_r(x)} w_{\Qb_R} \leq C r,
	\]
	and we are done. Assume, then, that $B_{2r}(x) \ss \Qb_R$.
s

{\it Step 2: Initial $S$ bound.} In this step, we bound $S$ from above in terms of the integral of $u$ over spheres $\p B_s(x)$ for $s \in (0,1)$ using a multiple of the Green's function as a barrier.  
More specifically, let $z \in \partial B_r(x) \cap \{y: d(y, \p \W) = r  \}$ be a point such that $u(z) = S$. By definition $B_r(z) \ss \W$, and so from Lemma \ref{l:basic} we have $|\Lap u|\leq C$
	on this ball. Applying the Harnack inequality \cite[Theorems 8.17, 8.18]{GT}, 
	\[
		S \leq \sup_{B_{r/2}(z)} u \leq C\inf_{B_{r/2}(z)} u + C r^2.
	\]
	If $S \leq r$, we are done; if not, choose $r_0$ small enough in terms of $C$ so  that $Cr^2 \leq \frac{1}{2} r \leq \frac{1}{2} S$, and thus
	\begin{equation}\label{e:upper4}
		\inf_{B_{r/2}(z)} u \geq \frac{S}{C}.
	\end{equation}
	In order to propagate this bound to balls centered at $x$ (at least in an integral sense), consider the barrier $v(y) = \frac{S}{C_*}r^{n - 2} G(z, y)$, where $G(z, y)$ is the Green's function for $B_r(z)$. On $\p B_r(z)$, we have $v = 0 \leq u$, while $v \leq u$ on $\p B_{r/2}(z)$  so long as $C_*$ is chosen large relative to the constants in \eqref{e:upper4} and \eqref{e:upper5}. On the annular region between them, $\Lap v = 0$ and $\Lap u \leq 0$. It follows from the comparison principle that $v \leq u$ on $B_r(z) \sm B_{r/2}(z)$. In particular, in conjunction with \eqref{e:upper4}, this means that for any $t \in (\frac{r}{2}, r)$ we have
	\[
		\inf_{B_{t}(z)} u \geq c S \left(\frac{r^{n - 2}}{t^{n - 2}} - 1\right) \geq c S \frac{r - t}{r}.
	\]
	For any $s \in (0, r)$, choose $t\in (r/2,r)$ so that $r - t = s/2$. One may check that the set $B_t(z) \cap \p B_s(x)$ has measure of this set is bounded from below by $c s^{n-1}$. So, integrating over this set and applying the previous estimate, we have
	\begin{equation} \label{e:upper1}
		\int_{\p B_s(x)} u \geq \int_{\p B_s(x) \cap B_{t}(z)} u \geq c s^{n-1} \frac{s}{r} S
	\end{equation}
	for all $s \in (0, r)$.	This is the first of two estimates which will be combined to bound $S$.\\
	
Ultimately, in Step 4, we will use the mean value-type inequality of Lemma~\ref{l: MVP} in order to bound the left-hand side of \eqref{e:upper1}. So, it is essential to estimate the $|\Delta|$ measure of balls $B_s(x)$.

{\it Step 3: Estimate for $|\Delta|$.}	
	The second estimate involves computing the total variation of $\Lap u$ when viewed as a measure, using Lemma~\ref{l:harmonicapprox}. From Lemma \ref{l:basic}, we know that $\Lap u = \mu - f$ in the sense of distributions, where $\mu$ is a nonnegative Borel measure supported on $\p \W$ and $f$ is either $\ei(\W)u_\W$ or $\tpar 1_\W$, and in each case is uniformly bounded. Fix $s \leq r$ and let $h$ be the harmonic replacement of $u$ on $B_s(x)$: from Lemma \ref{l:harmonicapprox},
	\[
		\int_{B_s(x)} |\grad (u - h)|^2 \leq C s^n.
	\]
	Using that $\Lap h = 0$  on $B_s(x)$ and $u = h$ on $\p B_s$,
	\[
		\int_{B_s(x)} (h - u) d \Lap u = - \int_{B_s(x)} g(\grad u, \grad(h - u)) = \int |\grad(u - h)|^2 \leq C s^n.
	\]
	The leftmost identity used that $\Lap u$ represents the Laplacian of $u$ in the sense of distributions. Decomposing this further,
	\[
		\int_{B_s(x)} (h - u) d \Lap u = \int_{B_s(x) \cap \p \W} (h - u) d \mu - \int_{B_s(x) \cap \W} (h - u) f.
	\]
	The second term is controlled by $C s^n$, as both $h, u$ are bounded. In the first term, $u = 0$,  so
	\begin{equation}\label{e:upper6}
		\int_{B_s(x) \cap \p \W} h d \mu \leq C s^n.
	\end{equation}
We thus need to bound $h$ from below on this set. 	Applying the Harnack inequality followed by  \eqref{e:upper2} to $h$ gives that
	\[
	C \inf_{B_{s/2}(x)} h  \geq 	h(x) \geq c s^{1-n} \int_{\p B_s(x)} h. 
	\]
	Since $h=u$ on $\p B_s(x)$, we find from  \eqref{e:upper1} that 
	$
		\inf_{B_{s/2}(x)} h  \geq c \frac{s}{r} S.
	$
	Combining this with  \eqref{e:upper6}, we get
	\[
		c \frac{s}{r} S\mu(B_{s/2}) \leq \mu(B_{s/2}) \inf_{B_{s/2}(x)} h \leq \int_{B_s(x) \cap \p \W} h d \mu \leq C s^n.
	\]
	So, we arrive at our second main estimate:
	\begin{equation}\label{e:upper3}
		|\Lap u|(B_{s/2}) \leq \frac{C r}{S} s^{n-1} + C s^n \leq \frac{C r}{S} s^{n-1},
	\end{equation}
	 where the very last step used that $s \leq r$ and $S \leq C$ is bounded.

{\it Step 4: Conclusion.} By Corollary~\ref{c:badreg}, and the fact that $u(x) = 0$, we may apply Lemma~\ref{l: MVP} to $u$. So, using \eqref{e:upper3}, Lemma~\ref{l: MVP} tells us that 
\[
\frac{1}{r^{n-1}}\int_{\p B_{r/4}(x)} u \leq C\int_0^{r/4} \frac{ r}{S}  ds =C \frac{r^2}{S}.
\]
Applying \eqref{e:upper1} to the left-hand side
	gives $S^2 \leq C r^2$. This bounds $S$, completing the proof.
\end{proof}

By applying standard elliptic estimates on $B_d(x)$ for any $x \in \W$, $d = \min\{r_0, d(x, \p \W)/2 \}$ and using that $u_\W + \sqrt{\tpar} w_\W \leq C d \UP(\W)$ on this ball, we obtain the following gradient estimate as well:

\begin{corollary} \label{cor:lip}
	Under the assumptions of Theorem~\ref{l:upper},
	\[
		|\n u_\W | + \sqrt{\tpar} |\n w_\W|\leq C(\UP(\W), R, v, \vmax).
	\]
\end{corollary}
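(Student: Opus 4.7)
The plan is to apply classical interior gradient estimates for elliptic equations to $u_\W$ and $\sqrt{\tpar}\,w_\W$ on carefully chosen balls, using Theorem~\ref{l:upper} to control the size of these functions on each ball. Fix any $x \in \W$ and, with $r_0$ as in Theorem~\ref{l:upper} (shrunk if needed so that $r_0 \leq 1/3$), set $d = \min\{r_0, d(x, \p \W)/2\}$; then $B_d(x) \ss \W$. The first step is to bound $u_\W + \sqrt{\tpar}\,w_\W$ on $B_d(x)$. If $d = d(x, \p \W)/2$, then every $y \in B_d(x)$ satisfies $d(y, \p \W) \leq 3d < 1$, and the definition \eqref{def: UP} of $\UP(\W)$ combined with $\UP(\W) \leq C_M$ from Theorem~\ref{l:upper} yields
\[
    u_\W(y) + \sqrt{\tpar}\,w_\W(y) \leq 3d\,\UP(\W).
\]
If instead $d = r_0$, then Lemmas~\ref{l:efbdd} and \ref{l:torsionfunctionbdd} (the latter together with Corollary~\ref{c:tornotminusinf}) give the global $L^\infty$ bound $u_\W + \sqrt{\tpar}\,w_\W \leq C(R, v, \vmax)$ on all of $\W$.

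For the second step, recall from Lemma~\ref{l:basic} that $-\Lap u_\W = \ei(\W)\,u_\W$ and $-\Lap(\sqrt{\tpar}\,w_\W) = \sqrt{\tpar}$ on $\W$. Both right-hand sides are uniformly bounded: $\ei(\W)\,u_\W$ by Lemma~\ref{l:efbdd} together with the a priori upper bound on $\ei(\W)$ coming from $\En(\W) \leq \minE + \d_M$, and $\sqrt{\tpar} \leq 1$ by assumption. Classical interior gradient estimates applied on $B_d(x)$ (in normal coordinates) then yield
\[
    |\n u_\W(x)| \leq C\Big( \tfrac{1}{d}\sup_{B_d(x)} u_\W + d \Big), \qquad
    \sqrt{\tpar}\,|\n w_\W(x)| \leq C\Big( \tfrac{1}{d}\sup_{B_d(x)} \sqrt{\tpar}\,w_\W + d \Big),
\]
with $C = C(R, v, \vmax)$. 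Substituting the sup bounds from the first step and using $d \leq r_0 \leq 1$, each right-hand side is bounded by $C(\UP(\W), R, v, \vmax)$, which gives the desired conclusion after summing.

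No serious obstacle arises in this argument: the essential nonlinear work has already been done by Theorem~\ref{l:upper}, and once one has $O(d)$ control on $u_\W + \sqrt{\tpar}\,w_\W$ at scale $d$, the pointwise gradient bound is routine. The only mild subtlety is the case split on the size of $d$, required because the definition of $\UP(\W)$ involves only points at distance less than $1$ from $\p \W$; in the complementary regime one falls back on the global $L^\infty$ estimates of Section~\ref{ss:basics}.
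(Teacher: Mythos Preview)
Your proof is correct and follows essentially the same approach as the paper, which simply sketches ``apply standard elliptic estimates on $B_d(x)$ for $d = \min\{r_0, d(x, \p \W)/2\}$ using that $u_\W + \sqrt{\tpar} w_\W \leq C d\, \UP(\W)$ on this ball.'' Your case split on whether $d = d(x,\p\W)/2$ or $d = r_0$ is the natural way to make this precise, since the definition of $\UP(\W)$ only covers points with $d(x,\p\W) < 1$; the paper's one-line version suppresses exactly this detail.
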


\section{Existence of minimizers}\label{s:exist}

We are now in a position to show that minimizers to the main functional $\Ep$ defined in \eqref{eqn: main functional} exist. Theorem~\ref{t:min} below establishes the existence of minimizers of $\Ep$ among sets in $\compset_{R, \vmax}$ for any Riemannian manifold. In the case when $M/G_0$ is compact for a subgroup $G_0$ of the isometry group for which the functional is invariant, Theorem~\ref{t:globalexist} shows that the parameter $R$ may be taken to $\infty$ and global minimizers exist. Throughout the section, we fix $R>0$ and $0<v<\vmax$, and assume that $\vpar < \vpar_0(R, v, \vmax)$ and $\tpar < \tpar_0(R, v, \vmax, \vpar)$ are fixed to be small enough that all results in Sections \ref{s:basics}, \ref{s:lb}, and \ref{s:ub} apply.

In the following lemma, we show that any set $\W$ with low base energy is contained in an outward minimizer whose energy does not exceed the energy of $\W$.
\begin{lemma}\label{l:outmin} There are $\d = \d(R, v, \vmax, \vpar) > 0$ and $\err_0 = \err_0(R, v, \vmax, \tpar, \vpar)$ such that if $\err < \err_0$ and $\W \in \compset$ with $\En(\W) \leq \minE + \d$, then there exists a $U \in \compset$ with $\W \ss U$, $\Ep(U) \leq \Ep(\W)$, and $U$ an outward minimizer.
\end{lemma}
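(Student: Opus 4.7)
The strategy is to construct $U$ as a minimizer of $\Ep$ over the restricted class
\[
\Min(\W) := \{U' \in \compset : U' \supset \W\}.
\]
Any minimizer of $\Ep$ in $\Min(\W)$ is automatically an outward minimizer, because any $U'' \supset U^*$ again lies in $\Min(\W)$, hence $\Ep(U^*) \leq \Ep(U'')$. Set $\alpha := \inf\{\Ep(U') : U' \in \Min(\W)\}$. Since $\W \in \Min(\W)$ we have $\alpha \leq \Ep(\W)$, and since $\nl \geq 0$ and $\En$ is uniformly bounded from below on $\compset$ (by Corollary~\ref{c:tornotminusinf}), also $\alpha > -\infty$.

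Now take a minimizing sequence $U_k \in \Min(\W)$ with $\Ep(U_k) \to \alpha$. Because $\En(U_k) \leq \Ep(U_k) \leq \minE + \d + \err + o(1)$, choosing $\d, \err_0$ small according to Lemma~\ref{l:simpleeval} gives a uniform spectral gap $\l_2(U_k) \geq \ei(U_k) + c$ and uniform $H^1_0(\Qb_R)$ bounds on $u_k := u_{U_k}$ and $w_k := w_{U_k}$. Pass to a subsequence so that $u_k \rightharpoonup u^*$, $w_k \rightharpoonup w^*$ weakly in $H^1$, strongly in $L^2$, and pointwise a.e.

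The heart of the argument is to establish that $u^*$ and $\sqrt{\tpar}\,w^*$ admit continuous representatives, which will follow from an outward almost-minimality property inherited from the sequence. Fix $B_{2r}(x) \ss \Qb_R$ and arbitrary test functions $a, b$ on $\Qb_R$ agreeing with $u^*, w^*$ outside $B_r(x)$, and build the glued competitors $a_k = \psi a + (1-\psi) u_k$, $b_k = \psi b + (1-\psi) w_k$ for a cutoff $\psi$ supported in $B_{2r}(x)$. Use $U_k' := U_k \cup B_r(x)$ as a competitor for $\alpha$; since $U_k' \supset \W$, we have $\Ep(U_k) \leq \Ep(U_k') + o(1)$. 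Then $a_k/\|a_k\|_{L^2}$ and $b_k$ serve as test functions for $\ei(U_k')$ and $\tor(U_k')$. The key new ingredient over Lemma~\ref{l:emin} is that $\nl$ must be controlled: applying Proposition~\ref{lem:key} to the inclusion $U_k \ss U_k'$ yields
\[
\|u_{U_k} - u_{U_k'}\|_{L^1} \leq C_4\bigl[\ei(U_k) - \ei(U_k') + \tor(U_k) - \tor(U_k')\bigr],
\]
and property \ref{a:nllip} then gives
\[
|\nl(U_k) - \nl(U_k')| \leq |U_k \triangle U_k'| + C_4\bigl[\ei(U_k) - \ei(U_k') + \tor(U_k) - \tor(U_k')\bigr].
\]
Choosing $\err_0$ small enough that $\err \cdot C_4 \leq \tfrac{1}{2}\tpar$, the resulting eigenvalue/torsion differences can be absorbed, and the argument of Lemma~\ref{l:emin}(2a) goes through verbatim to yield the almost-minimality
\[
\int |\grad u^*|^2 + \tpar \int \tfrac{1}{2}|\grad w^*|^2 - w^* \leq \frac{\int |\grad a|^2}{\int a^2} + \tpar \int \tfrac{1}{2}|\grad b|^2 - b + C r^n.
\]
Choosing $a, b$ to be the harmonic replacements on $B_r(x) \cap \Qb_R$ gives the Campanato growth $\int_{B_r(x)} |\grad(u^* - a)|^2 + \tpar|\grad(w^* - b)|^2 \leq C r^n$, and hence $u^*, \sqrt{\tpar} w^* \in C^{0,\alpha}$.

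It remains to set $U := \{u^* + \sqrt{\tpar}\, w^* > 0\}$, which is open. Since $U_k \supset \W$, the maximum principle gives $w_k \geq w_\W$ on $\W$, whence $w^* \geq w_\W > 0$ on $\W$ by continuity, so $U \supset \W$. Standard arguments (weak lower semicontinuity of the Dirichlet integrals, strong $L^2$ convergence, and Fatou on $|\{|u^*| + \tpar |w^*| > 0\}|$) identify $u^* = u_U$, $w^* = w_U$ and yield $\En(U) \leq \liminf_k \En(U_k)$. For the nonlinear term, $|U_k \triangle U| \to 0$ and $\|u_k - u^*\|_{L^1} \to 0$ together with \ref{a:nllip} give $\nl(U) = \lim_k \nl(U_k)$. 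Thus $\Ep(U) \leq \liminf_k \Ep(U_k) = \alpha \leq \Ep(\W)$, and $U \in \Min(\W)$ is the desired outward minimizer.

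\textbf{Main obstacle.} The only nontrivial step is carrying the outward almost-minimality through the nonlinear penalty $\err \nl$. Naively, perturbing $U_k$ to $U_k \cup B_r(x)$ could change $\nl$ by an amount comparable to the perturbation of $\ei$, which would prevent absorbing the error. Proposition~\ref{lem:key}, which exploits the torsion term in the base energy, produces a bound on $\|u_{U_k} - u_{U_k'}\|_{L^1}$ that is linear in $\ei(U_k) - \ei(U_k') + \tor(U_k) - \tor(U_k')$, and this precise linear dependence is exactly what is needed to make the absorption work. This is why the lemma requires $\err_0$ to depend on $\tpar$.
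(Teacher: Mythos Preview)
Your overall strategy—minimize $\Ep$ directly over $\{U' \in \compset : U' \supset \W\}$ and show the minimum is attained—is natural, but the passage to the limit has a genuine gap that is not easily repaired.

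The issue is the sentence ``Standard arguments \dots\ identify $u^* = u_U$, $w^* = w_U$.'' In Lemma~\ref{l:emin} this identification works only because of the energy sandwich $\minE \leq \En(U) \leq \int|\grad u^*|^2 + \dots \leq \minE$, which forces $u^*$ to realize $\ei(U)$. In your setting there is no such sandwich: you only know $\En(U) \leq \liminf \En(U_k)$, and nothing prevents $\int|\grad u^*|^2 > \ei(U)$. Without $u^* = u_U$ you cannot invoke \ref{a:nllip}, since that assumption compares $u_U$ to $u_{U_k}$, not $u^*$ to $u_{U_k}$. Lemma~\ref{l:poincarestab} only yields $\|u^* - u_U\|_{L^1} \leq C\sqrt{\gamma}$ with $\gamma = \int|\grad u^*|^2 - \ei(U)$, and a $\beta$-trick as in Theorem~\ref{t:min} then produces $\sqrt{\beta} \leq C\err$ rather than $\beta = 0$, which does not suffice to conclude that $U$ is an outward minimizer. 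A second, related gap is the claim $|U_k \triangle U| \to 0$: Fatou gives only $|U \sm U_k| \to 0$, and without $|U_k| \to |U|$ you cannot control $|U_k \sm U|$. Finally, note that the uniform-convergence route used in Theorem~\ref{t:min} to establish $u = u_\W$ is unavailable here, since it relies on the Lipschitz bound for outward minimizers---precisely what you are trying to construct.

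The paper's proof sidesteps all of this by building a \emph{nested} sequence $\W = U_0 \ss U_1 \ss \cdots$ (at each step choosing $U_{k+1} \supset U_k$ with $\Ep(U_{k+1})$ strictly below the midpoint of $\Ep(U_k)$ and $m_{U_k}$) and setting $U = \bigcup_k U_k$. The inclusion $U_k \ss U$ is then automatic, so $|U_k \triangle U| = |U \sm U_k| \to 0$ trivially, and Proposition~\ref{lem:key} applied to the pair $U_k \ss U$ gives $\int|u_{U_k} - u_U| \leq C_4[\ei(U_k) - \ei(U) + \tor(U_k) - \tor(U)]$ directly---a bound linear in $\alpha := \liminf \En(U_k) - \En(U)$, so the absorption closes with $\alpha = 0$ and $\Ep(U) = m$.
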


\begin{proof}
	Given any bounded open set $E$, let
	\[
		m_E = \inf\{\Ep(V) : V \in \compset, E \ss V\}.
	\]
	Clearly $m_\W \leq \Ep(\W)$, and as $m_\W \geq \minE > -\infty$ from Lemma \ref{l:emin} this is bounded from below. If $m_\W = \Ep(\W)$, then $\W$ is itself an outward minimizer and we are done by letting $U=\W$.  If not, we construct the set $U$ in the following way.  Let $U_1 \in \compset$ be a set with $\W \ss U_1$ which has $\Ep(U_1) < ({m_\W + \Ep(\W)})/{2}$. Now repeat this construction with $U_1$ in place of $\W$, producing a nested sequence (finite or infinite) $U_k \in \compset$ with $\W \ss U_k \ss U_{k + 1}$, and $\Ep(U_k) <({m_{U_{k-1}} + \Ep(U_{k-1})})/{2}$. 
	
	The sequence is finite if $m_{U_k} = \Ep(U_k)$ for the final $k$, meaning it is also an outward minimizer. In this case, $U_k$ satisfies the conclusions of this lemma and we are done by taking $U= U_k$. Now  assume the sequence is infinite. Observe that $m_{U_k} \geq m_{U_{k-1}}$ from the fact that $U_{k-1} \ss U_{k}$. Set $m = \lim_{k} m_{U_k}$. On the other hand, $\Ep(U_k)$ is decreasing and
	\[
		\Ep(U_k) - m_{U_{k}} < \frac{m_{U_{k - 1}} + \Ep(U_{k-1})}{2} - m_{U_{k-1}} = \frac{\Ep(U_{k-1}) - m_{U_{k-1}}}{2},
	\]
	meaning $m = \lim_k \Ep(U_k)$.
	
	Let $U = \cup_k U_k$; this is an open set. Note that $\W \ss U \ss \Qb_R$ and $|U| = \lim |U_k| \leq \vmax$ from the monotone convergence theorem. We also have $m_U \geq m_{U_k}$ for each $k$, so $m_U \geq m$. Our goal is to show that $\Ep(U) = m$; if we do so, then clearly $U$ is an outward minimizer and satisfies the assumptions.
	
	First, $\En(U) \leq \liminf_k \En(U_k)$. Indeed, this follows by extracting a subsequence of first eigenfunctions $u_{U_k}$ and torsion functions $w_{U_k}$ for $U_k$ which converge weakly in $H^1_0(U)$ to some $u, w$ which may be used as competitors for the eigenvalue and torsional rigidity problems on $U$ (together with the previously observed fact that $|U \sm U_k|\rightarrow 0$):
	\[
		\ei(U) \leq \frac{\int |\grad u|^2}{\int u^2} \leq \liminf_k \frac{\int |\grad u_{U_k}|^2}{\int u_{U_k}^2} = \liminf_k \ei(U_k),
	\]
	and similarly $\tor(U) \leq \liminf_k \tor(U_k)$ and $\fv(U) = \lim_k \fv(U_k)$. This implies that 
	\begin{equation}\label{e:outmin1}
		\En(U) \leq \lim \Ep(U_k) \leq \Ep(\W) \leq \En(\W) + \err_0 \leq \minE + \d + \err_0.
	\end{equation}
	Set $\a = \liminf_k \En(U_k) - \En(U) \geq 0$,
	noting that
	\[
		\liminf_k \1\l_1(U_k) + \tor(U_k) - \l_1(U) - \tor(U)\2 \leq \frac{\a}{\tpar}.
	\]
	
	Consider now the remaining term $\nl(U)$. From Proposition~\ref{lem:key} applied to $U$ and $U_k$ (using Lemma \ref{l:simpleeval} and \eqref{e:outmin1}, choosing $\d$ and $\err_0$ small enough), we have that
	\[
		\int |u_{U_k} - u_U | \leq C_4 (\tor(U_k) - \tor(U) + \l_1(U_k) - \l_1(U)).
	\]
	Therefore, from property \ref{a:nllip} of $\nl$, we have that
	\[
		|\nl(U) - \nl(U_k)| \leq |U\sm U_k| + C_4 (\tor(U_k) - \tor(U) + \l_1(U_k) - \l_1(U)) \leq \frac{C_4 \a}{\tpar} + o_k(1).
	\]
	In particular, if $\err_0 C_4 < \tpar/2$, this leads to
	\begin{align*}
		m \leq m_U \leq \Ep(U)
		&= \En(U) + \err \nl(U_k) + \err |\nl(U) - \nl(U_k)|\\
		& \leq \En(U) + \err \nl(U_k) + \err \frac{C_4 \a}{\tpar} + o_k(1)\\
		& = \En(U_k) - \a + \err \nl(U_k) + \frac{\a}{2} + o_k(1) \\
		& = \Ep(U_k) - \a + \frac{\a}{2} + o_k(1) = m - \frac{\a}{2} + o_k(1).
	\end{align*}
	Taking the limit, we see that $\a = 0$, $\Ep(U) = m$, and the conclusion follows.
\end{proof}

We now show that a minimizer of $\Ep$ exists among sets in $\compset = \compset_{R,\vmax}$. The proof makes use of Lemma~\ref{l:outmin} to replace any minimizing sequence with a minimizing sequence of {\it outer minimizers}, whose eigenfunctions and torsions functions we know to be uniformly Lipschitz from the results of Section~\ref{s:ub}.
\begin{theorem}\label{t:min} There is a $\err_0 = \err_0(R, v, \vmax, \tpar, \vpar)$ such that if $\err < \err_0$, there exists an $\W \ss \compset$ that is a minimizer of $\Ep$ among sets in $\compset = \compset_{R,\vmax}$.
\end{theorem}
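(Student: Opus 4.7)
The plan is to use a modified direct method. Standard direct minimization of $\Ep$ fails because the nonlinear term $\nl$ depends on the eigenfunction $u_\W$, and weak $H^1$ compactness is insufficient to pass it to the limit. The key idea is to replace an arbitrary minimizing sequence by one consisting of outward minimizers, thereby obtaining uniform Lipschitz bounds on their eigenfunctions and torsion functions (Corollary~\ref{cor:lip}), which yields the strong compactness needed.

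Fix $\err_0$ small enough to invoke Lemma~\ref{l:outmin}, Theorem~\ref{l:upper}, and Lemma~\ref{l:simpleeval}, and also so that $\err_0 < \vpar$ (implicitly $\vpar \leq 1$). Testing $\Ep$ at any $U \in \Min$ shows $m := \inf_{\compset}\Ep \leq \minE + \err$. Let $\W_k$ be a minimizing sequence; Lemma~\ref{l:outmin} produces outward minimizers $U_k \supseteq \W_k$ with $\Ep(U_k) \leq \Ep(\W_k) \to m$. Since $\nl \geq 0$, $\En(U_k) \leq \minE + \d_M$ for $k$ large, so Theorem~\ref{l:upper} and Corollary~\ref{cor:lip} provide uniform Lipschitz control on $u_{U_k}$ and $\sqrt{\tpar}\, w_{U_k}$. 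Passing to subsequences via Arzel\`a--Ascoli and weak $H^1_0(\Qb_R)$ compactness, I extract continuous nonnegative limits $u,w$ with $u_{U_k}\to u$ and $w_{U_k}\to w$ uniformly on $\Qb_R$ and weakly in $H^1_0(\Qb_R)$; I further extract so that $|U_k|$, $\nl(U_k)$, and $\En(U_k)$ converge. Define $\W := \{u + \sqrt{\tpar}\, w > 0\}$, an open subset of $\Qb_R$ (since $u_{U_k}, w_{U_k}$ vanish off $\Qb_R$) with $|\W| \leq \liminf_k |U_k| \leq \vmax$ by Fatou, hence $\W \in \compset$, and $\int u^2 = 1$ by $L^2$ convergence.

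To identify $u = u_\W$ and $w = w_\W$, note that $u$ is continuous with $u \equiv 0$ off $\W$, so $u \in H^1_0(\W)$. Weak lower semicontinuity gives $\ei(\W) \leq \int|\grad u|^2 \leq \liminf \ei(U_k)$. Conversely, for any $\phi \in C^\infty_c(\W)$, its compact support is contained in $\{u + \sqrt{\tpar}\, w > 0\}$, on which the limit is bounded below; uniform convergence then forces $\supp\phi \subseteq U_k$ eventually, so $\ei(U_k) \leq \int|\grad\phi|^2/\int\phi^2$ and by density $\limsup\ei(U_k)\leq\ei(\W)$. Hence $\ei(U_k)\to\ei(\W)=\int|\grad u|^2$, and since $\En(\W)\leq\minE+O(\err)$, Lemma~\ref{l:simpleeval}(2) provides a spectral gap at $\W$, so $u=u_\W$ by uniqueness. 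The identical argument on the torsional rigidity functional gives $\tor(U_k)\to\tor(\W)$ and $w=w_\W$.

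It remains to show $\Ep(\W)\leq m$. Since $1_\W \leq \liminf 1_{U_k}$ pointwise, dominated convergence yields $|\W\setminus U_k|\to 0$, and therefore $|\W \triangle U_k| \to \lim_k|U_k|-|\W|$. Combining assumption~\ref{a:nllip} with uniform convergence $u_{U_k}\to u_\W$ gives $\nl(\W) \leq \lim_k \nl(U_k) + \bigl(\lim_k|U_k| - |\W|\bigr)$. The function $\fv$ is nondecreasing with slope at least $\vpar$ (for $\vpar\leq 1$), so $\fv(|\W|) \leq \fv(\lim_k|U_k|) - \vpar\bigl(\lim_k|U_k| - |\W|\bigr)$. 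Putting these together with $\ei(U_k)\to\ei(\W)$ and $\tor(U_k)\to\tor(\W)$,
\[
    \Ep(\W) - m \;\leq\; (\err - \vpar)\bigl(\lim_k|U_k| - |\W|\bigr) \;\leq\; 0,
\]
so $\Ep(\W) = m$ by minimality. The main obstacle is precisely this possible excess mass $\lim_k|U_k| > |\W|$: since $U_k$ is only outward (not inward) minimizing, the nondegeneracy of Theorem~\ref{t:lb} is unavailable, and the symmetric difference $|U_k\triangle\W|$ need not vanish. It is the volume penalty $\fv$ that absorbs this excess exactly when $\err < \vpar$, which is the essential reason the volume penalization must strictly dominate the nonlinear perturbation.
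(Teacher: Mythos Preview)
Your proof is correct and follows essentially the same approach as the paper: replace a minimizing sequence by outward minimizers via Lemma~\ref{l:outmin}, use the resulting Lipschitz bounds (Corollary~\ref{cor:lip}) to get uniform convergence, identify the limit as the eigenfunction of the limiting positivity set via the spectral gap, and absorb the possible excess mass $\lim_k|U_k|-|\W|$ using the volume penalty under the condition $\err<\vpar$. The only minor differences are cosmetic: you take $\W=\{u+\sqrt{\tpar}\,w>0\}$ rather than $\{u>0\}$, and you establish $\limsup\ei(U_k)\leq\ei(\W)$ by a clean test-function argument rather than by passing the PDE to the limit on the interior as the paper does.
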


\begin{proof} Let $\a = \inf \{\Ep(E) : E \in \compset \}$, and $\W_j$ a minimizing sequence for $\Ep$. Note that $\En(\W) \leq \Ep(\W) \leq \En(\W) + \err$ for any set $\W$, so $\a \leq \minE + \err$ and $\En(\W_j) \leq \minE + 2\err$ for all $j$ large enough.  Apply Lemma \ref{l:outmin} to replace each $\W_j$ with an outward minimizer with smaller $\Ep(\W_j)$, choosing $2\err_0 < \d_0$ there. Let $u_{\W_j}$, $w_{\W_j}$ be the first eigenfunctions and torsion functions respectively, and pass to subsequences with $u_{\W_j} \rightarrow u$ and $w_{\W_j} \rightarrow w$ weakly in $H^1_0(\Qb_R)$ and strongly in $L^2(\Qb_R)$. Our approach here is different from the previous lemma: we show that $u$ is the unique  first eigenfunction on its positivity set: $u = u_{\{u > 0\}}$.

Apply Corollary \ref{cor:lip} to see that the $u_{\W_j}$ and $w_{\W_j}$ are equicontinuous, and so converge uniformly to $u$ and $w$ (which are Lipschitz functions) on $\Qb_R$. This also implies that $\W = \{ u > 0 \}$ is, by definition, an open set. Let us show that $\W$ is minimizer.

Let $\l = \lim \l_1(\W_j)$ (pass to a subsequence if needed). From the uniform convergence of $u_{\W_j} \rightarrow u$, we have that for every $x \in \W$, there is a ball $B_r(x)$ such that $B_r(x)\ss \W_j$ for every $j$ large enough. On this ball, the $u_{\W_j}$ converge in smooth topology to $u$ from standard elliptic estimates, and so $-\Lap u(x) = \l u(x)$ passes to the limit. In particular, this implies that $u_\W$ is an eigenfunction for $\W$ with eigenvalue $\l$.

From the weak convergence of $u_{\W_j} \rightarrow u$ and $w_{\W_j} \rightarrow w$ in $H^1_0$, we have that
\[
	\ei(\W) + \tpar \tor (\W) \leq \frac{\int |\grad u|^2}{\int u^2} + \tpar \int \frac{|\grad w|^2}{2} - w \leq \liminf_j \ei(\W_j) + \tpar \tor(\W_j).
\]
From Fatou's lemma, $|\W| \leq \liminf_j |\W_j|$, so $\En(\W) \leq \liminf_j \En(\W_j)$. We also have that $\W \ss \Qb_R$ and $|\W| \leq \vmax$, so $\W \in \compset$ is an admissible competitor.

Using that $\En(\W) \leq \liminf \En(\W_j) \leq \minE + 2\err$, we may apply Lemma \ref{l:simpleeval} to deduce that $\l_2(\W) > \ei(\W) + c(R, v, \vmax)$. As $\minE \leq \En(\W)$ as well, we must have that 
\[
	0 \leq \lim \ei(\W_j) - \ei(\W) \leq 2 \err_0,
\]
and so $|\ei(\W) - \l| \leq 2\err_0$. Ensuring that $\err_0$ is small enough in terms of $R, v, \vmax$ guarantees that $\l < \l_2(\W)$. In particular, this means there is a unique eigenfunction on $\W$ with eigenvalue at most $\l$, and so $u_\W = u$. In particular, this means that $u_{\W_j} \rightarrow u_\W$ uniformly.

Set
$
	\b = \liminf_j \En(\W_j) - \En(\W) \geq 0,
$
which has
$
	\b \geq \vpar [\liminf_j |\W_j| - |\W|].
$
From Fatou's lemma, $|\W |\leq \liminf_j |\W_j \cap \W|$, so $\liminf_j |\W\sm \W_j| = 0$. This gives that
\[
	\liminf_j |\W \triangle \W_j| = \liminf_j |\W_j \sm \W| = \liminf_j |\W_j| - |\W| \leq \frac{\b}{\vpar}.
\]

It follows from assumption \ref{a:nllip} of Definition~\ref{def: nl} on $\nl$ that $|\nl(\W) - \nl(\W_j)|\leq o_j(1) + |\W \triangle \W_j| \leq o_j(1) + \frac{\b}{\vpar}$. We can now estimate the energy of $\W$ in the following way:
\begin{align*}
	\a \leq \Ep(\W)
	& \leq \En (\W) + \err \nl(\W_j) + o_j(1) + \err \frac{\b}{\vpar}\\
	& \leq \En(\W_j) - \b + \err \nl(\W_j) + o_j(1)  + \err \frac{\b}{\vpar}\\
	& = \Ep(\W_j) - \b + o_j(1) + \err \frac{\b}{\vpar} = \a - \b + o_j(1) + \err \frac{\b}{\vpar}.
\end{align*}
If $\err < \vpar$, this implies that $\b = 0$, $\Ep(\W) = \a$, and $\W$ is a minimizer.
\end{proof}

Next, we prove that when $M/\Isom_0$ is compact, a minimizer of $\Ep$ among all open bounded sets with $|\W|\leq \vmax$ exists. The main idea is to show that the minimizers $\W_R$ from the previous theorem have uniformly bounded diameter, and thus $\W_R$ is a global minimizer for $R$ sufficiently large. 
\begin{theorem} \label{t:globalexist} Assume $M /\Isom_0$ is compact. Then exist constants $S = S( v, \vmax, \tpar, \vpar) > 0$ and  $\err_0 = \err_0(v, \vmax, \tpar, \vpar)$ such that if $\err < \err_0$, there exists an open, bounded $\W$ with $|\W|\leq \vmax$ which minimizes $\Ep$ over all such sets. For every such minimizer $\W$ of $\Ep$, there is an $e \in \Isom_0$ and an  such that $e(\W) \ss \Qb_S$.
\end{theorem}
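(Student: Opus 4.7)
My strategy is to obtain a global minimizer as a suitable ``$R \to \infty$ limit'' of the $\Qb_R$-minimizers produced by Theorem~\ref{t:min}. Setting $\a_R = \inf_{\compset_R} \Ep$, I would first observe that $\a_R$ is nonincreasing in $R$ and bounded below by $\minE(\infty) \geq -C$ (Lemma~\ref{l:globalElb}), so $\a := \lim_R \a_R$ is finite, and because every bounded open set with volume $\leq \vmax$ lies in some $\Qb_R$ (property (4) of the family $\Qb_R$), this $\a$ coincides with the infimum of $\Ep$ over all such sets. Choosing a smooth competitor with $\En$ close to $\minE(\infty)$ shows $\a \leq \minE(\infty) + \err$, so any minimizer $\W_R$ produced by Theorem~\ref{t:min} on $\compset_R$ has $\En(\W_R) \leq \minE(\infty) + 2\err$, which for $\err_0$ small falls in the range of applicability of all the key results of Sections~\ref{s:basics}--\ref{s:ub} with constants \emph{independent of} $R$ (this uses $\Isom_0 \leq \Isom$ and $M/\Isom$ compact).

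The main obstacle, and the heart of the argument, is a uniform (in $R$) diameter bound for $\W_R$ modulo the action of $\Isom_0$. I would apply the concentration-compactness portion of Lemma~\ref{l:globalsimpleeval} to $\W_R$ to produce a universal radius $S_0 = S_0(v, \vmax, \vpar)$, an isometry $e_R \in \Isom_0$, and a bound $|e_R(\W_R) \sm \Qb_{S_0}| \leq C \err$. (The isometry comes from translating the concentration point $x_R \in e_R(\Qb_S)$ constructed in Step 1 of the proof of that lemma back to the reference chart.) On the other hand, since $\W_R$ is simultaneously an inward and outward minimizer of $\Ep$ on $\compset_R$, Corollary~\ref{c:badldensitybd} provides a uniform-in-$R$ lower density bound $|\W_R \cap B_r(y)| \geq c(r) > 0$ for every $y \in \W_R$ and every small $r$. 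If $e_R(\W_R)$ had a point lying farther than a small fixed $r_1$ outside $\Qb_{S_0}$, a disjoint ball of radius $r_1$ around that point would contribute a uniform chunk of mass to $e_R(\W_R) \sm \Qb_{S_0}$, contradicting the concentration-compactness estimate once $\err_0$ is taken small enough. This forces $e_R(\W_R) \ss \Qb_S$ for some $S = S(v, \vmax, \vpar)$ independent of $R$.

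The remainder is essentially bookkeeping. By the $\Isom_0$-invariance of $\Ep$ (which follows because $\En$ is automatically isometry-invariant and $\nl$ is invariant under $\Isom_0$ by assumption~\ref{a:nlinv}), $\Ep(e_R(\W_R)) = \a_R$, and since $e_R(\W_R) \in \compset_S \ss \compset_R$ for $R \geq S$, we obtain $\a_R = \a_S$ for all such $R$. Thus $\a = \a_S$ and $e_R(\W_R)$ realizes the global infimum, yielding existence. The containment statement for an arbitrary global minimizer $\W$ then follows by running the same diameter-bound argument on $\W$ itself: being bounded, $\W$ lies in some $\Qb_R$; it inherits the energy bound $\En(\W) \leq \a \leq \minE(\infty) + \err$ from $\Ep(\W) = \a$; and it is trivially both an inward and outward minimizer inside any container $\Qb_R$ large enough to see it, so the argument of the previous paragraph applied to $\W$ in place of $\W_R$ produces the desired $e \in \Isom_0$.
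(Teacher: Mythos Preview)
Your proposal is correct and follows the same overall scheme as the paper: produce local minimizers $\W_R$ via Theorem~\ref{t:min}, obtain a diameter bound uniform in $R$, and then use $\Isom_0$-invariance of $\Ep$ to conclude that $\a_R$ stabilizes at $R = S$. The only real difference is how the diameter bound is obtained. The paper covers $\W_R$ by a bounded number of small balls using the lower density estimate of Corollary~\ref{c:badldensitybd}, and then appeals to conclusion~(3) of Lemma~\ref{l:globalsimpleeval} (the ``one large connected component'' statement) to force the union of these balls to be connected, hence of diameter $\leq 2Kr$. You instead extract the mass-concentration estimate $|\W_R \sm B_{8S\bar J}(x_0)| \leq C\err$ from Step~1 of the proof of Lemma~\ref{l:globalsimpleeval} and combine it with the lower density bound to rule out far-away points directly. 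Both routes work and rest on the same two lemmas; yours is a bit more direct. One small point to make explicit: the isometry $e_0$ arising in the proof of Lemma~\ref{l:globalsimpleeval} lies in the full isometry group $\Isom$, not a priori in $\Isom_0$. To get $e_R \in \Isom_0$ as you claim, either observe that the covering argument there can be run verbatim with $\Isom_0$ in place of $\Isom$ (valid since $M/\Isom_0$ is compact by hypothesis), or, as the paper does, first secure a pure diameter bound and then use $\cup_{e \in \Isom_0} e(\Qb_S) = M$ to translate into $\Qb_S$ via an element of $\Isom_0$.
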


Recall that $\Isom_0$ is a (possibly empty) subgroup of isometries under which $\nl$, and hence $\Ep$, is invariant. The assumption here implies that $M / \Isom$ is compact.

\begin{proof}
	For each $R\gg 1$, consider the minimizer  $\W_R$ obtained from Theorem \ref{t:min} of $\Ep$ over $\compset_R$. Set 
	\[
	\a_R = \Ep(\W_R) = \inf\{\Ep(\W) : \W \in \compset_R \};
	\]
	note that $\a_R$ is nonincreasing in $R$ and from Lemma \ref{l:globalElb}, we have that $\a_R \geq \En(\W_R) \geq \minE(\infty) > - \infty$. We claim that so long as $\err_0$ and $\tpar_0$ are chosen small enough, $\W_R$ has bounded diameter uniformly in $R$. Indeed, fix $r$ small and apply the lower density estimate of Corollary \ref{c:badldensitybd} to $\W_R$: for every $x\in \W_R$,
	\[
		|\W_R \cap B_r(x)| \geq c.
	\]
	As $|\W_R| \leq \vmax$, we this implies that $\W_R$ may be covered by a bounded number $K$ of balls $B_r(x_k)$, with $x_k \in \W_R$. Assume that the union of these balls is disconnected: this implies that $\W$ itself cannot have a connected component of measure greater than $|\W| - c$. Apply Lemma \ref{l:globalsimpleeval} with $\e = c/2$: this gives a contradiction to the third conclusion there. We infer that all the balls $B_r(x_k)$ have connected union of diameter at most $2Kr$, as promised.
	
	It follows that there is an $S$ and $e_R \in \Isom_0$ such that $e_R(\W_R) \ss \Qb_{S}$ for all $R$ (this uses only the diameter bound and that $\cup_{e \in \Isom_0} e(\Qb_S) = M$ for some $S$, as in the proof of Lemma \ref{l:globalElb}). Since $\Ep$ is invariant under isometries in $\Isom_0$, $\Ep(\W_R) = \Ep(e_R(\W_R))$ and $e_R(\W_R)$ is also a minimizer. This implies that for $R \geq S$,
	\[
		\a_S \geq \a_R = \Ep(\W_R) = \Ep(e_R(\W_R)) \geq \a_S.
	\]
	In particular, $\a_R$ is independent of $R$ for $R \geq S$, and $\W_S$ minimizes $\Ep$ over all open bounded sets with $|\W|\leq \vmax$.	The second conclusion follows from the fact that any open, bounded set $\W$ with $\Ep(\W) = \a_S$ lies in $\Qb_R$ for a sufficiently large $R$, and so the above argument applies to $\W$.
\end{proof}

A consequence of Theorem \ref{t:globalexist} is that all constants pertaining to minimizers of $\Ep$ may be taken independently of $R$ if $M /\Isom_0$ is compact. This allows us to avoid tracking the dependence on $R$ below, instead fixing a sufficiently large $R$ and looking at minimizers in $\Qb_R$.

\section{Measure-theoretic estimates}\label{s:measure}

Having now established the existence of minimizers of the main energy, we move toward understanding some initial measure-theoretic properties of these minimizers. Ultimately, the results of this section will be used in Section~\ref{s:el} to derive the Euler-Lagrange equation satisfied by $\W$. 

In Section~\ref{ss:density}, we first prove that $\Omega$ is an NTA domain.  This allows us to utilize an inhomogeneous boundary Harnack principle for NTA domains recently shown in \cite{AKS20} in Section~\ref{ss:greens} in order to prove some fine estimates for the Green's function. This allows us to show that the first eigenfunction of a minimizer grows at least linearly from the boundary in Proposition~\ref{l:bdryharnackef}. Proposition~\ref{l:green} contains some finer Green's function estimates that will be crucial in deriving a useful form of the Euler-Lagrange equation in the following section.
With these results in hand, in Section~\ref{ss:reducedbdry} we can recover some basic measure theoretic properties of $\W$ and understand the nontangential limits of $|\nabla u_{\Omega}|$ and $|\nabla w_{\Omega}|$ on the reduced boundary 
$\p^* \Omega$, similarly to other recent approaches in vectorial free boundary problems \cite{CSY18, MTV17}.

We assume throughout the section that $R$ and $v<\vmax$ are fixed, and that $\vpar < \vpar_0(R, v, \vmax)$, $\tpar < \tpar_0(R, v, \vmax, \vpar)$, and $\err < \err_0(R, v, \vmax, \tpar, \vpar)$ are small enough that all results in earlier sections apply.  We recall  that the growth quantities $\DO(\W)$ and $\UP(\W)$ are defined in \eqref{def: DO} and \eqref{def: UP} respectively.

\subsection{Density estimates and the NTA property}\label{ss:density}
Let us summarize some consequences of the upper and lower bounds of Sections \ref{s:lb} and \ref{s:ub}. The proofs are mostly standard and may be carried out in local coordinates, so we provide only brief sketches and references. First, we have uniform upper and lower bounds on the volume density of $\W$ and perimeter density of $\p \W$.

\begin{lemma} \label{l:densitybd}
	Let $\W \in \compset$ be a minimizer of $\Ep$. Then there are $r_0, C > 0$ depending only on $R, v, \vmax, \vpar$ such that for any $x\in \p \W$ and $r < r_0$,
	\begin{equation}\label{e:voldensitybd}
		\frac{1}{C} < \frac{|B_r(x)\cap \W|}{|B_r(x)|} < 1 - \frac{1}{C}
	\end{equation}
	and
	\begin{equation}\label{e:perdensitybd}
		\frac{1}{C} < \frac{\cH^{n-1}(\p \W \cap B_r(x))}{r^{n-1}} < C.
	\end{equation}
\end{lemma}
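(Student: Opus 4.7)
The plan is to adapt classical density estimates from the Alt--Caffarelli theory of Bernoulli-type free boundary problems, handling the nonlinear perturbation $\err\nl$ via Proposition~\ref{lem:key}. Any minimizer $\W$ of $\Ep$ is simultaneously an inward and an outward minimizer, and satisfies $\En(\W)\le\minE+C\err$. Thus, for $\err$ sufficiently small relative to $\vpar$ and $\tpar$, both the nondegeneracy bound $\DO(\W)\ge c_m$ of Theorem~\ref{t:lb} and the linear growth/Lipschitz bounds of Theorem~\ref{l:upper} and Corollary~\ref{cor:lip} are at our disposal.

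The lower volume density is immediate from these two ingredients. Fix $x\in\p\W$ and $r<r_0$. Nondegeneracy applied to $B_{r/2}(x)$ yields some $y\in B_{r/2}(x)$ with $u_\W(y)+\sqrt{\tpar}w_\W(y)\ge c_m r/2$; we may assume $u_\W(y)\ge c_m r/4$ (the case of $w_\W$ is identical). The Lipschitz bound on $u_\W$ then gives $u_\W>0$ on $B_{c_m r/(4C)}(y)\ss B_r(x)$, so $|B_r(x)\cap\W|\ge c r^n$.

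The upper volume density is more delicate; I would argue by a blow-up/compactness argument. Suppose for contradiction that there exist minimizers $\W_j$ of $\Ep$, boundary points $x_j\in\p\W_j$, and radii $r_j<r_0$ with $|B_{r_j}(x_j)\setminus\W_j|/r_j^n\to 0$. Rescale by setting $\tilde\W_j=(\W_j-x_j)/r_j$ and $\tilde u_j(y)=u_{\W_j}(x_j+r_j y)/(c_m r_j)$. These functions are uniformly Lipschitz and nonnegative on $B_2(0)$, vanish at $0$, and satisfy $\sup_{B_{1/2}(0)}\tilde u_j\ge 1/2$ by the rescaled nondegeneracy (picking whichever of $u_{\W_j},\sqrt{\tpar}w_{\W_j}$ dominates at the scale of $r_j/2$). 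They solve $-\Lap\tilde u_j=r_j^2\ei(\W_j)\tilde u_j$ on $\tilde\W_j$, with $|B_1(0)\setminus\tilde\W_j|\to 0$. Passing to an Arzel\`a--Ascoli limit $\tilde u_\infty$ on $B_1(0)$, the rescaled metric converges to the Euclidean one and the eigenvalue term vanishes; combined with $|B_1(0)\setminus\tilde\W_j|\to 0$ this gives that $\tilde u_\infty$ is a nonnegative Lipschitz function on $B_1(0)$ with $-\Lap\tilde u_\infty\ge 0$ distributionally (the boundary measure from $\p\tilde\W_j$ is nonnegative), vanishing at $0$, and with supremum $\ge 1/2$. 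The strong maximum principle for nonnegative superharmonic functions then forces $\tilde u_\infty\equiv 0$ on $B_1(0)$, contradicting the supremum bound. Justifying the passage of the PDE to the limit --- especially showing that the distributional limit of $-\Lap\tilde u_j$ remains a nonnegative measure and that the supremum is captured in the interior --- is the principal technical obstacle.

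Once both volume-density estimates are in hand, the lower perimeter density $\cH^{n-1}(\p\W\cap B_r(x))\ge cr^{n-1}$ follows from the relative isoperimetric inequality on manifolds of bounded geometry. The upper perimeter density $\cH^{n-1}(\p\W\cap B_r(x))\le Cr^{n-1}$ follows from a standard covering argument based on the \emph{uniform cone condition} on $\p\W$: the nondegeneracy together with the Lipschitz estimates and the two volume-density bounds guarantee that both $\W$ and $M\setminus\bar\W$ contain cones of uniform opening angle and height at every boundary point, making $\p\W$ locally the graph of a Lipschitz function and hence of finite $(n-1)$-Hausdorff density.
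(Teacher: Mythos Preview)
Your lower volume density and lower perimeter density arguments are correct and match the paper's approach. However, both the upper volume density and the upper perimeter density arguments have genuine gaps.

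\textbf{Upper volume density.} Your blow-up argument contains a sign error. The eigenfunction $u_\W$, extended by zero, satisfies $\Lap u_\W = -\ei(\W)u_\W 1_\W + \mu$ distributionally on $M$, where $\mu\ge 0$ is supported on $\p\W$ (this is exactly Lemma~\ref{l:basic}(4)). Thus $-\Lap u_\W \le \ei(\W)u_\W$, and after rescaling and passing to the limit you obtain $-\Lap\tilde u_\infty\le 0$: the limit is \emph{sub}harmonic, not superharmonic. A nonnegative Lipschitz subharmonic function vanishing at the origin with $\sup_{B_{1/2}}\tilde u_\infty\ge 1/2$ is no contradiction at all (think of $(x_n)_+$). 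There is no way to conclude that the boundary measures $\mu_j$ vanish in the limit merely from $|B_1\setminus\tilde\W_j|\to 0$; this is precisely where outward minimality must enter. The paper avoids blow-up entirely and uses the harmonic approximation Lemma~\ref{l:harmonicapprox} directly: the harmonic replacement $u+\sqrt{\tpar}w$ of $u_\W+\sqrt{\tpar}w_\W$ on $B_{r/2}(x)$ is, by Harnack and nondegeneracy, bounded below by $cr$ on $B_{r/4}(x)$, while $u_\W+\sqrt{\tpar}w_\W\le \UP(\W)\,\e r$ on $B_{\e r}(x)$. The energy bound from Lemma~\ref{l:harmonicapprox} together with Poincar\'e then forces $|B_r(x)\setminus\W|\ge c r^n$.

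\textbf{Upper perimeter density.} Two-sided volume density bounds do \emph{not} imply a uniform cone condition or that $\p\W$ is locally a Lipschitz graph; NTA domains (which is all one has at this stage) can have boundaries that are far from Lipschitz. The standard Alt--Caffarelli argument, which the paper cites from \cite{KL18} and \cite{MTV17}, proceeds differently: one bounds the total variation of the measure $\Lap u_\W + \ei(\W)u_\W$ on $B_r(x)$ by $Cr^{n-1}$ via the energy comparison in Lemma~\ref{l:harmonicapprox} (this measure is controlled by $\int_{B_r}|\grad(u_\W-u)|^2/\sup_{B_r}u_\W$), and then uses the nondegeneracy $|\grad u_\W|\ge c$ in a weak sense to compare this measure to $\cH^{n-1}\mres\p\W$.
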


\begin{proof}[Sketch of proof.]
	First, from Theorem \ref{t:lb} and Theorem~\ref{l:upper}, we have that $c \leq \DO(\W) \leq \UP(\W) \leq C$. To prove the lower bound in \eqref{e:voldensitybd}, let $y \in B_{r/2}(x) \cap \W$ be a point with $u_\W(y) + \sqrt{\tpar} w_\W(y) \geq \DO(\W) r/2 $. By Corollary \ref{cor:lip} we have $|\grad (u_\W(y) + \sqrt{\tpar} w_\W)| \leq C(\UP(\W))$, and so we must have that $d(y, \p \W) \geq \frac{\DO(\W) r}{2 C(\UP(\W))} \geq c r$. So long as $r_0$ is small enough, $|B_{cr}(y)| \geq c' r^n$, $|B_r(x)|\leq C r^n$, and so
	\[
		\frac{|B_r(x)\cap \W|}{|B_r(x)|} \geq \frac{|B_{cr}(y)|}{|B_r(x)|} \geq \frac{1}{C}.
	\]

	For the upper bound in \eqref{e:voldensitybd}, first note that if $B_{r/2}(x) \cap (M \sm \Qb_R)$ is nonempty, then $|B_r(x) \sm \Qb_R| \geq c(R) r^n$ using the smoothness of $\p \Qb_R$, and this implies the estimate. If this is not the case, then $B_{r/2}(x) \ss \Qb_R$. We apply Lemma \ref{l:harmonicapprox} to learn that if $u$, $w$ are harmonic replacements for $u_\W, w_\W$ respectively on $B_{r/2}(x)$, then
	\[
		\int |\grad (u_\W - u)|^2 + \tpar |\grad (w_\W - w)|^2 \leq C\Big[|B_{r/2}(x) \cap \W| +  r^n \sup_{B_{r/2}(x)} \tpar (w_\W^2 + u_\W^2) \Big].
	\]
	Now, as $x \in \p \W$, we have that $\sup_{B_{r/2}(x)} \tpar w_\W^2 + u_\W^2 \leq C(\UP(\W)) r^2$. On the other hand, there must be a point $y \in B_{r/4}(x)$ with $\sqrt{\tpar} w_\W(y) + u_\W(y) \geq \DO(\W) r/4$. Recalling that $\Lap (\sqrt{\tpar} w_\W(y) + u_\W(y)) \geq - C(R, \vmax)$, applying \cite[Theorem 8.16]{GT} to $u_\W - u, w_\W - w$ gives
	\[
		\sup_{B_{r/2}(x)} \sqrt{\tpar} [w_\W - w] + [u_\W(y) - u] \leq C r^2.
	\]
	At $y$ then, $u(y) + \sqrt{\tpar} w(y) \geq c r - C r^2 \geq c r$ as long as $r_0$ is taken small enough. Using the Harnack inequality on $B_{r/2}(x)$, this implies $u + \sqrt{\tpar} w \geq c r$ on $B_{r/4}(x)$. By contrast, we know that on $B_{\e r}(x)$, $u_\W + \sqrt{\tpar} w_\W \leq \e r \UP(\W) \leq \frac{cr}{2}$ as long as we choose $\e$ small enough. Integrating and using the Poincar\'e inequality,
	\begin{align*}
		c r^{n + 2} &\leq  \int_{B_{\e r}(x)} |u_\W - u|^2 + \tpar |w_\W - w|^2\\
		& \leq \int_{B_{ r/2}(x)} |u_\W - u|^2 + \tpar |w_\W - w|^2 \\
		& \leq C r^2 \int_{B_{ r/2}(x)} |\grad (u_\W - u)|^2 + \tpar |\grad (w_\W - w)|^2   \leq C r^2 \left[|B_{r/2}(x) \cap \W| +  r^{n + 2}\right].
	\end{align*}
	As long as $r_0$ is small enough, the last term may be reabsorbed on the left, giving $|B_{r}(x) \cap \W| \geq c r^{n} \geq c |B_{r}(x)|$.
	
	For \eqref{e:perdensitybd}, the upper bound may be obtained as in \cite[Lemma 2.4]{KL18} or \cite{MTV17}. The lower bound follows from applying the relative isoperimetric inequality \cite[4.5.2(2)]{Federer}.
\end{proof}

The volume density bounds may be automatically improved slightly to give clean balls, or corkscrew points, instead.

\begin{definition}
	Let $\W \ss M$ be an open set. We say that $\W$ satisfies the \emph{inner (resp. outer) clean ball condition} with constant $K$ at $x \in \p \W$ if for any $r < 1$, there is a point $y$ such that $B_{r/K}(y) \ss B_r(x) \cap \W$ (resp. $B_{r/K}(y) \ss B_r(x)\sm \W$). We say that $\W$ satisfies the inner (resp. outer) clean ball condition with constant $K$ if it satisfies it at every $x \in \p \W$.
\end{definition}

\begin{corollary} \label{c:cleanball}
	Let $\W$ be as in Lemma \ref{l:densitybd}. Then there exists an $\a = \a(R, v, \vmax, \vpar) > 0$ such that for any $x\in \p \W$ and $r < r_0$ there are two balls $B_{\a r}(y) \ss (B_r(x) \cap \W)$ and $B_{\a r}(z) \ss (B_r(x) \sm \W)$. In particular, $\W$ satisfies the inner and outer clean ball condition with constant depending only on $r_0, \a$.
\end{corollary}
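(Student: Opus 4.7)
The plan is to deduce both clean ball inclusions from Lemma \ref{l:densitybd} combined with the Lipschitz regularity of $u_\W + \sqrt{\tpar} w_\W$ provided by Corollary \ref{cor:lip}.

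For the inner clean ball, the argument is essentially built into the proof of the lower volume density estimate in Lemma \ref{l:densitybd}: from the lower bound $\DO(\W) \geq c_m$ of Theorem \ref{t:lb} one extracts a point $y \in B_{r/2}(x) \cap \W$ with $u_\W(y) + \sqrt{\tpar} w_\W(y) \geq c r$. Since both functions vanish on $\p \W$ and are uniformly Lipschitz on $\W$, this forces $d(y, \p \W) \geq c' r$ for some $c' = c'(R, v, \vmax, \vpar) > 0$. Taking $\alpha \leq \min\{c', 1/2\}$ gives $B_{\alpha r}(y) \ss \W \cap B_r(x)$, so the inner clean ball is immediate.

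The main work is the outer clean ball, which I will obtain by contradiction. Suppose no $z$ admits $B_{\alpha r}(z) \ss B_r(x) \sm \W$. Whenever $z \in B_{r/2}(x)$ and $\alpha \leq 1/2$, the ball $B_{\alpha r}(z)$ lies in $B_r(x)$, and by assumption must meet $\W$. Thus every $z \in B_{r/2}(x) \sm \W$ satisfies $d(z, \W) < \alpha r$, and since $\W$ is open this equals $d(z, \p \W)$. Hence
\[
B_{r/2}(x) \sm \W \ \subseteq \ T_{\alpha r}(\p \W) \cap B_{r/2}(x),
\]
where $T_\rho(\p\W) := \{y : d(y, \p \W) < \rho\}$ denotes the open $\rho$-tubular neighborhood of $\p\W$.

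It remains to show $|T_{\alpha r}(\p\W) \cap B_{r/2}(x)| \leq C \alpha r^n$ and derive a contradiction with the outer volume density bound $|B_{r/2}(x) \sm \W| \geq c r^n$ coming from the upper inequality in \eqref{e:voldensitybd}. The tubular bound follows from Ahlfors $(n-1)$-regularity of $\p \W$ at scales below $r_0$, which is precisely \eqref{e:perdensitybd}: a standard Vitali covering argument produces $N \leq C \alpha^{1-n}$ balls $B_{\alpha r}(y_i)$ centered on $\p \W \cap B_r(x)$ whose union contains $T_{\alpha r}(\p\W) \cap B_{r/2}(x)$, each of volume at most $C(\alpha r)^n$, giving total measure $\leq C \alpha r^n$. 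Choosing $\alpha = \alpha(R, v, \vmax, \vpar)$ small enough then yields the contradiction. The principal obstacle is conceptual rather than technical: mere density bounds do not in general imply a clean ball condition on the complement, and it is exactly the upper perimeter bound in \eqref{e:perdensitybd} that supplies the extra ingredient needed to control the measure of the tubular neighborhood.
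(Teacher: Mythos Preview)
Your proof is correct and follows essentially the same approach as the paper. Both arguments derive the outer clean ball by contradiction from the volume and perimeter density estimates in Lemma \ref{l:densitybd} via a Vitali-type covering, the only difference being organizational: you cover $\p\W$ and bound the volume of its $\a r$-tubular neighborhood against the outer volume density, whereas the paper covers $B_{r/2}(x)\sm\W$ and sums the lower perimeter bounds against the upper perimeter bound; for the inner ball you use the direct nondegeneracy-plus-Lipschitz argument (already present in the proof of Lemma \ref{l:densitybd}), while the paper simply repeats the covering argument symmetrically.
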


\begin{proof}
	For $\a$ small, take the collection $\{B_{\a r}(y) : y \in B_{r/2}(x) \sm \W \}$ and from it pick a finite-overlapping subcover $\{B_{\a r}(y_k) \}_{k = 1}^K$ of $B_{r/2}(x) \sm \W $. Then if every one of $B_{\a r / 2}(y_k)$ intersects $\p \W$ at a point $x_k$, we have from Lemma \ref{l:densitybd} that
	\[
		\cH^{n-1}(B_{\a r}(y_k) \cap \p \W) \geq \cH^{n-1}(B_{\a r/2}(x_k) \cap \p \W) \geq c (\a r)^{n-1}.
	\]
	As $|B_{r/2}(x)\sm \W|\geq c |B_{r/2}(x)| \geq c r^n$ and the $B_{\a r}(y_k)$ cover this set, we must have that $K (\a r)^n \geq c r^n$, or $K \geq c \a^{-n}$. Summing over $k$ and using the finite overlapping property and the upper bound in \eqref{e:perdensitybd},
	\begin{align*}
	 C r^{n-1} &\geq	\cH^{n-1}(\p \W \cap B_{r}(x))\\
	 & \geq c \sum_{k = 1}^K  \cH^{n-1}(B_{\a r}(y_k) \cap \p \W)  \geq c K (\a r)^{n-1}  \geq \frac{c}{\a} r^{n-1}.
	\end{align*}
	If $\a$ is taken small enough, this is a contradiction, so at least one of $B_{\a r/2}(y_k) \ss B_{r}(x) \sm \W$. The inner clean ball may be found similarly.
\end{proof}

\begin{definition}
	Let $\W \ss M$ be an open set. We say that $\W$ satisfies the Harnack chain condition with constant $K$ at $x, y \in \W$ if there is a curve $\g : [0, 1] \rightarrow \W$ with $\g(0) = x$, $\g(y) = y$, $l(\g([0, 1])) \leq K d(x, y)$, and
	\[
		d(\g(t), \p \W) \geq \frac{1}{K} \min\big\{ l(\g([0, t])),\,  l(\g([t, 1])) \big \}.
	\]
	Here $l(\g([a,b])) = \int_a^b |\dot \g| \,dt $ denotes  length. We say $\W$ satisfies the Harnack chain condition  with constant $K$ if it satisfies it at every $x, y \in \W$. We say $\W$ is nontangentially accessible (NTA) with constant $K$ if it satisfies the inner and outer clean ball conditions and the Harnack chain condition with constant $K$.
\end{definition}

The next lemma follows from known results on Bernoulli-type free boundary problems \cite{ACS87}, but we present a proof in Appendix \ref{s:appendixnta} as we are unaware of a version with coefficients and nonzero right-hand side treated in the literature.

\begin{lemma} \label{l:localharnackchain} Let $\W \in \compset$ be a minimizer of $\Ep$. Then there are $r_0, K > 0$ depending only on $R, v, \vmax$, and $\vpar$ such that for any $z\in \p \W$ and any $x, y \in \W \cap B_{r_0}(z)$, $\W$ satisfies the Harnack chain condition at $x$ and $y$ with constant $K$.
\end{lemma}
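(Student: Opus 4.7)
The plan is to convert the inner clean ball condition of Corollary \ref{c:cleanball} into the Harnack chain condition, exploiting the linear growth and Lipschitz regularity of $f := u_\W + \sqrt{\tpar}\, w_\W$ provided by Theorem \ref{t:lb}, Theorem \ref{l:upper}, and Corollary \ref{cor:lip}. Concretely, on a collar $\{d(\cdot, \p\W) < r_0\}$ we have $c\, d(x, \p\W) \leq f(x) \leq C\, d(x, \p\W)$, $|\n f| \leq L$, and $\Delta f \geq -C$, all with constants depending only on $R, v, \vmax, \vpar$. This is exactly the package of estimates used in \cite{ACS87} to establish NTA regularity for classical Bernoulli minimizers, and the proof proceeds by adapting that scheme to our coefficient and right-hand side setting.

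Fix $z \in \p\W$ and $x, y \in \W \cap B_{r_0}(z)$, set $\rho = d(x, y)$, and assume by symmetry $d(x, \p\W) \leq d(y, \p\W)$. The easy case is $d(x, \p\W) \geq \rho/N_0$ for a suitably large constant $N_0$: taking $r_0$ smaller than the injectivity radius, the minimizing geodesic from $x$ to $y$ remains at distance $\geq \rho/(2N_0)$ from $\p\W$ and is covered by $O(1)$ overlapping balls of radius $\sim \rho$. The substantive case is $d(x, \p\W) < \rho/N_0$, in which the goal is to construct a corkscrew path from $x$ to an interior point $\tilde x$ with $d(\tilde x, \p\W) \geq c\rho$, and symmetrically from $y$ to $\tilde y$; once both points are in the easy regime they can be joined directly.

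The corkscrew path is built at dyadic scales. Starting from $p_0 := x$ and $\rho_0 := d(x, \p\W)$, at step $i$ we let $z_i \in \p\W$ be closest to $p_i$ and apply Corollary \ref{c:cleanball} at $z_i$ at scale $K\rho_i$ for $K$ large; this produces $B_{\alpha K \rho_i}(p_{i+1}) \subset \W \cap B_{K\rho_i}(z_i)$, so $\rho_{i+1} := d(p_{i+1}, \p\W) \geq 2\rho_i$ once $K \geq 2/\alpha$. Iterating $O(\log(\rho/\rho_0))$ times produces a point at distance $\geq c\rho$ from $\p\W$, which serves as $\tilde x$. The core of the proof, and the main obstacle, is to verify that consecutive balls $B_{\rho_i/4}(p_i)$ and $B_{\rho_{i+1}/4}(p_{i+1})$ can be joined through $\W$ by $O(1)$ overlapping balls of radius $\sim\rho_i$ sitting at distance $\sim\rho_i$ from $\p\W$; this is the content of the analogous step in \cite{ACS87}. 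To adapt it here we use $f$ as a barrier: since $\Delta f \geq -C$ uniformly, the weak Harnack inequality holds for $f$ with uniform constants on any interior ball of definite size, which together with the nondegeneracy $f \geq c\, d(\cdot, \p\W)$ lets us propagate the lower bound $f \geq c'\rho_i$ from a ball inside $B_{\rho_i/4}(p_i)$ to one inside $B_{\alpha K\rho_i}(p_{i+1})$ along overlapping balls contained in $\W \cap B_{2K\rho_i}(z_i)$. The Lipschitz upper bound $f \leq L\, d(\cdot, \p\W)$ then forces each such ball to lie at distance $\geq c'\rho_i/L$ from $\p\W$, closing the induction and producing the Harnack chain with constants depending only on $R, v, \vmax, \vpar$.
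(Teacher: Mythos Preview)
Your overall strategy of building a dyadic corkscrew sequence $p_0, p_1, \ldots$ with $d(p_{i+1}, \p\W) \geq 2\, d(p_i, \p\W)$ via the inner clean ball condition matches the paper's approach in Appendix~\ref{s:appendixnta} (Theorem~\ref{t:NTA}). The genuine gap is in what you call the ``core of the proof'': connecting $p_i$ to $p_{i+1}$ by $O(1)$ overlapping balls of radius $\sim\rho_i$ sitting at distance $\sim\rho_i$ from $\p\W$.

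Your proposed mechanism for this step is circular. You write that the weak Harnack inequality ``lets us propagate the lower bound $f \geq c'\rho_i$ \ldots\ along overlapping balls contained in $\W \cap B_{2K\rho_i}(z_i)$.'' But to apply Harnack along such a chain you must first \emph{produce} a chain of overlapping balls of radius $\sim\rho_i$ lying inside $\W$ and connecting $p_i$ to $p_{i+1}$. That is precisely the Harnack chain condition at scale $\rho_i$ between $p_i$ and $p_{i+1}$, which is what you are trying to prove. A~priori nothing rules out that $p_i$ and $p_{i+1}$ lie in different connected components of $\{f > c'\rho_i\} \cap B_{2K\rho_i}(z_i)$, or in the same component but only connected through a thin neck; nondegeneracy and the Lipschitz bound alone do not preclude this.

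The paper handles exactly this point via the Alt--Caffarelli--Friedman monotonicity formula (Lemma~\ref{l:appendlap}). If the two clean-ball centers were in distinct components $A_1, A_2$ of a level set, one shows each carries a definite amount of gradient energy at the small scale, so the ACF functional $J(r)$ has $J(C\rho_i) \geq c$. The \emph{outer} clean ball condition (which you do not use) upgrades ACF monotonicity to a power gain $J(r) \leq C r^\beta$, forcing a contradiction for $\rho_i$ small. This is why the paper packages the argument to reduce to the Laplacian (Lemma~\ref{l:appendPDE}) and then invokes the monotonicity formula; a pure Harnack-propagation argument cannot close this step.
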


From Lemma~\ref{l:localharnackchain}, we find that a minimizer $\W$ is an NTA domain.
\begin{corollary}\label{c:nta}
	Let $\W \in \compset$ be a minimizer of $\Ep$. Then $\W$ is an NTA domain, with constant depending only on $R, v, \vmax$, and $\vpar$.
\end{corollary}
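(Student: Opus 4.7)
The plan is to combine the three ingredients already assembled in this section: the two-sided clean ball condition (Corollary~\ref{c:cleanball}) and the local Harnack chain condition (Lemma~\ref{l:localharnackchain}), and then upgrade the latter from a local to a global statement by a covering argument that uses $\diam \W \leq 2R$ and the connectedness of $\W$.

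First I would dispense with the inner and outer clean ball conditions. Corollary~\ref{c:cleanball} supplies them at every $x \in \p \W$ at scales $r \leq r_0$ with a uniform constant $K_0=K_0(R,v,\vmax,\vpar)$. For $r \in [r_0, 1]$, the same balls at scale $r_0$ work after enlarging the constant to $K_0/r_0$. So the two clean ball conditions hold uniformly at every $x\in \p\W$ and every scale $r \leq 1$.

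The main work is the Harnack chain condition for arbitrary $x,y \in \W$, given that Lemma~\ref{l:localharnackchain} only supplies it when $x,y$ lie in a single ball $B_{r_0}(z)$ centered at a boundary point. First I would observe that for $\err$ sufficiently small, $\W$ is connected. Indeed, $\En(\W) \leq \minE + \err$, so Lemma~\ref{l:simpleeval} produces a dominant connected component $E$ of $\W$ with $|\W \sm E|$ arbitrarily small; but by Lemma~\ref{l:densitybd} any nontrivial connected component of $\W$ has $\cH^{n-1}$-perimeter bounded below in terms of its diameter, and by Corollary~\ref{c:badldensitybd} its volume is bounded below as soon as it contains any point of $\bar\W$. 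Choosing $\err$ small enough rules out every component but $E$.

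With $\W$ connected, I would define the interior core $\W^\circ_r = \{ z \in \W : d(z,\p\W) > r\}$ for a fixed $r = r_1(R,v,\vmax,\vpar)\leq r_0/8$ to be chosen. Using the inner clean ball condition, every point of $\W$ lies within distance $O(r_1)$ of a point of $\W^\circ_{cr_1}$ (for some $c$), and using Lemma~\ref{l:localharnackchain} one can connect any $x \in \W$ to such an interior point by a chain whose total length is $O(r_1)$ and which stays suitably far from $\p\W$. Meanwhile $\W^\circ_{cr_1}$ is connected: any broken continuous path in $\W$ between two of its points can be replaced, near any excursion close to $\p\W$, by a local Harnack chain supplied by Lemma~\ref{l:localharnackchain}, pushing it back into $\W^\circ_{cr_1}$. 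Finally, on the fixed bounded domain $\Qb_R$ (or uniformly, when $M/\Isom$ is compact, on the container obtained from Theorem~\ref{t:globalexist}), the set $\W^\circ_{cr_1}$ can be covered by $N=N(R,r_1)$ balls of radius $cr_1/4$; connectedness then yields a genuine Harnack chain between any two points of $\W^\circ_{cr_1}$ with at most $N$ links and length $\leq 2\diam \Qb_R$. Concatenating this interior chain with the two short chains that bring $x$ and $y$ into $\W^\circ_{cr_1}$ produces the required global Harnack chain with a constant $K$ depending only on $R,v,\vmax,\vpar$.

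The principal obstacle is the last paragraph: translating the local statement of Lemma~\ref{l:localharnackchain} into connectedness of $\W^\circ_{cr_1}$ and a uniform bound on the combinatorial length of the interior chain. It is here that one must use both the connectedness of $\W$ (via the refined spectral/volume control coming from Lemma~\ref{l:simpleeval} and Lemma~\ref{l:densitybd}) and the uniform boundedness of the ambient container, so that the covering number $N$ and the ratios between successive scales in the chain remain controlled purely by $R,v,\vmax,\vpar$. Everything else reduces to bookkeeping of constants.
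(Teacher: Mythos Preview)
Your proposal is correct and follows essentially the same approach as the paper: clean balls from Corollary~\ref{c:cleanball}, connectedness of $\W$ via Lemma~\ref{l:simpleeval} together with the density estimates, and then a covering argument to upgrade the local Harnack chain of Lemma~\ref{l:localharnackchain} to a global one. The paper organizes the covering step slightly differently (it builds a finite graph on overlapping balls $B_r(z_k)$ covering $\W$ and uses connectedness of that graph), but this is the same idea as your bound on the covering number $N$ of the interior core.
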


\begin{proof}
	First, observe that $\W$ is connected so long as $\vpar_0, \tpar_0, \err_0, r$ are taken small enough. Indeed, we have from Lemma \ref{l:simpleeval} that each connected component $U$ of $\W$ besides one must have $|U| < \e$. On the other hand, Lemma \ref{l:localharnackchain} implies that $B_{r_0}(z) \cap \W$ lies within a single connected component of $\W$ for every $z \in \partial \W$, which when combined with Lemma \ref{l:densitybd} gives that $|U| \geq |\W \cap B_{r_0}(z)| \geq c r_0^n$; if $\e$ is small enough then $\W$ has only one connected component.

	Consider the set $U_r = \{x \in \W : d(x, \p \W) > r \}$ for $r$ small and fixed. We claim that any two points $x, y\in U_r$ may be connected by a path $\g$ which has length bounded by $C(R, v, \vmax, \vpar, r)$ and stays a distance $1/C$ away from $\p \W$. To see this, cover $\W$ by a finitely-overlapping collection $\{B_{r}(z_k) \}_{k = 1}^K$ with $z_k \in \W$. As $|\W|\leq C$, we have that $K \leq C$ from the finite-overlapping property and Lemma \ref{l:densitybd}. For any two balls $B_r(z_k), B_r(z_j)$ with nontrivial intersection, we have two possibilities: either both $B_r(z_k), B_r(z_j) \ss U_r$, or at least one of them (say $B_r(z_k)$) intersects $\p U_r$. In the first case, any pair of points $x \in B_r(z_k), y\in B_r(z_j)$ may be connected by a curve of length $2r$ which stays $r$ away from $\p \W$ as it stays inside $B_r(z_k) \cup B_r(z_j$). In the second case, we have that as long as $r$ is small enough, $B_r(z_k) \cup B_r(z_j) \ss B_{r_0}(x)$ for some $x \in \p \W$. Applying Lemma \ref{l:localharnackchain} gives that for any $x \in B_r(z_k) \cap U_r$ and any $y \in B_r(z_j) \cap U_r$, $x$ and $y$ may be connected by a curve of length $Cr$ staying $r/C$ away from $\p \W$.
	

Take a graph with vertices $\{z_k\}$ and with an edge between $z_k$ and $z_j$ if and only if $B_r(z_k), B_r(z_j)$ have nontrivial intersection.  As these balls cover $\W$ and $\W$ is connected, the graph must also be connected, and so any two balls may be connected by a chain of distinct, pairwise overlapping balls. For any $x \in B_r(z_k)$ and $y\in B_r(z_j)$ with $x, y \in U_r$, find such a path of pairwise intersecting balls $B_r(z_{i_m})$, $m = 1, \ldots, J$, with $k = i_1$ and $j = i_J$. We have shown that $z_{i_m}$ and $z_{i_{m+1}}$, as well $x$ and $z_{i_1}$, $z_{i_J}$ and $y$, may be connected by curves of length $Cr$ and staying a distance $C/r$ from $\p \W$. By concatenating these curves (and using that $J \leq K$ is bounded), we see that $x$ and $y$ may be connected by a curve of uniformly bounded length remaining a distance $r/C$ from the boundary, as promised.

	This, together with Lemma \ref{l:localharnackchain}, shows that $\W$ satisfies the Harnack chain condition. Combining with Corollary \ref{c:cleanball} implies $\W$ is an NTA domain.
\end{proof}

\subsection{Estimates on the Green's function}\label{ss:greens}

In Lemma \ref{l:torsionbound}, we saw from a basic maximum principle argument that $u_\W \leq C w_\W$ for essentially arbitrary domains. The opposite inequality is far more subtle, and will in general fail even on polygonal domains in $\R^n$ (in particular, it does not follow from the NTA property of Corollary \ref{c:nta}). Nonetheless, we show in Proposition~\ref{l:bdryharnackef} below that it is, in fact, valid for \emph{minimizing} $\W$.

Let $G_\W(x, y)$ be the positive Green's function for $\W$ with pole at $x$. From \cite[Theorem 1.2.8]{Kenig}, we have that
\begin{equation}\label{e:green}
	\begin{split}
	G_\W(x, y) \leq C d^{2 - n}(x, y) &\qquad \forall x, y \in \Qb_R \\
	G_\W(x, y) \geq c d^{2 - n}(x, y) & \qquad \forall x, y \in \W,\quad d(x, y) \leq \frac{1}{2}d(x, \p \W),
	\end{split}
\end{equation}
where the constants depends only on $R$. If $n = 2$ the same is valid with $-\log d(x, y)$ in place of $d^{2-n}(x, y)$; we will only consider the case of $n > 2$ below, but all estimates remain valid if $n = 2$ after similar modification. We remark that similar estimates for the Green's function were obtained in \cite{CMMR}.

\begin{lemma} \label{l:bdryharnackgreen}
	Let $\W \in \compset$ be a minimizer of $\Ep$. Then for every $c_0 > 0$ there is a $C_0$ depending only on $R, v, \vmax, \vpar, c_0$ such that $w_\W(y) \leq C_0 G_\W(x, y)$ for any $y \in \W$ and any $x \in \W$ with $d(x, \p \W) > c_0$.
\end{lemma}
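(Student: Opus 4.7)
The plan is to exploit the NTA property of $\W$ established in Corollary~\ref{c:nta} together with the inhomogeneous boundary Harnack principle of \cite{AKS20} to compare $w_\W$ (which solves $-\Lap w_\W = 1$ on $\W$) with the nonnegative harmonic function $G_\W(x,\cdot)$; both vanish on $\p\W$, so the two sides of the claimed inequality are controlled by the same boundary behavior. The argument would split into two regimes depending on the distance of $y$ to $\p\W$. Fix a small radius $r_0 = r_0(R, v, \vmax, \vpar, c_0)$ with $r_0 < c_0/10$ and small enough that the density estimates of Lemma~\ref{l:densitybd} and the clean ball condition of Corollary~\ref{c:cleanball} apply on balls of radius $r_0$.

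First I would handle points $y$ with $d(y,\p\W) \geq r_0$. Here $w_\W(y) \leq C$ by Lemma~\ref{l:torsionfunctionbdd}, so it suffices to prove a uniform lower bound $G_\W(x,y) \geq c$. The singularity bound \eqref{e:green} gives $G_\W(x,y') \geq c r_0^{2-n}$ for any $y'$ on the sphere of radius $\min(r_0/2, c_0/4)$ about $x$. Then a standard Harnack chain argument for the nonnegative harmonic function $G_\W(x,\cdot)$ on $\W \setminus B_{r_0/4}(x)$, which is permissible because $\W$ is NTA and has diameter at most $2R$, propagates this lower bound to all $y$ with $d(y,\p\W) \geq r_0$; the length of the chain depends only on $R$ and $r_0$, so the constant is uniform.

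For points $y$ with $d(y,\p\W) < r_0$, pick $z \in \p\W$ achieving $d(y,z) = d(y,\p\W)$ and observe $d(z,x) \geq d(x,\p\W) > c_0 > 10 r_0$, so $x \notin B_{4r_0}(z)$. On $B_{2r_0}(z) \cap \W$, both $w_\W$ and $v := G_\W(x,\cdot)$ are positive, vanish on $B_{2r_0}(z) \cap \p\W$, and satisfy PDEs with bounded right-hand sides ($-\Lap w_\W = 1$ and $-\Lap v = 0$). Applying the inhomogeneous boundary Harnack principle of \cite{AKS20} to the NTA domain $\W$ at scale $r_0$ yields
\[
\frac{w_\W(y)}{G_\W(x,y)} \leq C\,\frac{w_\W(A)}{G_\W(x,A)},
\]
where $A \in B_{r_0}(z) \cap \W$ is a nontangential corkscrew point with $d(A,\p\W) \geq \alpha r_0$ furnished by Corollary~\ref{c:cleanball}. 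The Lipschitz growth bound of Theorem~\ref{l:upper} gives $w_\W(A) \leq C\,\UP(\W)\,r_0 \leq C r_0$, while the first case applied with $A$ in place of $y$ yields $G_\W(x,A) \geq c$, so the right-hand side is uniformly bounded.

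The main technical obstacle is the clean application of the boundary Harnack principle in \cite{AKS20}: one must verify it goes through in the present Riemannian setting on balls of scale $r_0$, with constants depending only on the NTA character of $\W$ from Corollary~\ref{c:nta} and on the $L^\infty$ norms of $-\Lap w_\W$ and $-\Lap v$ over $B_{2r_0}(z) \cap \W$. The latter bounds hold precisely because the choice $r_0 < c_0/10$ keeps the pole $x$ away from $B_{2r_0}(z)$, rendering $v$ harmonic on the ball where the comparison is carried out.
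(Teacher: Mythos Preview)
Your proposal is correct and follows essentially the same two-case strategy as the paper: a Harnack chain argument for $y$ far from $\p\W$, and the inhomogeneous boundary Harnack principle of \cite{AKS20} near $\p\W$. The one point the paper makes explicit that you leave implicit is the verification of the growth hypothesis (2.1) in \cite{AKS20}: this requires $w_\W(z) \geq c\,d(z,\p\W)$, which follows from the nondegeneracy estimate $\DO(\W) \geq c$ of Theorem~\ref{t:lb} together with Lemma~\ref{l:torsionbound}, and is what makes the inhomogeneity $-\Lap w_\W = 1$ a lower-order perturbation at the relevant scale.
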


\begin{proof}
	Choose $3r_0 \leq \min\{ c_0, \text{inj}_M\}$. Let $K$ be the NTA constant of $\W$ from Corollary \ref{c:nta}. Let us first consider the case that $y$ is such that  $d(y, \p \W) \geq r_0/K$. From the Harnack chain property of  Corollary \ref{c:nta}  we may find a sequence of finitely many balls (the number depending only on $r_0$ and $K$) $B_{cr_0}(y_k) \ss \W$ so that $B_{cr_0/2}(y_k) \cap B_{cr_0/2}(y_{k+1})$ is nonempty and $y_1 = y$ and  $y_J = x$. Applying the Harnack inequality to $G_\W(x, \cdot)$ on each ball, we get
	\[
	\sup_{z \in B_{c r_0/2}(y_k)} G_\W(x, z) \leq C \inf_{z \in B_{c r_0/2}(y_k)} G_\W(x, z)
	\]
	for $k < J$, while the assumption that $d(x \W) >c_0$ guarantees that $G_\W(x, z) \geq c r_0^{2 - n}$ for $z \in B_{c r_0 /2}(x)$ from the Green's function lower bound in \eqref{e:green}. Together these guarantee  that for any such point $y$ we have
	\begin{equation}\label{e:bdryharnackgreen2}
		G_\W(x, y) \geq c(r_0).
	\end{equation}

	On the other hand, from Lemma \ref{l:torsionfunctionbdd} $w_\W(y) \leq C$ at $y$, while for any $z \in \W$ applying Theorem \ref{t:lb} and Lemma \ref{l:torsionbound} gives
	\begin{equation} \label{e:bdryharnackaux}
	w_\W(z) \geq c [u_\W(z) + \sqrt{\tpar} w_\W(z)] \geq c \DO(\W) d(z, \p \W).
	\end{equation}
	
	Now apply \cite[Theorem 2.2]{AKS20} on $B_{r_0}(z)$ for any $z \in \p \W$, with $u_1 = w_\W/w_\W(y)$ and $u_2 = G_\W(x, \cdot)/G_\W(x, y)$, where $y \in B_{r_0}(z)$ is a point with $d(y, \p \W) \geq r_0/K$ (such a point $y$ exists from the inner clean ball property). In the theorem, we set $U = \W$, $Q$ the collection of NTA domains, $\b = 1$, $\z = 2$, $x^0 = y$, $V = \{ u \in C(\bar\W) \cap C^2(\W) : u \geq 0, |\Lap u| \leq C\}$ and $H$ the solution to the Dirichlet problem for $\Delta$. The assumptions (P1-P7) follow from standard elliptic estimates and (P8) from \cite[Lemma 1.3.7]{Kenig}. Using \cite[Remark 2.4]{AKS20}, we only need to verify the growth bound (2.1) for $w_\W$, and it follows from \eqref{e:bdryharnackaux}. We conclude that
	\begin{equation}\label{e:bdryharnackgreen1}
	w_\W(a) \leq C G_\W(x, a)
	\end{equation}
	for all $a \in B_{r_0}(z)$. Applying at any $z$, \eqref{e:bdryharnackgreen1} remains valid for each $a \in \W$ with $d(a, \p \W) < r_0$. For $a$ with $d(y, \p \W) \geq r_0$, \eqref{e:bdryharnackgreen1} follows from \eqref{e:bdryharnackgreen2} and that $|w_\W|\leq C$ instead.
\end{proof}

We use the Green's function as a barrier for $u$ from below and apply Lemma~\ref{l:bdryharnackgreen} to show that the torsion function bounds the eigenfunction from below:
\begin{proposition}\label{l:bdryharnackef}
	Let $\W \in \compset$ be a minimizer of $\Ep$. Then $w_\W \leq C(R, v, \vmax, \vpar) u_\W$ on $\W$, and in particular
	\[
	\sup_{B_r(x)} u_\W \geq \frac{1}{C} \DO(\W) r
	\]
	for all $x \in \p \W$ and $r \in (0, 1)$.
\end{proposition}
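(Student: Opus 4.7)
The overall strategy is to sandwich the torsion function between the eigenfunction and a multiple of the Green's function at a fixed interior pole, and then invoke the inhomogeneous boundary Harnack principle a second time. Since Lemma~\ref{l:bdryharnackgreen} has already given the upper bound of the sandwich, the heart of the matter is the lower bound $G_\W(x_0,\cdot) \le C u_\W$ for a well-chosen $x_0$. The second claim of the proposition will then drop out of Theorem~\ref{t:lb}.

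Concretely, I would fix $x_0 \in \W$ with $d(x_0, \partial \W) \ge c_0 = c_0(R,v,\vmax,\vpar) > 0$, which exists by the lower volume density of Lemma~\ref{l:densitybd} (balls of definite size lie inside $\W$). Lemma~\ref{l:bdryharnackgreen} then yields $w_\W \le C_0\, G_\W(x_0,\cdot)$ on all of $\W$, so it suffices to prove $G_\W(x_0,\cdot) \le C u_\W$. For this, pick any $z \in \partial \W$ with $d(z,x_0) > 2c_0$ and an inner corkscrew point $y \in B_{r_0/K}(z) \cap \W$ provided by Corollary~\ref{c:cleanball}. I would apply \cite[Theorem 2.2]{AKS20} to the normalized pair
\[
u_1 = \frac{G_\W(x_0,\cdot)}{G_\W(x_0,y)}, \qquad u_2 = \frac{u_\W}{u_\W(y)}
\]
on $B_{r_0}(z) \cap \W$, playing the symmetric roles to those in the proof of Lemma~\ref{l:bdryharnackgreen}. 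Both functions are nonnegative, vanish on $\partial \W$, equal $1$ at $y$, and satisfy an elliptic equation with uniformly bounded right-hand side on $B_{r_0}(z)$ (trivially for $u_1$ once $r_0 < c_0/2$ excludes the pole; for $u_2$ because $\ei(\W)$ and $\|u_\W\|_{L^\infty}$ are bounded by Lemma~\ref{l:efbdd}). Corollary~\ref{c:nta} supplies the NTA structure required by \cite{AKS20}, and by \cite[Remark 2.4]{AKS20} it remains to verify the linear growth hypothesis (2.1) for one of the two functions.

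I would verify (2.1) for $u_1$, using the standard NTA lower bound on the Green's function: if $A_{r_0}(z)$ is a corkscrew at scale $r_0$, then $G_\W(x_0,a) \ge c\, d(a,\partial \W)\, r_0^{1-n}\, G_\W(x_0, A_{r_0}(z))$ for $a \in B_{r_0/2}(z) \cap \W$, and an interior Harnack chain from $x_0$ yields $G_\W(x_0,y), G_\W(x_0, A_{r_0}(z)) \sim c(r_0)$, so $u_1(a) \ge c\, d(a,\partial \W)$. The boundary Harnack conclusion is $u_1 \le C u_2$ on $B_{r_0/2}(z) \cap \W$, which unnormalizes to $G_\W(x_0,a) \le C'\, u_\W(a)$ after noting $u_\W(y) \ge c$ (from $\|u_\W\|_{L^2} = 1$, $\|u_\W\|_{L^\infty} \le C$, and an interior Harnack chain to $y$). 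Covering a uniform tubular neighborhood of $\partial \W$ by such balls, and using that on the complementary interior region both $G_\W(x_0,\cdot)$ and $u_\W$ are bounded above and below by uniform positive constants (Lemmas~\ref{l:efbdd} and \ref{l:torsionfunctionbdd} plus Harnack), we conclude $G_\W(x_0,\cdot) \le C u_\W$, and combining with Lemma~\ref{l:bdryharnackgreen} gives $w_\W \le C u_\W$.

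Finally, Theorem~\ref{t:lb} gives $\sup_{B_r(x)} (u_\W + \sqrt{\tpar}\, w_\W) \ge \DO(\W)\, r$ for $x \in \partial \W$ and $r \in (0,1)$, and combining with $w_\W \le C u_\W$ yields
\[
\DO(\W)\, r \;\le\; \sup_{B_r(x)} \left( u_\W + \sqrt{\tpar}\, w_\W \right) \;\le\; (1 + C \sqrt{\tpar}) \sup_{B_r(x)} u_\W,
\]
which, since $\tpar \le 1$, is exactly the stated lower bound. The main obstacle in this plan is the verification of the linear growth condition (2.1) for $G_\W(x_0,\cdot)$: this is a standard but delicate NTA Green's function estimate, and it is the reason one cannot bypass the Green's function and try to compare $u_\W$ and $w_\W$ directly — the growth of $u_\W$ near $\partial \W$ is precisely what we are trying to establish, whereas for the Green's function the growth is a clean consequence of Corollary~\ref{c:nta}.
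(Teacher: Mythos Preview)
Your approach is workable but contains a misattribution that needs correcting, and it is considerably heavier than the paper's argument. You assert that the linear growth $G_\W(x_0,a) \ge c\, d(a,\partial \W)$ is ``a clean consequence of Corollary~\ref{c:nta}'', i.e.\ of the NTA property alone. This is false in general: for an arbitrary NTA domain the Green's function may decay faster than linearly at the boundary (the rate is governed by harmonic measure, which need not be Ahlfors $(n-1)$-regular). The linear growth \emph{does} hold here, but for a reason you already have in hand: Lemma~\ref{l:bdryharnackgreen} gives $G_\W(x_0,\cdot) \ge c\, w_\W$, and $w_\W(a) \ge c\, d(a,\partial\W)$ follows from Theorem~\ref{t:lb} together with Lemma~\ref{l:torsionbound}. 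With this correction your second boundary Harnack application goes through.

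The paper bypasses that second application entirely by exploiting the superharmonicity of $u_\W$: since $\Lap u_\W = -\ei(\W) u_\W \le 0$ on $\W$, a direct comparison principle argument suffices. One first locates a ball $B_r(x_0) \ss \W$ on which $u_\W \ge c$ (from $\int u_\W^2 = 1$, the bound $\UP(\W) \le C$, and the Lipschitz estimate of Corollary~\ref{cor:lip}), then observes that $G_\W(x_0,\cdot) \le C r^{2-n}$ on $\p B_r(x_0)$ by \eqref{e:green}. Since $u_\W \ge c'\, G_\W(x_0,\cdot)$ on $\p B_r(x_0)$, both functions vanish on $\p\W$, and $\Lap u_\W \le 0 = \Lap G_\W(x_0,\cdot)$ on $\W \sm \bar{B}_r(x_0)$, the maximum principle gives $u_\W \ge c'\, G_\W(x_0,\cdot)$ on all of $\W \sm B_r(x_0)$. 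Combined with Lemma~\ref{l:bdryharnackgreen} this yields $u_\W \ge c'' w_\W$ in one stroke. Your route trades this elementary barrier for a second invocation of \cite{AKS20} and the attendant verification of its growth hypothesis; the paper's argument is shorter and uses only the maximum principle.
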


\begin{proof}

	We have that $|\W| \leq \vmax$ and $\int u_\W^2 = 1$. This means there must be a point $x\in \W$ with $u_\W(x) \geq c$, and by Theorem~\ref{l:upper}, we have $d(x, \p \W) \geq c/\UP(\W) \geq c$. From the Lipschitz estimate of Corollary \ref{cor:lip}, we still have $u \geq c/2$ on a ball $B_{r}(x) \ss \W$, where  $r >0$ depends on $c$ and $\UP(\W)$, and thus on $R, v, \vmax,$ and $ \vpar$. Using the Green's function upper bound in \eqref{e:green}, we have $G_\W(x, y) \leq c r^{2 - n} \leq c$ on $\p B_{r}(x)$. As $\Lap u_\W \leq 0$ on $\W$, we may use $c G_\W(x, \cdot)$ as a barrier from below for $u_\W$ on the set $\W \sm \bar{B}_{r}(x)$;  the comparison principle implies  that
	\[
	u_\W(y) \geq c G_\W(x, y) \geq c w_\W(y),
	\]
	with the last inequality from Lemma \ref{l:bdryharnackgreen}. On $B_r(x)$, on the other hand, $u_\W \geq c/2$ while $w_\W \leq C$, so the same inequality follows. This completes the proof.
\end{proof}

Next, we prove sharp bounds on $G_\W$ near $\p \W$. To state these in a more useful fashion, recall that the \emph{harmonic measure} on $\W$, which we denote by $\w_x$, is defined as follows: for any $f \in C(\p \W)$, let $\bar{f} \in C(\bar{\W})$ be the unique classical solution to the Dirichlet problem
\[
\begin{cases}
\Lap \bar{f} = 0 & \text{ on } \W\\
\bar{f} = f & \text{ on } \p\W.
\end{cases}
\]
Such a solution $\bar{f}$ may be obtained using Perron's method or by approximation schemes, noting that Lemma \ref{l:densitybd} guarantees that $\W$ satisfies the Wiener criterion (see \cite[Theorem 8.31]{GT}). Then $f \mapsto \bar{f}(x)$ is a linear functional from $C(\p \W)$ to $\R$, and from the maximum principle it has norm bounded by $1$ and positive (i.e. $f \geq 0 \implies \bar{f}(x) \geq 0$). From the Riesz representation theorem, there is a positive Borel measure $\w_x$ on $\p \W$ such that 
\[
\int_{\p \W} f d\w_x = \bar{f}(x).
\]
This is the harmonic measure; we clearly also have $\w_x(\p \W) = 1$.

\begin{proposition}\label{l:green} Let $\W \in \compset$ be a minimizer of $\Ep$. Then the Green's function $G_\W$ satisfies
	\begin{equation}\label{e:lgreen2}
	c \frac{d(x, \p \W) d(y, \p \W)}{d^n(x, y)} \leq	G_\W(x, y) \leq C \frac{d(x, \p \W) d(y, \p \W)}{d^n(x, y)}.
	\end{equation}
	Moreover, for any $x\in \p \W$, and any $y \in \W$ with $d(y, x) > 4 r$, we have
	\begin{equation} \label{e:lgreen}
	c \frac{d(y, \p \W)}{d^{n}(x, y)} \leq \frac{\w_y(B_r(x))}{r^{n-1}} \leq C \frac{d(y, \p \W)}{d^{n}(x, y)}
	\end{equation}
	All constants depend only on $v, \vmax, R,$ and $\vpar$.
\end{proposition}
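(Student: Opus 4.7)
The plan is to combine the NTA property of $\W$ (Corollary~\ref{c:nta}), the inhomogeneous boundary Harnack of \cite{AKS20} already used in Lemma~\ref{l:bdryharnackgreen}, and the interior Green's function estimates \eqref{e:green}, following the framework developed by Jerison--Kenig for Green's functions on NTA domains. I will use repeatedly the comparability $w_\W(z) \asymp d(z, \p \W)$ for $z \in \W$ near the boundary, which combines the upper bound from Theorem~\ref{l:upper} and Corollary~\ref{cor:lip} with the lower bound derived from Theorem~\ref{t:lb}, the inner clean ball condition of Corollary~\ref{c:cleanball}, the Harnack inequality, and Lemma~\ref{l:torsionbound}.

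For the upper bound in \eqref{e:lgreen2} in the nontangential regime $d(x, y) \geq 4\max\{d(x, \p\W), d(y, \p\W)\}$, I will select corkscrew points $A_x, A_y \in \W$ at scale $r \approx d(x,y)/8$ near $x$ and $y$ respectively (available by Corollary~\ref{c:cleanball}), so that $d(A_x, \p\W), d(A_y, \p\W), d(A_x, A_y) \asymp d(x, y)$. On a ball of radius $r$ around a boundary point close to $x$ (which excludes the pole $y$), apply \cite[Theorem 2.2]{AKS20} to the pair $(G_\W(\cdot, y), w_\W)$: both are nonnegative, vanish on $\p\W$, and have bounded Laplacians in the region, while the growth hypothesis (2.1) there is verified via the NTA geometry and the comparability of $w_\W$ with the distance function. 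This yields $G_\W(x, y)/w_\W(x) \lesssim G_\W(A_x, y)/w_\W(A_x)$. Applying the same principle to the pair $(G_\W(A_x, \cdot), w_\W)$ near $y$ (using symmetry of $G_\W$) gives $G_\W(A_x, y)/w_\W(y) \lesssim G_\W(A_x, A_y)/w_\W(A_y)$. Combining these with $G_\W(A_x, A_y) \lesssim d(x, y)^{2-n}$ from \eqref{e:green} and $w_\W(A_x), w_\W(A_y) \asymp d(x, y)$ produces $G_\W(x, y) \lesssim w_\W(x) w_\W(y)/d(x, y)^n$; then $w_\W(z) \lesssim d(z, \p\W)$ yields the claimed upper bound.

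The lower bound follows from the same chain with reversed inequalities, using the two-sided nature of boundary Harnack, the interior lower bound $G_\W(A_x, A_y) \gtrsim d(x, y)^{2-n}$ (valid after a single Harnack chain step since $d(A_x, A_y) \asymp d(A_x, \p\W)$, cf.\ \eqref{e:green}), and $w_\W(z) \gtrsim d(z, \p\W)$. The complementary regime, where $d(x,y)$ is small compared to $\max\{d(x, \p\W), d(y, \p\W)\}$, is handled directly via the interior bounds \eqref{e:green} and the comparability of $d(x, \p\W)$ with $d(y, \p\W)$ that holds there.

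For \eqref{e:lgreen}, I will invoke the standard NTA representation of harmonic measure: for $x \in \p\W$, $y \in \W$ with $d(y, x) > 4r$, and $A_r$ a corkscrew point at $x$ at scale $r$, one has $\w_y(B_r(x)) \asymp r^{n-2} G_\W(y, A_r)$. This identity follows via boundary Harnack and the maximum principle as in the Jerison--Kenig theory (compare $G_\W(y, \cdot)$ to the harmonic extension of a cutoff of $1_{B_r(x)}$, using Lemma~\ref{l:bdryharnackgreen}-style boundary Harnack to control ratios on the NTA domain). Since $d(A_r, \p\W) \asymp r$ and $d(y, A_r) \asymp d(y, x)$, applying \eqref{e:lgreen2} to the pair $(y, A_r)$ yields $G_\W(y, A_r) \asymp d(y, \p\W) \cdot r / d(y, x)^n$, and dividing by $r^{n-1}$ gives both bounds in \eqref{e:lgreen}. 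The principal technical obstacle throughout is the careful application of the inhomogeneous boundary Harnack to the Green's function: because of its pole, one must work on balls excluding $y$ at each step, and the linear growth hypothesis (2.1) of \cite{AKS20} must be verified for both comparison functions on each scale, which is where the NTA geometry and the sharp comparability $w_\W \asymp d(\cdot,\p\W)$ intervene.
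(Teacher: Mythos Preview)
Your proposal is correct and follows essentially the same strategy as the paper: combine the NTA property, the interior Green's function bounds \eqref{e:green}, and boundary Harnack to prove \eqref{e:lgreen2}, then read off \eqref{e:lgreen} from the standard NTA comparison $\w_y(B_r(x)) \approx r^{n-2} G_\W(z,y)$ (which the paper cites as \cite[Corollary 1.3.6]{Kenig}).

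The organization differs slightly. The paper first fixes a reference pole $y_0$ with $B_{2r_0}(y_0)\subset\W$ and establishes $G_\W(y_0,\cdot)\approx d(\cdot,\p\W)$ directly (Lemma~\ref{l:bdryharnackgreen} for the lower bound, a barrier argument with $u_\W$ and Proposition~\ref{l:bdryharnackef} for the upper bound); it then bootstraps to arbitrary poles via the \emph{homogeneous} boundary Harnack \cite[Lemma 1.3.7]{Kenig} applied to the pair $G_\W(y,\cdot)$, $G_\W(y_0,\cdot)$ through a three-case analysis. Your route instead applies the inhomogeneous boundary Harnack of \cite{AKS20} directly at each scale to the pair $(G_\W(\cdot,y), w_\W)$, which is more streamlined but invokes the heavier tool at every step. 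One small point: the upper bound $w_\W \lesssim d(\cdot,\p\W)$ with constant independent of $\tpar$ does not follow from Theorem~\ref{l:upper} and Corollary~\ref{cor:lip} alone (these give $w_\W\leq C d/\sqrt{\tpar}$); you should route through Proposition~\ref{l:bdryharnackef} to get $w_\W\leq C u_\W\leq C d$, or simply use $u_\W$ as the comparison function throughout, as the paper does.
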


\begin{proof}
	For any point $a\in W$, set $d_a = d(a, \p \W)$. Take two points $x, y \in \W$. If $d(x, y) \leq \frac{1}{2} d_y$, then \eqref{e:lgreen2} follows directly from \eqref{e:green} (using the diameter bound for $\W$ in the lower bound) and there is nothing more to show. We now consider the case of $d(x, y) \geq \frac{1}{2} \max\{d_x, d_y\}$, breaking up the estimate into several cases depending on the locations of $x$ and $y$.  The notation $s \approx t$ below stands for $c s \leq t \leq C s$ with constants depending only on $v, \vmax, R, \vpar$.\\
	
{\it Case 1: $2d(x,y) \geq d_y \geq r_0$.} 	First, assume that $d_y \geq r_0$ for some fixed $r_0$ to be chosen below. We have from Lemma \ref{l:bdryharnackgreen} that
	\[
	c d_x \leq  w_\W(x) \leq C(r_0) G_\W(y, x)
	\]
	for any $x\in \W$. On the other hand, from \eqref{e:green} we have $G_\W(y, x) \leq C(r_0)$ for $x \in \p B_{r_0/2}(y)$, while $u_\W(x) \geq c d_x$ there from Proposition~\ref{l:bdryharnackef}. So, we may use a multiple of $G_\W(y, \cdot)$ as a lower barrier for $u_\W$ on the set $\W \sm B_{r_0/2}(y)$: as $\Lap u_\W \leq 0$, from the comparison principle this leads to
	\[
	G_\W(y, x) \leq C u_\W(x) \leq C \UP(\W) d_x
	\]
	for $x \notin B_{r_0/2}(y)$. Together these two estimates show that in the case when $d_y \geq r_0$ and $d(x, y) \geq \frac{1}{2} d_y \geq \frac{r_0}{2}$, we have
	\begin{equation} \label{eq:1}
	G_\W(y, x) \approx d_x \approx \frac{d_x}{r_0^{n-1}} \approx \frac{d_x d_y}{d^n(x, y)}.
	\end{equation}
\\

{\it Case 2: $d_y \geq d(x,y)/4K.$ }	Now fix $r_0$ small enough that $B_{2 r_0}(y_0) \ss \W$ for some reference point $y_0$, using the interior clean ball condition from Lemma \ref{c:nta}. The next two cases we treat are when $d_y \geq \frac{1}{4K} d(x, y)$, with $K$ the NTA constant of $\W$. Take a point $a \in \p B_{d_y/4}(y)$. The basic estimates \eqref{e:green} apply to this point to give $G_\W(y, a) \approx d^{2-n}(y, a) \approx d_y^{2-n}$. If $d_x \geq \frac{1}{4 K} d_y$, then $d_x \approx d_y \approx d(x, y)$ are all comparable and we may use the Harnack chain condition to find a uniformly bounded chain of balls $B_r(x_i)\ss \W$ with $r \approx d_y$ connecting $a$ and $x$; applying the Harnack inequality finitely many times gives
	\begin{equation} \label{eq:2}
		G_\W(y, x) \approx G_\W(y, a) \approx d_y^{2-n} \approx \frac{d_y d_x}{d^n(x, y)}.
	\end{equation}
	We may therefore assume that $d_x \leq \frac{1}{4 K} d_y$. In this case, let $z \in \p \W$ be a point with $d_x = d(x, z)$, and find an $a'$ with $B_{{d_y}/{4K}}(a')  \ss B_{d_y/4}(z) \cap \W$ using the interior clean ball property. Since $d_{a'} \geq d_y/4K$, we have $G_\W(y, a') \approx d_y^{2-n}$ from \eqref{eq:2}. On the other hand, from \eqref{eq:1} we have $G_\W(y_0, a') \approx d_y$. Applying the boundary Harnack principle \cite[Lemma 1.3.7]{Kenig} to $G_\W(y, \cdot)$ and $G_\W(y_0, \cdot)$ on $B_{d_y/2}(z)$ (note that this excludes the pole at $y$), we have that
	\[
		\frac{G_\W(y, x')}{G_\W(y_0, x')} \approx \frac{G_\W(y, a')}{G_\W(y_0, a')} \approx \frac{d_y^{2-n}}{d_y} \approx \frac{d_y}{d^{n}(x, y)}
	\]
	for any $x' \in B_{d_y/4}(z)$ (with the last step using $d_y \approx d(x, y)$). In particular this is valid at $x' = x$, leading to
	\begin{equation}\label{eq:3}
		G_\W(y, x) \approx G_\W(y_0, x) \frac{d_y}{d^{n}(x, y)}  \approx \frac{d_x d_y}{d^{n}(x, y)}
	\end{equation}
	using \eqref{eq:1} again.
\\

{\it Case 3: $\max \{d_y, d_x\} \leq  d(x, y)/{4K}$.}	
	The only case remaining is when $\max \{d_y, d_x\} \leq \frac{1}{4K} d(x, y)$. Assume without loss of generality that $d_y \geq d_x$ and choose $z_x \in \p \W$ with $d(z_x, x)  = d_x$. Now use the clean ball property to find a point $a_x$ with $B_{r/K}(a_x) \ss B_{r}(z_x)\cap \W$ for $r = \min \{{d(x, y)}/{4}, r_0 \}$. Similarly construct $z_y$ and $a_y$. At $a_y$, we have from \eqref{eq:3} that 
	\[
		G_\W(y, a_y) \approx \frac{d_y d_{a_y}}{d^{n}(a_y, y)} \approx \frac{d_y}{r^{n-1}}.
	\]
	Applying the Harnack inequality along a Harnack chain connecting $a_x$ and $a_y$, this also gives $G_\W(y, a_x) \approx {d_y}/{r^{n-1}}$. We now use the boundary Harnack principle on $G_\W(y, \cdot)$ and $G_\W(y_0, \cdot)$ on the region $B_{2r}(z_x) \cap \W$ (which contains $x$) to give
	\[
		\frac{G_\W(y, x)}{G_\W(y_0, x)} \approx \frac{G_\W(y, a_x)}{G_\W(y_0, a_x)} \approx \frac{d_y}{r^{n-1}} \frac{1}{r}.
	\]
	In other words, 
	\[
		G_\W(y, x) \approx G_\W(y_0, x) \frac{d_y}{r^n} \approx \frac{d_x d_y}{r^n}.
	\]
	Noting that $r \approx d(x, y)$ leads to the conclusion \eqref{e:lgreen2}, which we have now established in all cases.
	
	For \eqref{e:lgreen}, \cite[Corollary 1.3.6]{Kenig} gives that at any $x \in \p \W$, $r < r_0$, and $y \in \W \sm B_{4r}(x)$, if $z$ is a point with $B_{r/K}(z) \ss B_r(x) \cap \W$,
	\[
	\w_y(B_{r}(x)) \approx G_\W(z, y) r^{n - 2}.
	\]
	Applying \eqref{e:lgreen2} leads to
	\[
		G_\W(z, y) \approx \frac{d_z d_y}{d^n(z, y)} \approx r \frac{d_y}{d^n(x, y)},
	\]
	from which \eqref{e:lgreen} follows.
\end{proof}

Note that we used the upper and lower bounds $\UP(\W), \DO(\W)$ in a crucial way to get \eqref{eq:1} and then \eqref{eq:2} and the remaining estimates. For general NTA domains, the conclusions of this lemma need not hold.

A direct consequence of these estimates and Lemma \ref{l:densitybd} is that $\w_y$ is absolutely continuous with respect to Hausdorff measure restricted to $\p \W$, and the Radon-Nykodym derivative
\begin{equation}\label{e:poissonkerneldef}
	K(x, y) = \frac{d \w_y}{d\cH^{n-1}\mres \p \W}(x) \approx \frac{d(y, \p \W)}{d^n(x, y)}
\end{equation}
the \emph{Poisson kernel}, is a bounded function of $x$ satisfying similar estimates.

\subsection{Blow-up analysis and the reduced boundary}\label{ss:reducedbdry}
Given a point $x \in \p \W$ and vector $\nu \in T_x M$ with $|\nu| = 1$, let 
\[
	B_{r, \nu}(x) = \exp_x \{ \mathscr{v} \in T_x M : |\mathscr{v}| < r, g(\mathscr{v}, \nu) < 0  \} \ss B_r(x)
\]
be a half-ball. For $r \leq \text{inj}_M$, define the function $l_{x, \nu} : B_r(x) \rightarrow \R$ be given by
\begin{equation}\label{eqn: truncated linear}
		l_{x, \nu}(\exp_x (\mathscr{v})) = g(\mathscr{v}, -\nu)_+,
\end{equation}
i.e. $l_{x, \nu}$ is a truncated linear function in normal coordinates.

\begin{lemma}\label{l:tangentplane} Let $\W$ be a minimizer of $\Ep$. For each $s \in (0, s_0)$, there are constants $\e(R, s, v, \vmax, \vpar) > 0$ and $c = c(R, v, \vmax, \vpar)$ such that if $x \in \p \W$ has  $|(B_r(x) \cap \W) \triangle B_{r, \nu}(x)| \leq \e r^n$ for some $\nu$ and $r < \e$, then:
	\begin{enumerate}
		\item $B_{c r}(x) \cap \p \W \ss \exp_x \{ \mathscr{v} \in T_x M : |\mathscr{v}| < r,  |g(\mathscr{v}, \nu)| < s r \}$,
		\item $|u_\W(y) - \a l_{x, \nu}(y) | < s r$ for some $\a \in [c, 1/c]$ and for all $y \in B_{c\sqrt{s} r}(x)$,
		\item $|w_\W(y) - \b l_{x, \nu}(y) | < s r$ for some $\b\in [c, 1/c]$ and for all $y \in B_{c\sqrt{s} r}(x)$.
	\end{enumerate}
\end{lemma}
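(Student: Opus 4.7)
The plan is to proceed by contradiction and compactness, exploiting the uniform Lipschitz estimates (Corollary~\ref{cor:lip}), the linear nondegeneracy of $u_\W$ and $w_\W$ (Theorem~\ref{t:lb} and Proposition~\ref{l:bdryharnackef}), and the density/NTA estimates (Lemma~\ref{l:densitybd}, Corollary~\ref{c:nta}). Suppose the lemma fails for some $s\in(0,s_0)$: then there exist minimizers $\W_k$ of $\Ep$, points $x_k\in\p\W_k$, unit vectors $\nu_k\in T_{x_k}M$, and radii $r_k\to 0$ with
\[
|(B_{r_k}(x_k)\cap\W_k)\triangle B_{r_k,\nu_k}(x_k)|\leq \tfrac{1}{k}r_k^n,
\]
such that at least one of (1), (2), (3) fails for every candidate constant $c$.

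Working in geodesic normal coordinates at $x_k$, identify $B_{r_k}(x_k)$ with a subset of $T_{x_k}M\cong\R^n$ and rescale by setting $\tilde\W_k=r_k^{-1}(\W_k-x_k)$, $\tilde u_k(y)=r_k^{-1}u_{\W_k}(\exp_{x_k}(r_k y))$, $\tilde w_k(y)=r_k^{-1}w_{\W_k}(\exp_{x_k}(r_k y))$, and $\tilde g_k(y)=g(\exp_{x_k}(r_ky))$; the metrics $\tilde g_k$ converge in $C^\infty_{\mathrm{loc}}$ to the Euclidean metric on $\R^n$. By Corollary~\ref{cor:lip}, $\tilde u_k$ and $\sqrt\tpar\,\tilde w_k$ are uniformly Lipschitz with constants independent of $k$, and both vanish on the complement of $\tilde\W_k$. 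They solve $\Lap_{\tilde g_k}\tilde u_k=-r_k^2\ei(\W_k)\tilde u_k$ and $\Lap_{\tilde g_k}\tilde w_k=-r_k^2$ inside $\tilde\W_k$, with the right-hand sides going to $0$. Passing to a subsequence, $\tilde u_k\to\tilde u_\infty$ and $\tilde w_k\to\tilde w_\infty$ locally uniformly, and $\tilde\W_k$ converges in $L^1_{\mathrm{loc}}$ to the half-space $H=\{y\cdot\nu<0\}$ (after extracting a limit of $\nu_k$ to some $\nu_\infty$). The clean-ball and Harnack-chain estimates of Corollary~\ref{c:cleanball} and Corollary~\ref{c:nta}, together with the uniform Lipschitz bound, upgrade this $L^1$ convergence to Hausdorff convergence of $\p\tilde\W_k$ to $\p H$ on compact sets, so in particular $\tilde u_\infty=\tilde w_\infty=0$ on $\R^n\setminus\overline H$ and $\tilde u_\infty,\tilde w_\infty\geq 0$ are harmonic on $H$ (by the locally uniform convergence of the PDEs and domains).

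Now $\tilde u_\infty$ is nonnegative, harmonic in $H$, continuous up to $\p H$, vanishing on $\p H$, and has linear growth at infinity (from the Lipschitz bound). By the standard Liouville theorem in a half-space, $\tilde u_\infty(y)=\alpha_\infty\,g(y,-\nu_\infty)_+$ for a constant $\alpha_\infty\geq 0$; the nondegeneracy $\DO(\W_k)\geq c_m$ of Theorem~\ref{t:lb} combined with Proposition~\ref{l:bdryharnackef} (which gives $w_\W\leq Cu_\W$ and hence also a positive lower bound on the eigenfunction growth) forces $\alpha_\infty\in[c,1/c]$, and identically $\tilde w_\infty=\beta_\infty\,g(y,-\nu_\infty)_+$ with $\beta_\infty\in[c,1/c]$. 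This contradicts the supposed failure of (2) and (3) at scale $c\sqrt s\, r$ for $k$ large, once $c$ is chosen in terms of the Liouville constants.

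For (1), once (2) is known, nondegeneracy and uniform Lipschitz continuity of $u_{\W_k}$ pin $\p\W_k$ inside a thin slab. Namely, at a point $y\in B_{cr}(x)\cap\p\W$ with $g(\exp_x^{-1}y,\nu)<-sr$, conclusion (2) would give $u_\W(y)\geq\alpha sr-sr>0$, forcing $y\in\W$; at a point with $g(\exp_x^{-1}y,\nu)>sr$ on the other side of the slab, the nondegeneracy $\DO(\W)\geq c_m$ together with the Lipschitz bound on $u_\W$ and conclusion (2) shows the clean outer ball furnished by Corollary~\ref{c:cleanball} sits in $M\setminus\overline\W$, again precluding $y\in\p\W$. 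Choosing $c$ smaller than $c\sqrt s$ and shrinking $\e$ further if needed absorbs the residual metric errors from the normal chart and yields (1). The main technical obstacle is the bookkeeping in the blow-up on a manifold: keeping the metric perturbation under control while simultaneously applying Proposition~\ref{l:bdryharnackef} to obtain the \emph{lower} bound $\alpha_\infty\geq c$, which is what rules out the degenerate Liouville limit $\tilde u_\infty\equiv 0$ and is ultimately the reason the torsion term in $\Ep$ was introduced.
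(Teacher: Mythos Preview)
Your overall strategy—contradiction plus blow-up using the uniform Lipschitz bound and nondegeneracy—matches the paper's, and your upgrade from $L^1$ convergence to Hausdorff convergence of $\partial\tilde\W_k$ via the density estimates is exactly how the paper proves (1). The gap is in the Liouville step.

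The flatness hypothesis $|(B_r(x)\cap\W)\triangle B_{r,\nu}(x)|\leq\e r^n$ gives information only at scale $r$; after rescaling by $r_k$, you know $\tilde\W_k\to H$ in $L^1$ (hence Hausdorff) \emph{only on $B_1$}. Although $\tilde u_k$ is defined on all of $\R^n$ and converges locally uniformly, you can only conclude that $\tilde u_\infty$ is harmonic on $H\cap B_1$ and vanishes on $B_1\setminus\overline{H}$. Outside $B_1$ the set $\{\tilde u_\infty>0\}$ is unconstrained by the hypothesis, so the half-space Liouville theorem is not available, and you cannot conclude $\tilde u_\infty$ is globally linear.

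The paper stays on $B_1$ throughout. Having first proved (1), it knows $\tilde u_\infty$ is harmonic on the half-ball $B_{1,e_n}(0)$ and vanishes on the flat boundary; boundary elliptic estimates then give $|\nabla\tilde u_\infty|+|D^2\tilde u_\infty|\leq C$ on $B_{1/2,e_n}(0)$. Taylor expansion at the origin yields
\[
\tilde u_\infty(y)=\alpha\,(y\cdot(-e_n))_+ + O(|y|^2),\qquad \alpha=|\nabla\tilde u_\infty(0)|\in[c,1/c],
\]
with the lower bound on $\alpha$ coming from nondegeneracy exactly as you say. On $B_{\kappa\sqrt{s}}(0)$ the quadratic remainder is at most $C\kappa^2 s<s/2$ for $\kappa$ small, and combined with the uniform convergence $\tilde u_k\to\tilde u_\infty$ this gives (2). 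This local Taylor-expansion argument is precisely why the conclusion holds on $B_{c\sqrt{s}\,r}(x)$ rather than on $B_{cr}(x)$; your Liouville route, if it worked, would give the latter, which is strictly stronger than what is being claimed.

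A minor separate point: in your derivation of (1) from (2) at the end, the inequality $u_\W(y)\geq\alpha sr-sr>0$ requires $\alpha>1$, which you do not have. This is harmless since you already obtained (1) via the density/Hausdorff argument, but the paper's order—prove (1) first and use it as input for (2), (3)—is the cleaner route.
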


\begin{proof}
	Conclusion (1) follows from standard geometric measure theory arguments using only Lemma \ref{l:densitybd}: if there is a $y \in \p \W\cap B_{c r}(x)$ with $y = \exp_x \mathscr{v}$, $|g(\mathscr{v}, \nu)| \geq s r$, then we may find clean balls $B_{c s r}(y_1) \in \W \cap B_{s r/2}(y)$ and $B_{c s r}(y_2) \in B_{s r/2}(y)\sm \W$, with either both inside or both outside $B_{r, \nu}(x)$. This implies that $|(B_r(x) \cap \W) \triangle B_{r, \nu}(x)| \geq c s^n r^n$ (one of these two balls must be in this symmetric difference), and this is a contradiction if $\e \ll s^n$.
	
	The other two conclusions follow from a compactness argument: assume, say, (2) is false for a given $s < s_0$, and take a sequence $\W_k$ of minimizers, points $x_k$  in $\p \W_k$, $r_k \rightarrow 0$, $\nu_k \in T_{x_k} M$ with norm $1$, and $B_{r_k}(x_k)$ with 
	\[
		B_{r_k}(x_k) \cap \p \W_k \ss \exp_x \{ \mathscr{v} \in T_{x_k} M : |v| < r_k,  |g(v, \nu_k)| < s_k r_k \}
	\]
	with $s_k \rightarrow 0$ (after applying (1) with $s_k$ rather than with $s$, using that $\e_k \rightarrow 0$ to do so). Identify $T_{x_k}M$ with $\R^n$ using an orthonormal basis with $e_n = \nu_k$, and define $\tilde{u}_k : B_1(0) \ss \R^n \rightarrow \R$ by
	\[
		\tilde{u}_k(e) = \frac{u_k(r_k \exp_{x_k} e )}{r_k}.
	\]
	Using Corollary \ref{cor:lip}, the functions $\tilde{u}_k$ are  uniformly Lipschitz functions defined on $B_1(0)$ and  $\{\tilde{u}_k > 0\}$ converges to $B_{1, e_n}(0) := \{y \in B_1: y \cdot e_n < 0 \}$ in the Hausdorff topology. After passing to a subsequence, $\tilde{u}_k \rightarrow \tilde{u}_\infty$  uniformly on $B_1(0)$. Moreover, letting $g_k$ denote the pullback of the metric $g$ on $B_{r_k}(x_k)$ under the rescaled exponential map $ e \in B_1(0) \mapsto \exp_{x_k}(r_k e)$, then  $g_k \rightarrow g_\infty$ in $C^2$ topology where $g_\infty(e_i, e_j) \equiv \d_{ij}$ the Euclidean metric.
	
	Let $\Lap_k$ denote the Laplacian with respect to the metric $g_k$ on $B_1(0)$. We have $\Lap_k \tilde{u}_k =  - r_k \ei(\W_k) \tilde{u}_k$ on $\{\tilde{u}_k > 0\}$; from elliptic estimates this means $\tilde{u}_k \rightarrow \tilde{u}_\infty$ in $C^2$ locally on $B_{1, e_n}(0)$. Rassing to the limit at any point in $B_{1, e_n}(0)$ gives $\Lap_{\infty} \tilde{u}_\infty = 0$. Therefore,  $\tilde{u}_\infty$ is harmonic on $B_{1, e_n}(0)$ and vanishes on the rest of $B_1(0)$ (using the uniform convergence again). In particular, using elliptic estimates means $\tilde{u}_\infty$ is a piecewise smooth function on $B_1(0)$ with $|\grad \tilde{u}_\infty | + |D^2 \tilde{u}_\infty|\leq C$ on $B_{1/2, e_n}(0)$.
	
	We also have that $c \DO(\W_k) r \leq  \sup_{B_r(0)} \tilde{u}_k \leq C \UP(\W_k) r$ for $r \in (0, 1)$ from Proposition~\ref{l:bdryharnackef}, and the constants here are uniform in $k$. Passing to the limit, $c r \leq  \sup_{B_r(0)} \tilde{u}_\infty \leq C r$, which implies that $|\n \tilde{u}_\infty(0)| \geq c$ (from the $-e_n$ direction) and so $\tilde{u}_\infty(x) = \a (x\cdot - e_n)_+ + O(|x|^2)$ for some $c \leq \a \leq C$. On $B_{\k \sqrt{s}}(0)$, this implies that
	\begin{align*}
		|\tilde{u}_k(x) - \a (x\cdot - e_n)_+ | \leq |\tilde{u}_\infty(x) - \a (x\cdot - e)_+ | &+ |\tilde{u}_k(x) - \tilde{u}_\infty(x) | \\
		&\leq C |x|^2 + o_k(1) \leq C s \k^2 + o_k(1)< \frac{1}{2}s.
	\end{align*}
	in the last step we took $\k$ small, and then $k$ large. Changing back to the original variables,
	\[
		\sup_{B_{\k \sqrt{s} r_k}(x_k)} |u_{\W_k} - \a l_{x_k, \nu_k}| < s r_k,
	\]
	which is a contradiction to (2) failing for $\W_k$ at $x_k$. The argument for (3) is similar.
\end{proof}

We say $x \in \p^* \W$, the reduced boundary, if
\[
	\lim_{r \searrow 0} \frac{|(B_r(x) \cap \W) \triangle B_{r, \nu}(x)|}{|B_r(x)|} = 0
\]
for some $\nu \in T_x M$ with $|\nu| = 1$; we will use the notation $\nu_x$ for this $\nu$. Lemma \ref{l:densitybd} implies that $\cH^{n-1}(\p \W \sm \p^*\W) = 0$ (see \cite[4.5.11, 4.5.6]{Federer}).

\begin{corollary}\label{c:reducedgmt} Let $\W$ be a minimizer of $\Ep$, and $x\in \p^* \W$. Then we have, for constants $0 < c \leq C < \infty$ depending only on $R, v, \vmax$, and $\vpar$:
	\begin{enumerate}
		\item $\p \W \cap B_r(x)$ converges to the approximate tangent plane $\p B_{r, \nu_x}(x) \cap B_r(x)$ in the following Hausdorff sense:
		\[
			\lim_{r \searrow 0} \frac{1}{r}\sup_{y \in \p \W \cap B_r(x)} d(y, \p B_{r, \nu_x}(x) \cap B_r(x)) = 0.
		\]
		\item $\cH^{n-1}(\p \W \cap B_r(x)) = \w_{n-1}r^{n-1} + o(r^{n-1})$, where $\w_{n-1}$ is the measure of the unit ball in $\R^{n-1}$.
		\item $u_\W(y) = |\grad u_\W(x)| l_{x, \nu_x}(y) + o(d(x, y))$ for some number $|\grad u_\W (x)| \in [c, C]$.
		\item $w_\W(y) = |\grad w_\W(x)| l_{x, \nu_x}(y) + o(d(x, y))$ for some number $|\grad w_\W (x)| \in [c, C]$.
	\end{enumerate}
\end{corollary}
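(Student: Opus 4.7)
The key observation is that the hypothesis of Lemma~\ref{l:tangentplane} is automatically satisfied at every scale near a reduced boundary point: by definition of $\p^* \W$, for any fixed $s > 0$ there is an $r_0 = r_0(s, x) > 0$ such that $|(B_r(x) \cap \W) \triangle B_{r, \nu_x}(x)| \leq \e(s) r^n$ for all $r \leq r_0$, where $\e(s)$ is the threshold from the lemma. All four conclusions will follow by invoking Lemma~\ref{l:tangentplane} for a sequence $s_k \to 0$ and passing to the limit; the content beyond the lemma is essentially bookkeeping of scales, together with one piece of classical geometric measure theory for (2).

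Claim (1) is immediate from Lemma~\ref{l:tangentplane}(1): for any $s > 0$ and any $r < r_0(s, x)$ we obtain $\p \W \cap B_{c r}(x) \subset \exp_x\{\mathscr{v} : |\mathscr{v}| < r,\ |g(\mathscr{v}, \nu_x)| < s r\}$, so $\sup_{y \in \p\W \cap B_{cr}(x)} d(y, \p B_{r, \nu_x}(x))/r \leq s$. Since $s$ is arbitrary, the limsup as $r \to 0$ vanishes (and a trivial rescaling from $cr$ to $r$ absorbs the constant). For claim (2), first observe that $\W$ has locally finite perimeter: the upper bound \eqref{e:perdensitybd} together with a Vitali covering gives $\cH^{n-1}(\p\W \cap K) < \infty$ for every compact $K \subset \Qb_R$. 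Combining this with the observation preceding the corollary that $\cH^{n-1}(\p \W \sm \p^* \W) = 0$, the measure $\cH^{n-1}\mres \p \W$ agrees with the reduced perimeter measure of $\W$. Then De~Giorgi's structure theorem \cite[\S4.5.6]{Federer} yields $\cH^{n-1}(\p\W \cap B_r(x)) = \w_{n-1} r^{n-1} + o(r^{n-1})$ at every $x \in \p^*\W$.

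For claims (3) and (4) the arguments are identical, so we treat only (3). Fix $s_k \searrow 0$ and for each $k$ and each $r \leq r_0(s_k, x)$, Lemma~\ref{l:tangentplane}(2) yields some $\a_{r,k} \in [c, 1/c]$ with
\[
 \sup_{y \in B_{c \sqrt{s_k}\, r}(x)} \big| u_\W(y) - \a_{r,k}\, l_{x, \nu_x}(y) \big| < s_k r.
\]
Fix any unit vector $\mathscr{v}_0 \in T_x M$ with $g(\mathscr{v}_0, \nu_x) = -\tfrac{1}{2}$, and define $y_t = \exp_x(t \mathscr{v}_0)$, so that $l_{x,\nu_x}(y_t) = t/2$ and $d(x, y_t) = t$. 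Applying the display at $y = y_t$ with $t = c \sqrt{s_k}\, r$ gives
\[
 \Big| \frac{u_\W(y_t)}{t} - \tfrac{1}{2} \a_{r,k} \Big| \leq \frac{s_k r}{t} = \frac{\sqrt{s_k}}{c}.
\]
Thus $\tfrac{1}{2}\a_{r,k}$ is within $\sqrt{s_k}/c$ of the one-sided directional quotient $u_\W(y_t)/t$ evaluated at $t = c\sqrt{s_k}\, r$. This both shows that $\lim_{t \to 0^+} u_\W(y_t)/t$ exists (since any two subsequential limits of $\a_{r,k}$ must agree, by the squeezing at arbitrarily small $t$) and that $\a_{r,k}$ itself converges to a unique value $\a^* \in [c, 1/c]$ as $k \to \infty$ and $r \leq r_0(s_k, x)$. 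Define $|\grad u_\W(x)| := \a^*$. For the expansion, given $\rho > 0$ small, choose $s = s(\rho) \searrow 0$ slowly enough that $\rho /(c \sqrt{s(\rho)}) \leq r_0(s(\rho), x)$ and also $|\a_{r,k} - \a^*| \leq \sqrt{s(\rho)}$ once $s_k \leq s(\rho)$. Setting $r = \rho/(c \sqrt{s(\rho)})$, every $y$ with $d(x, y) \leq \rho$ lies in $B_{c \sqrt{s(\rho)}\, r}(x)$, so
\[
 \big| u_\W(y) - |\grad u_\W(x)|\, l_{x, \nu_x}(y) \big| \leq \big|\a_{r,k} - \a^*\big|\, d(x, y) + s(\rho)\, r \leq \sqrt{s(\rho)}\, \rho + \frac{\sqrt{s(\rho)}\, \rho}{c} = o(\rho).
\]
The identical argument with $w_\W$ in place of $u_\W$ yields (4), with $\b \in [c, 1/c]$ coming from Lemma~\ref{l:tangentplane}(3).

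The only nontrivial step is the uniqueness of the blow-up slope $\a^*$ and the coordination of scales in the final displayed estimate; the positive lower bound $\a^* \geq c$ is precisely the non-degeneracy content of Proposition~\ref{l:bdryharnackef} (through $\DO(\W) \geq c_m$), while the upper bound $\a^* \leq 1/c$ is the Lipschitz content of Corollary~\ref{cor:lip}. Everything else is direct application of Lemma~\ref{l:tangentplane} and standard measure-theoretic facts already proven.
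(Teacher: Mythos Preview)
Your proposal is correct and follows the same approach as the paper: items (1), (3), (4) are obtained by applying Lemma~\ref{l:tangentplane} at all sufficiently small scales at a reduced-boundary point, and (2) is the classical density statement from \cite[4.5.6]{Federer}. You supply more detail than the paper (which simply asserts these follow directly); the bookkeeping showing uniqueness of the blow-up slope $\alpha^*$ could be tightened with an explicit overlapping-scales comparison rather than the appeal to subsequential limits, but the idea is sound.
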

The second conclusion can be found in \cite[4.5.6(2)]{Federer}; the others follow directly from Lemma \ref{l:tangentplane}. 

We say that a function $u \in C(\W)$ \emph{converges nontangentially} to $\a$ at $x \in \p \W$ if
\[
	\lim_{r \searrow 0}\  \sup\left\{ |u(y) - \a| \, : \, y \in B_r(x),\, d(y, \p \W) > \e r \right\} 
\]
for all $\e > 0$. While the numbers $|\grad u_\W(x)|$ in this corollary were abstract (not directly connected to $\grad u_\W$ proper), they may be reinterpreted as nontangential limits of $|\grad u_\W|$:

\begin{corollary}\label{c:ntlimits} Let $\W$ be a minimizer of $\Ep$, $x\in \p^* \W$, and $|\grad u_\W(x)|, |\grad w_\W(x)|$ as defined in Corollary \ref{c:reducedgmt}. Then:
	\begin{enumerate}
		\item $- |\grad u_\W(x)| \nu_x $ is the nontangential limit of $\grad u_\W$ at $x$.
		\item $- |\grad w_\W(x)| \nu_x$ is the nontangential limit of $\grad w_\W$ at $x$.
	\end{enumerate}
\end{corollary}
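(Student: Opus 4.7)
The plan is to prove both (1) and (2) by the same blow-up argument, which I describe for $u_\W$; the case of $w_\W$ is identical aside from the right-hand side of the relevant PDE. Arguing by contradiction, suppose (1) fails. Then there exist $\e, \d > 0$ and sequences $r_k \searrow 0$, $y_k \in B_{r_k}(x)$ with $d(y_k, \p\W) > \e r_k$, such that
\[
    \bigl|\grad u_\W(y_k) + |\grad u_\W(x)|\nu_x\bigr| \geq \d.
\]
Setting $\rho_k := d(x, y_k)$, the nontangential condition forces $\e r_k \leq \rho_k \leq r_k$, so $\rho_k\to 0$. In normal coordinates at $x$, identify $T_xM \cong \R^n$, set $z_k = \exp_x^{-1}(y_k)/\rho_k$ (so $|z_k| = 1$), and define the blow-ups
\[
    u_k(z) = \frac{u_\W(\exp_x(\rho_k z))}{\rho_k},
\]
together with the rescaled metrics $g_k$ obtained from pulling back $g$ via $z \mapsto \exp_x(\rho_k z)$. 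Then $g_k \to g_{\mathrm{Eucl}}$ in $C^2_{\mathrm{loc}}$, while $u_k$ solves $\Lap_{g_k} u_k = -\rho_k^2\, \ei(\W)\, u_k$ on $\{u_k > 0\}$, a uniformly bounded (in fact vanishing) right-hand side.

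By Corollary~\ref{c:reducedgmt}(3), $u_k \to u_\infty$ uniformly on compact sets with $u_\infty(z) = |\grad u_\W(x)|\, (z\cdot(-\nu_x))_+$, and by Corollary~\ref{c:reducedgmt}(1) the positivity sets converge in Hausdorff sense to the half-space $\{z\cdot(-\nu_x) > 0\}$. Pass to a subsequence so that $z_k \to z_\infty$ with $|z_\infty| = 1$. The inequality $d(y_k, \p\W) \geq \e\rho_k$ combined with the Hausdorff convergence of $\p\W$ yields $z_\infty \cdot (-\nu_x) \geq \e > 0$; in particular $z_\infty$ lies in the open positivity set of $u_\infty$, where $u_\infty \geq c(\e) > 0$ on a definite ball $B_\sigma(z_\infty)$. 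By uniform convergence, $u_k \geq c(\e)/2$ on $B_\sigma(z_\infty)$ for all $k$ large, so each $u_k$ solves a smooth linear elliptic equation there with uniformly bounded data and coefficients converging to those of $\Lap$.

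Interior Schauder estimates therefore give a uniform $C^{1,\alpha}$ bound for $u_k$ on $B_{\sigma/2}(z_\infty)$, and by Arzel\`a--Ascoli together with uniqueness of the limit, $u_k \to u_\infty$ in $C^1(B_{\sigma/4}(z_\infty))$. A direct chain rule computation shows that $\grad_z u_k(z_k)$ (computed with respect to $g_k$) coincides with $\grad u_\W(y_k)$ read in normal coordinates at $x$; since $g_k \to g_{\mathrm{Eucl}}$ in $C^2$ and $\grad u_k$ is uniformly bounded, the discrepancy between the $g_k$-gradient and the Euclidean gradient of $u_k$ at $z_k$ is $o(1)$. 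Passing to the limit,
\[
    \grad u_\W(y_k) \longrightarrow \grad u_\infty(z_\infty) = -|\grad u_\W(x)|\, \nu_x,
\]
contradicting the standing hypothesis and establishing (1). For (2) the only change is that $\Lap w_\W = -1$ on $\W$ rescales to $\Lap_{g_k} w_k = -\rho_k$, still a vanishing right-hand side, so the same scheme applies verbatim.

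The main obstacle is the upgrade from uniform to $C^1$ convergence of the blow-ups in the nontangential region. This step is where the nontangential hypothesis $d(y_k,\p\W) \geq \e r_k$ is crucial: it is exactly what guarantees, via the Hausdorff convergence of $\p\W$ to a hyperplane at a reduced boundary point, that the rescaled base points $z_k$ stay at a definite distance from the free boundary of the limit profile. Once this uniform interior separation is in place, standard smooth elliptic regularity closes the argument, and no boundary regularity for $u_k$ or $w_k$ is needed.
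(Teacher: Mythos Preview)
Your proof is correct and follows essentially the same idea as the paper's: both arguments use Corollary~\ref{c:reducedgmt} to get $C^0$ closeness of $u_\W$ to the half-linear profile, and then apply interior elliptic gradient estimates on a ball $B_{c\e r}(y)\subset\W$ (guaranteed by the nontangential hypothesis) to upgrade this to $C^1$ closeness. The only difference is presentational: the paper gives a direct quantitative estimate of $\|\grad(u_\W-\a l_{x,\nu_x})\|_{L^\infty(B_{\e r/2}(y))}$ in terms of $r\|\Lap(u_\W-\a l_{x,\nu_x})\|_{L^\infty}+r^{-1}\|u_\W-\a l_{x,\nu_x}\|_{L^\infty}$, while you package the same computation as a blow-up/contradiction argument with $C^1$ compactness on the limiting half-space.
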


\begin{proof}
	We only prove (1), setting $\a = |\grad u(x)|$. Using Corollary \ref{c:reducedgmt},
	\[
		\lim_{r \searrow 0}\frac{\sup_{y \in B_r(x)} |u_\W(y) - \a l_{x, \nu_x}(y) |}{r} = 0.
	\] 
	Working in normal coordinates, at any $y \in B_{r, \nu_x}(x)$ the function $l_{x, \nu_x}(y)$ has
	\[
	|\Lap l_{x, \nu_x}(y)| \leq C\|g_x - g_{euc} \|_{C^1} \leq C r \qquad |\grad l_{x, \nu_x}(y) - \grad l_{x, \nu_x}(x)| \leq C r
	\]
	where $g_{euc}$ denotes the Euclidean metric. Fix $\e > 0$ and take $y \in B_r(x)$ with $d(x, y) \geq \e r$.   Then applying elliptic estimates on $B_{r \e}(y)$ in the first inequality and the fact that $|\Lap u_\W|\leq C$  in the second inequality, we have
	\begin{align*}
		\|\grad (u_\W  - \a  l_{x, \nu_x})\|_{L^\infty (B_{\e r /2}(x))} & \leq C \left[r \|\Lap (u_\W - \a l_{x, \nu_x})\|_{L^\infty(B_{\e r}(x))} + \frac{1}{r}\|u_\W - \a l_{x, \nu_x} \|_{L^\infty(B_{\e r}(x))} \right] \\
		& \leq C r + o_r(1) = o_r(1).
	\end{align*}
 The constant here depends only on $\e$. It follows that
	\[
		|\grad u_\W(y) + \a \nu_x |\leq o_r(1) + |\a||\nu_x + \grad l_{x, \nu_x}(y)| = o_r(1) + |\a||\grad l_{x, \nu_x}(x) - \grad l_{x, \nu_x}(y)| \leq o_r(1).
	\]
\end{proof}

\section{The Euler-Lagrange equation}\label{s:el}

Our next goal is to derive the Euler-Lagrange equation satisfied by minimizers by differentiating $\Ep$ with respect to smooth families of diffeomorphisms applied to $\W$. There will essentially be three main steps to do this. First, in Section~\ref{ss: dist EL}, we derive the distributional form of the Euler Lagrange equation. Computing the derivatives of the terms appearing in $\En$ is more or less routine for smooth sets in Euclidean space, though we must take some care because minimizers are not necessarily smooth and we are working on a Riemannian manifold. For the distributional form of the Euler-Lagrange equation, the derivatives of the nonlinear term are essentially given in condition \ref{a:nlc1} in the definition of admissible nonlinearity (Definition~\ref{def: nl}).

Next, in Section~\ref{ss: pointwise EL}, we derive a pointwise form of the distributional Euler-Lagrange  equation from Section~\ref{ss: dist EL}. The basic idea is to plug into the distributional Euler-Lagrange equation a sequence of vector fields that approximate the outer unit normal to $\W$ at each point in the reduced boundary. Carrying this out for the terms coming from the base energy is not difficult, but it turns out to be fairly delicate for the term $\nl$ for reasons discussed further below.  This will require the optimal Green's function estimates from Proposition~\ref{l:green} of the previous section. Ultimately, we are able to arrive at the following pointwise form of the free boundary condition (see Corollary~\ref{c:elpt} in Section~\ref{ss: pointwise EL}). Then there is a constant $A_0$ such that for almost every $x\in \p^* \W \cap \Qb_R$, we have the identity
\begin{equation}\label{eqn: EL intro8}
	-|\n u_\W(x)|^2  - \frac{\tpar}{2}|\n w_\W(x)|^2 + \err \left[ b_\W(x) + \int_{\W}a_\W v^x_\W \right] = - A_0.
\end{equation}
	If $|\W| \neq v$, then $A_0 = \fv'(|\W|)$. Here, for each $x \in \p \W \cap Q_R$, the function $v_\W^x$ is the solution of an auxiliary PDE.

It is not clear that the Euler-Lagrange equation \eqref{eqn: EL intro8} behaves as a Bernoulli-type problem or  should yield any  regularity.  The most important part of this section, which is the content of Section~\ref{ss: EL rewritten}, is to rewrite the Euler-Lagrange equation in a way which involves only $|\grad u_\W|$, up to ``lower order'' terms. In doing so, the term $b_\W(x)$ is more or less innocuous and will be controlled by simply choosing the parameter $\err$ to be sufficiently small. The $|\grad w_\W|$ term is a priori problematic, but can be handled similarly to \cite{CSY18, MTV17}. The most challenging term to control is $\int_{\W}a_\W v^x_\W $, because the function $v^x_\W$ solves an auxiliary PDE that does not immediately lend itself to comparison with $u_\W$. We will discuss how to overcome this difficulty at the beginning of Section~\ref{ss: EL rewritten} once we have seen introduced the equation for $v^x_\W$.   The main result of this section is the following free boundary condition.

\begin{theorem}\label{thm:usefulEL}
	 There is a function $\r \geq - C \err$ with $\|\r\|_{C^{0, \a}(\p \W)} \leq C$, $C, \a$ depending only on $v, \vmax, \vpar, R$, such that
	\[
		|\grad u_\W (x)|^2(1 + \r(x)) = A_0
	\]
	for $\cH^{n-1}$-a.e. $x \in \p^* \W \cap \Qb_R$. The constant $A_0$ has $A_0 \in [1/C, C]$.
\end{theorem}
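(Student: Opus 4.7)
The plan is to divide the pointwise Euler--Lagrange identity of Corollary~\ref{c:elpt} through by $|\grad u_\W|^2$. Setting
\[
	\r(x) := \frac{1}{|\grad u_\W(x)|^2}\Bigl[\tfrac{\tpar}{2}|\grad w_\W(x)|^2 - \err b_\W(x) - \err \int_\W a_\W\,  v^x_\W \Bigr]
\]
for $x \in \p^*\W \cap \Qb_R$, the identity $|\grad u_\W(x)|^2 (1 + \r(x)) = A_0$ follows at once. What remains is to verify three quantitative facts: that $|\grad u_\W|$ is bounded above and below uniformly on $\p^*\W$, that $A_0$ lies in $[1/C, C]$, and that $\r$ is $C^{0, \a}$ with the asserted sign bound.

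First, nondegeneracy of $|\grad u_\W|$ on $\p^*\W$ follows from Corollary~\ref{c:ntlimits}(1), which identifies $|\grad u_\W(x)|$ as a nontangential limit of $|\grad u_\W|$, combined with the linear upper growth of $u_\W$ (Corollary~\ref{cor:lip} via Theorem~\ref{l:upper}) and the linear lower growth (Proposition~\ref{l:bdryharnackef} via Theorem~\ref{t:lb}). Together these give $c \leq |\grad u_\W(x)| \leq C$ at every $x \in \p^*\W$. Evaluating the pointwise equation at any such $x$ and using the uniform $L^\infty$ bounds $|a_\W|, |b_\W| \leq 1$ from \ref{a:nlc1}, together with the pointwise bound on $v^x_\W$ coming from the Green's function estimates of Proposition~\ref{l:green}, then yields $A_0 \in [1/C, C]$.

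The heart of the proof is the $C^{0, \a}$ regularity of $\r$, which I would analyze term by term. For the ratio $|\grad w_\W|^2/|\grad u_\W|^2$, the inhomogeneous boundary Harnack principle of \cite{AKS20} (already invoked in Lemma~\ref{l:bdryharnackgreen}) applied to the pair $(u_\W, w_\W)$ in the NTA domain $\W$ (Corollary~\ref{c:nta}) shows that $w_\W/u_\W$ extends to a $C^{0, \a}$ function up to $\p^* \W$, and by Corollary~\ref{c:ntlimits} this boundary value coincides with $|\grad w_\W|/|\grad u_\W|$. The $b_\W/|\grad u_\W|^2$ term inherits $C^{0, \a}$ regularity from the $C^{1,1}$ bound on $b_\W$ in \ref{a:nlc1} and the Hölder regularity of $1/|\grad u_\W|^2$ just obtained.

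The main obstacle is the nonlocal term $\int_\W a_\W v^x_\W$, which depends on $x \in \p^*\W$ in a nontrivial way through the source of the auxiliary PDE for $v^x_\W$ introduced in Section~\ref{s:el}. The strategy is to rewrite this integral via duality: letting $\tilde v$ solve the adjoint equation with right-hand side $a_\W$, the quantity $\int_\W a_\W v^x_\W$ re-expresses as $\tilde v$ tested against the data of the equation for $v^x_\W$, which is localized near $x$ (and essentially a multiple of the relevant Poisson-type data). The fine Green's function bounds of Proposition~\ref{l:green} and the Poisson kernel estimate \eqref{e:poissonkerneldef} then show that this quantity vanishes linearly at $\p\W$ in $x$ with the same order as $u_\W$, so a final application of the inhomogeneous boundary Harnack principle of \cite{AKS20} to the pair $(u_\W, \int_\W a_\W v^\cdot_\W)$ delivers the $C^{0, \a}$ regularity of the ratio up to $\p^* \W$. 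Finally, the sign bound $\r \geq - C \err$ is immediate since the $|\grad w_\W|^2$ term contributes nonnegatively to $\r$, and the remaining two terms are uniformly bounded in $L^\infty$ and multiplied by $\err$.
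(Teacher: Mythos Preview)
Your overall plan matches the paper's: divide the pointwise Euler--Lagrange identity by $|\grad u_\W|^2$ and establish H\"older regularity of the resulting expression term by term via boundary Harnack techniques. However, there are two execution gaps.

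First, the $b_\W$ term. You assert that $b_\W/|\grad u_\W|^2$ is $C^{0,\a}$ using ``the H\"older regularity of $1/|\grad u_\W|^2$ just obtained,'' but at that point you have only shown that the \emph{ratio} $|\grad w_\W|/|\grad u_\W|$ is H\"older, not $|\grad u_\W|$ itself; boundary Harnack for $w_\W/u_\W$ gives no information about $|\grad u_\W|$ alone. The paper avoids this by not dividing $b_\W$ through: it writes the identity as $|\grad u_\W|^2\bigl(1 + \tfrac{\tpar}{2}\rho_2 - \err\rho_1\bigr) = A_0 + \err b_\W$ and then sets $1+\r = \frac{1 + \tfrac{\tpar}{2}\rho_2 - \err\rho_1}{1 - (\err/A_0) b_\W}$. (One could also observe that, once $\rho_1,\rho_2$ are both known to be H\"older, the identity itself forces $|\grad u_\W|^2$ to be H\"older; but that requires handling the nonlocal term \emph{before} the $b_\W$ term, the opposite of your ordering.)

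Second, and more seriously, your treatment of $\int_\W a_\W v^x_\W$ is imprecise at the key step. You propose applying the boundary Harnack principle to the pair $\bigl(u_\W, \int_\W a_\W v^{\,\cdot}_\W\bigr)$, but the second entry is a function of $x \in \p^*\W$ only; boundary Harnack compares functions defined on $\W$. The paper's resolution (Lemmas~\ref{l:hbh}--\ref{l:qbh}, via Lemma~\ref{lem:greentopoisson}) is to use the identity $K(x,y) = |\grad_x G_\W(x,y)|$ on $\p^*\W$, so that with $p(z) := \int_\W G_\W(z,y)\,a_\W(y)\,dy$ solving $-\Lap p = a_\W$, $p = 0$ on $\p\W$, one has
\[
\int_\W h^x a_\W \;=\; |\grad u_\W(x)|\int_\W K(x,y)\,a_\W(y)\,dy \;=\; |\grad u_\W(x)|\,|\grad p(x)|.
\]
Now the inhomogeneous boundary Harnack (Lemma~\ref{lem:improvedbhrhs}) applies legitimately to the pair $(u_\W, p)$ on $\W$, giving $|\grad p| = |\grad u_\W|\rho_p$ with $\rho_p \in C^{0,\a}(\p\W)$, hence $\int_\W h^x a_\W = |\grad u_\W(x)|^2\rho_p(x)$. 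Your dual potential $\tilde v$ is morally this $p$, and your duality heuristic is correct in spirit; what is missing is the identification of the right object on $\W$ (namely $p$, not the boundary-valued $\int_\W a_\W v^{\,\cdot}_\W$) to which boundary Harnack actually applies.
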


As always, we assume that $R$ is fixed, $\vpar < \vpar_0(R, v, \vmax)$, $\tpar < \tpar_0(R, v, \vmax, \vpar)$, and $\err < \err_0(R, v, \vmax, \tpar, \vpar)$ are small enough that all results in earlier sections apply. We recall that $a_\W$ and $b_\W$ are functions given in the definition of admissible nonlinearity in Definition~\ref{def: nl}.

\subsection{The first variation along a vector field}\label{ss: dist EL}
The main goal of this subsection is to derive the distributional form of the Euler-Lagrange equation in Lemma~\ref{l:elvf}. We start with some auxiliary lemmas quantifying how eigenvalues and eigenfunctions vary under change of domains. We let $\nu= \nu_{\W}$ denote the outer unit normal of $\W$. If we knew that $\W$ had smooth boundary, then the following lemma would follow easily from the standard Hadamard variational formula;  see \cite{HP}. Since thus far we only know that $\W$ satisfies some very basic measure theoretic properties, the  proof requires some more care.

\begin{lemma}\label{l:tderevalue}
	Let $\W$ be a minimizer of $\Ep$. Let  $\phi_t : M \rightarrow M$  be a one-parameter family of smooth diffeomorphisms with $\phi_0(x) = x$ and  $\p_t \phi_t|_{t = 0} = T$ for  a vector field $ T$ on $M$. Set $\W_t = \phi_t(\W)$. Then:
	\begin{enumerate}
		\item $\limsup_{t\rightarrow 0} \frac{1}{|t|} \1 \ei(\W_t) - \ei(\W) + t \int_{\p^* \W} |\grad u_\W|^2 g(T, \nu)\, d\cH^{n-1} \2 \leq 0$
		\item $\limsup_{t\rightarrow 0} \frac{1}{|t|} \1 \tor(\W_t) - \tor(\W) + t \frac{1}{2}\int_{\p^* \W} |\grad w_\W|^2 g(T, \nu)\, d\cH^{n-1} \2 \leq 0$
		\item $\lim_{t\rightarrow 0} \frac{1}{t} \1|\W_t| - |\W| - t \int_{\p^* \W} g(T, \nu) \,d\cH^{n-1}\2 = 0$
		\item $\l_2(\W_t) \geq \ei(\W_t) + c(R, v, \vmax)$ for all $|t|$ small enough.
		\item $\lim_{t\rightarrow s} \frac{1}{|t - s|} |\ei(\W_t) - \ei(\W_s) + (t - s) f_s^\phi |  = 0$, where $f_s^\phi : (-c, c) \rightarrow (0, \infty)$ is a continuous function of $s$.
	\end{enumerate}
	In particular, $\lambda_1(\W_t)$ is differentiable and $\p_t \lambda_1(\W_t)|_{t=0} = -\int_{\p^* \W} |\grad u_\W|^2 g(T, \nu)\, d\cH^{n-1}$.
\end{lemma}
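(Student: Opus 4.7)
The plan is to pull the structures on $\W_t$ back to $\W$ via $\phi_t$ and reduce all claims to the differentiation of smooth functionals of the pulled-back metric $g_t = \phi_t^* g$ on a fixed domain. The measure-theoretic and regularity information needed to convert interior computations into boundary ones has already been assembled in Sections \ref{s:lb}--\ref{s:measure}.

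For item (3), write $|\W_t| = \int_\W J_t \, dm$ with $J_t = |\det D\phi_t|$, and use $\p_t J_t|_{t=0} = \dvg T$ together with the $t$-smoothness of $J_t$ to differentiate under the integral. Since $\W$ has locally finite perimeter by Lemma \ref{l:densitybd}, the divergence theorem on sets of finite perimeter converts $\int_\W \dvg T\, dm$ into $\int_{\p^* \W} g(T, \nu)\, d\cH^{n-1}$.

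For items (1) and (2), the plan is to use $v_t = u_\W \circ \phi_t^{-1}$ and $w_t = w_\W \circ \phi_t^{-1}$ as competitors. After change of variables, $R(v_t, \W_t)$ and the torsion-type functional evaluated at $w_t$ become $t$-smooth quantities expressed as integrals on the fixed domain $\W$ against the pullback metric $g_t$. Using the interior Lipschitz regularity of $u_\W, w_\W$ (Corollary \ref{cor:lip}), $\p_t g_t|_{t=0} = \mathcal{L}_T g$, and $\p_t \, dv_{g_t}|_{t=0} = \dvg T\, dm$, the derivative at $t=0$ becomes a volume integral of the schematic form $\int_\W [\dvg(T)(|\grad u_\W|^2 - \ei u_\W^2) - 2 g(\grad_{\grad u_\W} T, \grad u_\W)]\, dm$. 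Using the interior equations $-\Lap u_\W = \ei(\W) u_\W$ and $-\Lap w_\W = 1$, an integration by parts against the vector fields $V_u = (|\grad u_\W|^2 - \ei u_\W^2) T - 2 g(\grad u_\W, T)\grad u_\W$ and its $w_\W$-analogue converts this volume integral into $-\int_{\p^* \W} |\grad u_\W|^2 g(T, \nu)\, d\cH^{n-1}$ and $-\tfrac12 \int_{\p^* \W} |\grad w_\W|^2 g(T, \nu)\, d\cH^{n-1}$ respectively. The integration by parts is justified by approximating $\W$ by the smooth superlevel sets $\{u_\W > \r\}$ (smooth for a.e.\ $\r$ by Sard's theorem), applying the classical divergence theorem there, and sending $\r \to 0$ using the nontangential gradient limits of Corollary \ref{c:ntlimits}, the upper perimeter density bound of Lemma \ref{l:densitybd}, and Proposition \ref{l:bdryharnackef} together with Corollary \ref{cor:lip} to ensure the error shells $\{0 < u_\W < \r\}$ have $\cH^{n-1}$-negligible weight as $\r \to 0$.

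Item (4) will follow immediately by combining the upper estimates in (1)--(3), which give $\En(\W_t) \leq \minE + o(1)$ as $t \to 0$, with the spectral gap conclusion of Lemma \ref{l:simpleeval}(2). For item (5), the argument for (1) applied at $s$ (using the reparametrized family $\psi^{(s)}_\t = \phi_{s+\t} \circ \phi_s^{-1}$ with initial velocity $\tilde T_s = (\p_t \phi_t|_{t=s}) \circ \phi_s^{-1}$) yields the one-sided bound $\ei(\W_{s+\t}) - \ei(\W_s) + \t f_s^\phi \leq o(|\t|)$, where $f_s^\phi = \int_{\p^* \W_s} |\grad u_{\W_s}|^2 g(\tilde T_s, \nu_{\W_s})\, d\cH^{n-1}$. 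Running the same upper bound centered at $s+\t$ with step $-\t$ and combining gives the matching lower bound, provided $f_s^\phi$ is continuous in $s$. This continuity is obtained by pulling back the integral to $\p^* \W$ via $\phi_s$, where it becomes an integral of data depending continuously on $s$: the normalized first eigenfunctions of $(\W, g_s)$ vary continuously in $H^1$ and in $C^1_{\mathrm{loc}}(\W)$ by standard spectral theory using the uniform spectral gap from (4), and the pulled-back surface measure and normal vary smoothly by the smoothness of $\phi_s$.

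The principal obstacle is the rigorous justification of the integration-by-parts step, since $u_\W$ and $w_\W$ are only H\"older continuous up to $\p\W$ and Lipschitz in the interior, with no pointwise second-derivative control at the boundary. Controlling the passage to the limit on the shells $\{0 < u_\W < \r\}$ is delicate and relies crucially on both the linear lower growth bound for $u_\W$ from Proposition \ref{l:bdryharnackef} (which forces $\{u_\W = \r\}$ to lie in an $O(\r)$-neighborhood of $\p^* \W$) and the fact that the nontangential trace of $|\grad u_\W|$ is bounded above and below on $\p^* \W$ (Corollary \ref{c:ntlimits}), together with the $\cH^{n-1}$-upper density bound of Lemma \ref{l:densitybd}.
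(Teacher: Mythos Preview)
Your treatment of items (1)--(4) follows the paper's approach closely: pull back via $\phi_t$, use $u_\W\circ\phi_t^{-1}$ and $w_\W\circ\phi_t^{-1}$ as competitors, expand to first order, and convert the resulting volume integral to a boundary integral via the divergence theorem on the minimizer $\W$. The only cosmetic difference is that the paper invokes the divergence theorem of \cite{CTZ09} directly, while you approximate by superlevel sets; both are fine since the relevant regularity (Corollary~\ref{cor:lip}, Corollary~\ref{c:ntlimits}, Lemma~\ref{l:densitybd}) has been established for the minimizer~$\W$.

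There is, however, a genuine gap in your approach to item~(5). You define $f_s^\phi$ as the boundary integral $\int_{\p^*\W_s}|\grad u_{\W_s}|^2 g(\tilde T_s,\nu_{\W_s})\,d\cH^{n-1}$ and claim that the argument for (1) applies at each $s$. But for $s\neq 0$, $\W_s$ is not a minimizer of $\Ep$, so none of the a~priori estimates of Sections~\ref{s:lb}--\ref{s:measure} are available: in particular, you have no Lipschitz bound on $u_{\W_s}$ (Theorem~\ref{l:upper} is for outward minimizers), no nontangential gradient limits (Corollary~\ref{c:ntlimits} is for minimizers), and hence no justification for the divergence theorem that converts the volume expression to your boundary integral. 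Moreover, even granting existence of the boundary integral, your proposed continuity argument fails: $H^1$ and $C^1_{\mathrm{loc}}(\W)$ convergence of the pulled-back eigenfunctions does not control convergence of the nontangential boundary traces of their gradients.

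The paper explicitly sidesteps this: it writes ``We do not try to justify the application of the divergence theorem in this case,'' and instead defines $f_s^\phi$ as the \emph{volume} integral $\int_\W |\grad u_{\W_s}|^2\dvg T_s - 2g(\grad u_{\W_s},\grad_{\grad u_{\W_s}}T_s) + 2u_{\W_s}g(\grad u_{\W_s},T_s)$ (after pullback). Continuity of this expression in $s$ then only requires strong $H^1$ convergence $u_{\W_t}\to u_{\W_s}$, which follows from Lemma~\ref{l:poincarestab} and the uniform spectral gap of (4). The final statement then follows because at $s=0$ this volume integral \emph{does} equal the boundary integral, by the computation already done in~(1). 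Your argument can be repaired by adopting the same volume-integral definition of $f_s^\phi$.
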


\begin{proof} {\it Step 1: Some general expressions.}
	Take any $v \in L^2(\W)$, and let $v_t = v \circ \phi_t^{-1} \in L^2(\W_t)$. Then we may compute
	\begin{equation}\label{e:tderevalue2}
	\int_{\W_t} v_t^2 = \int_\W v^2 |\det d \phi_t | = \int_\W v^2 \left(1 + t \dvg T + O(t^2)\right),
	\end{equation}
	where the $O$ depends only on $\|\phi_t\|_{C^1}$. If instead we consider a $v \in H^1(\W)$,
	\[
	\int_{\W_t} |\grad v_t|^2 = \int_\W \left|d \phi_t^{-1} (\grad v)\right|^2 |\det d \phi_t | = \int_\W \left[|\grad v|^2(1 + t \dvg T) - 2 t g(\grad v, \grad_{\grad v} T)\right] + O(t^2)\int_\W |\grad v|^2.
	\]
	If $v$ is smooth on $\W$, bounded, and $-\Lap v = f \in L^2(\W)$, this may be further rewritten using the identity
	\[
	\dvg\left(|\grad v|^2 T - 2g(\grad v, T)\grad v\right) = |\grad v|^2 \dvg T + 2g(\grad v, T) f - 2g(\grad v, \grad_{\grad v} T) \in L^2(\W)
	\]
	to give
	\[
	\int_{\W_t} |\grad v_t|^2 = \int_\W |\grad v|^2 + t\int_{\W} - 2 g(\grad v, T) f + \dvg\left(|\grad v|^2 T - 2g(\grad v, T)\grad v\right) + O\left(t^2 \right)\int_\W |\grad v|^2.
	\]
	Applying the divergence theorem of \cite{CTZ09} to the second term and assuming $\grad v$ has nontangential limits at $\cH^{n-1}$-a.e. point of $\p \W$ lets us rewrite
	\[
	\int_{\W_t} |\grad v_t|^2 = \int_\W |\grad v|^2 - t\int_{\W} 2 g(\grad v, T) f + t\int_{\p^* \W} |\grad v|^2 g(T, \nu) - 2 g(\grad v, T) g(\grad v, \nu) d\cH^{n-1} + O\left(t^2\right) \int_\W |\grad v|^2.
	\]
		
	{\it Step 2: Proofs of (1)-(4).} We now verify the conclusions of the lemma. For (1), take $v = u_\W$, and use $v_t$ as a competitor for the definition of $\ei(\W_t)$. Then $v_t \in H^1_0(\W_t)$, and
	\[
	\int_{\W_t} v_t^2 = \int_\W u_\W^2 \left(1 + t \dvg T + O(t^2)\right) = 1 - 2 t \int_{\W} u_\W g(\grad u_\W, T) +O(t^2),
	\]
	integrating by parts as $u_\W \in H^1_0(\W)$. On the other hand, $\grad u_\W$ has nontangential limits a.e. (from Corollary \ref{c:ntlimits}), and so
	\[
	\int_{\W_t} |\grad v_t|^2 = \ei(\W) - t\int_{\W} 2 g(\grad u_\W, T) \ei(\W) u_\W + t\int_{\p^* \W} |\grad u_\W|^2 g(T, \nu) - 2 g(\grad u_\W, T) g(\grad u_\W, \nu) d\cH^{n-1} + O(t^2)
	\]
	Using the formula from Corollary \ref{c:ntlimits} on the boundary gives $g(\grad u_\W, T) g(\grad u_\W, \nu) = |\grad u_\W|^2 g(T, \nu)$, so this leads to
	\[
	\ei(\W_t) \leq \frac{\int |\grad v_t|^2}{\int u_t^2} \leq \ei(\W) - t\int_{\p^* \W} |\grad u_\W|^2 g(T, \nu) d\cH^{n-1} + O(t^2),
	\]
	as the terms of the form $t\int_\W u_\W g(\grad u_\W, T)$  cancel to order $t$. This proves (1).
	
	The proof of (2) is analogous:  proceeding in the same way using $v = w_\W$,  the terms $t\int_{\W} g(\grad u_\W, T)$ in  the expansion of $\tor(\W_t)$ again cancel and we find
	\[
	\tor(\W_t) \leq \int \frac{1}{2}|\grad v_t|^2 - v_t \leq \tor(\W) - \frac{t}{2}\int_{\p^* \W} |\grad w_\W|^2 g(T, \nu) d\cH^{n-1} + O(t^2).
	\]
	
	To prove (3), we use  the constant function $v = 1$  in \eqref{e:tderevalue2} and the divergence theorem to obtain
	\[
	|\W_t| = |\W| + t \int_{\p^* \W} g(T, \nu) d\cH^{n-1} + O(t^2).
	\]
	
	Using (1), (2), and  (3), we see that $\En(\W_t) \leq \En(\W) + C|t| \leq \minE + \d$ if $|t|$ is small enough. From Lemma \ref{l:simpleeval}, this implies that $\l_2(\W_t) > \ei(\W) + c$, giving (4).
	\\
	
	{\it Step 3: Proof of (5).}
	Finally, for (5) we proceed as for (1), except with $\W_s$ in place of $\W$, using $u_{s, t} = u_{\W_s}\circ \phi_s \circ \phi_t^{-1} : \W_t \rightarrow \R$ as a competitor. These are nonnegative and have
	\begin{equation}\label{e:tderevalue3}
		\int_{\W_t} u_{s, t}^2 = 1 + O(|t - s|)
	\end{equation}
	from \eqref{e:tderevalue2}. We do not try to justify the application of the divergence theorem in this case, instead only using
	\begin{align*}
	\ei(\W_t) &\leq  \frac{\int_{\W_t} |\grad u_{s, t}|^2} {\int_{\W_t} u_{s, t}^2} \\
	& = \ei(\W_s) + (t - s)  \int_\W |\grad u_{\W_s}|^2 \dvg T_s - 2 g(\grad u_{\W_s}, \grad_{\grad u_{\W_s}} T_s) + 2 u_{\W_s} g(\grad u_{\W_s}, T_s) + O((t - s)^2) \\
	& := \ei(\W_s) + (t - s)f_s^\phi + O((t-s)^2),
	\end{align*}
	where $T_s = \p_t \phi_t|_{t = s}$. Clearly $f_s^\phi$ is bounded in terms of $\phi$ and $\ei(\W_t)$, and $\ei(\W_t)$ is bounded from (1). Applying with $t$ and $s$ reversed shows that
	\begin{equation} \label{e:tderevalue1}
	\ei(\W_s) \leq \ei(\W_t) + (s - t) f_t^{\phi} + O((t - s)^2),
	\end{equation}
	and so $|\ei(\W_s) - \ei(\W_t)|\leq C |s - t|$. Together with (4), Lemma \ref{l:poincarestab}, and \eqref{e:tderevalue3} this implies that
	\begin{align*}
	\|u_{s, t} - u_{\W_t}\|_{H^1_0(\W_t)}^2 &\leq \bigg\|\frac{u_{s, t}}{\sqrt{\int u_{s, t}^2}} - u_{\W_t}\bigg\|_{H^1_0(\W_t)}^2 + C |t - s| \\
	& \leq C\left[\frac{\int_{\W_t} |\grad u_{s, t}|^2} {\int_{\W_t} u_{s, t}^2} - \ei(\W_t) \right] + C |t - s| \leq C |t - s|.
	\end{align*}
	On the other hand, as $u_{s, t}$ are defined by $u_{\W_s}$ composed with smooth diffeomorphisms approaching the identity, $\lim_{t \rightarrow 0}\|u_{s, t} - u_{\W_s}\|_{H^1(M)} =  0$ from the Vitali convergence theorem. Together, these give that $u_{\W_t} \rightarrow u_{\W_s}$ strongly in $H^1$ as $t \rightarrow s$, and it follows that $f_t^\phi$ is a continuous function of $t$. Combining with \eqref{e:tderevalue1} leads to
	\[
	\ei(\W_s) \leq \ei(\W_t) + (s - t) f_s^\phi + o(t-s).
	\]
	This completes the proof of (5). The final statement of the lemma is immediate from (1) and (5).
\end{proof}

We now turn to a finer analysis of how the eigenfunctions $u_{\W_t}$ vary under domain variation. We already saw in the proof of (5) that they vary continuously in $t$, in $H^1$ norm; however, we will need a much more careful estimate to handle the term $\nl$. Our first goal is a kind of $C^1_t L^2_x$ estimate, which we break into two parts.

\begin{lemma} \label{lem:differentiation}
	Let $\W$ be a minimizer of $\Ep$. Let $g_t$ be a smooth (in both $x$ and $t$, uniformly on $\W$), one-parameter family of metrics with $g_0  = g$, the original metric on $M$, and let $\Lap_{t}$ and $m_t$ respectively be the Laplace-Beltrami operator and volume measure for the metric $g_t$. Let $f_t$ be a $C^1$ function of $t$ that is constant in $x$  with $f_0 = \ei(\W)$. Assume that $u_t \in H^1_0(\W)$ is the unique nonnegative function with $\int u_t^2 dm_t = 1$ and
	\[
		- \Lap_{t} u_t = f_t u_t,
	\]
	and assume that no solutions $v\in H^1_0(\W)$ to
	\[
		-\Lap_{t} v = \l f_t v
	\]
	exist for any $\l \in (0, 1 + c)$ (for some $c > 0$) apart from scalar multiples of $u_t$. Then $u_t$ is continuously differentiable in $t$ for $|t|< c$ and any $x \in \W$, the derivative $v$ at $t = 0$ lies in $C(\bar{\W}) \cap H^1_0(\W)$, and
	\[
		\lim_{t \rightarrow 0} \frac{1}{t} \|u_t - u_0 - t v\|_{H^1_0(\W)} = 0.
	\]
\end{lemma}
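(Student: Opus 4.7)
The plan is to apply the implicit function theorem in Banach space. Define the map $F: (-c, c) \times H^1_0(\W) \times \R \to H^{-1}(\W) \times \R$ by
\[
F(t, u, \mu) = \Bigl(-\Lap_t u - \mu u,\; \int u^2 \, dm_t - 1\Bigr),
\]
where $-\Lap_t u$ is viewed as an element of $H^{-1}(\W)$ through the bilinear form $\varphi \mapsto \int g_t(\grad u, \grad \varphi)\, dm_t$. The smoothness of $g_t$ in $t$ ensures that $F$ is $C^1$, and by hypothesis $F(0, u_0, f_0) = 0$.

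The main step is to verify that the partial derivative
\[
D_{(u,\mu)} F(0, u_0, f_0)[v, \nu] = \Bigl(-\Lap v - f_0 v - \nu u_0,\; 2 \int u_0 v \, dm_0 \Bigr)
\]
is an isomorphism from $H^1_0(\W) \times \R$ onto $H^{-1}(\W) \times \R$. Given $(h, \alpha)$ in the target, pairing the first equation against $u_0$ in $L^2(dm_0)$ and using that $-\Lap u_0 = f_0 u_0$ uniquely determines $\nu$; with this choice, the right-hand side of the first equation is orthogonal to $u_0$, so the spectral gap hypothesis together with the Fredholm alternative makes $-\Lap - f_0$ invertible on $u_0^\perp$, giving the component of $v$ orthogonal to $u_0$, while the second equation fixes its $u_0$-component. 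The IFT then produces a $C^1$ curve $t \mapsto (\tilde u_t, \tilde \mu_t)$ in $H^1_0 \times \R$ through $(u_0, f_0)$ solving $F = 0$. Combined with the assumed uniqueness of the nonnegative first eigenfunction, this forces $\tilde u_t = u_t$ and $\tilde \mu_t = f_t$ for small $t$, yielding the Fr\'echet differentiability claim $\|u_t - u_0 - t v\|_{H^1_0} = o(t)$.

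Differentiating the eigenvalue equation at $t = 0$ shows that $v = \p_t u_t|_{t=0} \in H^1_0(\W)$ weakly satisfies $-\Lap v - f_0 v = F_0$, where $F_0 \in H^{-1}(\W)$ collects the contributions from $(\p_t \Lap_t)|_{t=0} u_0$ and $(\p_t f_t)|_{t=0} u_0$. Standard interior elliptic regularity makes $v$ smooth inside $\W$, and a second application of the implicit function theorem in $C^{k,\alpha}$ spaces on balls $B_r(x) \cc \W$ upgrades this to pointwise $C^1$ dependence of $u_t(x)$ on $t$ for every $x \in \W$. The remaining and most delicate claim is continuity of $v$ up to $\p \W$: the main obstacle is that $(\p_t \Lap_t)|_{t=0} u_0$ nominally involves $D^2 u_0$, which need not be bounded near $\p \W$. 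The remedy is to rewrite $(\p_t \Lap_t)|_{t=0} u_0$ in divergence form in local coordinates, so that $F_0 = \dvg(G) + h$ with $G, h \in L^\infty(\W)$ (using only the Lipschitz bound $|\grad u_0| \leq C$ from Corollary \ref{cor:lip}); since $\W$ is an NTA domain by Corollary \ref{c:nta}, boundary regularity for the Dirichlet problem for divergence-form equations with bounded data then yields $v \in C(\bar\W) \cap H^1_0(\W)$.
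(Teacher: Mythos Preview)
Your implicit function theorem approach is correct and takes a genuinely different route from the paper. The paper instead works directly with difference quotients: it writes $-\Lap_s(u_t - u_s) = f_s(u_t - u_s) + q_s^t$ with $\|q_s^t\|_{H^{-1}} \leq C|t-s|$, invokes the same Fredholm alternative on $u_s^\perp$ that you use to first obtain Lipschitz continuity $\|u_t - u_s\|_{H^1_0} \leq C|t-s|$, and then identifies the formal $t$-derivative $q'_s$ of $q_s^t$ and solves $-\Lap v_s^0 = f_s v_s^0 + q'_s$ on $u_s^\perp$ (plus an explicit $u_s$-component correction) to extract $v$. Your IFT packaging is cleaner for the differentiability statement; the paper's hands-on argument is more explicit and produces the intermediate Lipschitz bound as a byproduct.

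Two remarks. First, the identification $(\tilde u_t, \tilde\mu_t) = (u_t, f_t)$ deserves one more line: the IFT gives $\tilde\mu_t \to f_0$, and the spectral gap hypothesis says the only eigenvalue of $-\Lap_t$ in $(0,(1+c)f_t)$ is $f_t$, which forces $\tilde\mu_t = f_t$ for small $t$; then $\tilde u_t$ lies in the one-dimensional $f_t$-eigenspace and equals $u_t$ by normalization and sign. Second, the ``second application of the implicit function theorem in $C^{k,\alpha}$ spaces on balls $B_r(x)\subset\subset\W$'' is not quite right as stated, since the Dirichlet data on $\p B_r$ would itself depend on $t$ through the very regularity you are trying to establish. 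The cleaner route---and the one the paper takes---is simply to bootstrap the $H^1_0$-differentiability to $C^k_{\mathrm{loc}}$ by applying interior elliptic estimates to $u_t - u_s - (t-s)v_s$. Your boundary-continuity argument (rewriting $(\p_t \Lap_t)|_{t=0}u_0$ in divergence form so that the right-hand side is $\dvg G + h$ with $G,h$ bounded, then using De Giorgi--Nash--Moser regularity on the NTA domain) is essentially identical to the paper's, which observes that the inhomogeneity $q'_0$ depends only on $u_0$ and $\grad u_0$, both bounded by Corollary~\ref{cor:lip}.
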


Note that the existence of $u_t$ is part of the lemma's hypotheses; we do not claim such a $u_t$ exists.

\begin{proof}
	Observe that  $\|u_t\|_{H^1_0(\W)}$ is uniformly bounded in $t$, using $u_t$ as a test function for itself:
	\begin{equation} \label{e:diff1}
		\int |\grad u_t|_t^2 dm_t = f_t \int  u_t^2 dm_t \leq f_t.
	\end{equation}

	For a $q \in H^{-1}(\W)$ consider the problem of finding a $p \in H^1_0(\W)$ which solves
	$
		- \Lap_t p = f_t p + q,
	$
	with $\int p u_t = 0$. Such a $p$ exists if and only if  $q[u_t] = 0$; if it exists it is unique and
	\begin{equation}\label{e:fredholm}
		\|p\|_{H^1(\W)} \leq C \|q\|_{H^{-1}(\W)}.
	\end{equation} 
Indeed, the operator $-\Lap_t - f_t I : H^1_0(\W) \rightarrow H^{-1}(\W)$ is bounded and, by our assumptions, has a one-dimensional kernel spanned by $u_t$. From the Fredholm alternative and open mapping theorem (see \cite{Brezis}) that the range consists of all $q \in H^{-1}(\W)$  with $q[u_t]= 0$, and
	\[
		\int \Big|\grad \Big[p - u_t\int g_t(\grad u_t, \grad p)\Big] \Big|_{g_t}^2 dm_t \leq C \|q\|_{H^{-1}}^2.
	\]
	Then \eqref{e:fredholm} follows, as $\int  g_t(\grad u_t, \grad p) = \int f_t u_t p = 0$  from the definition of $u_t$.

	Insert $u_t$ into the equation for $u_s$, to get (in weak form with $\phi \in H^1_0(\W)$ a test function)
	\begin{align*}
		\int g_s(\grad u_t, \grad \phi)dm_s & = \int g_s(\grad u_t, \grad \phi)dm_t  + \int g_s(\grad u_t, \grad \phi)d (m_s - m_t) \\
		& = f_t \int u_t \phi dm_t + \int g_s(\grad u_t, \grad \phi) - g_t(\grad u_t, \grad \phi) dm_t + \int g_s(\grad u_t, \grad \phi)d (m_s - m_t)\\
		& = f_s \int u_t \phi dm_s + \int (f_t - f_s) u_t \phi + g_s(\grad u_t, \grad \phi) - g_t(\grad u_t, \grad \phi) dm_t \\
		&\qquad + \int - f_s u_t \phi + g_s(\grad u_t, \grad \phi)d (m_s - m_t)\\
		& = f_s \int u_t \phi dm_s + q_s^t[\phi],
	\end{align*}
	where $q_s^t \in H^{-1}(\W)$ has
	\[
		|q_s^t[\phi]| \leq \Big[\|f_t - f_s\|_{L^\infty} + C \sup_\W |g_t - g_s|  \Big] \|u_t\|_{L^2}\|\phi\|_{L^2} + \sup_\W |g_t - g_s| \|u_t\|_{H^1_0(\W)} \|\phi\|_{H^1_0(\W)},
	\]
	and in particular $\|q\|_{H^{-1}(\W)} \leq C |t - s|$. Here $|g_t - g_s|= |g_t-g_s|_{g}$ is with respect to the original metric. Subtracting $u_s$, we have (in weak form)
	\begin{equation}\label{e:tdiffpde}
		- \Lap_s (u_t - u_s) = f_s (u_t - u_s) + q_s^t.
	\end{equation}

	By applying \eqref{e:fredholm} to $u_t - u_s - u_s \int u_s(u_t - u_s) dm_s = u_t - a u_s$, we see that
	\[
		\|u_t - a u_s\|_{H^1_0} \leq C \|q_s^t\|_{H^{-1}} \leq C |t - s|.
	\]
	The constant $a$ need not be $1$, but we do know that as $u_t, u_s \geq 0$ by assumption, $a = \int u_t u_s dm_s \geq 0$. Using the Poincar\'e and triangle inequalities and the normalization on $u_s, u_t$,
	\begin{align*}
		|1 - a|  = \big|\|u_t\|_{L^2(dm_t)} -  \|a u_s\|_{L^2(dm_s)}\big| 
		&\leq \big|\|u_t\|_{L^2( dm_s)} - \|a u_s\|_{L^2(dm_s)}\big| + \big|\|u_t\|_{L^2(dm_s)} - \|u_t\|_{L^2(dm_t)} \big|\\
		&\leq \|u_t - a u_s\|_{L^2( dm_s)} + C |t - s| \leq C |t - s|.
	\end{align*}
	Therefore, we see that
	\begin{equation}\label{e:diff3}
		\|u_t - u_s\|_{H^1_0(\W)} \leq \|u_t - a u_s\|_{H^1_0(\W)} + |1 - a|\|u_s\|_{H^1_0(\W)} \leq C |t - s|.
	\end{equation}

	Set
	\[
q'_t[\phi] :=  - \int (\p_t f_t) u_t \phi - (\p_t g_t)(\grad u_t, \grad \phi) + m'_t\big[f_t u_t \phi - g_t(\grad u_t, \grad \phi)\big] dm_t,
	\]
	where $m'_t$ stands for the derivative of the volume form (i.e. $m_t(E) - m_s(E) = \int_s^t \int_E m'_r dm_r dr $).	Then $|q'_t[\phi]|\leq C\|\phi\|_{H^1_0} \|u_t\|_{H^1_0}$, so $\|q'_t\|_{H^{-1}} \leq C$. On the other hand, directly estimating each term using the assumptions on $f_t$ and $g_t$ leads to
	\begin{align*}
	 |q_s^t[w] - (s - t) q'_t[w]| &\leq C \Big[\sup_\W |f_t - f_s + (s - t) \p_t f_t| + \sup_\W |g_t - g_s + (s - t)g'_t| + \sup_\W |f_t - f_s|^2 + |g_t - g_s|^2\Big]\\
		&\qquad \cdot \|u_t\|_{H^1_0(\W)} \|\phi\|_{H^1_0(\W)}\\
		& \leq o(|t - s|) \|u_t\|_{H^1_0(\W)} \|\phi\|_{H^1_0(\W)},
	\end{align*}
	so $\|q_s^t - (s - t) q'_t\|_{H^{-1}(\W)} = o(|t - s|)$. Using \eqref{e:diff3}, $\|q'_t - q'_s\|_{H^{-1}(\W)} \rightarrow 0$ as $ t\rightarrow s$, so this may be rewritten as
	\[
		\|q_s^t - (s - t) q'_s\|_{H^{-1}(\W)} = o(|t - s|).
	\]
	
	We also claim that $q'_s[u_s] = 0$. Indeed, as $- \Lap_s u_t = f_s u_t + q_s^t$ is a nontrivial solution, from the Fredholm alternative we must have $q_s^t[u_s] = 0$. But then
	\[
		|t-s| |q'_t[u_s]| = o(|t - s|),
	\]
	so taking $t \rightarrow s$ implies $q'_s[u_s] = 0$.  Let $v_s^0 \in H^1_0(\W)$ be the unique solution to $-\Lap v_s^0 = f_s v_s^0 + q'_s$ with $\int u_s v_s^0 = 0$, using the Fredholm alternative from \eqref{e:fredholm}. Then together with \eqref{e:tdiffpde}, we have that
	\[
		\|u_t - a u_s - (s - t) v_s^0\|_{H^1_0} \leq C \|q_s^t - (s - t) q'_s\|_{H^{-1}(\W)}  = o(|t -s|),
	\]
	where $a = \int u_t u_s dm_s$ as before. Now set $v_s^1 = u_s \frac{1}{2}\int u_s^2 m'_s dm_s$, and estimate $a$ again:
	\begin{align*}
		\Big|\int u_t^2 dm_t - \int u_s^2 dm_s - (t - s) \int u_s^2 m'_s dm_s\Big| & \leq \Big|\int u_t^2 dm_t - \int u_s^2 dm_s  - (t - s) \int u_t^2 m'_s dm_s\Big| + C |t - s|^2\\
		& \leq C |t - s|^2,
	\end{align*}
	so taking square roots and recalling that $\int u_t^2 dm_t = 1$,
	\begin{align*}
	\Big|\|u_t\|_{L^2(dm_t)}& - \|u_t\|_{L^2(dm_s)} - (t - s)\frac{1}{2} \int u_s^2 m'_s dm_s\Big| \\
	& \leq  C |t - s|^2 + \frac{|2 - 1 - \|u_t\|_{L^2(dm_s)}|}{1 + \|u_t\|_{L^2(dm_s)}} |t - s|\int u_s^2 |m'_s| dm_s   \leq C |t - s|^2.
	\end{align*}
	This may be used to estimate $1 - a$ up to a correction:
	\begin{align*}
		\Big|1 - a - (t - s)\frac{1}{2} \int u_s^2 m'_s dm_s\Big| &\leq \big| \|u_t\|_{L^2(dm_s)} - a \|u_s\|_{L^2(dm_s)}\big| + C |t - s|^2 \\
		& \leq \big| \|u_t\|_{L^2(dm_s)} - \|a u_s - (s - t) v_s^0 \|_{L^2(dm_s)}\big| + C |t - s|^2 \\
		& \leq \|u_t - a u_s - (s - t) v_s^0\|_{H^1_0} + C |t -s|^2 = o(|t -s|).
	\end{align*}
	The second step used that $\int u_s v_s^0 dm_s = 0$. Thus,
	\begin{align*}
		\big\|u_t - u_s - (s - t) [v_s^0 + v_s^1]\big\|_{H^1_0} & = \|u_t - a u_s - (s - t) v_s^0\|_{H^1_0} + \big|1 - a - (t - s)\frac{1}{2} \int u_s^2 m'_s dm_s\big|\\
		& = o(|t-s|).
	\end{align*}
	Set $v_s = v_s^0 + v_s^1$. This establishes the main conclusion of the lemma. It remains to verify that $v_0 \in C(\bar{\W})$ and that $u_t$ is continuously differentiable on $\W$.

	These can be seen from the PDE for $v_s^0$. As $u_t$ is smooth on the interior of $\W$, as long as $\phi \in H^1_0(U)$ for $U\cc \W$, we have (integrating by parts as needed)  that $|q'_s[\phi]| \leq C(U) \|\phi \|_{L^1} $. In other words, $q'_s$ (and similarly $q_s^t$) may be represented by a bounded function. Applying elliptic regularity estimates to \eqref{e:tdiffpde} gives that for $U' \cc U$,
	\[
		\|u_t - u_s - (s - t)v_s\|_{C^{1, \a}(U')} \leq C(U, U') \big[\|u_t - u_s - (s - t)v_s\|_{H^1(U)} + \|q_s^t - (s - t) q'_s\|_{L^\infty(U)} \big] = o(t - s).
	\]
	In particular, this implies that $u_t(x), \grad u_t$ are continuously differentiable in $t$, locally uniformly on $\W$.
	
	As for checking that $v_0 \in C(\bar{\W})$, note that $q'_0$ only depends on $u_0, \grad u_0$, which are bounded uniformly on $\W$ (as $u_0 = u_\W$, and using Corollary \ref{cor:lip}). Thus from elliptic estimates on NTA domains (see e.g. \cite[Theorem 1.2.8]{Kenig}), we have that for some $\a > 0$,
	\[
		\|v_0^0\|_{C^{0, \a}(\bar{\W})} \leq C \left[ \|v_0^0\|_{H^1_0(\W)} + \|q'_0\|_{L^\infty(\W)}\right] \leq C.
	\]
	On the other hand, $v_0^1$ is a bounded multiple of $u_0$, and therefore Lipschitz continuous.
\end{proof}

\begin{lemma}\label{l:tderpde}
	Let $\W$ be a minimizer of $\Ep$, $\phi_t : M \rightarrow M$ a one-parameter family of smooth diffeomorphisms with $\phi_0(x) = x$, $\p_t \phi_t|_{t = 0} = T$ a vector field on $M$, and $\W_t = \phi_t(\W)$. Then $u_{\W_t}: (-c, c)\rightarrow H^1(\W)$ is differentiable in $t$ at $t = 0$ with derivative $\dot{u}_\W = \dot{u}^T_\W \in L^\infty(M)$ vanishing outside $\W$, in the following sense:
	\[
		\lim_{t \rightarrow 0} \frac{1}{t} \left\|u_{\W_t} - u_\W - t\, \dot{u}_\W\right\|_{L^2(M)} = 0.
	\]
	The function $\dot{u}_\W$ satisfies
	\begin{equation}\label{eqn: udot eqn}
		\begin{cases}
			- \Lap \dot{u}_\W = \ei(\W) \dot{u}_\W - u_\W \int_{\p^* \W } |\grad u_\W|^2 g(T, \nu) d\cH^{n-1}  & \text{ on } \W \\
			\dot{u}_\W = - g(\grad u_\W, T) & \text{ on } \p^* \W\\
			\int_\W \dot{u}_\W u_\W = 0  .& \\
		\end{cases}
	\end{equation}
	The boundary condition holds in the sense of nontangential limits at a.e. point on $\p^* \W$.
\end{lemma}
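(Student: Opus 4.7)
The strategy is to pull back to the fixed domain $\W$ and invoke Lemma~\ref{lem:differentiation}. I would set $\tilde u_t = u_{\W_t} \circ \phi_t \in H^1_0(\W)$ and $g_t = \phi_t^* g$; by change of variables, $\tilde u_t$ is the unique nonnegative $L^2(dm_{g_t})$-normalized first eigenfunction of $-\Lap_{g_t}$ on $\W$ with eigenvalue $f_t := \ei(\W_t)$. The hypotheses of Lemma~\ref{lem:differentiation} are verified via smoothness of $\phi_t$ in $(x,t)$, the $C^1$ dependence of $\ei(\W_t)$ on $t$ from Lemma~\ref{l:tderevalue}(5), and the spectral gap from Lemma~\ref{l:tderevalue}(4). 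The lemma then produces $\tilde v := \p_t \tilde u_t|_{t=0} \in C(\bar{\W}) \cap H^1_0(\W)$ with $\|\tilde u_t - u_\W - t\,\tilde v\|_{H^1_0(\W)} = o(t)$.

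I would next define $\dot u_\W(y) = \tilde v(y) - g(\grad u_\W(y), T(y))$ for $y \in \W$, extended by zero outside, with the $L^\infty$ bound coming from $\tilde v \in C(\bar{\W})$ and the Lipschitz estimate of Corollary~\ref{cor:lip}. To upgrade to $L^2(M)$-differentiability of $u_{\W_t}$, I split $M$ into $\W \cap \W_t$ and the symmetric difference $\W \triangle \W_t$. On $\W \triangle \W_t$ (which has measure $O(t)$), each of $u_\W$, $u_{\W_t}$, and $t\dot u_\W$ is $O(t)$ pointwise --- by Lipschitz decay at the respective boundaries and the $L^\infty$ bound on $\dot u_\W$ --- so the $L^2$ contribution is $O(t^{3/2}) = o(t)$. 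On $\W \cap \W_t$ I write
\[
u_{\W_t}(y) - u_\W(y) = \bigl[(\tilde u_t - u_\W)\circ \phi_t^{-1}\bigr](y) + \bigl[u_\W(\phi_t^{-1}(y)) - u_\W(y)\bigr];
\]
the first bracket equals $t\,\tilde v \circ \phi_t^{-1} + o(t)$ in $L^2$ (using that the pull-back is uniformly bounded on $L^2$ and that $\tilde v \circ \phi_t^{-1} \to \tilde v$ in $L^2$ by dominated convergence), while the second equals $-t\, g(\grad u_\W, T) + o(t)$ in $L^2$ via Rademacher's theorem combined with dominated convergence, leveraging $|\grad u_\W| \in L^\infty(\W)$.

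For the interior PDE, I test the weak eigenvalue equation $-\int u_{\W_t} \Lap \phi = \ei(\W_t) \int u_{\W_t} \phi$ against $\phi \in C^\infty_c(\W)$: for small $|t|$ we also have $\supp \phi \cc \W_t$, so differentiating using the $L^2$-expansion of $u_{\W_t}$ and the formula for $\p_t \ei(\W_t)|_{t=0}$ from Lemma~\ref{l:tderevalue}(1) yields \eqref{eqn: udot eqn} in the weak sense on $\W$. The orthogonality $\int_\W \dot u_\W u_\W = 0$ comes from differentiating the normalization $\int u_{\W_t}^2 = 1$. The boundary condition is read off from the defining decomposition of $\dot u_\W$: since $\tilde v \in C(\bar{\W})$ with $\tilde v = 0$ on $\p\W$, and since $\grad u_\W$ has nontangential limit $-|\grad u_\W|\nu$ at a.e. point of $\p^*\W$ by Corollary~\ref{c:ntlimits}, $\dot u_\W$ acquires the claimed nontangential limit $-g(\grad u_\W, T)$.

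The main technical obstacle is the transfer step: $H^1$-differentiability on the fixed pull-back domain must be converted to $L^2(M)$-differentiability across the moving collar $\W \triangle \W_t$, on which neither $u_\W$ nor $u_{\W_t}$ has a canonical extension to the other's domain. This is absorbed using the interior Lipschitz estimates of Corollary~\ref{cor:lip} together with the linear bound $|\W \triangle \W_t| = O(t)$ implied by Lemma~\ref{l:tderevalue}(3).
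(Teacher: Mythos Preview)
Your overall strategy coincides with the paper's: pull back via $\phi_t$, apply Lemma~\ref{lem:differentiation} to obtain $\tilde v \in C(\bar\W)\cap H^1_0(\W)$, and set $\dot u_\W = \tilde v - g(\grad u_\W, T)$. The derivation of the PDE, the orthogonality, and the boundary condition are all handled correctly and in the same spirit as the paper (your weak-form derivation of the interior equation is a clean alternative to the paper's pointwise differentiation).

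There is, however, a genuine gap in your transfer step on the collar $\W_t\setminus\W$. You assert that $u_{\W_t}$ is $O(t)$ pointwise there ``by Lipschitz decay at the respective boundaries,'' but Corollary~\ref{cor:lip} gives a Lipschitz bound only for the minimizer $\W$; the deformed sets $\W_t$ are not outward minimizers, so no uniform Lipschitz estimate on $u_{\W_t}$ is available at this point in the argument. The paper avoids this by never invoking pointwise bounds on $u_{\W_t}$: it writes $u_{\W_t}(x)-\tilde u_t(x)=-\int_0^t g(\grad u_{\W_t}(\phi_s(x)),\p_s\phi_s)\,ds$ as an $H^1$ identity on all of $M$, and controls $\grad u_{\W_t}\circ\phi_s$ in $L^2$ using the bound $\|u_{\W_t}-u_\W\circ\phi_t^{-1}\|_{H^1_0(\W_t)}\leq C|t|$ inherited from Lemma~\ref{lem:differentiation}.

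Your decomposition can be salvaged without pointwise control. On $\W_t\setminus\W$ you only need $\|u_{\W_t}\|_{L^2(\W_t\setminus\W)}=o(t)$. Changing variables, this equals (up to a bounded Jacobian) $\|\tilde u_t\|_{L^2(E)}$ with $E=\W\setminus\phi_t^{-1}(\W)$; here $|E|=O(t)$ and every point of $E$ lies within $O(t)$ of $\p\W$. Now split $\tilde u_t = u_\W + t\tilde v + (\tilde u_t-u_\W-t\tilde v)$: the first term is $O(t)$ pointwise on $E$ by the Lipschitz bound on $u_\W$, the second is $O(t)$ by boundedness of $\tilde v$, and the third is $o(t)$ in $L^2(\W)$ from Lemma~\ref{lem:differentiation}. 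This gives $\|\tilde u_t\|_{L^2(E)}\leq O(t)|E|^{1/2}+o(t)=o(t)$, completing your argument.
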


We will write $\dot{u}_\W^T$ to denote the derivative found in Lemma~\ref{l:tderpde} when we wish to emphasize its dependence on the vector field $T$, and will simply write $\dot{u}_\W$ otherwise in order to alleviate notation. We will discuss the specifics of the PDE satisfied by $\dot{u}_\W$ below, but note for now that from the absolute continuity of harmonic and Hausdorff measure from Proposition~\ref{l:green}, the boundary condition is given on a sufficiently large portion of $\p \W$ (i.e. there is a unique solution to this problem; see \cite[Theorem 1.4.4]{Kenig}).

\begin{proof}
	Set $u_t = u_{\W_t}\circ \phi_t : \W \rightarrow [0, \infty)$ to be the pullback of $u_{\W_t}$. Then $u_t$ satisfies a PDE on $\W$; namely,
	$
		- \Lap_t u_t = f_t u_t,
	$
	where $\Lap_t$ is the Laplace-Beltrami operator associated with the metric $g_t = \phi_t^* g$ and $f_t = \ei(\W_t)$. From Lemma \ref{l:tderevalue}, part (5), $f_t$ is a $C^1$ function. Moreover, $u_t$ is the unique nonnegative solution to this equation in $H^1_0(\W)$ with $\|u_t\|_{L^2(\W, dm_t)} = 1$, while the same equation with $f_t$ replaced by $\l f_t$, $\l \in [0, 1 + c]$ admits no nontrivial solutions (this follows from Lemma \ref{l:tderevalue}, part (4)). Apply Lemma \ref{lem:differentiation} to this to learn that $u_t$ is continuously differentiable in $t$ and to obtain a function $v \in H^1_0(\W) \cap C(\bar{\W})$ with
	\[
		\lim_{t \rightarrow 0} \frac{1}{t}\|u_t - u_\W - t v \|_{H^1_0(\W)} = 0.
	\]
	In particular, we have that
	\begin{equation}\label{e:tdirpde1}
		\|u_{\W_t} - u_\W \circ \phi_t^{-1}\|_{H^1_0(\W_t)} \leq C \|u_t - u_\W\|_{H^1_0(\W)} \leq C|t|.
	\end{equation}

	Our goal now is to estimate $u_{\W_t} - u_t$. Consider, with $t$ \emph{fixed}, the function $\psi \circ \phi_s(x) : M \times [0, t] \rightarrow [0, \infty)$ for any $\psi \in H^1(M)$. This function lies in $H^1(M \times [0, t])$, with distributional derivative $d_{(x, s)} (\psi \circ \phi_s)(v, 1) = (d_{\phi_s(x)}\psi( d_x\phi_s(x) v), d_{\phi_s(x)}\psi(\p_s \phi_s(x))$; this may be verified by approximating by smooth functions. Plugging in $u_{\W_t}$ for $\psi$ leads to the identity
	\[
		u_{\W_t}(x) - u_{t}(x) = - \int_0^t \p_s [u_{\W_t} \circ \phi_s](x) ds = - \int_0^t g(\grad u_{\W_t}(\phi_s(x)), \p_s \phi_s(x)) ds,
	\]
	which is valid for almost every $x \in M$. Now, for $|s|\leq |t|$,
	\begin{align*}
		\|\grad (u_{\W_t}\circ \phi_s) - \grad u_\W\|_{L^2(M)} & \leq \|\grad (u_{\W_t}\circ \phi_s) - \grad (u_\W \circ \phi_t^{-1} \circ \phi_s)\|_{L^2(M)} + \|\grad u_{\W} - \grad (u_\W \circ \phi_t^{-1} \circ \phi_s)\|_{L^2(M)} \\
		& \leq C|t| + o_t(1).
	\end{align*}
	We used \eqref{e:tdirpde1} to bound the first term (after changing variables), while the second goes to $0$ from the dominated convergence theorem ($\grad u_\W$ is bounded while $\phi_t^{-1}\circ \phi_s(x) \rightarrow x$ pointwise). We also have that
	\[
		|\p_s \phi_s(x) - T|\leq C |s| \leq C |t|
	\]
	as $\phi_s$ is smooth. Applying both of these,
	\begin{align*}
		\frac{1}{|t|}\| u_{\W_t} - u_t + t g(\grad u_\W, T)\|_{L^2(M)} &\leq \frac{1}{|t|} \int_0^t \left\|g(\grad u_{\W_t}(\phi_s(\cdot)), \p_s \phi_s(\cdot)) - g(\grad_\W , T) \right\|_{L^2(M)}ds\\
		& \leq C|t| + o_t(1)  = o_t(1).
	\end{align*}

	Now, set $\dot{u}_\W = v - g(\grad u_\W, T)$ (extending it by $0$ outside of $\W$). We have shown that
	\[
		\frac{1}{|t|}\|u_{\W_t} - u_\W - t \dot{u}_\W \|_{L^2(M)} \leq \frac{1}{|t|}\big[\|u_t - u_\W - t v \|_{L^2(\W)} + \|u_{\W_t} - u_t + t g(\grad u_\W, T) \|_{L^2(M)}\big] \rightarrow 0.
	\]
	As both $v$ and $\grad u_\W$ are bounded, so is $\dot{u}_\W$. It remains to verify that $\dot{u}_\W$ solves the stated PDE.

	First, recall that from Lemma \ref{lem:differentiation} we know $u_t$ (and its spatial derivatives) are continuously differentiable in $t$ on $\W$ for small $|t|$. It follows that $u_{\W_t}$ is continuously differentiable in $t$ on $\W_t$ as well, and $\p_t u_{\W_t}|_{t = 0} = \dot{u}_\W$ on $\W$. Therefore, we may differentiate the PDE
	$
		-\Lap u_{\W_t} = \ei(\W_t) u_{\W_t}
	$
	pointwise on $\W$ at $t = 0$ to give
	\[
		-\Lap \dot{u}_\W = \ei(\W) \dot{u}_\W + (\p_t \ei(\W_t) |_{t = 0}) u_\W = \ei(\W) \dot{u}_\W - u_\W \int_{\p^* \W} |\grad u_\W|^2 g(T, \nu) d\cH^{n-1}
	\]
	on $\W$, where in the second equality we used the final statement of Lemma \ref{l:tderevalue}.
	The orthogonality condition may be inferred from the normalization of $u_{\W_t}$: indeed,
	\[
		1 = \int u_{\W_t}^2 = \int (u_\W + t \dot{u}_\W)^2 + o(t) = 1 + 2 t \int u_\W\dot{u}_\W + o(t),
	\]
	Rearranging and sending $t$ to $0$ gives $\int u_\W \dot{u}_\W = 0$. 
		Finally, we have that $- g(\grad u_\W, T)$ has nontangential limits a.e. on $\p \W$ from Corollary \ref{c:ntlimits}, while $v \rightarrow 0$ on $\p \W$ uniformly; this gives the boundary condition.
\end{proof}

We are now in a position to prove the main result of this subsection, which is a ``distributional'' Euler-Lagrange equation for $\W$.

\begin{lemma}[Distributional Euler-Lagrange equation]\label{l:elvf}
	Let $\W$ be a minimizer of $\Ep$. Fix  $B_r(x_0) \ss \Qb_R$ and let $L:=|B_r(x_0)|$. Let $T$ be a smooth vector field on $M$ with $|T|\leq 1$ that is compactly supported on $B_r(x_0)$.
	
	If $T$ is volume preserving to first order in the sense that $\int_{\p^* \W} g(T, \nu_x) d\cH^{n-1} = 0$,  then
	\[
		\bigg| - \int_{\p^* \W} |\n u_\W|^2 g(T, \nu) + \frac{\tpar}{2}|\n w_\W|^2 g(T, \nu_x) d\cH^{n-1} + \err \Big[\int_\W \dot{u}_\W a_\W  + \int_{\p^*\W} b_\W g(T, \nu) d\cH^{n-1} \Big]  \bigg| \leq C L,
	\]
	where $a_\W$, $b_\W$ are the functions from property \ref{a:nlc1} of $\nl$ and $\dot{u}_\W = \dot{u}_\W^T$ is as in Lemma~\ref{l:tderpde}.
	
	If $|\W| \neq v$, $L < ||\W| - v|$, and we do not assume $\int_{\p^* \W} g(T, \nu_x) d\cH^{n-1} = 0$, we instead have
	\begin{align*}
			\bigg| - \int_{\p^* \W}& |\n u_\W|^2 g(T, \nu) + \frac{\tpar}{2}|\n w_\W|^2 g(T, \nu) d\cH^{n-1} \\
			&+ \fv'(|\W|)\int_{\p^* \W} g(T, \nu) d\cH^{n-1} + \err \Big[\int_\W \dot{u}_\W a_\W  + \int_{\p^*\W} b_\W g(T, \nu) d\cH^{n-1} \Big]  \bigg| \leq C L.
	\end{align*}
\end{lemma}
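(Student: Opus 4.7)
My plan is to realize $\W_t := \phi_t(\W)$ as competitors in the minimality inequality $\Ep(\W_t) \geq \Ep(\W)$, where $\phi_t$ denotes the flow of $T$, and then differentiate at $t = 0$. Since $T$ is supported in $B_r(x_0) \ss \Qb_R$ with $|T| \leq 1$, one has $\W_t \ss \Qb_R$ and $||\W_t| - |\W|| \leq L$ for all sufficiently small $|t|$, so $\W_t \in \compset$ by Lemma~\ref{l:volumenottoobig}; Lemma~\ref{l:densitybd} provides $\cH^{n-1}(\p \W) < \infty$, and the $O(|t|)$ continuity of the terms in $\En$ (established below) yields $\En(\W_t) \leq \minE + \err_0$ for small $|t|$, so property~\ref{a:nlc1} is applicable to the family $\W_t$.

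The next step is to expand $\Ep(\W_t) - \Ep(\W)$ to first order in $t$. Abbreviating
\[
	A := \int_{\p^*\W} |\grad u_\W|^2 g(T,\nu)\,d\cH^{n-1}, \quad B := \int_{\p^*\W} |\grad w_\W|^2 g(T,\nu)\,d\cH^{n-1}, \quad V := \int_{\p^*\W} g(T,\nu)\,d\cH^{n-1},
\]
Lemma~\ref{l:tderevalue} supplies $\ei(\W_t) - \ei(\W) = -tA + o(t)$, $\tor(\W_t) - \tor(\W) \leq -\tfrac{t}{2}B + o(|t|)$, and $|\W_t| - |\W| = tV + o(t)$. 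The torsion estimate is only one-sided, but this suffices because $\tpar > 0$ and I will only use it to bound $\Ep(\W_t) - \Ep(\W)$ from above. For the nonlinear piece, property~\ref{a:nlc1} reduces matters to expanding $\int (u_{\W_t} - u_\W) a_\W$ and $\int_{\W_t} b_\W - \int_\W b_\W$, modulo an additive error $\err C_\nl r^n |t|$. The first equals $t \int_\W \dot u_\W a_\W + o(|t|)$ by Lemma~\ref{l:tderpde}, using $\|a_\W\|_\infty \leq 1$ on the bounded container to pass from $L^2$-differentiability to an $L^1$-expansion. For the second, I would change variables $y = \phi_t(x)$ to obtain
\[
	\int_{\W_t} b_\W\,dm = \int_\W b_\W(\phi_t(x))\,|\det d\phi_t|\,dm(x) = \int_\W b_\W\,dm + t \int_\W \dvg(b_\W T)\,dm + O(t^2),
\]
using the $C^{1,1}(\Qb_R)$ bound on $b_\W$, and then invoke the Federer divergence theorem (valid since $\W$ has finite perimeter) to rewrite $\int_\W \dvg(b_\W T)\,dm = \int_{\p^*\W} b_\W g(T,\nu)\,d\cH^{n-1}$.

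Setting $\mathcal{I} := -A - \tfrac{\tpar}{2} B + D + \err\bigl[\int_\W \dot u_\W a_\W + \int_{\p^*\W} b_\W g(T,\nu)\,d\cH^{n-1}\bigr]$, where $D$ records the first-order contribution of $\fv(|\W_t|) - \fv(|\W|)$, the pieces assemble to
\[
	0 \leq \Ep(\W_t) - \Ep(\W) \leq t\, \mathcal{I} + O(\err r^n|t|) + o(|t|).
\]
Dividing by $t$ and sending $t \to 0^+$, then $t \to 0^-$ (the sign of $t$ flipping the inequality in the second pass), produces the two-sided bound $|\mathcal{I}| \leq C \err r^n \leq CL$, which is the claim. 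It remains to identify $D$ case-by-case: in Case 1, the hypothesis $V = 0$ forces $|\W_t| - |\W| = o(t)$, and the Lipschitz bound $|\fv(|\W_t|) - \fv(|\W|)| \leq \vpar^{-1}||\W_t|-|\W||$ then yields $D = 0$; in Case 2, the assumption $L < ||\W| - v|$ confines $|\W_t|$ to an interval on which $\fv$ is affine with slope $\fv'(|\W|)$, giving $D = \fv'(|\W|) V$. The principal bookkeeping subtlety is precisely the corner of $\fv$ at $|\W| = v$ — sidestepping it is the reason the lemma explicitly excludes the ambiguous configuration where $|\W| = v$ yet $V \neq 0$.
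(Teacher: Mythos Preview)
Your proposal is correct and follows essentially the same approach as the paper: generate competitors via the flow of $T$, expand each piece of $\Ep(\W_t)-\Ep(\W)$ to first order using Lemma~\ref{l:tderevalue}, Lemma~\ref{l:tderpde}, and property~\ref{a:nlc1}, then divide by $t$ and send $t\to 0^\pm$. Your handling of the $b_\W$ term---expanding $b_\W(\phi_t(x))|\det d\phi_t|$ directly into $b_\W + t\,\dvg(b_\W T)$ before applying the divergence theorem---is in fact slightly cleaner than the paper's version, which first isolates $\int_\W b_\W\,\dvg T$ and then discards the residual $\int_\W g(\grad b_\W,T)$ as an $O(tr^n)$ error.
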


The term containing $\dot{u}_\W$ may first appear to be lower-order since it is  integrated on $\W$ rather than the boundary $\p^* \W$. This is not the case: due to how $\dot{u}_\W=\dot{u}_\W^T$ depends on $T$, it will end up being of the same order as the others.

\begin{proof}[Proof of Lemma~\ref{l:elvf}]
	Let $\phi_t(x)$ be the flow associated with $T$: $\p_t \phi_t(x) = T(\phi_t(x))$; then $\phi_t$ is a smooth family of diffeomorphisms for all $|t|\leq c$ small, with $\phi_t(x) = x$ outside of $B_r(x_0)$. Setting $\W_t = \phi_t(\W)$, apply Lemma \ref{l:tderevalue} to $\W$ and $\phi_t$: this implies that $\l_2(\W_t) > \ei(\W_t) + c$ and
	\[
		\En(\W_t) \leq \En(\W) - t \int_{\p^* \W} |\n u_\W|^2 g(T, \nu) + \frac{\tpar}{2}|\n w_\W|^2 g(T, \nu) d\cH^{n-1} + o(t).
	\]

	Let us now consider $\nl(\W_t)$: using property \ref{a:nlc1}, we have that
	\[
		\nl(\W_t) - \nl(\W) = \int (u_{\W_t} - u_\W) a_\W + \int_{\W_t} b_\W - \int_\W b_\W + o(t) + O(t r^n).
	\]
	The second term here may be estimated as in Lemma \ref{l:tderevalue} using \eqref{e:tderevalue2} to give
	\begin{align*}
		\int_{\W_t} b_\W - \int_\W b_\W = t \int_{\W} b_\W \dvg T + o(t)
		& = t \int_{\p^* \W} b_\W g(T, \nu) d\cH^{n-1} - t \int_\W g(\grad b_\W, T) + o(t)\\
		& = t \int_{\p^* \W} b_\W g(T, \nu) d\cH^{n-1} + O(t r^n) + o(t),
	\end{align*}
	using that $\|b_\W\|_{C^{0, 1}} \leq C$ by assumption. For the first term, we see that $	\|u_{\W_t} - u_\W - t\dot{u}_\W\|_{L^2} = o(t)$ from  Lemma \ref{l:tderpde}, and so
	\[
		\int (u_{\W_t} - u_\W) a_\W = t \int\dot{u}_\W\, a_\W + o(t).
	\]
	Putting everything together and using that $\Ep(\W) \leq \Ep(\W_t)$,
	\[
		0 \leq - t \int_{\p^* \W} |\n u_\W|^2 g(T, \nu) + \frac{\tpar}{2}|\n w_\W|^2 g(T, \nu) d\cH^{n-1} + \err\Big[t \int_{\p^* \W} b_\W g(T, \nu) d\cH^{n-1} + t \int \dot{u}_\W\, a_\W\Big] + o(t) + O(tr^n).
	\]
	The conclusion follows from dividing by $t$ and sending $t\rightarrow 0$.
	
	If $|\W| \neq v$ and $||\W| - |\W_t|| < ||\W| - v|$ (which always holds if $t$ is small enough) then $\fv$ is linear and we may estimate
	\[
		\fv(|\W_t|) = \fv(|\W|) + \fv'(|\W|)[|\W_t| - |\W|] = \fv(|\W|) + t \fv'(|\W|) \int_{\p^* \W} g(T, \nu) d\cH^{n-1} + o(t).
	\]
	We then proceed as previously.
\end{proof}

\subsection{The first variation at a point}\label{ss: pointwise EL}

Next, we wish to derive a pointwise form of the distributional Euler-Lagrange equation of Lemma~\ref{l:elvf} by sending $T$ to $\d_x \nu_x - \d_y \nu_y$ at points $x, y \in \p^* \W$. Here and in the sequel, we let $\nu_x = \nu_\W(x)$ be the outer unit normal of $\W$ at $x$. Passing to this limit is actually quite delicate, with the main challenge arising from  making sure that the functions $\dot{u}_\W=\dot{u}_\W^T$ behave well under such an approximation procedure. Heuristically speaking, recalling the equation \eqref{eqn: udot eqn} satisfied by $\dot{u}_\W^T$, the limit of the $\dot{u}_\W^T$ should solve an equation with a Dirac delta as boundary data. On the other hand, the boundary data in \eqref{eqn: udot eqn} is achieved only $\cH^{n-1}$-a.e. and in a nontangential limit sense. To rigorously derive this limit, we will use the PDE \eqref{eqn: udot eqn} solved by $\dot{u}_\W$ by writing $\dot{u}_\W$ in terms of the Green's function and Poisson kernel. 

It will be convenient to decouple the Poisson kernel part and the Green's function part, or in other words, the harmonic part and its remainder part, of $\dot{u}_\W$ in the following way. For any smooth vector field as in Lemma~\ref{l:elvf}, let the harmonic part $h^T$  of $\dot{u}_\W^T$ be given by
\[
	h^T(x) = \int_{\p \W} \dot{u}_\W^T\, d\w_x,
\]
where $\w_x$ is harmonic measure; i.e. it solves the PDE
\[
	\begin{cases}
		-\Lap h^T = 0 & \text{ on } \W\\
		h^T =\dot{u}_\W^T = - g(\grad u_\W, T) & \text{ on } \p \W
	\end{cases}
\]
with the boundary condition in the sense of nontangential limits at a.e. point. Then, let $q^T = \dot{u}^T_\W - h^T$ be the remainder part. Using the equation for $\dot{u}_\W^T$ from Lemma~\ref{l:tderpde}, we see that $q^{T}$ solves the PDE
	\begin{equation}\label{eqn: PDE for q}
		\begin{cases}
		- \Lap q^T = \ei(\W) (q^T + h^T) - u_\W\int_{\p^* \W} |\grad u_\W|^2 g(T, \nu) d\cH^{n-1}   & \text{ on } \W \\
		q^T = 0 & \text{ on } \p \W\\
		\int (q^T + h^T) u_\W = 0. &
		\end{cases}
	\end{equation}

We construct our approximating vector fields in the following way.
Given a point $x \in \p^* \W$ and scale $r$ sufficiently small, let $T_{x, r}$ be a vector field chosen so that $|T_{x, r}|\leq 1$, $\supp T_{x, r} \ss B_{c(n)r}(x)$, and $\int_{\p^* \W } g(T_{x, r}, \nu) d\cH^{n-1} = \w_{n-1}r^{n-1}$ where $\w_{n-1}$ is the volume of the unit ball in $\R^{n-1}$; this may always be found by starting with the vector field $\nu_x \kappa_r$ in normal coordinates near $x$, with $\kappa_r$ a smooth cutoff function approximating $1_{B_r(x)}$, and then multiplying by a constant.

Below, recall the notation
\[
	K(x, y) = \frac{d\w_y}{d\cH^{n-1}\mres \p \W}(x)
\]
for the Poisson kernel, i.e. the Radon-Nikodym derivative of harmonic and surface measures, from \eqref{e:poissonkerneldef}.

First, in the following lemma we will show that the harmonic parts $h^{T_{x,r}}$, after suitable renormalization, converge to a multiple of the Poission kernel as $r \to 0$.  The main tool in the proof is the sharp estimates for the harmonic measure established in Proposition~\ref{l:green}.
\begin{lemma}\label{l:hconv} For every $x \in \p^* \W$, if $\supp T \ss B_r(x)$  then the function $h^T$ satisfies
	\[
		|h^T(y)| \leq \frac{C}{ d^{n-1}(y, x)} \int_{\p^* \W} |T| d \cH^{n-1} 
	\]
	whenever  $d(x, y) \geq C r$.

	For $\cH^{n-1}$-almost every $x \in \p^* \W$, if $h^x(y) = |\grad u_\W(x)| K(x, y)$ is a multiple of the Poisson kernel, then $h^x$ is well-defined, satisfies
	\[
		|h^x(y)| \leq \frac{C}{ d^{n-1}(y, x)},
	\]
	and
	\begin{equation} \label{e:hconv}
		\lim_{r \searrow 0} \sup_{y \in \W \sm B_{C r}(x)} \left|h^x(y) - \frac{h^{T_{x, r}}(y)}{\w_{n-1}r^{n-1}}\right| d^{n-1}(y, x) = 0.
	\end{equation}
\end{lemma}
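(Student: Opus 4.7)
The plan is to represent $h^T$ via the Poisson kernel and use the sharp estimates of Proposition \ref{l:green}. Since $h^T$ is harmonic in $\W$ with nontangential boundary data $-g(\grad u_\W, T)$ on $\p^*\W$ (Lemma \ref{l:tderpde}), and since Corollary \ref{c:ntlimits} gives $\grad u_\W(z) = -|\grad u_\W(z)|\nu_z$ in the nontangential sense at $\cH^{n-1}$-a.e.\ $z \in \p^*\W$, formula \eqref{e:poissonkerneldef} will yield
\[
h^T(y) = \int_{\p^*\W} |\grad u_\W(z)|\, g(\nu_z, T(z))\, K(z, y)\, d\cH^{n-1}(z).
\]
For the first inequality, take $\supp T \ss B_r(x)$ and $d(x,y) \geq Cr$ with $C$ sufficiently large, so that $d(z,y) \geq \tfrac{1}{2} d(x,y)$ for every $z \in \supp T$; combining $|\grad u_\W|\leq C$ (Corollary \ref{cor:lip}) with Proposition \ref{l:green} and the bound $d(y,\p\W)\leq d(x,y)$ gives $K(z,y) \leq C/d^{n-1}(x,y)$, and the stated bound follows. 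For $h^x$, at $\cH^{n-1}$-a.e.\ $x \in \p^*\W$ the nontangential limit $|\grad u_\W(x)| \in [c,C]$ exists by Corollary \ref{c:reducedgmt}(3), so $h^x(y) = |\grad u_\W(x)| K(x,y)$ is well defined and its pointwise bound is immediate from Proposition \ref{l:green}.

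For the limit \eqref{e:hconv}, I would restrict attention to the full-$\cH^{n-1}$-measure set of $x \in \p^*\W$ that are Lebesgue points of $|\grad u_\W|$ (viewed as an $\cH^{n-1}$-measurable function on $\p^*\W$; the Lebesgue differentiation theorem applies since $\cH^{n-1}\mres\p\W$ is Ahlfors regular by Lemma \ref{l:densitybd}) and that satisfy the approximate-tangent-plane conclusion of Corollary \ref{c:reducedgmt}(1). Using the normalization $\int_{\p^*\W} g(\nu_z, T_{x,r})\, d\cH^{n-1} = \w_{n-1} r^{n-1}$ built into the construction of $T_{x,r}$, the difference becomes
\[
\frac{h^{T_{x,r}}(y)}{\w_{n-1}r^{n-1}} - h^x(y) = \frac{1}{\w_{n-1}r^{n-1}} \int_{\p^*\W \cap B_{c_0 r}(x)} \bigl[|\grad u_\W(z)| K(z,y) - |\grad u_\W(x)| K(x,y)\bigr] g(\nu_z, T_{x,r})\, d\cH^{n-1}.
\]
I would split the bracket as $(|\grad u_\W(z)|-|\grad u_\W(x)|) K(z,y) + |\grad u_\W(x)|(K(z,y)-K(x,y))$. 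The first term, bounded using $K(z,y)\leq C/d^{n-1}(x,y)$ and Lebesgue differentiation (whose null set is independent of $y$), contributes $o_r(1)/d^{n-1}(x,y)$. The second I would control via boundary H\"older regularity of positive harmonic functions in NTA domains (a Jerison--Kenig-type estimate applied to $K(\cdot,y)/K(\cdot,y_0)$ for a fixed interior reference pole $y_0$), combined with the pointwise comparisons of Proposition \ref{l:green}.

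The hard part will be the regime $d(x,y) \sim r$: the standard Jerison--Kenig H\"older bound $|K(z,y)/K(x,y) - 1| \lesssim (r/d(x,y))^\alpha$ yields only $O(1)$, rather than $o_r(1)$, after weighting by $d^{n-1}(x,y)$. To close this gap, my plan is to perform a blow-up at $x$ on scale $r$: the rescaled domains $\W^r = \{v \in T_x M : \exp_x(rv) \in \W\}$ converge in Hausdorff distance (by Corollary \ref{c:reducedgmt}(1) and Lemma \ref{l:tangentplane}) to the half-space $H = \{v \in T_x M : g(v, \nu_x) < 0\}$, whose Poisson kernel is explicit and smooth away from the diagonal. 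Since the NTA constants from Corollary \ref{c:nta} are scale invariant, a compactness and normal-families argument for positive harmonic functions in uniformly NTA domains would then upgrade the H\"older estimate to an $o_r(1)$ modulus throughout $d(x,y) \in [Cr, \diam\W]$, completing the proof. The main technical burden will be making the convergence of Poisson kernels $K_{\W^r}(z', y') \to K_H(z', y')$ uniform over the relevant $(z', y')$, which I expect to follow from the boundary Harnack principle combined with the Green's function estimates of Section \ref{s:measure}.
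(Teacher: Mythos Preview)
Your treatment of the first bound on $h^T$ and of $|h^x|$ is correct and matches the paper. For \eqref{e:hconv}, however, you are missing the paper's central device, which is what lets it avoid any blow-up argument.

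The paper does \emph{not} try to handle all $y$ at once. Instead it fixes one interior reference pole $y_0$ with $B_{c_0}(y_0)\ss\W$, and restricts to those $x\in\p^*\W$ that are Lebesgue points of \emph{both} $|\grad u_\W|$ \emph{and} $K(\cdot,y_0)$ with respect to $\cH^{n-1}\mres\p\W$ (this is the assumption you do not make). It then proceeds in two stages. First, it proves the convergence at the single point $y=y_0$: since $K(\cdot,y_0)$ is bounded, the Lebesgue property of $K(\cdot,y_0)$ at $x$ combined with that of $|\grad u_\W|$ gives $|h^x(y_0)-h^{T_{x,r}}(y_0)/\w_{n-1}r^{n-1}|=o_r(1)$ directly. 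Second, for an arbitrary $z\in\W\sm B_{Cr}(x)$ it writes, using $h^x(z)=h^x(y_0)K(x,z)/K(x,y_0)$,
\[
h^x(z)-\frac{h^{T_{x,r}}(z)}{\w_{n-1}r^{n-1}}
= \underbrace{\Big[h^x(y_0)-\frac{h^{T_{x,r}}(y_0)}{\w_{n-1}r^{n-1}}\Big]\frac{K(x,z)}{K(x,y_0)}}_{I}
+ \underbrace{\frac{1}{\w_{n-1}r^{n-1}}\int \dot u_\W^{T_{x,r}}(q)\Big[K(q,y_0)\frac{K(x,z)}{K(x,y_0)}-K(q,z)\Big]\,d\cH^{n-1}(q)}_{II}.
\]
Term $I$ is handled by the first stage together with $K(x,z)/K(x,y_0)\leq C/d^{n-1}(z,x)$. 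Term $II$ is exactly $K(q,y_0)$ times the oscillation of the kernel function $q\mapsto \frac{d\w^z}{d\w^{y_0}}(q)$ over $q\in B_{cr}(x)$; the paper bounds this by the H\"older estimate \cite[Corollary~1.3.20]{Kenig}, yielding $II\leq Cr^\a/d^{n-1}(z,x)$.

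Your splitting puts the oscillation directly on $K(\cdot,y)$ with the \emph{variable} evaluation point $y$, which is why you only get $(r/d(x,y))^\a$ after weighting and are forced into a blow-up at scale $r$. By routing everything through the fixed $y_0$, the paper separates the $o_r(1)$ piece (which comes purely from Lebesgue differentiation at the single $y_0$, hence with null set independent of the evaluation point) from the piece that carries the $z$-dependence (which is a clean kernel-function H\"older bound). Your compactness/blow-up program might be made to work, but it is substantially heavier, and the step you flag as the main technical burden---uniform convergence of Poisson kernels under Hausdorff convergence of NTA domains---is precisely what the reference-pole trick is designed to sidestep.
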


\begin{proof}
	The first conclusion follows from the harmonic measure estimate \eqref{e:lgreen} in Proposition~\ref{l:green}, along with the fact that $|\grad u_\W|\leq C$ from Lemma \ref{cor:lip}. For the others, select $x$ with the following properties: (1) $x\in \p^* \W$, (2) $x$ is a Lebesgue point of $|\grad u|$ with respect to $\cH^{n-1}$, (3) $x$ is a Lebesgue point of $K(x, y)$ with respect to $\cH^{n-1}$ for a fixed $y \in \W$ with $B_{c_0}(y) \ss \W$. Then the Poisson kernel estimates of \eqref{e:poissonkerneldef} following Proposition~\ref{l:green} directly give that for any $z \in \W$,
	\[
		|h^x(z)| = |\grad u(x)| K(x, z) \leq \frac{C}{ d^{n-1}(z, x)}.
	\]
	
	As a first step toward proving \eqref{e:hconv}, we have that for this one fixed $y$,
	\begin{align*}
		\bigg|h^x(y)  &  - \frac{h^{T_{x, r}}(y)}{\w_{n-1}r^{n-1}}\bigg| = \left||\n u(x)| K(x, y) - \frac{h^{T_{x, r}}(y)}{\w_{n-1}r^{n-1}}\right| \\
		&=  \frac{1}{\w_{n-1}r^{n-1}} \left|\int_{\p^* \W \cap B_{cr}(x)}|\n u (x)| K(x, y) g(T_{x, r}(z), \nu_z)\,  d\cH^{n-1}(z) - {h^{T_{x, r}}(y)}\right|\\
		&= \frac{1}{\w_{n-1}r^{n-1}} \left|\int_{\p^* \W \cap B_{cr}(x)}|\n u (x)| K(x, y) g(T_{x, r}(z), \nu_z) - \dot{u}_\W^{T_{x, r}}(z) K(z, y)  \, d\cH^{n-1}(z)\right|\\
		&\leq  \frac{C}{r^{n-1}}\int_{\p^* \W \cap B_{cr}(x)}\big| |\n u (x)| K(x, y) g(T_{x, r}(z), \nu_z) + g(\grad u_\W(z), T_{x, r}(z)) K(z, y)\big|\, d\cH^{n-1}(z)\\
		&= \frac{C}{r^{n-1}}\int_{\p^* \W \cap B_{cr}(x)}\big| |\n u (z)| K(x, y) g(T_{x, r}(z), \nu_z) + g(\grad u_\W(z), T_{x, r}(z)) K(z, y)\big|\, d\cH^{n-1}(z) + o_r(1)\\
		&= \frac{C}{r^{n-1}}\int_{\p^* \W \cap B_{cr}(x)}\left| K(z, y) g(T_{x, r}(z), \grad u_\W(z)) - g(\grad u_\W(z), T_{x, r}(z)) K(z, y)\right| d\cH^{n-1}(z)| + o_r(1)\\
		&= o_r(1),
	\end{align*}
	using the definitions, both of the Lebesgue point assumptions, and the fact that $K(z, y), |\grad u_\W(z)|, |T_{x, r}(z)|$ are all bounded.

	Now, let us estimate the same quantity replacing $y$ with  any $z \in \W \sm B_{C r}(x)$. To this end, we use the triangle inequality and the definitions of $h^x$ and $h^{T_{r,x}}$ to write
	\[
		\bigg|h^x(z) -  \frac{h^{T_{x, r}}(z)}{\w_{n-1}r^{n-1}}\bigg|  = \left|h^x(y) \frac{K(x, z)}{K(x, y)} - \frac{h^{T_{x, r}}(z)}{\w_{n-1}r^{n-1}}\right| \leq I + II,
	\]
	where we set 
	\begin{align*}
		I & =\left|h^x (y) - \frac{h^{T_{x, r}}(y)}{\w_{n-1}r^{n-1}}\right| \frac{K(x, z)}{K(x, y)} \\
		II& =  \frac{C}{r^{n-1}}\left|\int_{\p^* \W \cap B_{cr(x)}} \dot{u}_\W^{T_{x, r}}(q)\left[K(q, z) - K(q, y) \frac{K(x, z)}{K(x, y)}\right] d\cH^{n-1}(q)\right|
	\end{align*}	
Since $d(x,y)\geq c_0$, from \eqref{e:poissonkerneldef} and Proposition~\ref{l:green} we have
	\[
		\frac{K(x, z)}{K(x, y)} \leq C K(x, z) \leq \frac{C}{d^{n-1}(z, x)}
	\]
	 and therefore we see that $I  \leq {o_r(1)}/{d^{n-1}(z, x)}$ using the first step above.
	Next, from \cite[Corollary 1.3.20]{Kenig},
	\[
		\left|\frac{d \w_z}{d \w_y}(x) - \frac{d \w_z}{d \w_y}(q)\right| \leq C \left|\frac{d \w_z}{d \w_y}(x)\right| d^\a(x, q) \leq C \frac{d^\a(x, q)}{d^{n-1}(z,x)}
	\]
	for some $\alpha \in (0,1)$, giving
	\[
		\left|K(q, z) - K(q, y) \frac{K(x, z)}{K(x, y)}\right| \leq C \frac{d^\a(x, q)}{d^{n-1}(z,x)}K(q, y) \leq C \frac{d^\a(x, q)}{d^{n-1}(z,x)}
	\]
	for any $q \in \p \W$. We therefore see that $II \leq {C r^\a}/{d^{n-1}(z, x)}$. 
	This proves \eqref{e:hconv}.
\end{proof}

We now move toward estimating the remainder $q^T$ and pass to a limit for the (renormalized) functions $q^{T_{x,r}}$ corresponding to the vector fields $T_{x,r}$ defined above.
The remainder $q^T$ satisfies a simpler boundary condition than $h^T$ (recall \eqref{eqn: PDE for q}), and we will use the following auxiliary elliptic estimate to help control it.

\begin{lemma}\label{l:l1est}
	Let $p$ be a bounded function on $\W$ with $\int p u_\W = 0$. Then
	\[
		\begin{cases}
			- \Lap u = \ei(\W) u + p & \text{ on } \W \\
			\int u u_\W = 0 &\\
			u = 0 & \text{ on } \p \W
		\end{cases}
	\]
	admits a unique solution $u \in H^1_0(\W)$, which has the estimate
	\begin{equation}\label{e:l1est}
		\| u \|_{L^{P}} \leq C(P) \| p\|_{L^1}
	\end{equation}
	for any $P \in[1, \frac{n}{n-2})$.
\end{lemma}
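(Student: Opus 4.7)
The plan is to establish existence and uniqueness via the Fredholm alternative, and then obtain the $L^P$ bound by duality, reducing matters to an $L^\infty$ estimate for a dual PDE.

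\emph{Existence and uniqueness.} Because $\ei(\W) < \l_2(\W)$ by Lemma~\ref{l:simpleeval}, the operator $T = -\Lap - \ei(\W) I : H^1_0(\W) \rightarrow H^{-1}(\W)$ is Fredholm of index zero with one-dimensional kernel spanned by $u_\W$ and range $\{F \in H^{-1}(\W) : F(u_\W) = 0\}$ (this was already used in the proof of Lemma~\ref{lem:differentiation}). Since $p \in L^\infty(\W) \subset H^{-1}(\W)$ and $\int p u_\W = 0$ by hypothesis, there exists $u \in H^1_0(\W)$ with $Tu = p$, and imposing $\int u u_\W = 0$ picks out the unique such solution with $\|u\|_{H^1_0(\W)} \leq C \|p\|_{H^{-1}}$.

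\emph{Duality.} Fix $P \in [1, \frac{n}{n-2})$ and let $P' = P/(P-1) > n/2$. I would compute $\|u\|_{L^P}$ by duality, estimating $\int u f$ for an arbitrary $f \in L^{P'}(\W)$. Since $\int u u_\W = 0$, setting $\tilde{f} = f - u_\W \int f u_\W$ gives $\int u f = \int u \tilde{f}$, with $\int \tilde{f} u_\W = 0$ and $\|\tilde{f}\|_{L^{P'}} \leq C \|f\|_{L^{P'}}$ by Lemma~\ref{l:efbdd}. The Fredholm alternative then furnishes $v \in H^1_0(\W)$ with $-\Lap v - \ei(\W) v = \tilde{f}$ and $\int v u_\W = 0$. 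Integrating by parts (valid since $u, v \in H^1_0$ and $p, \tilde{f} \in L^\infty$ ensure all terms are absolutely integrable) yields the key identity
\[
	\int u f = \int u \tilde{f} = \int u(-\Lap v - \ei(\W) v) = \int v(-\Lap u - \ei(\W) u) = \int v p,
\]
so $|\int u f| \leq \|v\|_{L^\infty} \|p\|_{L^1}$.

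\emph{The $L^\infty$ bound on $v$.} The remaining and main step is to prove
\[
	\|v\|_{L^\infty(\W)} \leq C(P') \|\tilde f\|_{L^{P'}(\W)},
\]
after which taking the supremum over $f$ with $\|f\|_{L^{P'}} \leq 1$ gives \eqref{e:l1est}. From the Fredholm alternative applied on the orthogonal complement of $u_\W$, together with the spectral gap $\l_2(\W) - \ei(\W) \geq c(R,v,\vmax)$ from Lemma~\ref{l:simpleeval}, one has the a priori estimate $\|v\|_{L^2} \leq C \|\tilde f\|_{L^2} \leq C \|\tilde{f}\|_{L^{P'}}$ since $\W$ is bounded. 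Now $v \in H^1_0(\W)$ is a weak solution of $-\Lap v = g$ with $g = \ei(\W) v + \tilde f$, which has a bounded lower-order coefficient and $L^{P'}$ inhomogeneity with $P' > n/2$. Standard De Giorgi--Moser--Stampacchia iteration (applied to $v_\pm$ separately, truncating at levels $k > 0$ using $(v \mp k)_\pm \in H^1_0(\W)$ as test functions; boundary regularity of $\W$ is irrelevant since the test functions vanish on $\p \W$ by construction) then gives
\[
	\|v\|_{L^\infty(\W)} \leq C(P')\bigl[\|\tilde f\|_{L^{P'}(\W)} + \|v\|_{L^2(\W)}\bigr] \leq C(P') \|\tilde f\|_{L^{P'}(\W)},
\]
with $C(P') \to \infty$ as $P' \searrow n/2$, which matches the blowup of $C(P)$ as $P \nearrow \frac{n}{n-2}$ in the statement. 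This is the step where the exponent restriction $P < n/(n-2)$ is essential; away from that critical threshold the estimate is standard and the NTA structure of $\W$ plays no role.
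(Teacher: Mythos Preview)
Your proof is correct and follows essentially the same route as the paper's (brief) sketch: existence/uniqueness by the Fredholm alternative, an $L^\infty \leftarrow L^{P'}$ bound for the solution operator via De Giorgi--Moser iteration combined with the $L^2$ Fredholm estimate, and then the $L^P \leftarrow L^1$ estimate by duality. One tiny quibble: in your integration-by-parts justification you write ``$p, \tilde f \in L^\infty$'', but $\tilde f$ is only in $L^{P'}$; the identity is nonetheless valid since $u, v \in H^1_0(\W)$ makes $\int \nabla u \cdot \nabla v$ well-defined and the $L^{P'}$--$L^{2^*}$ pairing handles the rest.
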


The existence and uniqueness are immediate from the Fredholm alternative (using that $p \in L^\infty$), as in \eqref{e:fredholm}. The main point is the estimate in terms of only the $L^1$ norm of $p$.

\begin{proof}[Sketch of proof.]
	The estimate
	\[
		\|u\|_{L^\infty(\W)} \leq C [\|p\|_{L^{P'}(\W)} + \|u\|_{L^2(\W)}]
	\]
	for any $P' > n/2$ may be found in \cite[Theorem 8.15]{GT}. Combining with the Hilbert space estimate from the Fredholm alternative (as in \eqref{e:fredholm})
	\[
		\|u\|_{L^2} \leq \|u\|_{H^1_0(\W)} \leq C \|p\|_{L^2(\W)}
	\]
	gives
	\[
		\|u\|_{L^\infty(\W)} \leq C \|p\|_{L^{P'}(\W)}.
	\]
	Then \eqref{e:l1est} follows from duality.
\end{proof}

\begin{corollary}\label{c:qconv} For almost every $x \in \p^* \W$, the functions ${q^{T_{x, r}}}/{\w_{n-1}r^{n-1}}$ converge in $L^{p}$ topology (for any $p \in [1, \frac{n}{n-2})$) to a function $q^x$, which solves
	\[
	\begin{cases}
		- \Lap q^x = \ei(\W)(q^x + h^x) -  u_\W|\grad u_\W(x)|^2  & \text{ on } \W \\
		\int_\W (q^x + h^x) u_\W = 0 &\\
		q^x = 0 & \text{ on } \p \W,
	\end{cases}
	\]
	in the sense that $q^x$ is the Green potential of the right-hand side:	for a.e. $y\in \W$,
	\begin{equation} \label{e:qconv2}
		q^{x}(y) = \int_\W \left[\ei(\W) (q^{x}(z) + h^{x}(z)) - u_\W(z)|\grad u_\W(x)|^2 \right] G_\W(z, y) dz\,.
	\end{equation}
\end{corollary}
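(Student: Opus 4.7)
The strategy is to write the PDE satisfied by $\tilde q^r := q^{T_{x,r}}/(\w_{n-1} r^{n-1})$, orthogonalize against $u_\W$ in order to apply the $L^1\to L^P$ estimate of Lemma~\ref{l:l1est}, and then pass to the limit by linearity. Fix $x \in \p^* \W$ satisfying the conclusions of Lemma~\ref{l:hconv} and also a Lebesgue point of $|\grad u_\W|^2$ with respect to $\cH^{n-1}\mres \p^*\W$; these form a set of full $\cH^{n-1}$-measure. Setting $\tilde h^r := h^{T_{x,r}}/(\w_{n-1}r^{n-1})$ and
\[
A_r := \frac{1}{\w_{n-1} r^{n-1}} \int_{\p^* \W} |\grad u_\W|^2 g(T_{x,r}, \nu)\, d\cH^{n-1},
\]
the PDE for $q^T$ and the definition of $T_{x,r}$ give $-\Lap \tilde q^r - \ei(\W) \tilde q^r = \lambda_1(\W) \tilde h^r - A_r u_\W =: p_r$ on $\W$, with $\tilde q^r = 0$ on $\p \W$ and $\int(\tilde q^r + \tilde h^r) u_\W = 0$. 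The Lebesgue point assumption yields $A_r \to |\grad u_\W(x)|^2$.

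The first task is to show $p_r \to p^x := \ei(\W) h^x - |\grad u_\W(x)|^2 u_\W$ in $L^1(\W)$. By Lemma~\ref{l:hconv}, $\tilde h^r \to h^x$ pointwise on $\W \sm \{x\}$ with the uniform bound $|\tilde h^r(y)| \leq C d(y,x)^{1-n}$ on $\W \sm B_{Cr}(x)$; on the complementary ball the crude maximum principle bound $|\tilde h^r| \leq C r^{1-n}$ combined with $|B_{Cr}(x)| \leq Cr^n$ gives $\int_{B_{Cr}(x)} |\tilde h^r|\, dy \leq Cr \to 0$. Since $y \mapsto d(y,x)^{1-n}$ belongs to $L^p(\W)$ for all $p<n/(n-1)$, the dominated convergence theorem gives $\tilde h^r \to h^x$ in $L^p$ for every such $p$, and in particular in $L^1$, so $\|p_r - p^x\|_{L^1(\W)} \to 0$.

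The second task is to identify $\tilde q^r$ as a perturbation of a function to which Lemma~\ref{l:l1est} applies. Integration by parts of $-\Lap h^T \cdot u_\W$ against the boundary condition $h^T = -g(\grad u_\W, T)$, combined with the nontangential limit $\grad u_\W = -|\grad u_\W|\nu$ on $\p^* \W$ from Corollary~\ref{c:ntlimits}, yields the identity $\ei(\W) \int h^T u_\W = \int_{\p^*\W} |\grad u_\W|^2 g(T, \nu)\, d\cH^{n-1}$, and hence $\int \tilde h^r u_\W = A_r/\ei(\W) \to |\grad u_\W(x)|^2/\ei(\W) =: \a^x$. Setting $\a_r := \int \tilde h^r u_\W$ and $\tilde q^r_0 := \tilde q^r + \a_r u_\W$, the orthogonality constraint gives $\int \tilde q^r_0 u_\W = 0$, and $\tilde q^r_0$ still solves $-\Lap \tilde q^r_0 - \ei(\W) \tilde q^r_0 = p_r$ with zero boundary data (the added multiple of $u_\W$ annihilates $-\Lap - \ei(\W) I$). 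The compatibility condition $\int p_r u_\W = \ei(\W)\a_r - A_r = 0$ is automatic, so Lemma~\ref{l:l1est} applies.

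Applying Lemma~\ref{l:l1est} to the difference of any two such problems yields $\|\tilde q_0^r - \tilde q_0^{r'}\|_{L^P(\W)} \leq C\|p_r - p_{r'}\|_{L^1(\W)}$ for $P < n/(n-2)$, so by the $L^1$-convergence established above, $\{\tilde q_0^r\}$ is Cauchy in $L^P$ and converges to a limit $q_0^x \in L^P(\W)$, which is the unique solution given by Lemma~\ref{l:l1est} with source $p^x$. Define $q^x := q_0^x - \a^x u_\W$; then $\tilde q^r = \tilde q_0^r - \a_r u_\W \to q^x$ in $L^P(\W)$ for all $P \in [1, n/(n-2))$. To obtain \eqref{e:qconv2}, start from the Green's representation for $\tilde q^r$ (justified since $\tilde q^r \in H^1_0(\W)$ has bounded right-hand side),
\[
\tilde q^r(y) = \int_\W G_\W(z,y)\big[\ei(\W)(\tilde q^r(z) + \tilde h^r(z)) - A_r u_\W(z)\big]\, dz,
\]
and pass to the limit termwise, using the $L^p$-convergence of $\tilde q^r, \tilde h^r$ together with the integrability of $G_\W(\cdot, y)$ in $L^{p'}$ for $p' = p/(p-1) > n/2$ (which follows from the bound $G_\W(z,y) \leq C d(z,y)^{2-n}$).

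The main obstacle is handling the singularity of the limiting harmonic part $h^x$, which forces all estimates to be carried out in $L^p$ for the restricted range $p < n/(n-1)$; keeping this range compatible with the output exponent $P < n/(n-2)$ of Lemma~\ref{l:l1est} is exactly what makes the orthogonalization step necessary, since a bare $L^\infty$ or Hilbert approach cannot absorb the inverse-power singularity at $x$.
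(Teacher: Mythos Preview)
Your approach is essentially identical to the paper's: both orthogonalize $q^{T_{x,r}}$ against $u_\W$ (your $\tilde q_0^r$ is exactly the paper's $q_1^{T_{x,r}}$), both use the integration-by-parts identity for $\int h^T u_\W$ to verify the compatibility condition, and both feed the $L^1$-convergence of the source term coming from Lemma~\ref{l:hconv} into Lemma~\ref{l:l1est} to obtain $L^P$-Cauchyness.

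There is one technical slip in your last paragraph. Your H\"older pairing for passing \eqref{e:qconv2} to the limit requires $G_\W(\cdot,y)\in L^{p'}$, which from $G_\W(z,y)\le C d(z,y)^{2-n}$ forces $p'<n/(n-2)$; but your $L^p$-convergence of $\tilde h^r$ holds only for $p<n/(n-1)$, i.e.\ $p'>n$, and these ranges are disjoint for $n\ge 3$ (and the same obstruction hits the $\tilde q^r$ term for $n\ge 4$). The fix, which is implicit in the paper's phrase ``passing both sides to the limit in $L^p$'', is to argue in the $y$-variable instead: the Green potential $f\mapsto \int_\W G_\W(\cdot,y)f$ maps $L^1(\W)\to L^P(\W)$ continuously for $P<n/(n-2)$, so the $L^1$-convergence of the entire right-hand side (which you already established) gives convergence of the potential in $L^P(dy)$, and the identity then holds a.e.
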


It is straightforward to check that in fact $q^x$ is continuous (up to the boundary) away from $x$, and satisfies the PDE and boundary condition classically. However, the statement here is actually stronger: it implies that at $x$, $q^x$ still is $0$ in the sense of that it does not form  measure-valued boundary data there.

\begin{proof} Recalling the equation \eqref{eqn: PDE for q} satisfied by $q^T$, we wish to further decompose $q^T$ further as $q^T = q_1^T - u_\W \int h^T u_\W$ in order to apply the estimate of Lemma~\ref{l:l1est} above to the piece $q_1^T$, which is orthogonal to $u_\W$ by construction. Let us integrate by parts twice to compute
	\begin{equation}\label{e:qconv1}
		\begin{split}
			\ei(\W) \int_\W h^T u_\W &= - \int_\W h^T \Lap u_\W \\
			&= \int_\W g(\grad h^T, \grad u_\W) - \int_{\p^* \W} h^T g(\grad u_\W, \nu) d\cH^{n-1} = - \int_{\p^*\W} |\grad u_\W|^2 g(T, \nu) d\cH^{n-1}.
		\end{split}
	\end{equation}
	We used the divergence theorem of \cite{CTZ09}, the fact that $\Lap h^T = 0$, and Corollary \ref{c:ntlimits}. In particular, if we write $h_1^T = h^T - u_\W \int h^T u_\W$ (so that $h_1^T + q_1^T = h^T + q^T$), we have that
	\[
		\begin{cases}
	- \Lap q^T_1 = \l_1(\W) \left(q^T_1 + h^T_1\right)  & \text{ on } \W \\
	\int q^T_1 u_\W = 0 &\\
	q^T_1 = 0 & \text{ on } \p \W
		\end{cases}
	\]
	and $\int h_1^T u_\W = 0$. 
	
	We claim that  ${h^{T_{x, r}}_1}/{\w_{n-1}r^{n-1}}$ is a Cauchy sequence in $L^1$ (with respect to index $r$). To see this, apply Lemma \ref{l:hconv}, we know that $\lim_{r\searrow 0} \|\frac{h^{T_{x, r}}}{\w_{n-1}r^{n-1}} - h^x\|_{L^1} = 0$, and that 
	\[
	\lim_{r\searrow 0}\int u_\W \frac{h^{T_{x, r}}}{\w_{n-1}r^{n-1}} = \int h^x u_\W.
	\]
	From \eqref{e:qconv1} we see this implies that at each Lebesgue point $x$ of $|\grad u_\W|$ on $\p^* \W$ we have 
	\[
		\int h^x u_\W = \lim_{r \searrow 0} - \frac{1}{\ei(\W)\w_{n-1}r^{n-1}}\int_{\p^*\W} |\grad u_\W|^2 g(T_{x,r}, \nu_x) d\cH^{n-1} = - \frac{|\grad u_\W(x)|^2}{\ei(\W)}.
	\]

	 So, applying Lemma \ref{l:l1est}, we see that ${q^{T_{x, r}}_1}/{\w_{n-1}r^{n-1}}$ is Cauchy in $L^{p}$, and so converges to some function $q_1^x$. Passing the orthogonality condition to the limit, $\int q^x_1 u_\W = 0$. It then also follows that ${q^{T_{x, r}}}/{\w_{n-1}r^{n-1}} \rightarrow q^x := q^x_1 - u_\W \int h^x u_\W$.
	
	We have verified the orthogonality condition $\int_\W (q^x + h^x) u_\W = 0$. The boundary condition and PDE may be checked by writing
	\[
		q^{T_{x, r}}(y) = \int_\W \left[\l_1(\W) \left(q^{T_{x, r}}(z) + h^{T_{x, r}}(z)\right) - u_\W(z)\int_{\p^* \W} |\grad u_\W(y')|^2 g(T_{x, r}, \nu_{y'}) d\cH^{n-1}(y') \right] G_\W(z, y) dz,
	\]
	and passing both sides to the limit in $L^p$, which results in
	\[
	q^{x}(y) = \int_\W \left[\l_1(\W) (q^{x}(z) + h^{x}(z)) -  u_\W(z)|\grad u_\W(x)|^2\right] G_\W(z, y) dz
	\]
	for a.e. $y \in \W$.
\end{proof}

\begin{remark} \label{rem:L1estqh}
	We may recover further estimates for $q^x$ from this argument if we wish. For example, for any $\a, \b \in \R$,
	\[
		\|\a q^x + \b q^y\|_{L^1} \leq C \|\a h^x + \b h^y\|_{L^1} + C | \a |\grad u_\W(x)|^2 + \b |\grad u_\W(y)|^2|,
	\]
	by taking the limit in the corresponding estimates for $q^T$ with $T = \a T_{x, r} + \b T_{y, r}$. If we choose $ \a = \frac{1}{|\grad u_\W(x)|^2}$ and $\b = -\frac{1}{|\grad u_\W(y)|^2}$, the second term vanishes and we get
	\[
		\left\|\frac{q^x}{|\grad u_\W(x)|^2} - \frac{q^y}{|\grad u_\W(y)|^2}\right\|_{L^1} \leq C \left\|\frac{h^x}{|\n u_\W(x)|^2} - \frac{h^y}{|\n u_\W(y)|^2}\right\|_{L^1}.
	\]
	These may also be derived directly from the potential identity \eqref{e:qconv2}.
\end{remark}

Set $v^x_\W = q^x + h^x$. We are now in a position to derive a pointwise Euler-Lagrange equation, or free boundary condition, satisfied $\cH^{n-1}$-a.e. along $\p \W$.

\begin{corollary}[Pointwise form of the Euler-Lagrange equation]\label{c:elpt}
	Let $\W$ be a minimizer. Then there is a constant $A_0$ such that for almost every $x\in \p^* \W \cap \Qb_R$, we have the identity
	\begin{equation}\label{e:ptwiseel}
	-|\n u_\W(x)|^2  - \frac{\tpar}{2}|\n w_\W(x)|^2 + \err \left[ b_\W(x) + \int_{\W}a_\W v^x_\W \right] = - A_0.
	\end{equation}
	Here $v_\W^x = q^x +h^x$ where $q^x$ is as in \eqref{e:qconv2} and $h^x = |\nabla u_\W(x)|K(x ,\cdot)$. 
	If $|\W| \neq v$, then $A_0 = \fv'(|\W|)$.
\end{corollary}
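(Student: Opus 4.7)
The plan is to test the distributional Euler--Lagrange identity of Lemma~\ref{l:elvf} against the difference of two normalized ``point'' vector fields $T_{x,r} - T_{y,r}$ localized near generic points $x, y \in \p^* \W$, divide by $\w_{n-1} r^{n-1}$, and pass to the limit $r \to 0$. This will show that the left-hand side of \eqref{e:ptwiseel} takes the same value at $x$ and at $y$ for $\cH^{n-1}$-a.e.\ pair, hence is constant. For the separate claim that $A_0 = \fv'(|\W|)$ when $|\W| \neq v$, we drop the volume-preserving constraint and test against a single $T_{x,r}$, using the second form of Lemma~\ref{l:elvf}.

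First I would fix a set $\Sigma \ss \p^* \W$ with $\cH^{n-1}(\p^* \W \sm \Sigma) = 0$ consisting of points $x$ where (i) $x$ is a Lebesgue point of $|\grad u_\W|$ and of $|\grad w_\W|$ with respect to $\cH^{n-1}\mres \p^* \W$ (using the nontangential limit identification of Corollary~\ref{c:ntlimits}), (ii) $b_\W$ is continuous at $x$, and (iii) the conclusions of Lemma~\ref{l:hconv} and Corollary~\ref{c:qconv} hold. For any two $x, y \in \Sigma$ with $x \neq y$ and $r$ small enough that $B_{cr}(x) \cap B_{cr}(y) = \emptyset$, set $T = T_{x,r} - T_{y,r}$. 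Then $|T| \leq 1$, $\supp T \ss B_{cr}(x) \cup B_{cr}(y)$, and $\int_{\p^* \W} g(T, \nu) d\cH^{n-1} = 0$ by construction. Applying the first form of Lemma~\ref{l:elvf} with $L = Cr^n$ and dividing by $\w_{n-1} r^{n-1}$ yields
\[
\bigg| - A_r(x) + A_r(y) + \err B_r(x) - \err B_r(y) + \err C_r(x) - \err C_r(y) \bigg| \leq Cr,
\]
where $A_r(z) = \frac{1}{\w_{n-1}r^{n-1}}\int_{\p^*\W} \big(|\n u_\W|^2 + \tfrac{\tpar}{2}|\n w_\W|^2\big) g(T_{z,r}, \nu) d\cH^{n-1}$, $B_r(z) = \frac{1}{\w_{n-1}r^{n-1}}\int_{\p^*\W} b_\W g(T_{z,r}, \nu) d\cH^{n-1}$, and $C_r(z) = \frac{1}{\w_{n-1}r^{n-1}}\int_\W \dot{u}_\W^{T_{z,r}} a_\W$.

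The key convergence step is sending $r \searrow 0$. By the Lebesgue point property and the construction of $T_{z,r}$, we have $A_r(z) \to |\n u_\W(z)|^2 + \tfrac{\tpar}{2}|\n w_\W(z)|^2$ and $B_r(z) \to b_\W(z)$ at every $z \in \Sigma$. For the nonlocal term $C_r(z)$, decompose $\dot{u}_\W^{T_{z,r}} = h^{T_{z,r}} + q^{T_{z,r}}$. Lemma~\ref{l:hconv} gives $h^{T_{z,r}}/(\w_{n-1}r^{n-1}) \to h^z = |\n u_\W(z)| K(z, \cdot)$ pointwise on $\W$ with the domination $|h^{T_{z,r}}(y)|/(\w_{n-1}r^{n-1}) \leq C d(y,z)^{1-n}$, which is integrable on $\W$ and allows us to pass to the limit against the bounded $a_\W$ by dominated convergence. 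Corollary~\ref{c:qconv} gives $q^{T_{z,r}}/(\w_{n-1}r^{n-1}) \to q^z$ in $L^p(\W)$ for some $p > 1$, so $\int_\W q^{T_{z,r}} a_\W /(\w_{n-1}r^{n-1}) \to \int_\W q^z a_\W$. Combining, $C_r(z) \to \int_\W v^z_\W a_\W$ where $v^z_\W = h^z + q^z$. Passing to the limit in the displayed inequality yields
\[
-|\n u_\W(x)|^2 - \tfrac{\tpar}{2}|\n w_\W(x)|^2 + \err\Big[b_\W(x) + \int_\W v^x_\W a_\W\Big] = -|\n u_\W(y)|^2 - \tfrac{\tpar}{2}|\n w_\W(y)|^2 + \err\Big[b_\W(y) + \int_\W v^y_\W a_\W\Big]
\]
for all $x, y \in \Sigma$. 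Hence this expression equals a single constant $-A_0$, establishing \eqref{e:ptwiseel}.

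For the identification $A_0 = \fv'(|\W|)$ when $|\W| \neq v$, I would instead apply the second (non volume-preserving) form of Lemma~\ref{l:elvf} with $T = T_{x,r}$ alone (choosing $r$ small enough that $C r^n < ||\W| - v|$) and divide by $\w_{n-1}r^{n-1}$. The extra term $\fv'(|\W|)\int_{\p^* \W} g(T_{x,r}, \nu) d\cH^{n-1}/(\w_{n-1}r^{n-1}) = \fv'(|\W|)$ appears, and passing to the limit exactly as above yields \eqref{e:ptwiseel} with $A_0$ replaced by $\fv'(|\W|)$; comparing with the identity just proven forces $A_0 = \fv'(|\W|)$. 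The main obstacle in the argument is the interior convergence of the nonlocal term $C_r(z)$, since the naive $L^1$ bound for $h^{T_{z,r}}$ is only uniform away from $z$; this is exactly why the precise Green's function and harmonic measure estimates of Section~\ref{s:measure}, encoded in Lemma~\ref{l:hconv} and Corollary~\ref{c:qconv}, are essential here.
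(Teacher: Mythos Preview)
Your proposal is correct and follows essentially the same approach as the paper's proof: test Lemma~\ref{l:elvf} against $(T_{x,r}-T_{y,r})/(\w_{n-1}r^{n-1})$, use Lebesgue points for the local boundary terms, and invoke Lemma~\ref{l:hconv} and Corollary~\ref{c:qconv} for the nonlocal term, then handle the case $|\W|\neq v$ by testing against a single $T_{x,r}$. Your justification of the $L^1$ convergence of $h^{T_{z,r}}/(\w_{n-1}r^{n-1})$ via the domination $C\,d(\cdot,z)^{1-n}$ is in fact slightly more explicit than what the paper writes (the paper simply asserts $L^1$ convergence).
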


\begin{proof}[Proof of Corollary \ref{c:elpt}]
	Take $x, y$ any two points in $\p^* \W \cap \Qb_R$ to which both Lemma \ref{l:hconv} and Corollary \ref{c:qconv} apply. Using $T_r = ({T_{x, r} - T_{y, r}})/{\w_{n-1}r^{n-1}}$ for $T$ in Lemma \ref{l:elvf}, we see that
	\[
		\left|\int_{\p^*\W} \left[-|\grad u_\W(q)|^2 - \frac{\tpar}{2}|\grad w_\W(q)|^2 + \err b_\W(q)\right] g(T_r(q), \nu_q) d\cH^{n-1}(q) + \err \int_{\W} a_\W \dot{u}_\W^{T_r}\right| \leq C r \rightarrow 0.
	\]
	Restricting further to only those $x, y$ which are Lebesgue points of both $|\grad u_\W|$ and $|\grad w_\W|$ with respect to $\cH^{n-1}\mres \p \W$, we see that the first term converges to
	\[
		|\grad u_\W(y)|^2 - |\grad u_\W(x)|^2 + \frac{\tpar}{2}\left[|\grad w_\W(y)|^2 - |\grad w_\W(x)|^2\right] + \err [b_\W(x) - b_\W(y)].
	\]
	From Lemma \ref{l:hconv}, we have that $h^{T_r} \rightarrow h^y - h^x$ in $L^1(\W)$, while from Corollary \ref{c:qconv}, $q^{T_r} \rightarrow q^y - q^x$ in $L^1(\W)$. Passing to the limit then gives
	\[
		|\grad u_\W(y)|^2 - |\grad u_\W(x)|^2 + \frac{\tpar}{2}\left[|\grad w_\W(y)|^2 - |\grad w_\W(x)|^2\right] + \err [b_\W(x) - b_\W(y)] + \err \int_\W a_\W [v^x_\W - v^y_\W] = 0.
	\]
	This implies the quantity in \eqref{e:ptwiseel} is independent of $x$.
	
	If $|\W| \neq v$, we may instead just use $T_r = {T_{x, r}}/{\w_{n-1}r^{n-1}}$; proceeding similarly gives that
	\[
		-|\n u_\W(x)|^2  - \frac{\tpar}{2}|\n w_\W(x)|^2 + \fv'(|\W|) + \err \left[ b_\W(x) + \int_{\W}a_\W v^x_\W \right] = 0.
	\]
	This completes the proof.
\end{proof}

\subsection{The Euler-Lagrange equation rewritten}\label{ss: EL rewritten}
As we discussed at the beginning of the section, the pointwise form of the Euler-Lagrange equation in Corollary~\ref{c:elpt} is not particularly useful from the perspective of regularity theory. The goal of this subsection is to prove Theorem~\ref{thm:usefulEL}, that is, to rewrite \eqref{e:ptwiseel} in a way which involves only $|\grad u_\W|$, up to ``lower order'' terms.

The most difficult term in \eqref{e:ptwiseel} to control is $\int_{\W}a_\W v^x_\W $, so let us give a brief heuristic idea of how this is done. Recall that $v^x_\W = h^x + q^x$; we will consider these two terms separately. The  term $h^x = |\n u_\W|  K(x, \cdot)$ is strictly harder to control, so let use focus on the term  $\int_{\W} a_\W h^x $. In order to compare this term to $|\n u_\W|^2,$ we reinterpret the equation solved by $h^x$ using that the Poisson kernel is the (normal) derivative of the Green's function. So,
\[
\int_\W h^x(y) \,dm(y) \leq  |\n u_\W(x)| \left|  \n_x \int_\W G_\W(x,y)a_\W(y) \, dm(y)\right| = |\n u_\W| | \n p(x)|,
\]
where $p(x)$ is the potential solving $\Lap p=a_\W$ with $p=0$ on $\p \W$.  From here, this term can  be handled in a similar way to the term $|\n w_\W|^2,$ using several applications of the inhomogeneous boundary Harnack principle.

Let us now move toward proving Theorem~\ref{thm:usefulEL}. Before proving the main theorem, we will need several lemmas that will allow us to control the other terms in \eqref{e:ptwiseel}.
 The first lemma shows that the ratio between the first eigenfunction and  a function $p$ that solves an equation is H\"{o}lder continuous up to the boundary. One application of this lemma will be to $p=w_\W$ in the proof of Theorem~\ref{thm:usefulEL}. It will also be used to help control the  
bad term $\int_{\W}a_\W v^x_\W $ in the Euler-Lagrange equation \eqref{e:ptwiseel}.

\begin{lemma}\label{lem:improvedbhrhs}
	Let $p$ be a continuous function with $p = 0$ on $\p \W$ and $- \Lap p = f$, $\|f\|_{L^\infty} \leq 1$. Then
	\begin{equation}\label{e:improvedbhrhs}
		\left\| \frac{p}{u_\W} \right\|_{C^{0, \a}(\bar{\W})} \leq C
	\end{equation}
	for some $C, \a$ depending only on $v, \vmax, R, \vpar$. Moreover, $\grad p$ admits nontangential limits at almost every point $x$ of $\p^*\W$, $|g(\grad p(x), \nu_x)| = |\grad p(x)|$, and
	\[
		\frac{|\grad p(x)|}{|\grad u_\W(x)|} = \lim_{y \rightarrow x \text{ n.t.}} \left|\frac{p(y)}{u_\W(x)}\right|.
	\]
	In particular, $|\grad p(x)| = |\grad u_\W(x)| \r_p(x)$ along $\p \W$, where $\|\r_p(x)\|_{C^{0, \a}(\p \W)} \leq C$.
\end{lemma}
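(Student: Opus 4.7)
\textbf{Proof plan for Lemma \ref{lem:improvedbhrhs}.}

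The plan is to invoke the inhomogeneous boundary Harnack principle of \cite{AKS20} applied to the pair $(u_\W,\, p + C_1 u_\W)$ for a suitable constant $C_1$, following the template of Lemma \ref{l:bdryharnackgreen}. Since $-\Lap p = f$ with $\|f\|_{L^\infty} \leq 1$ and $p$ vanishes on $\p \W$, the comparison principle against the torsion function yields $|p| \leq w_\W$ on $\W$. Combining with Proposition~\ref{l:bdryharnackef}, which gives $w_\W \leq C u_\W$, produces $|p| \leq C u_\W$, so $p + C_1 u_\W \geq 0$ for $C_1$ large. Moreover $-\Lap(p + C_1 u_\W) = f + C_1 \ei(\W) u_\W$ is uniformly bounded in $L^\infty$ by Lemma \ref{l:efbdd}.

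First, I would invoke \cite[Theorem 2.2]{AKS20} exactly as in Lemma \ref{l:bdryharnackgreen}, with $u_1 = u_\W$ and $u_2 = p + C_1 u_\W$ on balls $B_{r_0}(z)$ centered at $z \in \p \W$, using that $\W$ is NTA by Corollary \ref{c:nta}. The linear growth condition (2.1) of \cite{AKS20} required for $u_\W$ follows from Proposition~\ref{l:bdryharnackef} together with Theorem~\ref{t:lb}, which give $u_\W(y) \geq c\, d(y, \p \W)$. The output is $\|(p + C_1 u_\W)/u_\W\|_{C^{0, \a}(\bar \W)} \leq C$, and subtracting the constant $C_1$ gives \eqref{e:improvedbhrhs}. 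Set $\varphi := p/u_\W \in C^{0, \a}(\bar \W)$ with $\|\varphi\|_{C^{0, \a}} \leq C$.

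Next, I would extract the gradient information at $\cH^{n-1}$-a.e. $x \in \p^* \W$ by an interior elliptic estimate applied to the error $\tilde p := p - \varphi(x) u_\W$. Since $p = \varphi\, u_\W$ and $u_\W$ is globally Lipschitz with $u_\W(x) = 0$ (Corollary \ref{cor:lip}), I obtain the pointwise bound
\[
|\tilde p(y)| = |\varphi(y) - \varphi(x)|\, u_\W(y) \leq C\, d(x, y)^{1 + \a},
\]
while $|\Lap \tilde p| \leq C$. For $y \in \W$ approaching $x$ nontangentially, standard interior gradient estimates on $B_{d(y, \p \W)/2}(y)$ yield
\[
|\grad \tilde p(y)| \leq \frac{C}{d(y, \p \W)} \sup_{B_{d(y, \p \W)}(y)} |\tilde p| + C\, d(y, \p \W) \leq C\, d(x, y)^\a \rightarrow 0.
\]
Combined with the nontangential limit $\grad u_\W(y) \rightarrow -|\grad u_\W(x)| \nu_x$ from Corollary \ref{c:ntlimits}, this shows $\grad p$ has a nontangential limit at $x$ equal to $-\varphi(x) |\grad u_\W(x)| \nu_x$. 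In particular $|\grad p(x)| = |\varphi(x)|\, |\grad u_\W(x)|$, $|g(\grad p(x), \nu_x)| = |\grad p(x)|$, and
\[
\lim_{y \rightarrow x \text{ n.t.}} \left|\frac{p(y)}{u_\W(y)}\right| = |\varphi(x)| = \frac{|\grad p(x)|}{|\grad u_\W(x)|}.
\]
Setting $\r_p(x) := |\varphi(x)|$ gives $\|\r_p\|_{C^{0, \a}(\p \W)} \leq \|\varphi\|_{C^{0, \a}(\bar \W)} \leq C$, completing the proof.

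I expect the main obstacle to be the careful verification of the hypotheses of \cite[Theorem 2.2]{AKS20} for the pair $(u_\W,\, p + C_1 u_\W)$, particularly ensuring that the relevant linear growth and PDE class conditions hold with constants depending only on $v, \vmax, R, \vpar$. However, this has essentially already been carried out for the analogous pair $(w_\W, G_\W(x, \cdot))$ in the proof of Lemma \ref{l:bdryharnackgreen}, so the adaptation is primarily bookkeeping.
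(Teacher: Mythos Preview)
Your approach differs from the paper's. You directly invoke \cite[Theorem 2.2]{AKS20} to obtain $C^{0,\a}$ regularity of $(p + C_1 u_\W)/u_\W$ in one step, after making the numerator nonnegative by adding $C_1 u_\W$. The paper instead runs an explicit oscillation-decay iteration: it first splits $f = f_+ - f_-$ and works with the nonnegative solution $p^+$ of $-\Lap p^+ = f_+$, then shows $M(cr) - m(cr) \leq \theta [M(r) - m(r)] + r^\e$, where $M(r), m(r)$ are the sup and inf of $p^+/u_\W$ on $B_r(z) \cap \W$. The key extra ingredient is a dichotomy at each scale: either $M(r) \leq r^\e$ (and the claim is immediate), or one can propagate a lower bound $p^+ \geq c\, d(\cdot, \p\W)^{1+\e}$ via a Green's function barrier and then apply \cite[Theorem 1.3]{AKS20} with growth exponent $\b = 1+\e$ to the pairs $(M(r) u_\W - p^+,\, u_\W)$ and $(p^+ - m(r) u_\W,\, u_\W)$.

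One caution: in Lemma~\ref{l:bdryharnackgreen} the paper applied \cite[Theorem 2.2]{AKS20} and extracted only the one-sided comparison $w_\W \leq C G_\W$, not H\"older continuity of the ratio. You are assuming that theorem delivers the full $C^{0,\a}$ estimate on the quotient; if it does, your route is correct and cleaner than the paper's. If it gives only comparison, you would still need an iteration, and the dichotomy step establishing the superlinear growth of $p^+$ at each scale is the non-obvious ingredient you would be missing. Your treatment of the nontangential limits of $\grad p$, via interior gradient estimates on $p - \varphi(x) u_\W$, is essentially the same as the paper's, which writes $p = \r_p\, u_\W$ and combines the linear expansion of $u_\W$ from Lemma~\ref{l:tangentplane} with elliptic estimates as in Corollary~\ref{c:ntlimits}.
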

 When one instead has the ratio of two harmonic functions, this fact follows from a standard argument iteratively applying the classical boundary Harnack inequality. The argument here is essentially similar, but we include the details since our functions of interest solve equations with a right-hand side and require the form of the boundary Harnack estimate established in \cite{AKS20}.
\begin{proof}
	First, let $p^+$ solve
	\[
		\begin{cases}
			- \Lap p^+ = f_+ & \text{ on } \W \\
			p^+ = 0 & \text{ on } \p \W.
		\end{cases}
	\]
	We will show that $p^+$ satisfies \eqref{e:improvedbhrhs}; then by also applying to $- p^-$, we obtain the statement for any $p$. Note that $0 \leq p^+ \leq w_\W$ from the maximum principle, so in particular from Proposition~\ref{l:bdryharnackef}, $p/u_\W \leq C$. 
	
	Fix $r < r_0$ small and $z \in \p \W$, and let
	\[
		M(r) = \sup_{\{y \in B_r(z) \cap \W: d(y, z) > t r\}} \frac{p^+}{u_\W} \qquad\qquad m(r) = \inf_{\{y \in B_r(z) \cap \W: d(y, z) > t r\}} \frac{p^+}{u_\W}
	\]
	where $t$ is small and fixed.	We claim that there are $c, \e > 0$ and a $\theta < 1$ such that 
	\[
		M(c r) - m(c r) \leq \theta [M(r) - m(r)] + r^\e.
	\]
	This claim implies \eqref{e:improvedbhrhs}: indeed, a classic iteration argument then gives that $M(r) - m(r) \leq C r^{\e'}$, and then applying this along a Harnack chain connecting any two points will give the conclusion.
	
	Our claim is immediate if $M(r) \leq r^\e$. If not, then there is a point $y \in B_r(z) \cap \W$ with $d(y, z) \geq t r$ and $p^+(y) \geq r^\e u_\W(y) \geq c r^{\e + 1}$. From the Harnack inequality, $p^+ \geq c r^{\e + 1} - C r^2 \geq c r^{\e + 1}$ on $B_{t r/2}(y)$ as long as $r_0$ was taken small enough. Then we have 
	\[
		p^+(y') \geq c r^{1 + \e}r^{n - 2} G_\W(y, y') \geq c r^{n-1 + \e}G_\W(y, y')
	\]
	for $y'$ along $\p B_{t r/2}(y)$, and by comparison principle on the entire compliment of this ball; we used \eqref{e:green} here. Applying the Green's function bounds from Proposition~\ref{l:green} gives that for $x \in B_{c r}(z)$,
	\[
		p^+(x) \geq c r^{n -  1 + \e} G(y, x) \geq c r^\e d(x, \p \W) \geq c d^{1 + \e}(x, \p \W).
	\]
	
	We now apply the inhomogeneous boundary Harnack principle \cite[Theorem 1.3]{AKS20} (working in normal coordinates and scaling suitably) on this region $B_{c r}(z)$ to
	\[
		M(r) - \frac{p^+}{u_\W} = \frac{M(r)u_\W + p^+}{u_\W}
	\]
	and
	\[
		\frac{p^+}{u_\W} - m(r) = \frac{p^+ - m(r) u_\W}{u_\W}.
	\]
	Both of these are quotients of positive functions vanishing on $\p \W$ and having bounded Laplace-Beltrami operator, and they both satisfy the growth condition with exponent $\b = 1 + \e$. This implies that
	\[
		M(r) - m(cr) \leq C[M(r) - M(c r)] \qquad\qquad  M(cr) - m(r) \leq C[m(cr) - m(r)].
	\]
	These may be rewritten to give $M(cr) - m(cr) \leq \frac{C-1}{C+1}[M(r) - m(r)]$, proving our claim.
	
	Now we consider the remaining conclusions. As $|p(x)|\leq C u_\W(x) \leq C d(x, \p \W)$, it follows from applying elliptic estimates on $B_{d(x, \p \W)/2}(x)$ that $|\grad p|\leq C$. Let $\r_p = p/u_\W \in C^{0, \a}$; then near any $x\in \p^*\W$ we have from Lemma \ref{l:tangentplane} that
	\[
		p(y) = \r_p(y) u_\W(y) = \r_p(x)[|\grad u_\W(x)|l_{x, \nu_x} + o(|x - y|)].
	\]
	Here $l_{x, \nu_x}$ is a truncated linear function in normal coordinates as defined in \eqref{eqn: truncated linear}. Applying elliptic estimates as in Corollary \ref{c:ntlimits}, we obtain that $\grad p(y) \rightarrow - \r_p(x) |\grad u_\W(x)| \nu_x$ nontangentially and the remaining conclusions follow.
\end{proof}

We focus our attention toward the term $\int_{\W}a_\W v^x_\W $. Following the discussion above, a key point will be to rewrite the equation solved by $v^x_\W$ using that the Poisson kernel is the derivative of the Green's function:
\begin{lemma}\label{lem:greentopoisson}
	The derivative of the Green's function $\grad_x G_\W(x, y)$ admits nontangential limits as $x \rightarrow z \in \p \W$ for every $y$ at $\cH^{n-1}$-a.e. $z \in \p^* \W$. Moreover, $\grad_x G_\W(x, y) = - \nu_x |\grad_x G_\W(x, y)|$ and $K(x, y) = |\n_x G_\W(x, y)|$ a.e. on $\p^* \W$.
\end{lemma}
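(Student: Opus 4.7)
The plan is to fix $y \in \W$, treat the Green's function $G_\W(\cdot, y)$ as a positive harmonic function vanishing on $\p \W$ on an annular neighborhood of $\p \W$ bounded away from $y$, and transfer to it the boundary regularity already established for $u_\W$ on the reduced boundary.

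For the first claim (existence of nontangential limits in the normal direction), I would set $c_0 = \tfrac{1}{2}d(y, \p \W)$ and excise the singularity by taking a smooth cutoff $\psi$ with $\psi \equiv 1$ on $\{d(\cdot, y) > c_0\}$ and $\psi \equiv 0$ on $B_{c_0/2}(y)$. Then $p := \psi\, G_\W(\cdot, y)$ is continuous on $\bar{\W}$, vanishes on $\p \W$, and satisfies $-\Lap p = f$ with $\|f\|_{L^\infty(\W)} \leq C$ (the Dirac contribution from $-\Lap G_\W(\cdot, y) = \d_y$ is killed by $\psi$, and the remaining terms are bounded via the upper bound $G_\W \leq C|x-y|^{2-n}$ of \eqref{e:green} on the annulus where $\grad \psi \neq 0$). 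Applying Lemma \ref{lem:improvedbhrhs} to a rescaling of $p$ yields $G_\W(\cdot, y)/u_\W \in C^{0, \a}$ in a neighborhood of $\p \W$. Writing this ratio as $\r_y$ and using Corollary \ref{c:ntlimits}, the product rule $\grad G_\W = \r_y \grad u_\W + u_\W \grad \r_y$ gives the nontangential limit $\grad_x G_\W(x, y) \to -\r_y(z)|\grad u_\W(z)|\nu_z$ at $\cH^{n-1}$-a.e. $z \in \p^* \W$, which is a negative multiple of $\nu_z$ as claimed.

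For the Poisson kernel identity, I would take $f \in C(\p \W)$ with harmonic extension $\bar{f} \in C(\bar{\W})$, so that $\bar{f}(y) = \int_{\p \W} f\, d\w_y$ by definition of harmonic measure. Excising a ball $B_\e(y)$ around the pole and applying the NTA divergence theorem from \cite{CTZ09} (valid by Corollary \ref{c:nta} together with the perimeter bound of Lemma \ref{l:densitybd} and the nontangential gradient limits from Step 1 and Corollary \ref{c:ntlimits}), then passing $\e \searrow 0$ with the pointwise bound on $G_\W$ from \eqref{e:green} to control the interior spherical contribution, should give
\[
    \bar{f}(y) = -\int_\W \bar{f}(x)\,\Lap_x G_\W(x, y)\, dm(x) = -\int_{\p^* \W} \bar{f}(x)\, g(\grad_x G_\W(x, y), \nu_x)\, d\cH^{n-1}(x),
\]
where the volume term $\int_\W g(\grad \bar{f}, \grad G_\W)\, dm$ from the first integration by parts vanishes after a second integration by parts (using $\Lap \bar{f} = 0$ and $G_\W(\cdot, y) = 0$ on $\p \W$). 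By Step 1 the integrand on the right equals $|\grad_x G_\W(x, y)|\, f(x)$, and since $f$ was arbitrary and $\cH^{n-1}(\p \W \setminus \p^* \W) = 0$, the Radon-Nikodym identity $K(x, y) = |\grad_x G_\W(x, y)|$ follows a.e.\ on $\p^* \W$.

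The main obstacle is the rigorous execution of the two integrations by parts on the NTA domain $\W$. If the statement of the divergence theorem in \cite{CTZ09} turns out to not directly apply in our setting, I would bypass the issue by performing the computation on the smooth approximating domains $\W_t := \{u_\W > t\}$, which have smooth boundary for a.e.\ $t > 0$ by Sard's theorem, and letting $t \searrow 0$, using the nontangential convergence of $\grad u_\W$ and $\grad G_\W(\cdot, y)$ from Corollary \ref{c:ntlimits} and Step 1 together with the uniform Lipschitz bound of Corollary \ref{cor:lip} to justify passing to the limit.
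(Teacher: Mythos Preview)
Your proposal is correct and follows essentially the same route as the paper: apply Lemma \ref{lem:improvedbhrhs} (after cutting off near the pole) to get the nontangential limits and normal alignment, then identify the Poisson kernel via a Green-type representation formula justified through the divergence theorem of \cite{CTZ09}.

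The one technical difference worth noting is in the second step. You work directly with the harmonic extension $\bar{f}$ of $f \in C(\p \W)$ and perform two integrations by parts; the second one produces a boundary term $\int_{\p^* \W} G_\W(\cdot, y)\, g(\grad \bar{f}, \nu)\, d\cH^{n-1}$ which you discard because $G_\W = 0$ on $\p \W$. This is correct morally but requires care, since $\grad \bar{f}$ need not have nontangential limits (or even be bounded near $\p \W$) for merely continuous $f$, so the product $G_\W \grad \bar{f}$ is not obviously covered by \cite{CTZ09}. The paper sidesteps this by testing against \emph{smooth} $\phi$ on $M$: the single identity
\[
\int_{\p^* \W} g(\grad_x G_\W(x, y), \nu_x)\, \phi(x)\, d\cH^{n-1}(x) = -\phi(y) - \int_\W G_\W(x, y)\, \Lap \phi(x)\, dx
\]
needs only the nontangential limits of $\grad G_\W$ you already established, and then the identification with harmonic measure follows by density (or by noting that $\phi(y) + \int_\W G_\W \Lap \phi$ equals the harmonic extension of $\phi|_{\p \W}$ evaluated at $y$). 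Your $\W_t = \{u_\W > t\}$ fallback would also close this gap, but the smooth-test-function route is shorter.
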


\begin{proof}[Sketch of proof.]
	The existence of nontangential limits and the expression $\grad_x G_\W(x, y) = - \nu_x |\grad_x G_\W(x, y)|$ may be obtained as in Lemma \ref{lem:improvedbhrhs}. The last point follows from the integration by parts formula
	\[
	\int_{\p^* \W } g(\grad_x G_\W(x, y), \nu_x) \phi(x) d \cH^{n-1}(x) = - \phi(y) - \int_{\W} G_\W(x, y) \Lap \phi(x) dx
	\]
	for any $\phi$ smooth (this may be justified using the divergence theorem of \cite{CTZ09}). This gives a representation formula for every harmonic function on $\W$ with continuous boundary conditions, so it follows from the definition of harmonic measure that $K(x, y) = |\n_x G_\W(x, y)|$ at $\cH^{n-1}$-a.e. $x$.
\end{proof}

The two lemmas will be used to control the term $\int_\W a_\W v_\W^x$ in \eqref{e:ptwiseel}. Recall from the previous section that $v_\W^x = h^x + q^x$. We focus on the term $h^x$ first.
\begin{lemma}\label{l:hbh}
	Let $f : \W \rightarrow \R$ be a function with $|f|\leq 1$. Then at a.e. $x \in \p \W$,
	\[
		\int_\W h^x f = |\grad u_\W(x)|^2 \r_f(x),
	\]
	where $\|\r_f\|_{C^{0, \a}} \leq C$ and $C, \a$ depend only on $v, \vmax, \vpar, R$. As a consequence,
	\[
		\left\|\frac{h^x}{|\grad u_\W(x)|^2} - \frac{h^y}{|\grad u_\W(y)|^2}\right\|_{L^1} \leq C d^\a(x, y).
	\]
\end{lemma}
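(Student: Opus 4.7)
The plan is to represent the integral $\int_\W h^x f$ as the nontangential limit of the normal derivative of a Green's potential, and then apply the inhomogeneous boundary Harnack principle of Lemma~\ref{lem:improvedbhrhs} to conclude.

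\textbf{Step 1: The Green's potential and its splitting.} First, I would introduce
\[
p(z) = \int_\W G_\W(z, y) f(y)\, dm(y), \quad z \in \W,
\]
which is the unique $H^1_0(\W)$ solution of $-\Lap p = f$ with $p = 0$ on $\p\W$, and which is continuous up to $\bar\W$ by the NTA property of $\W$ (Corollary~\ref{c:nta}). Since the lemma concerns a function $f$ which may change sign, I would split $f = f^+ - f^-$ and correspondingly $p = p^+ - p^-$, where $-\Lap p^\pm = f^\pm \geq 0$ with zero Dirichlet data, so that $p^\pm \geq 0$ in $\W$. Lemma~\ref{lem:improvedbhrhs} then applies to both $p^+$ and $p^-$ (each with $\|f^\pm\|_{L^\infty} \leq 1$), producing nonnegative $C^{0,\a}$ functions $\r_{p^\pm}$ on $\p\W$, with $\|\r_{p^\pm}\|_{C^{0,\a}} \leq C$ depending only on $v, \vmax, \vpar, R$, such that at $\cH^{n-1}$-a.e.\ $x \in \p^*\W$,
\[
|\grad p^\pm(x)| = |\grad u_\W(x)|\, \r_{p^\pm}(x) \quad \text{and} \quad \grad p^\pm(x) = -|\grad p^\pm(x)|\,\nu_x,
\]
the last equality coming from $p^\pm \geq 0$ in $\W$ with boundary value $0$ on $\p\W$.

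\textbf{Step 2: The normal derivative as an integral of the Poisson kernel.} For $z$ in the interior of $\W$, standard differentiation under the integral sign yields $\grad p(z) = \int_\W \grad_z G_\W(z, y) f(y)\, dm(y)$. At $\cH^{n-1}$-a.e.\ $x \in \p^*\W$, Lemma~\ref{lem:greentopoisson} gives the pointwise nontangential convergence $\grad_z G_\W(z, y) \to -\nu_x K(x, y)$ for each $y \in \W$. To pass to the nontangential limit in $z$, I would use a dominated convergence argument: the interior gradient estimate combined with the bound $G_\W(z, y) \le C d(z,\p\W) d(y, \p\W)/d^n(z, y)$ from Proposition~\ref{l:green} yields
\[
|\grad_z G_\W(z, y)| \leq C \min\!\left\{ d^{1-n}(z, y),\ d(y, \p\W)/d^n(z, y)\right\},
\]
which along any nontangential cone at $x$ is uniformly dominated by an integrable function of $y$ on $\W$. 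Using also that $\grad p$ admits a nontangential limit at $x$ by Lemma~\ref{lem:improvedbhrhs}, this yields
\[
\grad p(x) = -\nu_x \int_\W K(x, y) f(y)\, dm(y) \quad \text{at a.e.\ } x \in \p^*\W.
\]
Applying the same reasoning to $p^+$ and $p^-$ (whose normal components have definite signs as in Step~1) and subtracting gives
\[
\int_\W K(x, y) f(y)\, dm(y) = |\grad p^+(x)| - |\grad p^-(x)| = |\grad u_\W(x)|\left(\r_{p^+}(x) - \r_{p^-}(x)\right).
\]

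\textbf{Step 3: Conclusion and the $L^1$ estimate by duality.} Setting $\r_f := \r_{p^+} - \r_{p^-}$ and multiplying by $|\grad u_\W(x)|$ yields
\[
\int_\W h^x f = |\grad u_\W(x)|\int_\W K(x, y) f(y)\, dm(y) = |\grad u_\W(x)|^2 \r_f(x),
\]
with $\|\r_f\|_{C^{0,\a}(\p^*\W)} \leq 2C$, proving the first claim. For the consequence, the key point is that the $C^{0,\a}$ bound on $\r_f$ obtained in Step~1 depends on $f$ only through $\|f\|_{L^\infty} \leq 1$. Hence, by $L^1$–$L^\infty$ duality,
\[
\left\| \frac{h^x}{|\grad u_\W(x)|^2} - \frac{h^y}{|\grad u_\W(y)|^2} \right\|_{L^1(\W)} = \sup_{\|f\|_{L^\infty} \leq 1} |\r_f(x) - \r_f(y)| \leq C d^\a(x, y).
\]

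The main technical obstacle is Step~2: rigorously justifying the passage to the nontangential boundary limit in the identity $\grad p(z) = \int_\W \grad_z G_\W(z, y) f(y)\, dm(y)$. This requires carefully combining the pointwise nontangential convergence of $\grad_z G_\W$ from Lemma~\ref{lem:greentopoisson} with an integrable dominating function derived from the sharp Green's function estimates of Proposition~\ref{l:green}; the rest of the argument is a routine consequence of the inhomogeneous boundary Harnack principle and $L^1$–$L^\infty$ duality.
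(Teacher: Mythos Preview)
Your proposal is correct and follows essentially the same approach as the paper: represent $\int_\W h^x f$ via the normal derivative of the Green's potential $p$ solving $-\Lap p = f$, then invoke Lemma~\ref{lem:improvedbhrhs} for the H\"older bound and conclude by $L^1$--$L^\infty$ duality. The only minor differences are that the paper handles the sign of $\r_f$ directly (using that $p/u_\W$, not just its absolute value, is $C^{0,\a}$) rather than splitting $f=f^+-f^-$, and it justifies passing the gradient to the nontangential limit via the Vitali convergence theorem using the uniform $L^P$ bound $\sup_z\|\grad_z G_\W(z,\cdot)\|_{L^P}\leq C$ for $P<\tfrac{n}{n-1}$, rather than an explicit dominating function.
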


\begin{proof}
	From the definition of $h^x$ and Lemma \ref{lem:greentopoisson}, at a.e. $x \in \p^* \W$ we have
	\[
		\int_{\W}h^x f = |\grad u_\W(x)| \int_\W K(x, y) f(y) dy = - |\grad u_\W(x)| \int_\W g(\grad_x G_\W(x, y), \nu_x) f(y) dy.
	\]
	Let $x_k \in \W$ be a sequence of points converging to $x$ nontangentially (i.e. $\e d(x_k, x) \leq d(x_k, \p \W)$): then $\grad_{x} G_\W(x_k, y)$ converges to $\grad_x G_\W(x, y)$ for every $y$. From \cite[Lemma 1.2.8(iv)]{Kenig},  $\sup_x \|\grad_x G_\W(x, y)\|_{L^{P}(dy)} \leq C$ for $P \in [1, \frac{n}{n-1})$. Working in normal coordinates and applying the Vitali convergence theorem, this means that
	\[
		\int_\W g(\grad_x G_\W(x, y), \nu_x) f(y) dy = \lim_k \int_\W g(\grad_x G_\W(x_k, y), \nu_x) f(y) dy.
	\]

	Set $p(z) = \int_{\W}G_\W(z, y) f(y) dy$, which solves
	\[
	\begin{cases}
	-\Lap p = f & \text{ on } \W\\
	p = 0 & \text{ on } \p \W.
	\end{cases}
	\]
	From Lemma \ref{lem:improvedbhrhs}, $\grad p$ admits nontangential limits $\cH^{n-1}$-a.e. on $\p \W$ and $|\grad p(x)| = |\grad u_\W(x)| \r_f(x)$, with $\|\r_f\|_{C^{0, \a}} \leq C$. In particular,
	\[
		g(\grad p(x), \nu_x) = \lim_k g(\grad p(x_k), \nu_x) = \lim_k g(  \grad_x \int_\W G_\W(x_k, y) f(y) dy, \nu_x).
	\]
	We may pass the derivative under the integral sign (again using that $\sup_x \|\grad_x G_\W(x, y)\|_{L^{p}(dy)} \leq C$ to justify this), to obtain
	\[
		|\grad u_\W(x)| \r_f(x) = \lim_k \left|\int_\W g(\grad_x G_\W(x_k, y), \nu_x) f(y) dy\right| = \frac{1}{|\grad u_\W(x)|} \left|\int_{\W}h^x f\right|.
	\]
	The first conclusion of the lemma follows using $\r_f \sign \int_{\W}h^x f$ as the $\r_f$. The $L^1$ estimate is now immediate from duality.
\end{proof}

Next, the term involving $q^x$ is much easier to handle.
\begin{lemma}\label{l:qbh} Let $f : \W \rightarrow \R$ be a function with $|f|\leq 1$. Then at a.e. $x \in \p \W$,
	\[
	\int_\W q^x f = |\grad u(x)|^2 \r_f(x),
	\]
	where $\|\r_f\|_{C^{0, \a}} \leq C$ and $C, \a$ depend only on $R, v, \vmax, \vpar$.
\end{lemma}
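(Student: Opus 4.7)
The plan is to leverage the Green's function representation \eqref{e:qconv2} of $q^x$ together with Lemma~\ref{l:hbh} (which already gives the analogous statement for $h^x$) and a duality argument using the self-adjointness of $-\Lap - \ei(\W)$ on $u_\W^\perp$. Given $f$ with $|f|\leq 1$, first set $p_f(z) := \int_\W G_\W(z, y) f(y)\, dy$, so that $-\Lap p_f = f$ on $\W$ and $p_f = 0$ on $\p\W$. By Lemma~\ref{lem:improvedbhrhs}, $|p_f| \leq C u_\W$ and in particular $\|p_f\|_{L^\infty} \leq C$. Applying Fubini to \eqref{e:qconv2} gives
\[
  \int_\W q^x f \;=\; -|\grad u_\W(x)|^2 \int_\W u_\W\, p_f \;+\; \ei(\W) \int_\W h^x p_f \;+\; \ei(\W) \int_\W q^x p_f.
\]
The first term is $|\grad u_\W(x)|^2$ times a constant in $x$, while the second, by Lemma~\ref{l:hbh} applied with $p_f$ in place of $f$, equals $|\grad u_\W(x)|^2 \rho_1(x)$ with $\rho_1\in C^{0,\a}(\p\W)$.

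The main obstacle is the third term $\int q^x p_f$, which cannot be handled directly by Lemma~\ref{l:hbh}. To treat it, I would decompose orthogonally with respect to $u_\W$: write $p_f = \alpha_f u_\W + p_{f,\perp}$ with $\int p_{f,\perp} u_\W = 0$, and use the orthogonality relation $\int(q^x + h^x) u_\W = 0$ from \eqref{eqn: PDE for q} to deduce $\int q^x u_\W = -\int h^x u_\W = |\grad u_\W(x)|^2/\ei(\W)$ (which was already computed in the proof of Corollary~\ref{c:qconv}). Thus the $u_\W$-parallel contribution $\alpha_f \int q^x u_\W$ is again $|\grad u_\W(x)|^2$ times a constant. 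It remains to estimate $\int q^x p_{f,\perp} = \int q^x_\perp p_{f,\perp}$, where $q^x_\perp := q^x - (|\grad u_\W(x)|^2/\ei(\W)) u_\W \in H^1_0(\W)$ satisfies
\[
  (-\Lap - \ei(\W)) q^x_\perp \;=\; \ei(\W) h^x + |\grad u_\W(x)|^2 u_\W,
\]
as follows from \eqref{eqn: PDE for q} and $\Lap u_\W = -\ei(\W) u_\W$ on $\W$; note that the right-hand side is $L^2$-orthogonal to $u_\W$.

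The key step is now a duality argument. Since $p_{f,\perp}\in L^\infty$ is orthogonal to $u_\W$, by the Fredholm alternative (as in Lemma~\ref{l:l1est} and \eqref{e:fredholm}) there is a unique $\psi\in H^1_0(\W)$ with $\int \psi u_\W = 0$ solving $(-\Lap - \ei(\W))\psi = p_{f,\perp}$, and $\|\psi\|_{L^\infty}\leq C$ by elliptic regularity. Integrating by parts twice and using that both $q^x_\perp$ and $\psi$ lie in $H^1_0(\W)$,
\[
  \int q^x_\perp p_{f,\perp} \;=\; \int q^x_\perp (-\Lap - \ei(\W))\psi \;=\; \int \bigl[\ei(\W) h^x + |\grad u_\W(x)|^2 u_\W\bigr] \psi \;=\; \ei(\W)\int h^x \psi,
\]
since $\int u_\W \psi = 0$. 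Finally, Lemma~\ref{l:hbh} applied with the bounded function $\psi$ gives $\int h^x \psi = |\grad u_\W(x)|^2 \tilde\rho(x)$ with $\tilde\rho\in C^{0,\a}(\p\W)$. Summing the three contributions yields $\int_\W q^x f = |\grad u_\W(x)|^2 \rho_f(x)$ with $\rho_f\in C^{0,\a}$, with norm controlled uniformly for $\|f\|_\infty \leq 1$. The only real subtlety, beyond carefully justifying the integrations by parts at this low regularity (which is available because all functions involved lie in $H^1_0(\W)$ and the Laplacians are handled distributionally), is recognizing that the self-adjoint inversion on $u_\W^\perp$ converts the a priori bad recursive term into one that Lemma~\ref{l:hbh} can treat.
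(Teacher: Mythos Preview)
Your approach is correct (modulo an inessential sign in the displayed PDE for $q^x_\perp$, which disappears anyway since $\int u_\W\psi=0$), but it is considerably more elaborate than the paper's argument. The paper's proof is a single line: combine Remark~\ref{rem:L1estqh} (which gives $\|q^x/|\grad u_\W(x)|^2 - q^y/|\grad u_\W(y)|^2\|_{L^1} \leq C\|h^x/|\grad u_\W(x)|^2 - h^y/|\grad u_\W(y)|^2\|_{L^1}$) with the $L^1$ H\"older estimate at the end of Lemma~\ref{l:hbh}. Testing against any $|f|\leq 1$ immediately yields the $C^{0,\a}$ bound on $\rho_f$.

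What is interesting is that your duality argument---solving $(-\Lap-\ei(\W))\psi=p_{f,\perp}$ on $u_\W^\perp$ and integrating by parts to convert $\int q^x_\perp p_{f,\perp}$ into $\ei(\W)\int h^x\psi$---is essentially the method the paper uses later for the $C^{1,\a}$ version, Lemma~\ref{l:qhighreg}. There the simpler $L^1$ route via Remark~\ref{rem:L1estqh} is not available, and one genuinely needs to pass through the potential and Lemma~\ref{l:hhighreg}. So your argument is not wasted effort; it just solves the harder problem. One technical point you flagged but did not fully resolve: $q^x$ need not lie in $H^1_0(\W)$ (its source $\ei(\W)h^x$ is only $L^p$ for $p<n/(n-1)$), so the integration by parts should be justified by carrying out the identity first for $q^{T_{x,r}}_1\in H^1_0(\W)$ and passing to the limit using the $L^p$ convergence from Corollary~\ref{c:qconv} against the bounded function $\psi$.
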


\begin{proof}
	We apply Lemma \ref{l:hbh} to Remark \ref{rem:L1estqh} to obtain
	\[
		\left\|\frac{q^x}{|\grad u(x)|^2} - \frac{q^y}{|\grad u(y)|^2}\right\|_{L^1} \leq C \left\|\frac{h^x}{|\n u(x)|^2} - \frac{h^y}{|\n u(y)|^2}\right\|_{L^1} \leq C d^\a(x, y).
	\]
\end{proof}

We are now in a position to prove Theorem~\ref{thm:usefulEL}.
\begin{proof}[Proof of Theorem~\ref{thm:usefulEL}]
	Apply Lemmas \ref{l:hbh} and \ref{l:qbh} to $h^x$, $q^x$ with $f = a_\W$ to obtain that for almost every $x \in \p^* \cH^{n-1}$,
	\[
		\int_{\W}a_\W(x) v^x_\W = |\grad u(x)|^2 \rho_1(x),
	\]
	where $\|\rho_1\|_{C^{0, \a}(\p \W)}\leq C$. Then apply Lemma \ref{lem:improvedbhrhs} to $p = w_\W$ to give
	\[
		|\grad w_\W(x)|^2 = |\grad u_\W(x)|^2 \rho_2(x)
	\]
	where $\|\rho_2\|_{C^{0, \a}(\p \W)}\leq C$.
Inserting both into Corollary \ref{c:elpt} gives
	\[
		|\grad u_\W (x)|^2\left( -1 - \frac{\tpar}{2} \r_2(x) + \err \r_1(x)\right) = - A_0 - \err b_\W(x).
	\]
	So long as $\err_0$ is small enough, $-1 - \frac{\tpar}{2} \r_2(x) + \err \r_1(x) \leq -\frac{1}{2}$. From Lemma \ref{l:tangentplane}, we have that $|\grad u_\W| \in [1/C, C]$ a.e., so $A_0$ must also be bounded above and below. Choosing $\err_0$ smaller as necessary, we may rewrite
	\[
		A_0 = |\grad u_\W (x)|^2\frac{1 + \frac{\tpar}{2} \r_2(x) - \err \r_1(x)}{1 - \frac{\err}{A_0} b_\W(x)} = |\grad u_\W (x)|^2 (1 + \rho(x)),
	\]
	where $\|\rho\|_{C^{0, \a}(\p \W)}\leq C$ and $- C \err \leq  \rho$.
\end{proof}

\section{Viscosity form of the Euler-Lagrange equation and $C^{1,\a}$ estimates}\label{s:reg}
Thus far, we have derived the Euler-Lagrange equation satisfied by minimizers of the main energy in a pointwise ($\cH^{n-1}$ a.e.) sense and we established Theorem~\ref{thm:usefulEL} to express the Euler-Lagrange equation in a more useful form. In this section, we reformulate the (useful form of the) Euler-Lagrange equation in the viscosity sense. This will allow us to apply known regularity results for one-phase free boundary problems. In particular, when minimizers of the base energy are sufficiently regular (for instance, this is the case when $M$ is a simply connected space form), we find in Theorem~\ref{thm:C1a} below that any minimizer of the main energy is a small $C^{1,\alpha}$ perturbation a minimizer of the base energy. We additionally use the viscosity form of the Euler-Lagrange equation to guarantee that minimizers satisfy the volume constraint $|\W| =v$.

The viscosity form of the Euler-Lagrange follows directly from the pointwise version and generic free boundary arguments, and we only briefly sketch the proof. Details may be found in \cite{KL18}.

\begin{lemma}\label{l:viscel}
	Theorem \ref{thm:usefulEL} holds in the viscosity sense: let $\W$ be a minimizer of $\Ep$, $x \in \partial \W \cap \Qb_R$, and $\phi$ a smooth function on $B_r(x)$. Assume that $\phi_+ \leq(\geq) u_\W$ on $B_r(x)$ and $\phi(x) = 0$. Then
	\[
		|\grad \phi(x)|^2( 1 + \r(x)) \leq (\geq) A_0.
	\]
\end{lemma}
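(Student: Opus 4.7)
The plan is a standard contradiction argument via sliding lower barriers, following the approach used for one-phase free boundary problems in \cite{KL18, CSY18, MTV17}. I will treat the $\leq$ case; the $\geq$ case is symmetric. Suppose, for contradiction, that $\phi$ is smooth on $B_r(x)$ with $\phi_+ \leq u_\W$ and $\phi(x) = 0$, yet $|\grad \phi(x)|^2(1 + \r(x)) \geq A_0 + \d_0$ for some $\d_0 > 0$. If $|\grad \phi(x)| = 0$ the conclusion $0 \leq A_0$ is immediate since $A_0 \geq 1/C > 0$, so I may assume $|\grad \phi(x)| > 0$. Then the smooth level set $\{\phi = 0\}$ separates $\W$ locally in $B_r(x)$, so $\W$ satisfies the exterior tangent ball condition at $x$ with outer normal $\nu = -\grad\phi(x)/|\grad\phi(x)|$.

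The next step is to produce a sequence of reduced boundary points $y_k \to x$ at which the pointwise Euler--Lagrange identity of Theorem~\ref{thm:usefulEL} applies and satisfies $|\grad u_\W(y_k)| \geq |\grad \phi(x)| - o_k(1)$. To this end, for each small $\e > 0$ I would first replace $\phi$ in a neighborhood of $x$ with a strict lower barrier $\phi_\e$ whose zero level set is a strictly convex perturbation of $\{\phi = 0\}$ (for example by subtracting $\e|\cdot - x|^2$ in normal coordinates and adding back a small constant) so that on $\partial B_{r/2}(x)$ one has $\phi_\e + c\e < u_\W$ on the contact set $\{u_\W > 0\}$ for some $c > 0$, while $\phi_\e(x) = 0$. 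Translate $\phi_\e$ upward continuously, decreasing the translation $t$ from some large positive value, until the first time $t_\e$ that $\phi_\e + t_\e$ touches $u_\W$ from below at some interior point $y_\e \in B_{r/2}(x) \cap \overline{\W}$. By the nondegeneracy of Proposition~\ref{l:bdryharnackef} and the Lipschitz bound of Corollary~\ref{cor:lip}, $y_\e \in \partial\W$ and $y_\e \to x$ as $\e \to 0$. Moreover, $\W$ has a strict exterior tangent ball at $y_\e$ coming from the strictly convex level set of $\phi_\e + t_\e$.

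The strict exterior ball condition at $y_\e$ forces the blow-up of $u_\W$ there to be the linear function $|\grad u_\W(y_\e)| \, l_{y_\e, \nu_{y_\e}}$ in the sense of Lemma~\ref{l:tangentplane}, so in particular $y_\e \in \partial^*\W$ and $|\grad u_\W(y_\e)| \geq |\grad \phi_\e(y_\e)|$ by comparing the leading-order behavior of $u_\W$ and the barrier at $y_\e$. By construction $|\grad \phi_\e(y_\e)| \geq |\grad \phi(x)| - o_\e(1)$. The remaining issue is that $y_\e$ itself may fail to be a $\cH^{n-1}$-Lebesgue point of $|\grad u_\W|$ and $\r$, so Theorem~\ref{thm:usefulEL} does not apply directly at $y_\e$; but the pointwise density estimate of Corollary~\ref{c:reducedgmt} and the $C^{0,\a}$ regularity of $\r$ from Theorem~\ref{thm:usefulEL} allow me to select, for each $\e$, a Lebesgue point $z_\e \in \partial^*\W$ with $d(z_\e, y_\e) \leq \e^2$, $|\r(z_\e) - \r(y_\e)| \leq C\e^{2\a}$, and $|\grad u_\W(z_\e)| \geq |\grad u_\W(y_\e)| - o_\e(1)$; the last inequality comes from a Harnack/flatness argument using that both $y_\e$ and $z_\e$ see the same flat blow-up. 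Then $A_0 = |\grad u_\W(z_\e)|^2(1+\r(z_\e)) \geq (|\grad \phi(x)| - o_\e(1))^2(1 + \r(x) - o_\e(1)) \to |\grad \phi(x)|^2(1+\r(x)) \geq A_0 + \d_0$, a contradiction.

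The main obstacle is the last step: passing from the touching point $y_\e$, where we have quantitative control of $|\grad u_\W(y_\e)|$ via the barrier, to a nearby reduced boundary Lebesgue point $z_\e$ where Theorem~\ref{thm:usefulEL} applies without losing the lower bound on $|\grad u_\W|$. This is precisely the technical content that is worked out in detail in \cite{KL18} using the flatness of the blow-up at $y_\e$ and the $C^{0,\a}$ regularity of the coefficient $\r$; given that infrastructure, the sliding and comparison arguments outlined above go through essentially verbatim on the Riemannian manifold, since all computations are local and the exponential map provides the needed normal coordinates.
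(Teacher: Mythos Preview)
Your sliding-barrier approach is a different route from the paper's, and the final step is not justified as written. The paper argues by a direct blow-up at the original point $x$: rescaling $u_r(y) = u_\W(ry)/r$ in normal coordinates, any subsequential limit $u$ is nonnegative, Lipschitz, harmonic on its positivity set, and (from the touching by $\phi$) satisfies $u \geq (\leq)\,(\grad\phi(0)\cdot y)_+$. Together with the two-sided linear growth inherited from $\DO(\W),\UP(\W)$, this forces $u = \alpha(y\cdot\nu)_+$ for some $\alpha \geq (\leq)\,|\grad\phi(x)|$ (citing \cite[Lemma~11.17]{CaffSalsa} or \cite[Lemma~3.6]{KL18}). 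The slope $\alpha$ is then identified \emph{exactly} as $\sqrt{A_0/(1+\r(x))}$ by passing the distributional Laplacian to the limit: one writes $-\Lap u_r = |\grad u_r|\,d\cH^{n-1}\mres(\p\W/r) + o_r(1)$ with $|\grad u_r| = \sqrt{A_0/(1+\r(r\cdot))}$ for $\cH^{n-1}$-a.e.\ point by Theorem~\ref{thm:usefulEL}, and since $\r$ is H\"older and the rescaled perimeter measures converge weakly to the hyperplane, the limit is $\sqrt{A_0/(1+\r(x))}\,d\cH^{n-1}\mres\{y\cdot\nu=0\}$.

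Your argument instead slides to a touching point $y_\e$ and then attempts to pass to a nearby Lebesgue point $z_\e$. The sliding detour is unnecessary, since the touching by $\phi$ at $x$ already produces the half-linear blow-up there. More substantively, your justification for selecting $z_\e$ with $|\grad u_\W(z_\e)| \geq |\grad u_\W(y_\e)| - o_\e(1)$ via ``a Harnack/flatness argument using that both $y_\e$ and $z_\e$ see the same flat blow-up'' is not the correct mechanism: flatness of the blow-up at $y_\e$ does not by itself control $|\grad u_\W|$ at nearby reduced-boundary points, because $|\grad u_\W(z_\e)|$ is defined through the blow-up at $z_\e$, not at $y_\e$. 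What does work is precisely the paper's measure-convergence step applied at $y_\e$: it gives $\fint_{\p^*\W\cap B_s(y_\e)}|\grad u_\W|\,d\cH^{n-1}\to\alpha$, from which a suitable $z_\e$ can be extracted. Once this is supplied your argument is correct, but strictly longer than the paper's, since the same measure identification already works at $x$ without the sliding.
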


\begin{proof}[Sketch of proof.]	
	We work in normal coordinates around $x$. Let $u_r(y) = u_\W(r y)/r$ be rescalings, which converge (along a subsequence) locally uniformly on $\R^n$ to a function $u$. Similarly rescaling $\phi_r(y) = \frac{\phi(r y)}{r}$, this converges to the linear function $\phi_\infty(y) = \grad \phi(0) \cdot y$. We have that $-\Lap u_r \rightarrow -\Lap u$ in the sense of distributions, so in particular as measures; it follows that $u$ is harmonic on $\{u > 0 \}$. As $u_r \geq(\leq) (\phi_r)_+$, this passes to the limit to give $u \geq (\leq) \phi_\infty$. In particular, $\{u > 0\}$ contains (is contained in) the half-space $\{y: \grad \phi(0) \cdot y > 0 \}$. We also recover that $c s \leq \sup_{B_s(0)} u \leq C s$ in the limit.
	
	Possibly choosing a further subsequence, it may be shown that $u$ is a half-linear function $\a (y \cdot \nu)_+$, with $\nu = \n \phi /|\n \phi|$. See \cite[Lemma 11.17]{CaffSalsa} or \cite[Lemma 3.6]{KL18} for details.
	
	As measures, we have that $- \Lap u_r = |\n u_r| d\cH^{n-1}\mres \p \W + o_r(1)$, and $|\n u_r(y)| = \sqrt{A_0/(1 + \r(r y))}$ a.e. on $\p \W$. Passing to the limit in the sense of distributions shows that
	\[
		\int_{\p^* \W/r}\psi |\grad u_r(y)| d\cH^{n-1} = - \int \psi d\Lap u_r + o_1(r) \rightarrow - \int \psi d\Lap u = \int_{y \cdot \nu = 0} \a \psi d\cH^{n-1}
	\]
	for any smooth $\psi$. The left-most integral converges to
	\[
		\sqrt{A_0/(1 + \r(0))} \lim_r \int_{\p^* \W/r} \psi d\cH^{n-1}, 
	\]
	and as $\W/r \rightarrow \{ y \cdot \nu > 0 \}$ locally as sets of finite perimeter (possibly taking a further subsequence), this limit is
	\[
		\sqrt{A_0/(1 + \r(0))} \int_{y \cdot \nu = 0} \psi d\cH^{n-1}.
	\]
	Therefore $\a = A_0/(1 + \r(0))$, and the conclusion follows from $\a \geq (\leq)|\grad \phi(0)|$.
\end{proof}

The following is an application of a regularity theorem, first shown in \cite{AC81} in simpler settings and generalized by De Silva \cite{DeSilva11} to equations and free boundary conditions which include the one satisfied by $u_\W$ here. It guarantees that at sufficiently flat points of the boundary $\p \W$, it may be represented as a $C^{1, \a}$ graph over a tangent plane.

\begin{lemma}\label{l:ds}
	There is a $\d = \d(v, \vmax, \vpar, R)$ such that the following holds. Fix any $x\in \p \W\cap \Qb_R$ and $r < \d$ with $B_r(x) \ss \Qb_R$. If $|\W \triangle B_{r, \nu}(x) | < \d |B_r(x)|$ then we may parametrize the entirety of $\p \W\cap B_{r/2}(x)$ as a $C^{1, \a}$ normal graph over the lateral boundary $\p B_{r, \nu}(x) \cap B_r(x)$, with $C^{1, \a}$ norm bounded in terms of only $v, \vmax, \vpar, R$.
\end{lemma}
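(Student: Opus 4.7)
The plan is to reduce to the flatness-implies-$C^{1,\a}$ regularity theorem of De Silva \cite{DeSilva11} for one-phase free boundary problems with variable coefficients and right-hand side. The setup for this is already essentially assembled: from Lemma \ref{l:viscel} the function $u_\W$ satisfies the free boundary condition $|\grad u_\W|^2(1+\r) = A_0$ in the viscosity sense, with $A_0 \in [1/C, C]$ and $\r \in C^{0,\a}(\p\W)$ bounded in terms of only $v, \vmax, \vpar, R$; and on the positivity set $\{u_\W > 0\} = \W$ we have the linear PDE $-\Lap u_\W = \ei(\W) u_\W$ with bounded right-hand side. Pulling back to normal coordinates in $B_r(x)$ expresses the metric as a smooth perturbation of the Euclidean one, and the rescaled function $u_{\W,r}(y) := u_\W(\exp_x(r y))/r$ solves a second order linear equation in non-divergence form on $\W/r \cap B_1$ with smooth coefficients converging to the flat ones as $r \searrow 0$, together with a free boundary condition of the form $|\grad u_{\W,r}|^2(1 + \tilde\r_r) = A_0$ on $\partial(\W/r)$, with $\tilde\r_r$ satisfying the same $C^{0,\a}$ bound.

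The first step is to convert the measure-flatness hypothesis $|\W \triangle B_{r,\nu}(x)| < \d|B_r(x)|$ into pointwise flatness of both $\p\W$ and $u_\W$ around a truncated linear function. This is precisely the content of Lemma \ref{l:tangentplane}: for any $s \in (0, s_0)$, if $\d = \d(s)$ is chosen small enough, then $\p\W \cap B_{cr}(x)$ lies in the strip $\{|g(\cdot - x, \nu)| < sr\}$, and $|u_\W - \a l_{x, \nu}| < sr$ on $B_{c\sqrt{s}r}(x)$ for some $\a \in [c, 1/c]$. In the rescaled picture, this says that after a further dilation by $\sqrt{s}$ we have $\p\W$ trapped in an $O(\sqrt{s})$-strip and $u_\W$ uniformly $O(\sqrt{s})$-close to a linear function with gradient of controlled size. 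Up to a final normalization by $\a^{-1}\sqrt{A_0/(1+\r(x))}$ so that the free boundary value $A_0$ matches the candidate slope, we arrive at precisely the ``$\e$-flat'' configuration hypothesized in \cite[Theorem 1.1]{DeSilva11}.

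The second step is to apply De Silva's flatness-improvement theorem. The hypotheses of \cite{DeSilva11} -- linear PDE in non-divergence form with bounded measurable right-hand side on the positivity set, and a viscosity free boundary condition $|\grad u| = G$ with $G$ a H\"older continuous positive function -- are exactly what we have verified (our $G = \sqrt{A_0/(1+\r)}$ is $C^{0,\a}$ and bounded between positive constants). De Silva's theorem then asserts that once the flatness parameter drops below a universal threshold, it improves geometrically at the next dyadic scale, with the improved tangent direction differing from the original by at most $C\e^{1/2}$ (or similar). Iterating this dichotomy at every point of $\p\W \cap B_{r/2}(x)$ and at every dyadic subscale produces uniform modulus-of-continuity estimates for the approximate normal direction, and standard arguments then package $\p\W \cap B_{r/2}(x)$ as a $C^{1,\a}$ normal graph over $\p B_{r,\nu}(x) \cap B_r(x)$ with bounds depending only on the universal parameters from \cite{DeSilva11} and on $v, \vmax, \vpar, R$.

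The main technical point to be careful about is matching conventions so that the theorem of \cite{DeSilva11}, which is stated in flat space and for a fixed free boundary condition, applies to our situation with a Riemannian metric and a spatially varying, H\"older continuous coefficient $\r$. Both of these extensions are standard and are in fact built into the statement of \cite{DeSilva11}: the metric enters only through the coefficients of the interior linear equation, which are smooth and converge uniformly to the flat ones on shrinking balls; and the variable coefficient $\r(x)$ in the free boundary condition is absorbed into the ``variable gradient bound'' $G$, whose required regularity is exactly the $C^{0,\a}$ regularity we established in Theorem \ref{thm:usefulEL}. Once these identifications are made, Lemma \ref{l:ds} follows directly, with the constants traceable back to the $C^{0,\a}$ constant of $\r$, the bounds on $A_0$, and the NTA constants of $\W$.
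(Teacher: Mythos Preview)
Your proposal is correct and follows essentially the same approach as the paper: convert measure-flatness to pointwise flatness via Lemma \ref{l:tangentplane}, invoke the viscosity free boundary condition from Lemma \ref{l:viscel}, and then apply De Silva's theorem \cite{DeSilva11}. The paper's proof is terser but identical in structure, concluding with a covering argument to pass from the smaller ball $B_{cr/2}(x)$ furnished by De Silva to all of $B_{r/2}(x)$.
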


\begin{proof}
	Choosing $\d$ small, Lemma \ref{l:tangentplane} implies that
	\[
		B_{c r}(x) \cap \p \W \ss \exp_x \{ \mathscr{v} \in T_x M : |\mathscr{v}| < r,  |g(\mathscr{v}, \nu)| < \d' r \}
	\]
	and
	\[
		\sup_{B_{cr}(x)} |u_\W - l_{x,\nu}| \leq \d' c r
	\]
	for a small $\d'$. Applying Lemma \ref{l:viscel}, we see that $\W$ and $u_\W$ is a viscosity solution to the free boundary problem
	\[
		\begin{cases}
			- \Lap u_\W  = \ei(\W) u_\W &\text{ on } B_{cr}(x) \cap \W\\
			u_\W > 0 &\text{ on } B_{cr}(x) \cap \W\\
			u_\W = 0 &\text{ on } B_{cr}(x) \cap \p \W\\
			|\grad u_\W | = f(x) &\text{ on } B_{cr}(x) \cap \p \W,\\
		\end{cases}
	\]
	where $0 < c \leq f \leq C < \infty$ and $\|f\|_{C^{0, \a}(\p \W)} \leq C$. Applying the main theorem of \cite{DeSilva11}, it follows that $B_{c r/2}(x) \cap \p \W$ may be parametrized, in normal coordinates, as a $C^{1, \a}$ graph over $\{e \cdot \nu = 0 \}$, with bounded $C^{1, \a'}$ norm for some $\a' > 0$. The conclusion now follows from a standard covering argument.
\end{proof}

\begin{remark}\label{r:smallgraph}\rm{
	The previous lemma actually implies that if, working in normal coordinates around $x$, $\p \W = \{ (x', f(x')) : |x'| < \frac{r}{2} \}$ on $B_{r/2}(x)$ with $\|f\|_{C^{1, \a}} \leq C$, then $f$ is small. In particular, Lemma \ref{l:tangentplane} gives that $|f| = C \d^{1/n}$, while $[\grad f]_{C^{0, \a'}} \leq C r^{\a - \a'} \leq C \d^{\a - \a'}$ for $\a' < \a$. This means that $\|\grad f\|_{C^{0, \a'}} \leq C \d^{\z}$ for some $\z > 0$.}
\end{remark}

In the next theorem, we say that a collection of open sets, in this case $\Min$, is \emph{uniformly} $C^{k, \a}$ if there is a constant $C$ such that for every $x \in \p U$ for $U \in \Min$, $B_{1/C}(x)$ admits normal coordinates, and on this ball $\p U$ may be expressed as a graph with $C^{k, \a}$ norm at most $C$ over a hyperplane in normal coordinates.

\begin{theorem}\label{thm:C1a}
	Assume that $\Min$ is uniformly $C^{2, \a}$. Then for every $r_1 > 0$ there is an $\err_1 = \err_1(v, \vmax, \vpar, R, r_1) > 0$ such that if $\W$ minimizes $\Ep$ and $\err < \err_1$, then $\p \W \cap \{ x \in \Qb_R: d(x, \p \Qb_R) > r_1 \}$ may be parametrized as a $C^{1, \a}$ normal graph (with $C^{1, \a}$ norm bounded by $r_1$) over $\p U$ for some $U \in \Min$.
\end{theorem}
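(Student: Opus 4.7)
The plan is to combine three ingredients: (i) when $\err$ is small, every minimizer $\W$ of $\Ep$ is globally $L^1$-close to some $U\in\Min$; (ii) the uniform $C^{2,\alpha}$ hypothesis on $\Min$ forces $\p U$ to be almost flat at small scales, so $\W$ is close to a half-ball at every boundary point near $\p U$; and (iii) Lemma~\ref{l:ds} and Remark~\ref{r:smallgraph} then give local $C^{1,\alpha}$ graphs of small norm, which we patch into a single normal graph over $\p U$.

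For step (i), since $\nl\leq 1$ we have $\En(\W)\leq\Ep(\W)\leq\Ep(U)\leq\minE+\err$ for any $U\in\Min$, so Lemma~\ref{l:simpleeval} produces, given any prescribed $\gamma>0$, a $U\in\Min$ with $|\W\triangle U|<\gamma$ whenever $\err$ is small enough in terms of $\gamma$. The density estimates of Lemma~\ref{l:densitybd} for $\W$, together with the elementary density bounds for the smooth set $U$, upgrade this to Hausdorff closeness of $\p\W$ and $\p U$ with rate $C\gamma^{1/n}$.

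Now fix $x\in\p\W$ with $d(x,\p\Qb_R)>r_1$, and let $y\in\p U$ achieve $\rho:=d(x,y)\leq C\gamma^{1/n}$. In normal coordinates around $y$, the $C^{2,\alpha}$ hypothesis writes $\p U$ as the graph of a function $f$ with $f(0)=0$, $\nabla f(0)=0$, $\|f\|_{C^{2,\alpha}}\leq K$, so $|U\cap B_r(y)\triangle B_{r,\nu_y}(y)|\leq CKr^{n+1}$; replacing the center $y$ with the nearby point $x$ alters the half-ball by at most a slab of volume $C\rho\, r^{n-1}$. Combining with step (i),
\[
\big|(\W\cap B_r(x))\triangle B_{r,\nu_y}(x)\big|\ \leq\ \gamma+CKr^{n+1}+C\gamma^{1/n}r^{n-1}.
\]
The small parameters are chosen in the following order: first fix $\delta>0$ smaller than the constant of Lemma~\ref{l:ds} and so small that the graph-norm bound $C\delta^\zeta$ furnished by Remark~\ref{r:smallgraph} is at most $r_1/4$; next fix $r\leq r_1$ with $CKr\leq\delta/3$; finally take $\err_1$ so small that the resulting $\gamma$ satisfies $\gamma\leq\delta r^n/3$ and $C\gamma^{1/n}r^{n-1}\leq\delta r^n/3$. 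The displayed bound is then $<\delta r^n$, and Lemma~\ref{l:ds} together with Remark~\ref{r:smallgraph} realizes $\p\W\cap B_{r/2}(x)$ as a $C^{1,\alpha}$ graph over the tangent plane $\{e\cdot\nu_y=0\}$ with norm $\leq r_1/4$.

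Since $\p U$ is itself a $C^{2,\alpha}$ graph over this same tangent plane near $y$, with norm at scale $r$ bounded by $Kr^{1+\alpha}\ll r_1$, subtracting the two graphs (or applying the implicit function theorem to the normal projection onto $\p U$) rewrites $\p\W\cap B_{r/4}(x)$ as a $C^{1,\alpha}$ normal graph over $\p U\cap B_{r/4}(y)$ with norm $\leq r_1/2$. The uniform thickness of the tubular neighborhood of $\p U$ granted by the $C^{2,\alpha}$ bound lets us patch these local pieces into a single $C^{1,\alpha}$ normal graph of $\p\W\cap\{d(\cdot,\p\Qb_R)>r_1\}$ over a subset of $\p U$ with global $C^{1,\alpha}$ norm $\leq r_1$. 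The main technical obstacle is the careful bookkeeping of the interdependent smallness conditions on $\delta,r,\gamma,\err_1$; the only conceptual subtlety, the passage from $L^1$ to Hausdorff closeness, is standard given the two-sided density estimates of Lemma~\ref{l:densitybd}.
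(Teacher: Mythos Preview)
Your argument is correct and follows essentially the same route as the paper's proof: both use Lemma~\ref{l:simpleeval} to obtain $L^1$ closeness of $\W$ to some $U\in\Min$, upgrade via density estimates to Hausdorff closeness, exploit the uniform $C^{2,\alpha}$ regularity of $\p U$ to show $\W$ is $\delta$-flat at a fixed small scale near each boundary point, invoke Lemma~\ref{l:ds} with Remark~\ref{r:smallgraph}, and then compose with the normal parametrization of the tangent plane over $\p U$. Your bookkeeping of the parameter dependencies is somewhat more explicit than the paper's, but the structure and the key estimates (in particular the triangle-inequality decomposition of $|\W\triangle B_{r,\nu_y}(x)|$) are identical.
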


If it is known that $\W \cc \Qb_R$ (uniformly), then one may take a fixed $r_1 < d(\W, M \sm \Qb_R)$ and obtain that the entire boundary $\p \W$ may be parametrized in this manner. In particular, this always applies to the case of $M / \Isom_0$ compact, using Theorem \ref{t:globalexist}. The assumption that $\p U \in C^{2, \a}$ is not needed to guarantee that $\p \W \in C^{1, \a}$ (even a flatness condition at every point would suffice), but it is necessary to express $\p \W$ as a graph over $\p U$.

\begin{proof}
	Let $S = \{ x \in \Qb_R: d(x, \p \Qb_R) > r_1 \}$, and let $\d$ be the parameter from Lemma \ref{l:ds}. First, observe that under the assumptions on $\Min$, there is an $0 < r' < \d$ such that for every $x \in \p U \cap S$ for some $U \in \Min$, $|B_{r', \nu_x}(x) \triangle U| < |B_{r'}(x)| \frac{\d}{4}$ and $B_{r', \nu_x}(x) \sm B_{r'}(x)$ may be represented as a $C^{1, \a}$ normal graph over $\p U$ with $C^{1, \a}$ norm bounded by $1/r'$. Indeed, this follows from the regularity assumption and the tubular neighborhood theorem.

	Choose $\err_1$ small enough that $\En(\W)\leq \minE + \err \leq \minE + \err_1$ and Lemma \ref{l:simpleeval} imply that $|\W \triangle U| \leq \frac{\d}{4} \inf_{x \in \Qb_R}|B_{r'}(x)|$ for some $U \in \Min$. Up to further decreasing $\err_1,$ we may apply Lemma \ref{l:densitybd} to take Hausdorff distance between $\partial U$ and $\partial \W$ smaller than $\kappa  r_1/2,$ where $\kappa$ is a fixed constant to be specified below. Then for any $x \in \p \W \cap S$, let $y \in \p U \cap S$ with $d(x, y) < \k r'$, and compute in normal coordinates around $y$:
	\begin{align*}
		|\W \triangle B_{r', \nu_y}(x)| &\leq |U \triangle \W|  + |B_{r', \nu_y}(y) \triangle U| + |B_{r', \nu_y}(x) \triangle B_{r', \nu_y}(y)|\\
		& \leq \frac{\d}{4} |B_{r'}(x)| + \frac{\d}{4} |B_{r'}(x)| + |B_{r', \nu_y}(x) \triangle B_{r', \nu_y}(y)|  \leq \left[\frac{\d}{2} + C\k \right]|B_{r'}(x)|
	\end{align*}
	where $C$ is a constant depending only on the metric. By choosing $\k$ small enough depending on $C$ and $\delta$, the right-hand side is bounded above by $\d |B_{r'}(x)|$, so the hypotheses of Lemma~\ref{l:ds} are satisfied at $x$ at scale $r'$ in direction $\nu_y.$
	
	Apply Lemma \ref{l:ds} (and Remark \ref{r:smallgraph}) on $B_r(x)$ to obtain that $\p \W \cap B_{r'/2}$ may be expressed as a $C^{1, \a}$ graph over $B_{r', \nu_x}(x) \sm B_{r'}(x)$ with$C^{1, \a}$ norm bounded by $c r_1$ for a small $c$. We may then parametrize $\p \W$ over $\p U$ instead by composing with the parametrization of $B_{r', \nu_x}(x) \sm B_{r'}(x)$ over $\p U$ to obtain the conclusion.
\end{proof}

Another application of our Euler-Lagrange equations is the following fact about the volume $|\W|$:

\begin{proposition}\label{l:volumeisright}
	There is a $\vpar_*(v, \vmax, R) > 0$, $\tpar_*(v, \vmax, R, \vpar)>0$, and $\err_*(v, \vmax, R, \vpar, \tpar) > 0$ such that for any $\vpar < \vpar_*$, $\tpar < \tpar_*$, and $\err < \err_*$, every minimizer $\W$ of $\Ep$ has $|\W| = v$. If $M / \Isom_0 $ is compact, all constants may be taken independent of $R$.
\end{proposition}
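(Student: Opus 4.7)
The plan is to argue by contradiction. Suppose some minimizer $\W$ of $\Ep$ has $|\W| \neq v$. By Lemma~\ref{l:volumenottoobig}, $v/2 < |\W| < (\vmax+v)/2$ (assuming $\vpar$ is below the threshold there), so $f_{v,\vpar}$ is linear in a neighborhood of $|\W|$ with slope
\[
s := f_{v,\vpar}'(|\W|) = \begin{cases} \vpar & \text{if } |\W| < v,\\ 1/\vpar & \text{if } |\W| > v. \end{cases}
\]
First I would apply the second form of Lemma~\ref{l:elvf} to vector fields supported on arbitrarily small balls $B_r(x_0) \subset \Qb_R$ (always possible since $r$ can be taken small enough that $|B_r(x_0)|<||\W|-v|$). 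Following the argument of Corollary~\ref{c:elpt} verbatim but with the single approximating vector field $T_{x,r}/(\w_{n-1}r^{n-1})$, and passing $r\to 0$, I obtain the pointwise Euler-Lagrange identity
\[
-|\n u_\W(x)|^2 - \frac{\tpar}{2}|\n w_\W(x)|^2 + s + \err\left[b_\W(x) + \int_\W a_\W v_\W^x\right] = 0
\]
for $\cH^{n-1}$-a.e. $x\in \p^*\W \cap \Qb_R$.

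Next, I would run the reduction procedure of Theorem~\ref{thm:usefulEL}: Lemma~\ref{lem:improvedbhrhs} applied to $w_\W$ gives $|\n w_\W(x)|^2 = |\n u_\W(x)|^2 \r_2(x)$ with $\r_2 \in C^{0,\a}$, while Lemmas~\ref{l:hbh} and~\ref{l:qbh} applied to $f = a_\W$ give $\int_\W a_\W v_\W^x = |\n u_\W(x)|^2 \r_1(x)$ with $\r_1 \in C^{0,\a}$. Together with $|b_\W|\le 1$, this rewrites the identity as
\[
|\n u_\W(x)|^2\bigl(1 + \tfrac{\tpar}{2}\r_2(x) - \err \r_1(x)\bigr) = s - \err b_\W(x).
\]
Corollary~\ref{c:reducedgmt}(3) provides uniform bounds $|\n u_\W(x)| \in [c_*, C_*]$ on $\p^*\W$, so for $\err, \tpar$ small enough (depending on the $C^{0,\a}$ bounds for $\r_1,\r_2$, hence on $R,v,\vmax,\vpar$) the parenthetical factor lies in $[\tfrac12,2]$ and $|\err b_\W| \le \tfrac14 c_*^2$, which forces
\[
s \in [c_1, C_1]
\]
for some $c_1, C_1>0$ depending only on $R, v, \vmax$ (and a fixed choice of upper bound $\vpar \le \vpar_0$).

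Finally, I would set $\vpar_* := \min\{c_1, 1/C_1\}/2$. For any $\vpar < \vpar_*$, the case $s = \vpar$ would give $\vpar \ge c_1 > \vpar_*$, a contradiction; the case $s = 1/\vpar$ would give $1/\vpar \le C_1$, i.e.\ $\vpar \ge 1/C_1 > \vpar_*$, again a contradiction. Hence $|\W| = v$. The main technical obstacle is ensuring that the constants $c_1, C_1$ are genuinely independent of the choice of $\vpar$ within $(0,\vpar_0)$: this requires tracing through the proofs of Theorem~\ref{t:lb} (lower bound on $\DO(\W)$) and Theorem~\ref{l:upper} (upper bound on $\UP(\W)$) to confirm that $\vpar$ enters only through threshold conditions on $\err$ and $\tpar$ and through the assumption $\vpar \le \vpar_0$, not in a way causing $c_m, C_M$ to degenerate as $\vpar \to 0$; this in turn feeds into the bounds of Corollary~\ref{c:reducedgmt}. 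The case $M/\Isom$ compact is handled identically, using Theorem~\ref{t:globalexist} and the $R$-independent constants noted there.
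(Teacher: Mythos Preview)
Your overall strategy matches the paper's: assume $|\W|\neq v$, invoke the pointwise Euler--Lagrange identity with $A_0 = f'_{v,\vpar}(|\W|)$, and derive a contradiction by showing $f'_{v,\vpar}(|\W|)$ cannot equal $\vpar$ or $1/\vpar$. However, your final step contains a genuine circularity that you flag as a ``technical obstacle'' but cannot in fact resolve by inspection.

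The issue is precisely the $\vpar$-dependence of the bounds $c_*,C_*$ on $|\nabla u_\W|$ from Corollary~\ref{c:reducedgmt}(3). These trace back to $\DO(\W)\geq c_m$ in Theorem~\ref{t:lb} and $\UP(\W)\leq C_M$ in Theorem~\ref{l:upper}, and the constants there \emph{do} degenerate as $\vpar\to 0$. For instance, in the proof of Lemma~\ref{l:davidtoroiteration}, the inward-minimality comparison produces an inequality of the form $(\vpar-\err)|\W\setminus\W'| \leq 2[\ldots]$, so the constant in \eqref{e:davidtoroiteration} scales like $1/\vpar$; tracing through, one finds $c_m$ is of order $\sqrt{\vpar}$, hence $c_1\approx c_*^2$ is at best of order $\vpar$. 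The inequality $\vpar < c_1$ is then not available. A similar (dual) issue affects $C_M$ via Lemma~\ref{l:harmonicapprox}, where a factor $C/\vpar$ appears. Thus choosing $\vpar_* = \min\{c_1,1/C_1\}/2$ is circular: the right-hand side itself depends on $\vpar$.

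The paper's proof sidesteps this by using estimates that are genuinely $\vpar$-independent. For $|\W|>v$ it integrates the identity $-\int_{\p^*\W}g(\nabla u_\W,\nu)\,d\cH^{n-1} = \int_\W \ei(\W)u_\W$ and combines with the relative isoperimetric inequality to bound $f'_{v,\vpar}(|\W|)\leq C(R)$ with $C(R)$ coming only from $\ei(\W)$, $\|u_\W\|_{L^\infty}$, and the isoperimetric constant---none of which involve $\vpar$. For $|\W|<v$ it first deduces $|\nabla u_\W|\leq 3\sqrt{\vpar}$ on $\p^*\W$, then obtains an interior gradient bound $|\nabla u_\W|\leq C(R)$ via Bochner's identity and the maximum principle (explicitly noted as $\vpar$-independent), and finally constructs a Harnack chain of balls from a point where $u_\W\geq c(v)$ out to $\p\W$, using a Green's-function barrier there to force $|\nabla u_\W|\geq c(R)$ at some boundary point, contradicting smallness of $\vpar$. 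These two arguments replace the appeal to Corollary~\ref{c:reducedgmt}(3); without something equivalent, your proposal does not close.
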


\begin{proof}
	If $M / \Isom$ is compact, we fix $\vpar$ small and select $R = R(\vpar)$ large enough that $\W \cc \Qb_R$; this is always possible from Theorem \ref{t:globalexist}. All constants will then be independent of this $R(\vpar)$, as we will verify below.
	
	If $|\W| \neq v$, we apply Corollary \ref{c:elpt} to give that $A_0 = \fv'(|\W|)$ and
	\[
	|\grad u_\W(x)|^2 + \frac{\tpar}{2} |\grad w_\W(x)|^2 - \err \left[b_\W(x) + \int_{\W}a_\W v^x_\W\right] = \fv'(|\W|).
	\]
	From Theorem \ref{thm:usefulEL}, we have that $|\grad w_\W(x)|^2 \leq C_0 |\grad u_\W(x)|^2$ and $|\int_{\W}a_\W v^x_\W| \leq C_0 |\grad u_\W(x)|^2$, where this constant $C_0$ depends on $\vpar$. Now select $\tpar_*$ small enough (in terms of $\vpar$) so that $\frac{\tpar}{2} |\grad w_\W(x)|^2 \leq \frac{1}{10}|\grad u_\W(x)|^2$. Then select $\err_*$ small enough in terms of $\vpar$ so that $\err |\int_{\W}a_\W v^x_\W| \leq \frac{1}{10}|\grad u_\W(x)|^2$, and also so that $\err |b_\W| \leq \vpar/2$. This gives
	\[
	\frac{1}{3}\fv'(|\W|) \leq |\grad u_\W|^2 \leq 3 \fv'(|\W|)
	\]
	along $\p^* \W \cap \Qb_R$.
	
	Consider the integration by parts formula
	\[
		\frac{1}{3}\sqrt{\fv'(|\W|)}\cH^{n-1}(\p \W \cap \Qb_R) \leq - \int_{\p^* \W} g(\grad u_\W(x), \nu_x) d\cH^{n-1} = \int_\W - \Lap u_\W \leq C(R, M)|\W|.
	\]
	We have that
	\begin{equation}\label{e:volumeisright}
		\inf\left\{ \frac{\cH^{n-1}(\p \W \cap \Qb_R)}{|\W|} : \W \in \compset \right\} > c = c(R, \vmax) > 0
	\end{equation}
	from the relative isoperimetric inequality on $\Qb_R$, \cite[4.5.2(2)]{Federer}, so $\fv'(|\W|) \leq C(R)$. Choosing $\vpar_*$ so that $\vpar_* < \frac{1}{C(R)}$, we see that $\fv(|\W|) < \frac{1}{\vpar_*}$ and $|\W| \leq v$. In the case of $M /\Isom_0$ compact, $\p \W \cap \Qb_R = \p \W$, we instead use the isoperimetric inequality: for example, from \cite[Theorem 3.2]{HebeyBook} we have $|\W|^{\frac{n-1}{n}} \leq C (\cH^{n-1}(\p \W) + |\W|)$ with a constant depending only on $M$, and this bounds \eqref{e:volumeisright} from below by $c(M)\vmax^{-1/n}$.
	
	It follows that if $|\W| \neq v$, then along $\p^*\W \cap \Qb_R$ the derivative $|\grad u_\W|\leq 9 \vpar_*$ is small. On $\p \W \cap \p \Qb_R$, an elementary comparison argument with $w_{\Qb_R}$ shows that $|\grad u_\W|\leq C(R)$. From Bochner's identity we have that $\Lap (|\grad u_\W|^2 + S u_\W^2) \geq - 2 \ei(\W) S u_\W^2$ for $S$ taken large enough in terms of the Ricci curvature of $(M,g)$. Using Lemma \ref{l:efbdd} and the maximum principle, we see that $|\grad u_\W| \leq C(R)$ on $\W$ (the point being that this bound is independent of $\eta$); the constant may be taken independent of $R$ if $M /\Isom_0$ is compact.
	
	Now, $|\W| < v$ and $\int u_\W^2 = 1$, so there must be a point $x\in \W$ with $u_\W(x) \geq \frac{1}{2}\sqrt{v}$. Using the gradient estimate above, there is a ball $B_{r}(x) \ss \W$, with $r$ depending only on $R$ and $v$. We construct a sequence of balls $B_r(x_k) \ss \W$ as follows: if there is an $x \in B_{r/2}(x_{j})$ for some $j < k$ with $B_r(x) \ss \W$ and $z \in \p B_r(x) \cap \p \W \cap \Qb_R$, set $x_k = x$ and stop. If not, choose any ball $B_r(x)$ with $d(x, x_j) \geq r/2$ and equality for one $j$. If this is impossible, then $\p \W = \p \Qb_R$, which contradicts our standing assumption that $\vmax < |\Qb_R|$. This process must terminate after $C(r)\vmax$ steps, and so we have a chain of balls culminating in $B_r(x_J)$ which has $z \in \p B_r(x_J) \cap \p \W \cap \Qb_R$. Applying the Harnack inequality along this chain, we have $u_\W \geq c(r, J)$ on $B_{r/2}(x_J)$. Letting $G'$ be the Green's function for $B_r(x_J)$, we have that $C(r, J) u_\W(y) \geq  G'(x_J, y)$ on $B_r(x_J) \sm B_{r/2}(x_J)$ from the comparison principle. From standard estimates on the Green's function (see \cite[Theorem 1.2.8]{Kenig}),
	\[
		C(r, J) u_\W(y) \geq G'(x_J, y) \geq c(r, R) d(z, y).
	\]
	From the viscosity form of the Euler-Lagrange equation, Lemma \ref{l:viscel}, we then must have that 
	\[
		c(r, R) \leq |\grad G'(x_J, z)| \leq \sqrt{3 \fv'(|\W|)} \leq 3 \vpar_*.
	\]
	If $\vpar_*$ is chosen small in terms of $r, R$, this gives a contradiction. If $M /\Isom_0$ is compact, it is easy to see that all constants here may be taken uniformly in $R$.
\end{proof}

\section{Higher Regularity}\label{s:highreg}

For the purpose of deriving stability estimates, it will be helpful to have $C^{2, \a}$ regularity for $\p \W$. Higher regularity depends on the regularity of the functions $a_\W, b_\W$ which appear in property \ref{a:nlc1} of $\nl$. In particular, in the case of $a_\W$ being a multiple of $u_U$ for a smooth domain $U$, $a_\W$ is at best Lipschitz continuous on $\W$ (unless $\W = U$). Under this assumption we will have $\p \W$ locally $C^{2, \a}$ at flat points for any $\a < 1$ and not better.
Our approach is based on a boundary Harnack  estimate of De Silva and Savin.

\begin{proposition}[\cite{DSS15}, Theorem 2.4]\label{prop:higherbh}
	Let $f \in {C^{0,\alpha}(\W)}$ satisfy $ \|- \Lap f\|_{C^{0,\a}(\W)} \leq 1$ {(with $-\Delta$ taken in the classical sense)}, $f = 0$ on $\p \W$, and $\W$ be a minimizer of $\Ep$. Assume that for some $x_0\in \p \W$, $r < r_0$, and $\d > 0$ small enough we have $B_r(x_0)\ss \Qb_R$ and $\p \W \cap B_{r/2}(x_0)$ a $C^{1, \a}$ graph over $\{e: \nu \cdot e = 0 \}$ (in normal coordinates around $x_0$) with $C^{1, \a}$ norm bounded by $C_\a$. Then
	\[
		\left\|\frac{f}{u_\W}\right\|_{C^{1,\a}(\bar{\W}\cap B_{r/4}(x_0))} \leq C
	\]
	and
	\[
		\left\|\frac{g(\grad f, \nu_x)}{|\n u_\W|}\right\|_{C^{1,\a}(\p \W \cap B_{r/4(x_0)})} \leq C.
	\]
	Here $\a \in (0, 1)$ and the constants depend only on $R, v, \vmax, \vpar, \a, C_\a$.
\end{proposition}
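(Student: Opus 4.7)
The plan is to follow the approach of \cite{DSS15}, which establishes higher boundary Harnack estimates via a flattening reduction followed by Schauder-type estimates for a degenerate elliptic equation. First, since $\p \W \cap B_{r/2}(x_0)$ is a $C^{1,\a}$ graph, I would use a $C^{1,\a}$ diffeomorphism to straighten $\p\W$ in normal coordinates, reducing to the case where $u_\W$ and $f$ are defined on a flat half-ball $B_{r/4}^+ = B_{r/4}(0) \cap \{x_n > 0\}$, vanish on $\{x_n = 0\}$, and satisfy linear elliptic equations with $C^{0,\a}$ coefficients (arising from the pulled-back metric) and H\"{o}lder right-hand sides. After this reduction, the Hopf lemma combined with the interior Lipschitz bound of Corollary~\ref{cor:lip} and the non-degeneracy bound of Proposition~\ref{l:bdryharnackef} yields $c\, x_n \leq u_\W(x) \leq C\, x_n$ throughout $B_{r/4}^+$, making the quotient $q := f/u_\W$ well-defined and bounded up to the boundary.

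Next, I would derive the equation solved by $q$. A direct computation, using the equations for $f$ and $u_\W$ and the product rule, shows that in the flattened coordinates $q$ satisfies a degenerate elliptic equation of Baouendi--Grushin type,
\[
	\dvg\!\left(u_\W^{2}\, A\, \grad q\right) = u_\W\, F,
\]
where $A$ is uniformly elliptic and symmetric and $F \in C^{0,\a}$ with norm controlled by $\|-\Lap f\|_{C^{0,\a}} + \ei(\W)$ together with the diffeomorphism data. Because $u_\W \sim x_n$, the weight $u_\W^2$ degenerates linearly at $\{x_n = 0\}$, so standard Schauder theory does not directly apply; one needs Schauder estimates tailored to this weighted/degenerate setting. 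Establishing $q \in C^{1,\a}$ up to the flat boundary is the core of the argument.

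The principal obstacle is this $C^{1,\a}$ boundary estimate for $q$. Following \cite{DSS15}, I would argue by a compactness/improvement-of-flatness iteration: assuming the geometric decay of $\osc (q - L_k)$ on $B_{2^{-k}r}^+$ (for affine $L_k$) fails at some scale, rescale appropriately and pass to a limit to extract a bounded solution $q_\infty$ of the model homogeneous equation $\dvg(x_n^{2}\grad q_\infty) = 0$ on a half-space. A direct classification of such solutions (via separation of variables and a Bessel-type analysis in $x_n$, together with the sublinear growth inherited from the rescaling) shows that $q_\infty$ must itself be affine, contradicting the failure of the decay and establishing a quantitative Campanato-type estimate. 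Undoing the flattening diffeomorphism yields the first stated bound. The second estimate follows from the first: using the nontangential-limit characterization of Corollary~\ref{c:ntlimits}, for $x \in \p^* \W$ one has $g(\grad f, \nu_x)/|\grad u_\W(x)| = \lim_{y \to x \text{ n.t.}} f(y)/u_\W(y) = q(x)$, so the boundary value of the $C^{1,\a}$ function $q$ realizes the second ratio, and its $C^{1,\a}$ norm along the $C^{1,\a}$ boundary is controlled accordingly.
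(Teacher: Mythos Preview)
Your proposal is correct but does far more work than the paper does. The paper's proof is two lines: it invokes \cite[Theorem~2.4]{DSS15} as a black box to obtain
\[
\left\|\frac{f}{u_\W}\right\|_{C^{1,\a}(\bar{\W}\cap B_{r/4}(x_0))} \leq C\left[\|f\|_{L^\infty} + \|\Lap f\|_{C^{0,\a}}\right] \leq C,
\]
and then deduces the second bound exactly as you do, by observing that $f/u_\W$ converges nontangentially to $g(\grad f,\nu_x)/|\grad u_\W|$ on $\p\W$. What you have written is essentially a sketch of the proof of the cited De Silva--Savin theorem itself (flattening, deriving the degenerate equation $\dvg(u_\W^2 A\grad q)=u_\W F$ for $q=f/u_\W$, and running an improvement-of-flatness argument against the model $\dvg(x_n^2\grad q_\infty)=0$). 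That is a sound outline of how \cite{DSS15} works, and your derivation of the second conclusion matches the paper's. The only slip is cosmetic: the weight $u_\W^2\sim x_n^2$ degenerates quadratically, not linearly, though your model equation is written correctly. For the purposes of this paper, simply citing \cite{DSS15} suffices; your version would be appropriate if you were writing a self-contained account.
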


\begin{proof}
	{We consider the operator $-\Delta -\lambda(\Omega)$, and note that $-\Delta u_{\Omega} -\lambda(\Omega)u=0$ and $\|-\Delta f - \lambda(\Omega) f\|_{C^{0,\alpha}} \leq C$}. From Theorem 2.4 in \cite{DSS15},
	\[
		\left\|\frac{f}{u_\W}\right\|_{C^{1,\a}(\bar{\W}\cap B_{r/4}({x_0}))} \leq C\left[\|f\|_{{L^\infty}} + \|\Lap f\|_{C^{0, \a}}\right] \leq C,
	\]
	using also the lower bound on $u_\W$. The second conclusion follows as $\frac{f}{u_\W} \rightarrow \frac{g(\grad f, \nu_x)}{g(\grad u_\W, \nu_x)}$ nontangentially at every point of $\p \W \cap B_{r/2}(x_0)$, and then passing to the limit.
\end{proof}

\begin{lemma}\label{l:hhighreg} Let $\W$ and $x_0$ be as in Proposition \ref{prop:higherbh}. Then
	\[
		\left\|\frac{1}{|\grad u_\W|^2}\int_\W h^x f\right\|_{C^{1, \a}(\p \W \cap B_{r/4}(x_0))} \leq C \|f\|_{C^{0, \a}(\W)}.
	\]
\end{lemma}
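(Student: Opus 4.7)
The plan is to rewrite the quantity $\frac{1}{|\grad u_\W|^2}\int_\W h^x f$ as the boundary trace of an explicit object to which the higher order boundary Harnack inequality of Proposition~\ref{prop:higherbh} applies. Mimicking the start of the proof of Lemma~\ref{l:hbh}, introduce the potential
\[
p(z) = \int_\W G_\W(z,y) f(y)\, dm(y), \qquad z \in \W,
\]
so that $-\Lap p = f$ in $\W$ and $p$ vanishes continuously on $\p \W$ (the boundary vanishing is a consequence of $\W$ being NTA, from Corollary~\ref{c:nta}). Using $h^x(y) = |\grad u_\W(x)| K(x,y)$ together with $K(x,y) = -g(\grad_x G_\W(x,y), \nu_x)$ from Lemma~\ref{lem:greentopoisson}, and exchanging the $x$-derivative with the integral exactly as in Lemma~\ref{l:hbh} (justified via Vitali's theorem using the $L^P$ bound on $\grad_x G_\W(x,\cdot)$ for $P < n/(n-1)$ together with the existence of nontangential limits of $\grad p$ established in Lemma~\ref{lem:improvedbhrhs}), one obtains the pointwise identity
\[
\frac{1}{|\grad u_\W(x)|^2}\int_\W h^x f = -\frac{g(\grad p(x), \nu_x)}{|\grad u_\W(x)|}
\]
for $\cH^{n-1}$-a.e. $x \in \p^* \W$.

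The next step is to apply Proposition~\ref{prop:higherbh} to $p$. The hypotheses are satisfied: $p \in C(\bar \W)$ with $p=0$ on $\p \W$, and $\|-\Lap p\|_{C^{0,\a}(\W)} = \|f\|_{C^{0,\a}(\W)}$, so after dividing $p$ by $\|f\|_{C^{0,\a}}$ (or equivalently tracking the linear dependence of the estimate on the $C^{0,\a}$ norm of the right-hand side) the proposition gives
\[
\left\|\frac{g(\grad p, \nu)}{|\grad u_\W|}\right\|_{C^{1,\a}(\p \W \cap B_{r/4}(x_0))} \leq C\|f\|_{C^{0,\a}(\W)}.
\]
Combined with the identity above, this is precisely the conclusion of the lemma.

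The only mild technical points are (i) $p$ is not necessarily sign-definite, so $p/u_\W$ is not a ratio of positive functions; however Proposition~\ref{prop:higherbh} (citing \cite{DSS15}) is stated for signed $f$, or alternatively one may decompose $f = f_+ - f_-$ and apply to $p_\pm$ separately; and (ii) one should note that the nontangential boundary value of $\grad p$ is genuinely normal to $\p \W$, so that $|g(\grad p(x), \nu_x)| = |\grad p(x)|$, consistent with the scalar identity used in Lemma~\ref{l:hbh}. This follows from $p \equiv 0$ on $\p \W$ together with the $C^{1,\a}$ regularity of $\p \W$ near $x_0$, which forces the tangential derivatives of $p$ along the boundary to vanish. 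No step is expected to be a serious obstacle once Proposition~\ref{prop:higherbh} is available, since this lemma is really a direct corollary of that higher order boundary Harnack principle applied to the Green potential $p$.
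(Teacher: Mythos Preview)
Your proposal is correct and follows essentially the same route as the paper: introduce the Green potential $p$ with $-\Lap p=f$, use Lemma~\ref{lem:greentopoisson} and the argument of Lemma~\ref{l:hbh} to rewrite $\frac{1}{|\grad u_\W|^2}\int_\W h^x f = -\frac{g(\grad p,\nu)}{|\grad u_\W|}$, and then apply Proposition~\ref{prop:higherbh} to $p$. Your additional remarks on sign-decomposition and normality of $\grad p$ are fine but not strictly needed, since Proposition~\ref{prop:higherbh} (via \cite{DSS15}) already handles signed right-hand sides.
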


\begin{proof}
	As in the proof of Lemma \ref{l:hbh}, we have that
	\[
		\int_{\W} h^x f = |\n u_\W(x)| \int_{\W} K(x, y) f(y) dy = - |\n u_\W(x)|\, g\Big(\grad_x \int_{\W} G_\W(x, y) f(y) dy, \nu_x\Big).
	\]
	If we set $p(x) = \int_{\W} G_\W(x, y) f(y) dy$, this satisfies $-\Lap p = f$ on $\W$ and $p = 0$ on $\p \W$; applying Proposition \ref{prop:higherbh} gives
	\[
		\left\|\frac{g(\grad p, \nu)}{|\grad u_\W|} \right\|_{C^{1, \a}(\p \W \cap B_{r/4}(x_0))} \leq C \|f\|_{C^{0, \a}}.
	\]
	The conclusion follows from
	\[
		\frac{1}{|\n u_\W(x)|^2}\int_{\W} h^x f = \frac{- g(\grad p(x) \cdot \nu)}{|\n u_\W(x)|}.
	\]
\end{proof}

\begin{lemma}\label{l:qhighreg} Let $\W$ and $x_0$ be as in Proposition \ref{prop:higherbh}. Then
	\[
	\left\|\frac{1}{|\n u_\W|^2}\int_\W q^x f\right\|_{C^{1, \a}(\p \W \cap B_{r/4}(x_0))} \leq C \|f\|_{C^{0, \a}}.
	\]
\end{lemma}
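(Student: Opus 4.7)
The plan is to derive a functional equation for the map
\[
\Psi_f(x) \;:=\; \frac{1}{|\grad u_\W(x)|^2}\int_\W q^x f
\]
by rewriting $\int_\W q^x f$ via the Green-potential representation of $q^x$ from Corollary~\ref{c:qconv}, and then to iterate this equation using a spectral-gap argument. Let $T$ denote the Green operator for $-\Lap$ on $\W$ with zero Dirichlet data, so $p_f := T f$ is the function appearing in Lemma~\ref{l:hhighreg}. Substituting the potential identity and exchanging the order of integration,
\[
\int_\W q^x f \;=\; \ei(\W) \int_\W (q^x + h^x)\,p_f \;-\; |\grad u_\W(x)|^2 \int_\W u_\W\,p_f,
\]
so dividing by $|\grad u_\W(x)|^2$ yields
\[
\Psi_f(x) \;=\; \ei(\W)\, \Psi_{p_f}(x) + E_f(x), \qquad
E_f(x) \;:=\; \ei(\W)\,\frac{\int_\W h^x p_f}{|\grad u_\W(x)|^2} - \int_\W u_\W p_f.
\]
Applying Lemma~\ref{l:hhighreg} with $f$ replaced by $p_f$, together with the global H\"older estimate $\|p_f\|_{C^{0,\a}(\bar\W)} \leq C \|f\|_{L^\infty}$ for the Dirichlet problem on the NTA domain $\W$ (Corollary~\ref{c:nta}), the error term satisfies $\|E_f\|_{C^{1,\a}(\p\W \cap B_{r/4}(x_0))} \leq C \|f\|_{L^\infty}$.

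The naive iteration $\Psi_f = \sum_k \ei(\W)^k E_{T^k f}$ cannot converge in general because $\ei(\W)\,T$ has spectral radius $1$ on $L^2$. The saving observation is that the restriction of the iteration to the one-dimensional space spanned by $u_\W$ produces no growth: using $T u_\W = u_\W/\ei(\W)$, the orthogonality $\int_\W u_\W(q^x + h^x) = 0$ from Corollary~\ref{c:qconv}, and the Fredholm compatibility $\int_\W u_\W h^x = |\grad u_\W(x)|^2/\ei(\W)$ (derived by pairing the equation for $q^x$ against $u_\W$), a direct computation shows both that $\Psi_{u_\W}(x) \equiv -1/\ei(\W)$ is constant in $x$ and that $E_{u_\W} \equiv 0$. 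Thus the $u_\W$-component of $f$ contributes only a constant to $\Psi_f$, and the iteration should be restricted to $\{u_\W\}^\perp$.

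Decompose $f = \b u_\W + f_\perp$ with $\b = \int_\W f\, u_\W$ and $\int_\W u_\W f_\perp = 0$. Linearity gives $\Psi_f = -\b/\ei(\W) + \Psi_{f_\perp}$. Self-adjointness of $T$ preserves $\{u_\W\}^\perp$, so $T^k f_\perp \perp u_\W$ for all $k$. In particular $\int_\W u_\W p_{T^k f_\perp} = 0$, which simplifies the error term to
\[
E_{T^k f_\perp}(x) \;=\; \ei(\W)\,\frac{\int_\W h^x\, T^{k+1} f_\perp}{|\grad u_\W(x)|^2},
\]
and the iteration yields
\[
\Psi_{f_\perp} \;=\; \sum_{k=0}^\infty \ei(\W)^k\, E_{T^k f_\perp},
\]
where the remainder $\ei(\W)^N \Psi_{T^N f_\perp}$ tends to $0$ in $C^{0,\a}$ by the bound of Lemma~\ref{l:qbh} together with the decay estimate below.

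To control the series, use the spectral gap of Lemma~\ref{l:simpleeval}: $\l_2(\W) \geq \ei(\W) + c$ uniformly, so $\theta := \ei(\W)/\l_2(\W) \leq 1 - c' < 1$ uniformly. On $\{u_\W\}^\perp$ the operator $T$ has $L^2$ operator norm $1/\l_2(\W)$, and combining with Sobolev/Moser iteration (applied $m_0$ times, $m_0$ depending only on $n$) gives $\|T^{k+m_0} f_\perp\|_{L^\infty(\W)} \leq C\, \l_2(\W)^{-k}\,\|f_\perp\|_{L^2}$. Applying Lemma~\ref{l:hhighreg} to $p_{T^k f_\perp} = T^{k+1} f_\perp \in C^{0,\a}(\bar\W)$ then yields
\[
\ei(\W)^k\, \|E_{T^k f_\perp}\|_{C^{1,\a}(\p\W \cap B_{r/4}(x_0))} \;\leq\; C\, \theta^k\, \|f\|_{L^\infty}.
\]
Summing this geometric series and recombining with the constant from the $u_\W$-component produces the bound $\|\Psi_f\|_{C^{1,\a}} \leq C \|f\|_{L^\infty} \leq C \|f\|_{C^{0,\a}}$. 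The main obstacle is the recognition that the direct iteration sits exactly at the spectral radius $1$ and must be routed through $\{u_\W\}^\perp$; the algebraic identity $E_{u_\W}=0$, which is forced by the Fredholm compatibility condition for the PDE defining $q^x$, is the key input that makes this routing legitimate.
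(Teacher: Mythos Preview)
Your argument is correct, but it takes a more circuitous route than the paper. Both proofs begin by splitting $f = \b u_\W + f_\perp$ and handle the $u_\W$-component by the same orthogonality computation. The essential difference is in the treatment of $f_\perp$: you expand in a Neumann series for the Green operator $T = (-\Lap)^{-1}$, iterating the identity $\Psi_g = \ei(\W)\,\Psi_{Tg} + E_g$ and summing the resulting geometric series via the spectral gap $\ei(\W)/\l_2(\W) < 1$, with Sobolev bootstrapping to pass from $L^2$ to $L^\infty$ and an appeal to Lemma~\ref{l:qbh} for the remainder. The paper instead inverts the \emph{shifted} operator in one stroke: it solves $-\Lap p = \ei(\W) p + f_\perp$ with $p \in \{u_\W\}^\perp$ (Fredholm alternative), observes $\|p\|_{C^{0,\a}} \leq C\|f\|_{C^{0,\a}}$ by elliptic regularity, and then a single Green's-identity pairing of $p$ against the equation for $\bar q^x$ yields $\int f_\perp \bar q^x = \ei(\W)\int \bar h^x p$, to which Lemma~\ref{l:hhighreg} applies directly.

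In effect, the paper's potential $p$ is exactly the sum $\sum_{k\geq 0} \ei(\W)^k T^{k+1} f_\perp$ that your series computes term by term; the paper's approach is the closed form of your expansion. Your method buys nothing extra here and costs the auxiliary $L^2 \to L^\infty$ iteration and remainder analysis, though it does make the role of the spectral gap more transparent.
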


The function $q^x$ is actually somewhat better behaved than this, but we will not require an optimal estimate below.

\begin{proof}
	Set $\bar{h}^x = \frac{h^x}{|\n u(x)|^2}$ and $\bar{q}^x = \frac{q^x}{|\n u(x)|^2}$. Rewriting the equation for $q$ in terms of these, we have that
	\[
		\begin{cases}
			- \Lap \bar{q}^x = \ei(\W) [ \bar{q}^x + \bar{h}^x] - u_\W & \text{ on } \W\\
			\int \bar{q}^x u_x = - \int \bar{h}^x u_x = - \frac{1}{\ei(\W)} & \\
			\bar{q}^x = 0 & \text{ on } \p \W.
		\end{cases}
	\]
	
	Let us decompose $f = f_1 + f_2$, where $f_2 = u_\W \int u_\W f$, and then solve for the potential function
	\[
		\begin{cases}
			- \Lap p = \l_1(\W) p + f_1 & \text{ on } \W\\
			\int p u_\W = 0 & \\
			p = 0 & \text{ on } \p \W.
		\end{cases}
	\]
	This admits a unique solution via the Fredholm alternative (using $\int f_1 u_\W = 0$) which satisfies $\|p\|_{L^2} \leq C \|f_1\|_{L^2} \leq C \|f\|_{C^{0, \a}}$. From elliptic regularity, we have that 
	\[
		\|p\|_{C^{0, \a}(\W)} \leq C [\|f_1\|_{L^\infty} + \|p\|_{L^2}] \leq C \|f\|_{C^{0, \a}}.
	\]
	
	Now, multiply the equation for $\bar{q}^x$ by $p$ and integrate:
	\[
		\int \ei(\W) \bar{h}^xp = \int p (- \Lap - \ei(\W)) \bar{q}^x = \int f_1 \bar{q}^x.
	\]
	The other term on the left vanishes as $\int p u_\W = 0$, while the simplification on the right is from Green's identity and the equation for $p$.
	
	On the other hand,
	\[
		\int f_2 \bar{q}^x = \left(\int u_\W f\right)\left(\int u_\W \bar{q}^x\right) = - \frac{1}{\l_1(\W)} \int u_\W f.
	\]
	Combining, we have that
	\[
		\int f \bar{q}^x = \ei(\W) \int \bar{h}^xp - \frac{1}{\ei(\W)} \int u_\W f,
	\]
	which is bounded in $C^{1, \a}$ by Lemma \ref{l:hhighreg} applied to $p$.
\end{proof}

\begin{corollary}\label{cor: main reg}
	Let $\W$ be a minimizer of $\Ep$. Assume that for some $x_0\in \p \W$, $r < r_0$, and $\d > 0$ small enough we have $B_r(x_0)\ss \Qb_R$ and $|\W \triangle B_{r, \nu}(x_0) | < \d |B_r(x)|$. Then for each $\a \in (0, 1)$,  $\p \W \cap B_{r/64}(x_0)$ is given by a $C^{2, \a}$ normal graph over $\p B_{r, \nu}(x_0) \cap B_r(x_0)$, with the $C^{2, \a}$ norm bounded by a constant depending only on $R, v, \vmax, \vpar$, and $\a$.
\end{corollary}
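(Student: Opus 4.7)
The plan is to upgrade the $C^{1,\a}$ regularity already provided by Lemma \ref{l:ds} to full $C^{2,\a}$ regularity by reading off sharper boundary regularity of $|\grad u_\W|$ from the free boundary condition of Theorem \ref{thm:usefulEL}, and then applying a classical higher regularity theorem for one-phase Bernoulli problems. First I would invoke Lemma \ref{l:ds} at $x_0$ and scale $r$: by the flatness hypothesis $|\W \triangle B_{r,\nu}(x_0)| < \d |B_r(x_0)|$ and (the proof of) Lemma \ref{l:ds}, the boundary $\p \W \cap B_{r/2}(x_0)$ is a $C^{1,\a}$ normal graph over the tangent hyperplane in normal coordinates, with norm bounded independently of $\d$. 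This gives us the starting regularity needed to apply the higher boundary Harnack estimates.

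Next, I would revisit the free boundary condition of Theorem \ref{thm:usefulEL}, writing $|\grad u_\W(x)|^2 = A_0/(1+\r(x))$, and show that $\r \in C^{1,\a}(\p \W \cap B_{r/16}(x_0))$ with controlled norm. Recall from the derivation of Theorem \ref{thm:usefulEL} that $\r$ is built (after an algebraic rearrangement involving $\err b_\W/A_0$) from three quantities: the ratio $|\grad w_\W|^2/|\grad u_\W|^2$, the function $b_\W$, and $(1/|\grad u_\W|^2)\int_\W a_\W v_\W^x$. The term $b_\W \in C^{1,1} \subset C^{1,\a}$ by property \ref{a:nlc1} of the nonlinearity. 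The term involving $w_\W$ is $C^{1,\a}$ on the boundary by applying Proposition \ref{prop:higherbh} with $f = w_\W$, since $-\Lap w_\W = 1$ on $\W$ is trivially $C^{0,\a}$; squaring the resulting $C^{1,\a}$ boundary estimate for $g(\grad w_\W,\nu)/|\grad u_\W|$ preserves the regularity class as long as $|\grad u_\W|$ is bounded away from zero, which it is by Theorem \ref{thm:usefulEL}. Finally, splitting $v_\W^x = h^x + q^x$ and applying Lemmas \ref{l:hhighreg} and \ref{l:qhighreg} with $f = a_\W \in C^{0,1} \subset C^{0,\a}$ gives that the nonlocal term is also $C^{1,\a}$ on $\p\W \cap B_{r/16}(x_0)$. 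Combining and using that $1+\r$ stays positive and bounded away from zero for $\err$ small (by choice of $\err_0$), we conclude $|\grad u_\W| \in C^{1,\a}(\p \W \cap B_{r/16}(x_0))$ with bounded norm.

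With this improved boundary regularity of the free boundary data in hand, $u_\W$ solves a classical one-phase Bernoulli problem
\[
	\begin{cases}
		-\Lap u_\W = \ei(\W) u_\W & \text{on } \W \cap B_{r/16}(x_0),\\
		u_\W = 0 & \text{on } \p \W \cap B_{r/16}(x_0),\\
		|\grad u_\W| = g & \text{on } \p \W \cap B_{r/16}(x_0),
	\end{cases}
\]
where $g \in C^{1,\a}$ is bounded above and below, $\p \W$ is a $C^{1,\a}$ graph, and the right-hand side $\ei(\W) u_\W$ is smooth on $\bar\W$. I would then apply the Kinderlehrer--Nirenberg hodograph-Legendre transform (as in \cite{KNS78}, adapted to variable metrics by working in normal coordinates and absorbing the smooth Christoffel symbols into the coefficients of the transformed equation) to deduce that $\p \W \cap B_{r/64}(x_0)$ is $C^{2,\a}$ with bounded norm. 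The shrinking of radii from $r/16$ to $r/64$ absorbs the standard loss in the hodograph argument.

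The main obstacle in this plan is the nonlocal nonlinear term: the function $x \mapsto \int_\W a_\W v_\W^x$ depends on $x$ through the Green's function and Poisson kernel of $\W$ itself, so establishing boundary $C^{1,\a}$ regularity of this object is not at all routine. This is precisely the difficulty that Lemmas \ref{l:hhighreg} and \ref{l:qhighreg} are engineered to resolve, by exploiting the identity relating the Poisson kernel to the normal derivative of the Green's function together with the inhomogeneous higher boundary Harnack principle of Proposition \ref{prop:higherbh}. A secondary point worth flagging is the limitation $\a < 1$: since $a_\W$ is only $C^{0,1}$, Proposition \ref{prop:higherbh} applied via these lemmas cannot yield regularity better than $C^{1,\a}$ for any $\a < 1$, which is exactly the regularity asserted in the corollary.
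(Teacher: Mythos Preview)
Your approach is essentially the same as the paper's: start from the $C^{1,\a}$ regularity of Lemma \ref{l:ds}, apply Proposition \ref{prop:higherbh} to $w_\W$ and Lemmas \ref{l:hhighreg}, \ref{l:qhighreg} with $f = a_\W$ to upgrade the free boundary data to $C^{1,\a}$, and then invoke a hodograph-type higher regularity theorem. The paper cites \cite[Section 9]{KL18} or \cite{DSFS19} rather than Kinderlehrer--Nirenberg, but these are the same mechanism.

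There is one small gap. Lemma \ref{l:ds} (via De Silva's theorem) only gives $C^{1,\a_0}$ regularity for some \emph{fixed} exponent $\a_0 > 0$, not for the arbitrary $\a \in (0,1)$ in the corollary's statement. Consequently Proposition \ref{prop:higherbh} and Lemmas \ref{l:hhighreg}, \ref{l:qhighreg} deliver only $\r \in C^{1,\a_0}$, and the hodograph step yields only $\p\W \in C^{2,\a_0}$. To reach arbitrary $\a$, the paper performs a bootstrap: after the first pass produces $\p\W \in C^{2,\a_0}$ on $B_{r/8}(x_0)$, note that $C^{2,\a_0} \subset C^{1,\a}$ for any $\a < 1$ and rerun the entire argument on the smaller ball to obtain $C^{2,\a}$ on $B_{r/64}(x_0)$. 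Your write-up conflates the initial $\a_0$ with the target $\a$ and omits this iteration, so as written it proves slightly less than claimed.
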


\begin{proof}
	From Lemma \ref{l:ds}, $\p \W\cap B_{r/2}(x_0)$ is a $C^{1, \a_0}$ graph over $\{e: e \cdot \nu = 0 \}$ in normal coordinates, where $\a_0 > 0$ is the fixed exponent from that lemma. Applying Lemmas \ref{l:hhighreg} and \ref{l:qhighreg} with $f = a_\W \in C^{0, 1}$, we have that
	\[
		\left\|\frac{v^x_\W}{|\grad u(y)|^2}\right\|_{C^{1, \a_0}(\p \W \cap B_{r/4}(x_0))} \leq C.
	\] 
	Applying Proposition \ref{prop:higherbh} to $w_\W$ gives that
	\[
		\left\|\frac{|\grad w_\W|^2}{|\grad u_\W|^2}\right\|_{C^{1, \a_0}(\p \W \cap B_{r/4}(x_0))} \leq C.
	\]
	We also have $\|b_\W\|_{C^{1, 1}} \leq 1$ by assumption \ref{a:nlc1}. Proceeding as in Theorem \ref{thm:usefulEL}, we may write the Euler-Lagrange equation for $u$ as
	\[
		|\n u(x)|^2 = \r(x),
	\]
	where $c \leq \r \leq C$ and $ \r \in C^{1, \a_0}(\p \W \cap B_{r/4}(x_0))$. Now apply {\cite[Appendix]{KL18}} or \cite{DSFS19} to obtain that $\p \W$ may be {parametrized} as a $C^{2, \a_0}$ graph on $\p \W \cap B_{r/8}(x_0)$.
	
	Now repeat this argument on $B_{r/8}(x)$, except using that $\p \W$ is $C^{2, \a_0} \ss C^{1, \a}$ rather than $C^{1, \a_0}$ to recover the conclusion as stated.
\end{proof}

Combining this with Theorem \ref{thm:C1a} gives that all of $\p \W \cc \Qb_R$ is $C^{2, \a}$, so long as all sets in $\Min$ are smooth:

\begin{corollary}\label{c:higherreg}
	Let $\W$ be as in Theorem \ref{thm:C1a} and assume that $\Min$ is uniformly $C^{4}$. Then $\p \W \cap \{ x \in \Qb_R: d(x, \p \Qb_R) > r_1 \}$ may be parametrized as a $C^{2, \a}$ normal graph over $\p U$ for any $\a < 1$, with $C^{2, \a}$ norm bounded by $r_1$.
\end{corollary}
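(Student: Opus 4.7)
The plan is to combine the global $C^{1,\alpha}$ parametrization from Theorem~\ref{thm:C1a} with the local higher-regularity estimate of Corollary~\ref{cor: main reg}, and then to transfer the local $C^{2,\alpha}$ bounds (which are stated for graphs over tangent planes in normal coordinates) into $C^{2,\alpha}$ bounds for the global normal graph over $\p U$, using the uniform $C^{4}$ regularity of $\Min$.

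First I would apply Theorem~\ref{thm:C1a} with an auxiliary parameter $r_1' \ll r_1$ in place of $r_1$, choosing $\err$ sufficiently small, to obtain a set $U \in \Min$ and a normal graph $\xi$ parametrizing $\p \W \cap \{d(x, \p \Qb_R) > r_1'\}$ over $\p U$ with $\|\xi\|_{C^{1,\alpha}}$ as small as desired. Because $\p U$ is $C^{2}$ and $\xi$ is $C^{1,\alpha}$-small, the flatness hypothesis $|\W \triangle B_{r,\nu_{x_0}}(x_0)|/|B_r(x_0)| < \delta$ of Corollary~\ref{cor: main reg} holds at every $x_0 \in \p\W \cap \{d(x,\p\Qb_R)>r_1\}$ for some uniform scale $r = r(r_1)$. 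Applying Corollary~\ref{cor: main reg} at each such $x_0$ then parametrizes $\p \W \cap B_{r/64}(x_0)$ as a $C^{2,\alpha}$ graph $\eta_{x_0}$ over $T_{x_0}\p\W$ in normal coordinates with $C^{2,\alpha}$ norm bounded uniformly in $x_0$.

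To convert the local parametrizations $\eta_{x_0}$ into global regularity of $\xi$, I would use that the uniform $C^4$ hypothesis on $\Min$ endows $\p U$ with a $C^3$ tubular neighborhood and a smooth normal projection $\pi$ of uniformly bounded $C^3$ norm. Writing any point $x \in \p\W$ near $x_0$ simultaneously as $\exp_{x_0}(v + \eta_{x_0}(v)\nu_{x_0})$ and as $\exp_{\pi(x)}\bigl(\xi(\pi(x))\,\nu_U(\pi(x))\bigr)$ realizes $\xi$ locally as the composition of $\eta_{x_0}$ with a $C^3$ change of coordinates whose derivatives are controlled by $\|\p U\|_{C^4}$ and $\|\xi\|_{C^{1,\alpha}}$. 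This yields uniform local $C^{2,\alpha}$ bounds on $\xi$, and a standard covering argument promotes them to a global bound on $\p\W \cap \{d(x,\p\Qb_R) > r_1\}$. The desired smallness (``bounded by $r_1$'') follows from interpolating the uniform $C^{2,\alpha}$ bound against the $C^{1,\alpha}$-smallness provided by Theorem~\ref{thm:C1a}. The main obstacle will be tracking uniform constants through this transition between the two parametrizations: the $C^4$ assumption on $\Min$, rather than a more natural $C^{2,\alpha}$, is exactly what gives enough derivatives in the change of coordinates to preserve the $C^{2,\alpha}$ Hölder seminorm under composition.
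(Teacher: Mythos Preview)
Your proposal is correct and follows exactly the approach the paper intends: the paper gives no explicit proof of Corollary~\ref{c:higherreg} beyond the sentence ``Combining this with Theorem~\ref{thm:C1a} gives that all of $\p \W \cc \Qb_R$ is $C^{2,\a}$,'' and your write-up spells out precisely the details one would expect---verifying the flatness hypothesis of Corollary~\ref{cor: main reg} from the $C^{1,\a}$ closeness to $\p U$, applying it pointwise, and then transferring the local $C^{2,\a}$ bounds to the global normal graph using the $C^4$ regularity of $\p U$. Your explanation of why $C^4$ (rather than $C^{2,\a}$) is needed---to preserve the H\"older seminorm through the change of parametrization---is the correct reading of that hypothesis.
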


\appendix

\section{Admissible nonlinearities}\label{app A}
In Section~\ref{ss:mainprob}, we gave several examples of  nonlinearities $\nl$ used in the definition of the main functional $\Ep$. In this appendix, we verify that these examples are admissible nonlinearities in the sense of Definition~\ref{def: nl}. We also define the notion of a set center, which  generalizes the notion of the barycenter of a set in Euclidean space.

To begin, it will be useful to show that a constant multiple of  $d_*(\W, U)^2$ defined in \eqref{eqn: dstar two sets} is itself an admissible nonlinearity. Given any bounded open set $U$ with $C^2$ boundary, define the functional
\begin{equation}\label{e:nlexample}
\nl_U(\W) = \frac{d_*(\W, U)^2}{C_1}  = \frac{1}{C_1}\left[ \int_\W \psi_{U} - \int_{U} \psi_U + \int |u_\W - u_U|^2\right]
\end{equation}
on bounded open sets $\W$ with $|\W|\leq \vmax$ and with $\En(\W) \leq \minE + \err_0$; recalling Remark~\ref{rmk: unique eigenfunction}, $\nl_U$ is well-defined on this class of sets. 
\begin{lemma}\label{l:A1} For $C_1$ chosen sufficiently large depending on $U$ and $\vmax$, 
	the functional $\nl_U(\W)$ is an admissible nonlinearity with respect to the trivial subgroup of the isometry group. 
\end{lemma}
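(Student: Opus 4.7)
The plan is to verify properties (N1)--(N4) of Definition~\ref{def: nl} in turn, fixing $C_1$ large enough at the end. Property (N2) is immediate since $\Isom_0$ is trivial. For (N1), the discussion preceding the statement already gives $d_*(\W,U)^2 \geq 0$ (both summands are nonnegative), while the upper bound
\[
d_*(\W,U)^2 \leq \|\psi_U\|_\infty(|\W|+|U|) + 2\|u_\W\|_{L^2}^2 + 2\|u_U\|_{L^2}^2 \leq C(U,\vmax)
\]
follows from Lemma~\ref{l:efbdd}. Taking $C_1$ larger than this constant yields $\nl_U \in [0,1]$.

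For (N3), I would expand the squares in $d_*(\cdot,U)^2$ and apply the telescoping identity
\[
\int(u_\W - u_U)^2 - \int(u_{\W'} - u_U)^2 = \int(u_\W - u_{\W'})(u_\W + u_{\W'} - 2u_U),
\]
bounding the second factor via Lemma~\ref{l:efbdd} and the $\psi_U$ contribution by $\|\psi_U\|_\infty|\W\triangle\W'|$. This produces
\[
|\nl_U(\W) - \nl_U(\W')| \leq \frac{C(U,\vmax)}{C_1}\left(|\W\triangle\W'| + \int|u_\W - u_{\W'}|\right),
\]
which satisfies (N3) once $C_1 \geq C(U,\vmax)$.

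The main computation is (N4), and the key point is the $L^2$ normalization. Under the hypothesis $\En(\phi_t(\W)) \leq \minE + \err_0$, Lemma~\ref{l:simpleeval} gives a uniform spectral gap, so $u_\W$ and $u_{\phi_t(\W)}$ are uniquely defined with unit $L^2$ norm. Consequently the $u^2$ terms cancel exactly in the expansion
\[
\int|u_{\phi_t(\W)} - u_U|^2 - \int|u_\W - u_U|^2 = -2\int u_U(u_{\phi_t(\W)} - u_\W),
\]
and combining with the volume-type term yields the \emph{exact} identity
\[
C_1\bigl[\nl_U(\phi_t(\W)) - \nl_U(\W)\bigr] = \left[\int_{\phi_t(\W)}\psi_U - \int_\W \psi_U\right] - 2\int u_U\,(u_{\phi_t(\W)} - u_\W).
\]
Setting $a_\W := -2u_U/C_1$ and $b_\W := \psi_U/C_1$ therefore makes the expression inside the limsup in (N4) identically zero, with no remainder needed (so any $C_{\nl} \geq 0$ works).

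Finally, I would verify the regularity requirements on $a_\W$ and $b_\W$. Since $\p U$ is $C^2$, the distance $d(\cdot,\p U)$ is $C^2$ on $\{d<4c_0\}$ by \cite[Lemma 14.16]{GT}, as used in the construction; combined with the fact that $\psi_U$ is locally constant outside this neighborhood by the choice of $f$, this gives $\psi_U \in C^2(M) \subset C^{1,1}(M)$. On the other hand, standard elliptic regularity for the first Dirichlet eigenfunction on a $C^2$ domain gives $u_U \in C^{1,\a}(\bar U)$, hence Lipschitz; extension by zero outside $U$ (using $u_U|_{\p U} = 0$) preserves this global Lipschitz property. Taking $C_1$ large enough, depending only on $U$ and $\vmax$, to simultaneously dominate $\|\psi_U\|_{C^{1,1}}$, $\|u_U\|_{C^{0,1}}$, and their sup norms, delivers $|a_\W|, |b_\W| \leq 1$, $\|b_\W\|_{C^{1,1}(\Qb_R)} \leq 1$, and $\|a_\W\|_{C^{0,1}(\W)} \leq 1$ as required. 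No step presents a genuine obstacle; the cleanness of (N4) rests entirely on the $L^2$ normalization cancellation, which is precisely what the design of $d_*$ is engineered to provide.
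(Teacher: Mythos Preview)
Your proof is correct and follows essentially the same route as the paper's: both identify $a_\W = -2u_U/C_1$ and $b_\W = \psi_U/C_1$, verify the exact cancellation in (N4) from the $L^2$ normalization, and handle the regularity of $u_U$ and $\psi_U$ in the same way. One small remark: in (N3) the paper exploits the same normalization cancellation $\int u_\W^2 = \int u_{\W'}^2$ that you use in (N4), which reduces your telescoped factor $u_\W + u_{\W'} - 2u_U$ directly to $-2u_U$ after integration and thereby avoids invoking an $L^\infty$ bound on $u_{\W'}$; this is a cosmetic simplification rather than a different argument.
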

\begin{proof}
	 Choose $C_1 \geq \vmax |\max f| + 4$ to guarantee that  $\nl_U(\W) \in [0,1]$ for any $\W$ so \ref{a:nlbdd} is satisfied. To see the upper bound, recall that $\int u_\W^2 = \int u_U^2 = 1$; the lower bound follows because $d_*(U, \W)\geq 0$. Since $G_0$ is trivial, \ref{a:nlinv} holds automatically.
Checking \ref{a:nllip} is also straightforward:
\begin{equation}\label{e:nlexamplep3}
|\nl_U(\W) - \nl_U(\W')|\leq \frac{1}{C_1}\left[ |\W \triangle \W'|\max |f|  + \int 2 |u_{\W'} - 2 u_\W||u_U| \right],
\end{equation}
so as long as $C_1 \geq 2\max u_U$, which is bounded by Lemma \ref{l:efbdd}, and so \ref{a:nllip} follows. 	 For \ref{a:nlc1}, we set $\W_t = \phi_t(\W)$ and perform the same computation more carefully to give
\begin{equation}\label{e:nlexamplep4}
\nl_U(\W_t) - \nl_U(\W) = \frac{1}{C_1}\left[ \int_{\W_t} \psi_U - \int_\W \psi_U + 2 \int (u_{\W} - u_{\W_t}) u_U\right].
\end{equation}
Set $a_\W = - \frac{2}{C_1} u_U$, $b_\W = \frac{1}{C_1} \psi_U$. After possibly choosing $C_1$ larger, we see that  \ref{a:nlc1} is  satisfied, using elliptic regularity and the smoothness of $U$ to bound $\grad u_U$. 
\end{proof}
Notice that  in this example, while $b_\W$ here is smooth, $a_\W$ is not better than Lipschitz; is the limiting factor for the higher regularity discussed in Section \ref{s:highreg}.

Example~\ref{ex: one min} considered a nonlinearity $\nl$ in the case when there is a unique (regular) minimizer $U$ of the base energy.  Up to multiplication by a constant $C=C(U)$, it follows that the nonlinearity in Example~\ref{ex: one min} is admissible with respect to the trivial subgroup of the isometry group for any choice of $c \in (0,1]$ in \eqref{eqn: SP nl}. Indeed, the functional $\nl = \nl_c$ of Example~\ref{ex: one min} (normalized by a constant $C=C(U)$ independent of $c$) is given by $\phi_c \circ \nl_U$ where  $\phi_c : [0, 1] \rightarrow [0, 1]$ is a smooth function with $|\phi_c'|\leq 1$ for all $c \in (0,1]$, and is thus admissible by Remark~\ref{rmk: admissible compositions}.

We now move toward verifying that the nonlinearity of Example~\ref{ex: general example} is admissible. To this end, let us introduce the notion of a set center. 

\begin{definition}[Set centers]\label{def: set center}
Let $\Isom_0 \leq \Isom$ be a closed subgroup of isometries of $M$, $\e > 0$, and $U$ a fixed bounded open set. We say that a mapping from the class of bounded, open, nonempty subsets $E$ of $M$ with
\begin{equation}\label{e:closetou}
	\inf_{e \in \Isom_0} |E \triangle e(U)| \leq \e
\end{equation}
to points $x_E$ in a complete Riemannian manifold $(N, g_N)$ is a \emph{set center adapted to} $U$ if it satisfies the following properties:
\begin{enumerate}[({C}1)]
	\item \label{a:clip} If $E, E' \ss \Qb_R$, then $d_N(x_E, x_{E'}) \leq C(R) |E \triangle E'|$.
	\item \label{a:conto} For every $E$, there is an $e \in \Isom_0$ such that $x_E = x_{e(U)}$.
	\item \label{a:cproj} For any $e, e' \in \Isom_0$, $|e(U)\triangle e'(U)| \leq C(E) d_N(x_{e(E)}, x_{e'(E)})$.
	\item \label{a:cc1} For $x \in M$ and $r \leq r_0$, let $\phi_t$ be a one-parameter family of diffeomorphisms with $\phi_0(x) = x$ and $|\p_t \phi_t|\leq 1$ such that $\{\phi_t(x) \neq x \}\ss B_r(x_0)$. If the sets $\phi_t(\W)$ satisfy \eqref{e:closetou} and  $\cH^{n-1}(\p \W) < \infty$, then $t \mapsto x_{\phi_t(\W)}$ defines  a $C^1$ curve in $N$ and for any tangent vector $v\in T_{x_\W}N$,
	\begin{equation}\label{e:c1setcenter}
		\limsup \frac{1}{|t|}\left| t g_N\left(x'_{\phi_t(\W)}|_{t = 0}, v\right) - \int_{\phi_t(\W)} a_\W^v + \int_{\W}a_\W^v\right| \leq C r^n
	\end{equation}
	 for a function $a_\W^v$ independent of $\phi_t$ and depending linearly on $v$ with $\|a_\W^v\|_{C^2(\Qb_R)} \leq C(R)|v|$. 
\end{enumerate}
\end{definition}
Given a bounded open set $E$ we let $U_E =e(U)$ denote image under the unique isometry $e\in G_0$ of $U$ such that $E$ and $U_E$ have the same set center. 

Property \ref{a:clip} implies $x_E$ is a kind of Lipschitz mapping from sets to the space of centers, while \ref{a:cproj} suggests it acts like a projection onto the images of $U$ under $\Isom_0$ action. Property \ref{a:cc1} implies that it is a $C^1$-regular projection in an appropriate sense: to understand \eqref{e:c1setcenter}, note that
\[
	\int_{\phi_t(\W)} a_\W^v - \int_{\W}a_\W^v = t \int_{\p^* \W} a_\W^v g(\phi'_0, \nu_x) d\cH^{n-1} - t \int_\W g(\phi'_0, \grad a_\W^v) + o(t).
\]
The second term is bounded by $C r^n t$, while the first captures the ``leading-order'' change in $\W$ under $t \mapsto \phi_t(\W)$. Thus \eqref{e:c1setcenter} ensures that the $v$ component of the derivative of $x_{\phi_t(\W)}$ exists and is proportional to $\int_{\p^* \W} a_\W^v g(\phi'_0, \nu_x) d\cH^{n-1}$ for some sufficiently smooth  function $a_\W^v$, up to error of size $r^n$. This type of error is lower-order when considering localized deformations near a point on $\p \W$.

Note that a set center may map to points in a Riemannian manifold $(N,g_N)$ that is not $(M,g)$. An example to keep in mind here is if $(M,g)$ is a cylinder $S^{n-1}\times \R$ equipped with the standard product metric, a set center can be defined by choosing the Euclidean barycenter of a set when $(M,g)$ is embedded into $\R^{n+1}$ in the standard way.

Let us give some examples of set centers.

\begin{example}{\rm
	If $\Isom_0$ is trivial, the constant mapping $x_E = x_0$ is a set center adapted to any $U$.
	}
\end{example}
 
 \begin{example}{\rm Suppose $(M,g)$ is simply connected and has nonpositive sectional curvature. The barycenter $x \mapsto \text{argmin}_x \int_E d^2(x, y)dm(y)$ exists and is unique. This is a set center adapted to any $U$ so long as (1) $\Isom_0$ acts transitively on $M$ and (2) if $e \in \Isom_0$ fixes $x_U$, it fixes $U$. In particular, this applies to $\R^n$ or hyperbolic space, $\Isom_0 = \Isom$, and $U = B_r(x)$ a ball. It also applies to $\R^n$ with $U$ an arbitrary (nice) open set and $\Isom_0$ the group of translations.
 	}
 \end{example}
 \begin{example}\label{proof: set center on sphere}{\rm 
 	Let $(M,g)$ be the round sphere, embedded in the standard way in $\R^{n+1}$. Let $G_0=G$ be the full isometry group and $U= B_r(x) $ be a geodesic ball with $r< \pi$. For any open $E \ss S^{n}$, let $
		y_E = \fint_{E} y d \cH^{n}(y),$
	where $y, y_E \in \R^{n+1}$ and this is a (vector-valued) surface integral. Note that $y_U \neq 0$ 
	 and that for any $E, E'$, we have
	\[
		|y_E - y_{E'}| \leq \frac{1}{|E|}\int_{E \triangle E'}|y| + |S^{n}|\left|\frac{1}{|E|} - \frac{1}{|E'|}\right| \leq C |E \triangle E'|.
	\]
	In particular, if $|E \triangle U| \leq \e$, we may ensure that $|y_E|\geq r > 0$. For any such $E$, the mapping $E\mapsto x_E = \frac{y_E}{|y_E|}$ is a set center. Indeed, then $|x_E - x_{E'}|\leq \frac{1}{r}|y_E - y_{E'}|$, so this satisfies \ref{a:clip}. It also satisfies \ref{a:conto}, as $\Isom$ acts transitively on $S^{n}$. Property \ref{a:cproj} can be checked using that $U=B_r(x)$ is invariant under rotation about its set center, and \ref{a:cc1} may be verified similarly to the proof of Proposition~\ref{l: set center euclidean} below.
	}
 \end{example}

Let us give a proof that the Euclidean barycenter is a set center in the sense of Definition~\ref{def: set center};  the other examples described above can be checked similarly.
First of all, the barycenter on Euclidean space is an example of a set center.
\begin{proposition}\label{l: set center euclidean}
	Let $(M,g)$ be Euclidean space, and let  $\Isom_0 = \Isom$ and $U = B_1$. Then the barycenter $x_E = \fint_E x \in \R^n$
	is a set center satisfying \ref{a:clip}-\ref{a:cc1}. Here $\e$ in \eqref{e:closetou} may be taken arbitrary.
\end{proposition}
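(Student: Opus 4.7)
The plan is to verify the four properties \ref{a:clip}--\ref{a:cc1} in turn, using that the barycenter in $\R^n$ admits explicit formulas and that $U = B_1$ is invariant under rotations about the origin.

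For \ref{a:clip}, I would write
\[
x_E - x_{E'} = \frac{1}{|E|}\int_{E\setminus E'} y\, dy - \frac{1}{|E'|}\int_{E'\setminus E} y\, dy + \Big(\frac{1}{|E|}-\frac{1}{|E'|}\Big)\int_{E\cap E'} y\, dy,
\]
then note that \eqref{e:closetou} forces $|E|, |E'| \geq |B_1| - \e \geq c > 0$ and that $E, E' \subset Q_R$ bounds $|y|\leq R$, so each term is bounded by $C(R)\,|E\triangle E'|$. For \ref{a:conto}, every isometry of $\R^n$ has the form $e(y) = Ry + b$; since $R B_1 = B_1$, we get $e(U) = B_1(b)$, and so $x_{e(U)} = b = e(0)$. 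Given $E$, choosing $e$ to be the translation by $x_E$ yields $x_{e(U)} = x_E$. For \ref{a:cproj}, for any $e,e'\in G$ with $e = R\cdot + b$, $e' = R'\cdot + b'$, we have $x_{e(E)} = R x_E + b$ and $x_{e'(E)} = R' x_E + b'$, and also $e(U) = B_1(b)$, $e'(U) = B_1(b')$; a direct computation of the symmetric difference of two unit balls whose centers are within a fixed bounded distance gives $|B_1(b)\triangle B_1(b')| \leq C\min\{|b-b'|,1\}$. One then checks, using that $|R - R'|$ is bounded in terms of $|b - b'|$ whenever $R,R'$ are fixed up to finite choices governed by $E$, that $|b - b'| \leq C(E) d_N(x_{e(E)}, x_{e'(E)})$.

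The main content is \ref{a:cc1}. I would define
\[
a_\W^v(y) \;=\; \frac{\langle y - x_\W,\, v\rangle}{|\W|},
\]
which is a linear function of $y$ (hence $C^\infty$) with $\|a_\W^v\|_{C^2(Q_R)} \leq C(R)|v|$, is independent of $\phi_t$, and depends linearly on $v$. To check the differentiation formula, set
\[
p(t) := \int_{\phi_t(\W)} \langle y,v\rangle\, dy, \qquad q(t) := |\phi_t(\W)|, \qquad G(t) := \langle x_{\phi_t(\W)}, v\rangle = \frac{p(t)}{q(t)},
\]
and $F(t) := \int_{\phi_t(\W)} a_\W^v\,dy = \frac{1}{|\W|}(p(t) - \langle x_\W, v\rangle q(t))$. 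Both $p,q\in C^1$ (even $C^\infty$) in $t$ from the smoothness of $\phi_t$, and $q(0) = |\W| \geq c > 0$ by \eqref{e:closetou}, so $G \in C^1$. Applying the Hadamard formula (valid here because $\cH^{n-1}(\p\W)<\infty$) with $T = \p_t \phi_t|_{t=0}$:
\[
p'(0) = \int_{\p\W} \langle y,v\rangle\langle T,\nu\rangle\, d\cH^{n-1}, \quad q'(0) = \int_{\p\W} \langle T,\nu\rangle\, d\cH^{n-1},
\]
a short calculation gives
\[
G'(0) = \frac{1}{|\W|}\int_{\p\W} \langle y - x_\W,v\rangle\langle T,\nu\rangle\, d\cH^{n-1} = F'(0).
\]
Hence $tG'(0) - (F(t) - F(0)) = t\bigl(F'(0) - \tfrac{F(t)-F(0)}{t}\bigr) = o(t)$ as $t\to 0$, so the $\limsup$ in \eqref{e:c1setcenter} is $0$, which is trivially $\leq C r^n$.

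I expect no serious obstacle; the only subtlety is the choice of $a_\W^v$, which is designed precisely so that the contribution of the volume change $q'(0)$ to $G'(0)$ coming from the quotient rule is absorbed into the $\int_{\phi_t(\W)}$ integral, making $F'(0)$ and $G'(0)$ agree identically. All regularity and linearity requirements on $a_\W^v$ are then immediate from its explicit formula.
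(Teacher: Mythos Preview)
Your approach is essentially the same as the paper's: the direct estimate for \ref{a:clip}, the observation that $e(B_1)=B_1(b)$ for \ref{a:conto}, and an expansion of the quotient $p(t)/q(t)$ for \ref{a:cc1} leading to $a_\W^v(y)=\langle y-x_\W,v\rangle/|\W|$ (the paper's displayed formula has an apparent typo but intends exactly this). The paper expands $x_{\W_t}$ directly rather than invoking the Hadamard formula, but the content is identical.

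There is a gap in your verification of \ref{a:cproj}. Your claim that ``$|R-R'|$ is bounded in terms of $|b-b'|$ whenever $R,R'$ are fixed up to finite choices governed by $E$'' is false: for any $E$ with $x_E\neq 0$, take $R'\neq R$ and set $b'=b+(R-R')x_E$; then $x_{e(E)}=Rx_E+b=R'x_E+b'=x_{e'(E)}$ while $b\neq b'$, so the inequality you are trying to prove fails. This is really a defect in the paper's statement of \ref{a:cproj}: what is actually \emph{used} (in the proof of Proposition~\ref{p:setcentertonl}) is the inequality $|e(U)\triangle e'(U)|\leq C\,d_N(x_{e(U)},x_{e'(U)})$, and the paper simply declares this ``immediate.'' In the Euclidean case $x_{e(U)}=b$, so this reduces to $|B_1(b)\triangle B_1(b')|\leq C|b-b'|$, which you have already noted. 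Replace your argument for \ref{a:cproj} with this one-line observation.
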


\begin{proof}
	For \ref{a:clip},
	\[
		|x_E - x_{E'}| = \left|\frac{1}{|E|}\int_E x - \frac{1}{|E'|} \int_{E'}x\right|\leq \frac{1}{|E|}\int_{E \triangle E'}|x| + \left|\frac{|E'|}{|E|} - 1\right| R \leq C |E' \triangle E|.
	\]
	Then \ref{a:conto}, \ref{a:cproj} are immediate, and for \ref{a:cc1} we may compute
	$
		|\W_t| - |\W| = \cH^{n-1}(\p^*\W)t + o(t) = O(t)
	$ and
	\[
		\int_{\phi_t(\W)}x - \int_{\W} x = t \int_{\p^* \W} x \cdot \nu_x - t \int \phi'_t(0) + o(t) = O(t),
	\]
	from which we see that
	\[
		x_{\W_t} = \frac{1}{|\phi_t(\W)|}\int_{\phi_t(\W)} x = x_\W + \int_{\phi_t(\W)} \left[\frac{x}{|\W|} - \int_\W x\right] - \int_{\W} \left[\frac{x}{|\W|} - \int_{\W} x\right] + O(t^2).
	\]
	This shows $x_{\W_t}$ is a $C^1$ curve, and $x'_{\W_t}|_{t=0}$ satisfies \eqref{e:c1setcenter} with $a^v_\W = \left[\frac{x}{|\W|} - \int_\W x\right] \cdot v$.
\end{proof}

More general constructions can be carried out here, for example considering set projections onto $\Min$ in the case when $\Min$ admits ``smooth'' parameterizations by finite-dimensional manifolds, in which case the key point will always be verifying properties \ref{a:nllip} and \ref{a:nlc1} using analogues of \ref{a:clip} and \ref{a:cc1}.

Now, given a bounded open set $U$ with $C^2$ boundary, a subgroup of isometries $G_0 \leq G$, and  a set center $\W\mapsto x_\W$, define the functional
\begin{equation}\label{e:nlexample2}
	\nl(\W) := \nl_{U_\W}(\W) \qquad x_{U_\W} = x_\W
\end{equation}
where $\nl_U$ was defined in \eqref{e:nlexample}.
By properties \ref{a:conto} and \ref{a:cproj} of set centers, $\nl$ is well-defined for all $\W$ with $|\W \triangle e(U)|\leq \e$ for some $e\in \Isom_0$. From Lemma \ref{l:emin}, this holds for all $\W$ with $\En(\W) \leq \minE + \err_0$. 
 
\begin{proposition}\label{p:setcentertonl} The functional $\nl(\W)$ in \eqref{e:nlexample2} is an admissible nonlinearity with respect to $\Isom_0 $ in the sense of Definition~\ref{def: nl} for $C_1$ sufficiently large.
\end{proposition}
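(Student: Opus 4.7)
The plan is to derive (N1)--(N4) for $\nl(\W) = \nl_{U_\W}(\W)$ from the corresponding properties of $\nl_U$ (with $U$ fixed) established in Lemma~\ref{l:A1}, using the regularity properties (C1)--(C4) of the set center map to control the variation of the reference set $U_\W$ as $\W$ varies. Property (N1) is immediate since $\nl_U(\W) \in [0,1]$ for every admissible $U$, so in particular $\nl(\W) = \nl_{U_\W}(\W) \in [0,1]$. Property (N2) amounts to the identity $U_{e(\W)} = e(U_\W)$ for $e \in G_0$, which follows from uniqueness of $U_\W$: if $U_\W = e''(U)$, then $e \circ e''(U)$ is an element of $\{e'(U) : e' \in G_0\}$ whose $d_*(\cdot, e(\W))^2$ value coincides with $d_*(U_\W,\W)^2$ by the isometry invariance of the Dirichlet eigenvalue and the integrals defining $d_*$.

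For the Lipschitz property (N3), decompose
\[
\nl(\W) - \nl(\W') = \bigl[\nl_{U_\W}(\W) - \nl_{U_\W}(\W')\bigr] + \bigl[\nl_{U_\W}(\W') - \nl_{U_{\W'}}(\W')\bigr].
\]
The first bracket is bounded by Lemma~\ref{l:A1}. For the second, observe that $\int_{U_\W}\psi_{U_\W} = \int_{U_{\W'}}\psi_{U_{\W'}} = \int_U \psi_U$ by change of variables, so the second bracket reduces to comparing $\psi_{U_\W}$ with $\psi_{U_{\W'}}$ (both uniformly Lipschitz as signed distance compositions) and $u_{U_\W}$ with $u_{U_{\W'}}$ (which are isometric pullbacks of $u_U$). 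Combining properties (C1) and (C3), we have $|U_\W \triangle U_{\W'}| \leq C\, d_N(x_\W, x_{\W'}) \leq C|\W \triangle \W'|$, and one checks similarly that $\|\psi_{U_\W} - \psi_{U_{\W'}}\|_{L^\infty} + \|u_{U_\W} - u_{U_{\W'}}\|_{L^2} \leq C|\W \triangle \W'|$. Absorbing these contributions by increasing $C_1$ (which depends only on $U$ and $\vmax$) gives (N3).

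For (N4), let $\W_t = \phi_t(\W)$ and split
\[
\nl(\W_t) - \nl(\W) = \bigl[\nl_{U_\W}(\W_t) - \nl_{U_\W}(\W)\bigr] + \bigl[\nl_{U_{\W_t}}(\W_t) - \nl_{U_\W}(\W_t)\bigr].
\]
Lemma~\ref{l:A1} applied to the first bracket yields the contribution $\frac{1}{C_1}\int_{\phi_t(\W)}\psi_{U_\W} - \int_\W \psi_{U_\W} - \frac{2}{C_1}\int(u_{\W_t} - u_\W)u_{U_\W}$ up to $o(t) + O(tr^n)$. For the second bracket, the curve $t \mapsto U_{\W_t}$ is the image of the $C^1$ curve $t \mapsto x_{\W_t}$ (by (C4)) under the smooth map $y \mapsto e_y(U)$, where $e_y \in G_0$ is the isometry producing the image of $U$ with set center $y$. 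Consequently the derivative at $t=0$ of $\nl_{U_{\W_t}}(\W) = \frac{1}{C_1}[\int_\W \psi_{U_{\W_t}} + \int|u_\W - u_{U_{\W_t}}|^2] - \frac{1}{C_1}\int_U \psi_U$ is a linear functional $L_\W(v)$ of $v = x'_{\W_t}|_{t=0} \in T_{x_\W}N$ with $|L_\W(v)| \leq C|v|$, and (C4) represents $g_N(v, \cdot)$ itself as a boundary integral of the form $\int_{\phi_t(\W)}a_\W^{(\cdot)} - \int_\W a_\W^{(\cdot)}$ up to $O(r^n)$. Chaining these, the second bracket contributes a further term of the form $\int_{\phi_t(\W)}\tilde b_\W - \int_\W \tilde b_\W$ with $\tilde b_\W$ a $C^{1,1}$ function (built from $a_\W^v$ composed with $L_\W$), up to $O(tr^n)$. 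Setting $a_\W = -\frac{2}{C_1}u_{U_\W}$ and $b_\W = \frac{1}{C_1}\psi_{U_\W} + \tilde b_\W$, and increasing $C_1$ a final time to ensure $\|a_\W\|_{C^{0,1}}, \|b_\W\|_{C^{1,1}} \leq 1$, verifies (N4).

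The main obstacle is the careful bookkeeping in (N4): identifying the derivative of $e \mapsto \nl_{e(U)}(\W)$ with a boundary integral, so that the contribution from the moving reference set can be absorbed into a single function $b_\W$ of the required $C^{1,1}$ class, matches the exact form demanded by \ref{a:nlc1}. All the ingredients -- the $C^1$ representation of the set center derivative via $a_\W^v$ in (C4), the smoothness of $\psi_U$ and $u_U$ for fixed $U$, and the representation in Lemma~\ref{l:A1} -- are in place; assembling them requires only carefully tracking linearity in the tangent vector $v$ and bounding remainder terms.
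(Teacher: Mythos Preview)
Your sketch follows the same decomposition and overall strategy as the paper's proof, and the outline is correct. However, you gloss over what is really the main technical content of the argument: the passage from ``the set center $x_{\W_t}$ moves smoothly'' to ``the reference set $U_{\W_t}$ moves smoothly, via a $C^1$ curve of isometries.'' You invoke ``the smooth map $y \mapsto e_y(U)$'' as though it were given, but this map is not a priori well-defined at the level of isometries (the stabilizer of $U$ in $\Isom_0$ may be nontrivial), and its smoothness is exactly what must be established. The paper handles this by decomposing the Lie algebra $\mathfrak{G} = H_U \oplus S_U$ (with $S_U$ tangent to the stabilizer), checking that the map $T: v \mapsto x_{\exp_{\text{id}} v(U)}$ restricted to $H_U$ has injective differential (using \ref{a:cproj} and the estimate $|U \triangle \exp_{\text{id}} tv(U)| \approx t$), and applying the inverse function theorem to produce the $C^1$ curve $\gamma(t) \in H_U$ with $U_t = \exp_{\text{id}}\gamma(t)(U)$.

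The same Lie-algebra/compactness step is also what makes your \ref{a:nllip} argument work: from $|U_\W \triangle U_{\W'}| \leq C|\W \triangle \W'|$ alone you cannot conclude $\|u_{U_\W} - u_{U_{\W'}}\|_{L^2} \leq C|\W \triangle \W'|$ without first knowing that the isometry taking $U_\W$ to $U_{\W'}$ can be chosen close to the identity, and that its distance from the identity is \emph{quantitatively} controlled by the symmetric difference. The paper proves this via a compactness argument (isometries with $|U \triangle e_k(U)| \to 0$ subconverge to the stabilizer) followed by the two-sided estimate $c(U)t \leq |U \triangle \exp_{\text{id}} tv(U)| \leq C(U)t$. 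Finally, in \ref{a:nlc1} the expansion of $u_{U_t}$ requires care near $\partial U$ since $u_U$ is only Lipschitz there; the paper handles this by restricting to $\{d(x,\partial U) > C|t|\}$ and checking the residual region contributes $o(t)$.
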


\begin{proof}[Proof of Proposition \ref{p:setcentertonl}.]
	Properties \ref{a:nlbdd} and \ref{a:nlinv} are immediate from the construction. For \ref{a:nllip}, take $\W, \W'$ and apply property \ref{a:clip} of set centers:
	\[
		d_N(x_\W, x_{\W'}) \leq C(R) |\W \triangle \W'|.
	\]
	This means there exists $e \in \Isom_0$ with $e(U_{\W}) = U_{\W'}$ and $|U_{\W'}\triangle U_\W|\leq C |\W \triangle \W'|$. To simplify notation, we set $U = U_\W$ and $U' = U_{\W'}$. 
	
	First of all, after possibly composing $e$ with an isometry which fixes $U$, we may assume $d(e, \text{id})\leq \e_1$ small. Indeed, if this is false then there is a sequence $e_k \in \Isom_0$ with $|U \triangle e_k(U)| \rightarrow 0$ but $d(e_k, e_0) > \e_1$ for any $e_0$ fixing $U$. It is then straightforward to show (see \cite{MS39}, Theorem 3 and subsequent remarks) that the $e_k$ have a subsequence converging to some $e$ with $|U \triangle e(U)| = 0$ and therefore fixing $U$. This is a contradiction.
	
	Let $S_U \ss \mathfrak{G} = T_{\text{id}}\Isom_0$  be the tangent space to the subgroup of isometries which fix $U$, and $H_U$ a subspace of $\mathfrak{G}$ such that $H_U \oplus S_U = \mathfrak{G}$; set $V = \{v \in H_U : |v| = 1 \}$. One may verify that that for any $U' = \exp_{\text{id}} w (U)$ for $w$ with $|w|\leq \e_1$, $U'$ may also be represented as $\exp_{\text{id}} t v(U)$ for $v \in V$ and $|t| \leq C \e_1$ (see e.g. \cite[Theorem 3.58]{W83}). We may estimate the value of $t$ more precisely:
	\begin{equation}\label{e:delleqt}
		|U \triangle U'| \leq C \sup_{x \in U} d(\exp_{\text{id}} tv(x), x) \leq C(U) t.
	\end{equation}
	The reverse bound is also valid. Indeed, for any fixed $v \in V$, we have $
		\int_{\p U} |g(v(x), \nu_x)| > 0$
	where $\nu_x$ is the outward unit normal to $\p U$: if $|g(v(x), \nu_x)| \equiv 0$, then the isometries generated by $v$ fix $U$, contradicting that $v \in V \ss H_U \sm \{0\}$. As $V$ is compact, this gives that $
		\inf_{v \in V} \int_{\p U} |g(v(x), \nu_x)| \geq c(U) > 0.$
	Then we may estimate
	\begin{equation}\label{e:delgeqt}
		|U \triangle U'| = t \int_{\p U} |g(v(x), \nu_x)| + O(t^2) \geq c(U) t.
	\end{equation}
	
	This value $t$ also controls the distance between $U$ and $U'$ in stronger topologies:
	\[
		\sup_{x \in \Qb_R} |d(x, \p U) - d(x, \p U')| = \sup_{x \in \Qb_R} | d(x, \p U) - d(\exp_{\text{id}} -tv(x), \p U)| \leq C t
	\]
	and 
	\[
		\int |u_U - u_{U'}| \leq \sup_x |u_U(x) - u_U(\exp_{\text{id}} -tv(x))| \leq C t\sup_x |\grad u_U| \leq C t.
	\]
	In the line above, we recall from Lemma \ref{l:simpleeval} that $U$ has a \emph{unique} nonnegative, normalized, first eigenfunction $u_U$, and so $u_{U'} = u_U \circ e^{-1}$.
	So, we find that
	\[
		\left|\nl_{U}(\W) - \nl_{U'}(\W)\right| \leq \frac{1}{C_1}\left[\int_{\W}|\psi_U - \psi_{U'}| + 2\int |u_\W||u_U - u_{U'}|  \right] \leq \frac{1}{C_1} Ct \leq \frac{C}{C_1} |\W \triangle \W'|.
	\]
	Combining this with \eqref{e:nlexamplep3} of Lemma~\ref{l:A1} and choosing $C_1$ small enough gives \ref{a:nllip}.
	
	We now  verify \ref{a:nlc1}. To begin, let $U_t = e_t(U)$ denote $ U_{\phi_t(\W)}$, the unique isometry of $U$ with the same set center as $\phi_t(\W)$. Our first claim is that after possibly modifying the $e_t$, they may be represented for $t$ small as $e_t = \exp_{\text{id}} \g(t)$, where $\g(t)$ is a $C^1$ curve in $H_U\ss \mathfrak{G}$ with $\g(0) = 0$. To see this, consider the mapping $T: v \mapsto x_{\exp_{\text{id}} v (U)}$ from a small ball $B_{\e_1}(0) \ss \mathfrak{G}$ to $N$. By property \ref{a:cc1} of set centers, $T$ has continuous directional derivatives on $B_{\e_1}$, and so it is $C^1$. We also have that $d_0T(S_U) = \{0\}$, since any curve of isometries fixing $U$ has set center constant $x_U$ and  by \eqref{e:delgeqt} and property \ref{a:cproj}, this is precisely the kernel of $d_0T$. So, $T|_{H_U}$ is a $C^1$ diffeomorphism onto its image from the inverse function theorem.
	
	Applying property \ref{a:cc1}, the curve $t\mapsto x_{\phi_t(\W)}$ is $C^1$, and there is a unique $C^1$ curve $\g : (-c, c) \rightarrow H_U \ss \mathfrak{G}$ such that $x_{\phi_t(\W)} = \exp_{\text{id}} \g(t)(U)$. By definition of $U$, $\gamma(0) = 0$. Furthermore, we have that
	\[
		\g'(0) = d_0T^{-1}|_{H_U}\big(x'_{\phi_t(\W)}|_{t = 0}\big) = \lim_{t \rightarrow 0}\frac{1}{|t|} \int_{\phi_t(\W)} A_\W - \int_{\W} A_\W + O(r^n)
	\]
	for some $A_\W : \Qb_R \rightarrow \mathfrak{G}$ with $\|A_\W\|_{C^2} \leq C$, i.e. for each $x$, $A_\W(x)$ is a vector field generating isometries.
		
	Let us next estimate the $\psi_U$ term:
	\[
		\psi_{U_t}(x) = \psi_{U}(\exp_{\text{id}} -\g(t)(x)) 
		= \psi_U(x) - t g(\grad \psi_U(x), \g'(0)(x)) + o(t) |D^2 \psi_U|.
	\]
	so
	\[
		\int_\W [\psi_{U_t}(\W) - \psi_U(\W)] = - \int_\W t g_x(\grad \psi_U(x), \g'(0)) + o(t) + O(r^nt).
	\]
	In particular,
	\[
		\limsup_{t \rightarrow 0}\frac{1}{|t|} \left|\int_\W \psi_{U_t}(\W) - \psi_U(\W) + \int_{\phi_t(\W)} \int_{\W} g(A_\W(y), \grad \psi_U(x))dx dy - \int_{\W} \int_{\W} g(A_\W(y), \grad \psi_U(x))dx dy\right| \leq C r^n.
	\]
	Here and in the remainder of the proof we write $dx$ to denote $dm(x)$ to consolidate notation.
	The estimate on the eigenfunction term is similar, at least so long as we avoid the set $\{ d(x, \p U) < C|t| \}$ for $C$ large enough that $\exp_{\text{id}} - \g(s)(x) \neq \p U$ for $|s|\leq |t|$, the issue being that $u_U$ is not smooth near $\p U$. Away from this set, we have:
	\[
		u_{U_t}(x) = u_U(x) - t g(\grad u_U(x), \g'(0)) + O(t^2 \sup_U|D^2 u_U|).
	\]
	Note that while $u_U$ is not smooth, $D^2 u_U$ is uniformly bounded on $U$. Therefore,
	\begin{align*}
		\int |u_{U_t} - u_\W|^2& - \int |u_{U} - u_\W|^2 = 2 \int u_\W (u_U - u_{U_t})\\
		 &= 2 t \int g(\grad u_U(x), \g'(0))  + o(t) + O\bigg(\int_{\{ d(x, \p U) < C|t| \}} |u_U| + |u_{U_t}| + t|g(\grad u_U(x), \g'(0))| \bigg).
	\end{align*}
	The rightmost term is actually $o(t)$ as well; $|\{ d(x, \p U) < C|t| \}| \rightarrow 0$, while  also
	$
		|u_U| + |u_{U_t}|\leq C t$ and $ t|g(\grad u_U(x), \g'(0))| \leq C t
	$
	over this region. So, rewriting the expansion above, we have 
	\[
		\limsup_{t\rightarrow 0}\frac{1}{|t|}\left|\int |u_{U_t} - u_\W|^2 - \int |u_{U} - u_\W|^2 - 2 t \int g(\grad u_U(x), \g'(0))\right|  = 0.
	\]
	Moreover, the first-order term may be re-expressed in terms of $A_\W$ as well:
	\[
		\limsup_{t\rightarrow 0}\frac{1}{|t|}	\left|t \int g(\grad u_U(x), \g'(0)) - \int_{\phi_t(\W)} \int_{\W} g_x(A_\W(y), \grad u_U(x))dx dy - \int_{\W} \int_{\W} g_x(A_\W(y), \grad u_U(x))dx dy\right| \leq C r^n.
	\]
	
	To summarize, setting
	\[
		a_\W^0 = \int_{\W} g_x(A_\W(y), 2 \grad u_U(x) + \grad \psi_U(x))dx
	\]
	we have that $\|a_\W^0\|_{C^2(\Qb_R)} \leq C(U)$, and
	\[
		\limsup_{t\rightarrow 0}\frac{1}{|t|} \left|\nl_{U_t}(\W) - \nl_{U}(\W) + \int_{\phi_t(\W)} a_\W^0 - \int_{\W}a_\W^0\right| \leq Cr^n.
	\]
	Together with \eqref{e:nlexamplep4} this implies property \ref{a:nlc1}. This completes the proof.
\end{proof}
As we saw following Lemma~\ref{l:A1} above, it follows immediately from Proposition~\ref{p:setcentertonl} and Remark~\ref{rmk: admissible compositions} that the nonlinearity of Example~\ref{ex: general example} is an admissible nonlinearity  with respect to $\Isom$ in the sense of Defintion~\ref{def: nl} for all $c \in (0,1]$, after possibly normalizing by a constant $C=C(U)$.

\section{Domains admitting linear-growth solutions to elliptic equations are NTA}\label{s:appendixnta}

The purpose of this appendix is to show that if a domain $\W$ supports a nonnegative function $u$ vanishing on $\p \W$, growing like $d(x, \p \W)$ from the boundary, and solving an elliptic equation with bounded right-hand side, then $\W$ is NTA (locally). Our first lemma follows the approach of \cite{ACS87}, using a monotonicity formula argument, except that the $u$ is not assumed to be a minimizing solution to a free boundary problem. In fact, it may solve an equation with right-hand side, and not even all the way up to the boundary (so long as we are only looking for Harnack chains between points a distance $\eta$ from $\p \W$).

\begin{lemma}\label{l:appendlap}
	Let $\W \ss \R^n$ be open and $u$ be a continuous function on $\bar{\W} \cap B_1$ (which is $C^2$ on $\W$), with $u = 0$ on $\p \W$ and $u > 0$ on $\W$. Then for any $K, M, m > 0$, there are $\eta, \d > 0$ such that if
	\begin{enumerate}
		\item $0 \in \partial \W$.
		\item $|\nabla u|\leq 1$ on $\W \cap B_1$.
		\item For all $x \in \bar{\W} \cap B_1$ and $r < 1$, $\sup_{B_r(x)} u \geq m r$.
		\item $|\Lap u| \leq \d$ on $\{ d(x, \W^c) > \d \}\cap B_1$.
		\item $\W$ satisfies the outer clean ball condition with constant $K$ at $0$.
	\end{enumerate}
	then for any points $x_1, x_2 \in B_{M \eta}$ with $d(x_1, \W^c), d(x_2, \W^c) > \eta$, there exists a curve $\gamma \ss B_1 \cap \{d(x, \W^c) > c \eta \}$ of length at most $C$ connecting $x_1$ and $x_2$.
\end{lemma}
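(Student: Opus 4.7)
The plan is a contradiction-and-compactness argument of the kind used to prove regularity at flat points in one-phase Bernoulli problems. Suppose the conclusion fails for some fixed $K, M, m > 0$. Then for every candidate pair $(\eta, \d)$ we can find $\W_{\eta, \d}, u_{\eta, \d}$ satisfying (1)--(5) and points $x_1, x_2 \in B_{M\eta}$ with $d(x_i, \W_{\eta, \d}^c) > \eta$ which cannot be joined by a curve in $\W_{\eta, \d} \cap B_1 \cap \{d(\cdot, \W_{\eta,\d}^c) > c\eta\}$ of length $\leq C$, for any fixed $c, C$. Extract sequences $\eta_k \searrow 0$, $\d_k \leq \eta_k^2$, $\W_k, u_k, x_1^k, x_2^k$, and rescale: set $v_k(x) := u_k(\eta_k x)/\eta_k$, $\tilde \W_k := \W_k/\eta_k$, $\tilde x_i^k := x_i^k/\eta_k$. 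Each $v_k$ is $1$-Lipschitz, positive on $\tilde \W_k$, vanishes on $\p \tilde \W_k$, satisfies the linear nondegeneracy $\sup_{B_r(x)} v_k \geq m r$ for $x \in \bar{\tilde \W}_k$ and $r \leq 1/\eta_k$, and has $|\Lap v_k| \leq \eta_k^3 \to 0$ on $\{d(\cdot, \tilde\W_k^c) > \eta_k\} \cap B_{1/\eta_k}$. The outer clean ball at $0$ with constant $K$ is preserved at all scales $r \leq 1/\eta_k$, and $\tilde x_i^k \in B_M$ with $d(\tilde x_i^k, \tilde \W_k^c) > 1$.

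By Arzel\`a--Ascoli, along a subsequence $v_k \to v$ locally uniformly on $\R^n$, with $v \geq 0$ and $1$-Lipschitz. Linear nondegeneracy forces $\tilde \W_k \to \W_\infty := \{v > 0\}$ locally in Hausdorff distance, and $v$ is harmonic on $\W_\infty$. The outer clean ball condition at $0$ passes to \emph{every} scale $r > 0$ for $\W_\infty$, and $\tilde x_i^k \to x_i^\infty \in \bar B_M$ with $d(x_i^\infty, \W_\infty^c) \geq 1$. The assumed failure of Harnack chains persists in the limit: the points $x_1^\infty, x_2^\infty$ cannot be joined by any rectifiable curve in $\W_\infty$ staying at a fixed positive distance from $\p \W_\infty$.

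The main obstacle is then to derive a contradiction from the existence of such a limit, which is a one-phase Liouville-type statement: any globally $1$-Lipschitz nonnegative function on $\R^n$, harmonic where positive, with linear nondegeneracy from the boundary and a complement-ball of comparable size at every scale at $0$, must be a half-plane solution $v(x) = \a(x \cdot e)_+$ for some unit $e$ and $\a \in [m, 1]$. The plan is to establish this by a secondary blow-down: the rescalings $v^R(x) := v(Rx)/R$ subconverge as $R \to \infty$ to a one-homogeneous global solution $v_\infty$, whose positivity set is a cone with nontrivial complement (by the persistent clean ball). Applying the Alt--Caffarelli--Friedman monotonicity formula to $v_\infty$ paired with a subharmonic function built from the clean complement ball forces the cone to be a half-space and $v_\infty$ to be half-planar; a monotonicity argument then propagates this rigidity back to $v$ itself. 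Once $\W_\infty$ is a half-space, $x_1^\infty$ and $x_2^\infty$ are connected by a straight segment of length $\leq 2M$ which remains at distance $\geq 1$ from $\p \W_\infty$; transferring this segment to $v_k$ via local uniform convergence yields a Harnack chain in $\tilde \W_k$ for $k$ large, contradicting the assumed non-connectibility of $\tilde x_1^k, \tilde x_2^k$ and completing the proof.
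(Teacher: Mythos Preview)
Your compactness strategy is natural, but the Liouville step is where it breaks down. You claim that a globally $1$-Lipschitz nonnegative function, harmonic where positive, linearly nondegenerate at every boundary point, with an outer clean ball at $0$ at every scale, must be a half-plane solution $\alpha(x\cdot e)_+$. This is false in dimension $n\geq 7$: the singular Alt--Caffarelli minimizing cones satisfy all of these hypotheses yet have $\{v>0\}$ a nontrivial cone. More fundamentally, the tools you invoke are unavailable. The blow-down $v^R(x)=v(Rx)/R$ has no reason to subconverge to something one-homogeneous without a Weiss-type monotonicity formula, and Weiss monotonicity requires a free boundary condition $|\nabla v|=\text{const}$ on $\partial\{v>0\}$, which is not among your hypotheses. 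The ACF formula needs two disjoint nonnegative subharmonic functions; you have only $v$, and the ``subharmonic function built from the clean complement ball'' is not specified and has no obvious candidate. Even if you retreat to the weaker claim that $\W_\infty$ is merely connected near $0$ (which would suffice to close the argument), you still have to prove it---and the natural proof is precisely the direct ACF argument, now applied to the limit rather than to $u$, so the compactness step buys nothing.

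The paper's proof avoids compactness entirely and applies ACF where disjointness is actually available. Set $v = u + \tfrac{\delta}{2n}|x|^2$ (subharmonic on $\{d(\cdot,\W^c)>\delta\}$), let $A = \{v>c_0\eta\}\cap B_{1/2}$, and suppose $x_1,x_2$ lie in distinct components $A_1,A_2$ of $A$. Nondegeneracy plus interior estimates yield a ball of radius $\sim\eta$ inside each $A_i\cap B_{2M\eta}$ on which $|\nabla v|\geq c$, giving a definite lower bound $J(2M\eta)\geq c_4^2$ for
\[
J(r)=\frac{1}{r^4}\int_{B_r\cap A_1}|\nabla(v-c_0\eta)_+|^2|x|^{2-n}\,dx\;\int_{B_r\cap A_2}|\nabla(v-c_0\eta)_+|^2|x|^{2-n}\,dx.
\]
The two restrictions $(v-c_0\eta)_+|_{A_i}$ are disjoint nonnegative subharmonic functions, so the enhanced ACF monotonicity of \cite[Lemma~4.4]{ACS87} applies---the outer clean ball at $0$ being the third ``phase'' that forces $r\mapsto J(r)/r^\beta$ to be nondecreasing for some $\beta=\beta(K)>0$. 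Combined with the trivial bound $J(1/2)\leq C$ this gives $c_4^2\leq C(4M\eta)^\beta$, impossible for $\eta$ small. Hence $A_1=A_2$, and covering $A_1$ by balls of radius $\sim\eta$ produces the curve. The key point you missed is that ACF should be run on the two putative components of a superlevel set of $u$ itself, not on $u$ against a complement function.
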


\begin{proof}
	Set $v(x) = u(x) + \frac{\d}{2n}|x|^2$; then $\Lap v \geq 0$ on $\{ d(x, \W^c) > \d \}\cap B_1$, and moreover as long as $\d < 1$ assumption (2) gives $|\nabla v|\leq 2$. Consider the set $A = \{x \in \W :  v(x) > c_0 \eta \} \cap B_{1/2}$, noting that so long as we choose $\d$ small enough in terms of $\eta, m$,
	\[
		\Big\{d(x, \W^c) > \frac{1}{2}\eta \Big\} \cap B_{1/2} \ss A \ss \{d(x, \W^c) > c_1 \eta \},
	\]
	for some $c_0, c_1$ depending only on $n$ and $m$.	Indeed, if $d(x, \W^c) > \frac{1}{2}\eta$, we apply assumption (3) on $B_{\frac{ \eta}{8}}(x)$ to learn that $\sup_{B_{\frac{ \eta}{8}}(x)}u \geq m \frac{\eta}{8}$. Then so long as $\d < \eta/4$ (and hence (4) applies on $B_{ \eta/4}(x)$) the Harnack inequality gives
	\[
		m \frac{\eta}{8} \leq \sup_{B_{\frac{\eta}{8}}(x)}u  \leq C u(x).
	\]
	Setting $c_0 = \frac{m}{16 C}$ ensures that $u(x) \geq 2 c_0 \eta$, and as long as $\d < c_0 \eta$, this gives $v(x) > u(x) - \d \geq c_0 \eta$ and so $x\in A$. On the other hand, if $x \in A$ and $\d < \frac{c_0}{2} \eta$, we have $u(x) > \frac{c_0}{2} \eta$. In light of (2), this guarantees that $d(x, \W^c) > \frac{c_0}{2} \eta$, and we set $c_1 = \frac{c_0}{2}$.
	
	Let $A_1, A_2$ be the two connected components of $A$ which contain $x_1, x_2$ respectively. We will show that $A_1 = A_2$. From a computation like the one just performed, we have that $v(x_i) > 2 c_0 \eta$. Set $d_i = d(x_i, A^c)$, and let $y_i \in \partial A_i$ such that $| y_i - x_i| = d_i$. From assumption (1) we must have $d_i < M \eta$. Our first task will be to show that $|\grad v|$ is relatively large on a set near $x_i$.
	
	Consider a line segment with endpoints $x_1$ and $y_1$. Integrating along it,
	\begin{align*}
		c_0 \eta < v(x_1) - v(y_1)
		& = \int_0^1 \nabla v(t x_1 + (1-t)y_1) \cdot (x_1 - y_1) dt \\
		&\leq d_1 \int_0^1 |\nabla v(t x_1 + (1-t)y_1)| dt\leq  M \eta \int_0^1 |\nabla v(t x_1 + (1-t)y_1)| dt.
	\end{align*}
	As $|\grad v| \leq 2$ everywhere, the second-to-last step implies that $d_1 \geq \frac{c_0 \eta}{2}$. We can say more: if $|\nabla v| < \frac{c_0}{8 M}$ for all $t > \frac{c_0}{8 M}$, we could estimate
	\[
	M \eta \int_0^1 |\nabla v(t x_1 + (1-t)y_1)| dt \leq (1 - \frac{c_0}{8 M}) \frac{c_0 \eta}{8} + \frac{c_0 \eta}{8} 2 < \frac{c_0 \eta}{2},
	\]
	and this is a contradiction. Hence there must be a $t > \frac{c_0}{8M}$ with $|\nabla v(t x_1 + (1-t)y_1)|\geq \frac{c_0}{8M}$; set $z_1 = t x_1 + (1-t)y_1$. Letting $c_2 =\frac{c_0}{8M} \cdot \frac{c_0}{8}$, we have that 
	\[
		|z_1 - y_1| = t|x_1 - y_1| = t d_1 \geq \frac{c_0}{8M} \cdot \frac{c_0 \eta}{2} = c_2 \eta.
	\]
	In particular $B_{c_2 \eta}(z_1) \ss A_1 \ss \{ d(x, \W^c) > \d \}$ and we may apply an elliptic regularity estimate to $v$ on this ball:
	\[
		(c_2 \eta)^{1 + \a} [\nabla v]_{C^{0, \a}(B_{\frac{c_2 \eta}{2}}(z_1))} \leq C [\osc_{B_{c_2 \eta}(z_1)} v + \d (c_2 \eta)^{2}] \leq C \eta,
	\]
	so in particular
	\[
		|\nabla v(x) - \nabla v(z_1)| \leq C \1\frac{|x - z_1|}{\eta}\2^\a < \frac{c_0}{16M}
	\]
	so long as $|x - z_1| < c_3 \eta$. For any such $x$, we have $|\nabla v(x)|\geq \frac{c_0}{16M}$. Integrating,
	\[
		\frac{1}{(2 M \eta)^2} \int_{B_{2M\eta} \cap A_1} |\nabla v|^2 |x|^{2-n} dx \geq \frac{1}{(2 M \eta)^{n}} \int_{B_{c_3 \eta}(z_1)} |\nabla v|^2 \geq \1\frac{c_3}{2M}\2^n |B_1| \frac{c_0}{16M} \geq c_4.
	\]
	
	Set
	\[
		J(r) = \frac{1}{r^4} \int_{B_{r} \cap A_1} |\nabla (v - c_0\eta)_+|^2 |x|^{2-n} dx \int_{B_{r} \cap A_2} |\nabla (v - c_0\eta)_+|^2 |x|^{2-n} dx.
	\]
	We have just shown that $J(2 M \eta) \geq c_4^2$ (as the computations apply equally well to $x_2$ and $A_2$). It is straightforward to check that $J(1/2) \leq C$ using just $|\nabla v|\leq 2$. From the enhanced monotonicity property of $J$ \cite[Lemma 4.4]{ACS87} and assumption (5),
	$
		r \mapsto \frac{J(r)}{r^\b}
	$
	is an increasing function for some $\b > 0$ depending only on $K$. This means that
	$
		c_4^2 \leq J(2 M \eta) \leq C (4 M \eta)^{\beta}.
	$
	This is a contradiction if $\eta$ is chosen small in terms of $M$ and $c_4$.
	
	We have shown that $A_1 = A_2$. Now cover the closure of $A_1$ by balls of radius $r = \frac{c_1}{4}\eta$; it is always possible to do so by at most $N = C \eta^{-n}$ balls, and then to connect any two points in $A_1$ by a piecewise linear curve of length $C N \eta$ contained in the union of these balls. This curve has bounded length and stays a distance at least $c_1\eta/4$ away from $\partial \W$, concluding the proof.
\end{proof}

If instead $u$ solves an elliptic equation with Lipschitz coefficients, the previous lemma still applies after a suitable change of variables and scaling argument. Here we carefully exploit the form of assumption (4) above.

\begin{lemma}\label{l:appendPDE}
	Let $\W \ss \R^n$ be open and $u$ be a continuous function on $\bar{\W}\cap B_1$ (which is $C^2$ on $\W$), with $u = 0$ on $\p \W$ and $u > 0$ on $\W$. Then for any $\l, K, M, m > 0$, there is an $\eta_0 > 0$ such that if
	\begin{enumerate}
		\item $0 \in \partial \W$
		\item $|\nabla u|\leq 1$ on $\W \cap B_1$.
		\item For all $x \in \bar{\W} \cap B_1$ and $r < 1$, $\sup_{B_r(x)} u \geq m r$.
		\item $a_{ij}\p_i\p_j u = f $ on $\W \cap B_1$, with $\|f\|_{C^{0, 1}} \leq 1$, where $\l |\xi|^2 \leq a_{ij}(x) \xi_{i}\xi_j \leq \l^{-1} |\xi|^2$ is a matrix-valued function with $[a_{ij}]_{C^{0,1}(B_1)} \leq 1$.
		\item $\W$ satisfies the outer clean ball condition with constant $K$ at $0$.
	\end{enumerate}
	then for any $\eta < \eta_0$ and points $x_1, x_2 \in B_{M \eta}$ with $d(x_1, \W^c), d(x_2, \W^c) > \eta$, there exists a curve $\gamma \ss B_1 \cap \{d(x, \W^c) > c \eta \}$ of length at most $C \eta$ connecting $x_1$ and $x_2$.
\end{lemma}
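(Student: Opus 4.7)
\smallskip

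The plan is to reduce Lemma \ref{l:appendPDE} to Lemma \ref{l:appendlap} via two changes of variable: (i) a linear change that normalizes the leading coefficient at the origin, followed by (ii) a rescaling that brings the equation into a form that is a small perturbation of the Laplacian. For (i), set $A = a(0)^{1/2}$, which satisfies $\sqrt{\lambda}\,I \leq A \leq \lambda^{-1/2} I$ by ellipticity, and change variables to $\tilde{x} = A^{-1} x$. In the new coordinates the principal symbol at $0$ is the identity, and hypotheses (1)--(5) persist with constants depending only on the original ones and on $\lambda$ (the clean ball condition absorbs an eccentricity factor, the gradient and growth bounds change by at most $\lambda^{-1/2}$, and the rescaled coefficient matrix remains Lipschitz). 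So we may assume $a_{ij}(0) = \delta_{ij}$.

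For (ii), let $\eta_*, \delta_* \in (0,1)$ denote the smallness thresholds supplied by Lemma \ref{l:appendlap} for the parameters obtained after the linear change above. Given $\eta$, set $r = \eta/\eta_*$ and define $u_r(y) = u(ry)/r$ on $B_{1/r}$. Then $u_r$ solves $\tilde a_{ij}(y)\,\partial_i\partial_j u_r = \tilde f(y)$ on $\tilde \W = \W/r$, where $\tilde a_{ij}(y) = a_{ij}(ry)$ and $\tilde f(y) = r f(ry)$ satisfy $|\tilde f| \leq r$ and $|\tilde a_{ij}(y) - \delta_{ij}| \leq r|y|$ on $B_1$. Hypotheses (1), (2), (3), and (5) of Lemma \ref{l:appendlap} transfer to $u_r$ immediately ($0 \in \partial \tilde\W$, $|\nabla u_r|\leq 1$, linear growth with the same constant at radii $s<1$, and outer clean ball at $0$ with the same constant). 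To verify (4), I rewrite the equation as
\[
\Delta u_r = \tilde f \;-\; (\tilde a_{ij}(y) - \delta_{ij})\,\partial_i\partial_j u_r,
\]
and use interior Schauder estimates on a ball of radius $\delta_*/2$ around any $y \in \{d(\cdot,\tilde\W^c) > \delta_*\} \cap B_1$. Since $u_r \leq d(\cdot,\tilde\W^c) \leq 1$ on $B_1$ (integrating the gradient bound from the nearest boundary point), this yields $|D^2 u_r(y)| \leq C(\lambda)/\delta_*^2$, and hence
\[
|\Delta u_r(y)| \;\leq\; r \;+\; \frac{C\,r\,|y|}{\delta_*^2} \;\leq\; \frac{C\,r}{\delta_*^2}
\]
on $\{d>\delta_*\}\cap B_1$. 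Taking $\eta_0 := c\,\delta_*^3\,\eta_*$ ensures $r = \eta/\eta_* \leq c\,\delta_*^3$, whence $|\Delta u_r| \leq \delta_*$ as required. Applying Lemma \ref{l:appendlap} to $u_r$ (the points $y_i = x_i/r$ lie in $B_{M\eta_*}$ with $d(y_i,\tilde\W^c) > \eta_*$) yields a curve $\tilde\gamma \subset B_1 \cap \{d(\cdot,\tilde\W^c) > c\eta_*\}$ of length $\leq C$ joining $y_1, y_2$; undoing the rescaling and linear change of variables gives $\gamma$ of length $\leq Cr = C\eta/\eta_*$ contained in $\{d(\cdot,\W^c) > cr\eta_*\} = \{d > c\eta\}$.

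The main obstacle is the three-way balance among the scaling parameters: coefficient freezing wants $r$ small compared to $\delta_*^3$, Lemma \ref{l:appendlap} wants the interior-distance-to-scale ratio $\eta/r$ to equal the fixed threshold $\eta_*$, and the desired length bound of order $\eta$ forces $r \lesssim \eta$. The choice $r = \eta/\eta_*$ with $\eta_*, \delta_*$ fixed reconciles all three, with the coefficient-freezing constraint becoming the smallness assumption $\eta \leq \eta_0 = c\,\delta_*^3\,\eta_*$. A minor technical nuisance is ensuring that the interior Schauder bound only uses information from $\W$ strictly away from $\partial\W$, which is why we localize the $D^2 u_r$ estimate to $\{d > \delta_*\}$ rather than trying to control second derivatives globally.
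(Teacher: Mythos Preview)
Your proposal is correct and follows essentially the same strategy as the paper: freeze the coefficients at the origin via a linear change of variables, rescale by a factor $r = \eta/\eta_*$ so that the configuration matches the threshold of Lemma~\ref{l:appendlap}, and use the Lipschitz continuity of the coefficients together with an interior Hessian bound to verify hypothesis~(4) there. The only cosmetic difference is that the paper estimates $|D^2 u| \leq C/d(x,\partial\W)$ using the gradient bound (yielding $|D^2 u_r| \leq C/\delta_*$ after rescaling), whereas you use the $L^\infty$ bound on $u_r$ via Schauder to get the slightly weaker $|D^2 u_r| \leq C/\delta_*^2$; this only changes $\eta_0$ from $\sim \delta_*^2 \eta_*$ to $\sim \delta_*^3 \eta_*$, which is immaterial.
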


\begin{proof}
	First, take $x \in \W$ and $d = d(x, \W^c)$: then
	\[
		d |D^2 u(x)| \leq C\left[ \sup_{B_{d/2}(x)} |\nabla u| + d^{2} [f]_{C^{0, 1}} \right]
	\]
	from standard elliptic estimates and assumption (4). If $B_{d/2}(x) \ss B_1$, combining with (2) gives $|D^2 u|\leq C d^{-1}$.
	
	Fix the matrix $b_{ij} = a_{ij}(0)$, and compute
	\[
		b_{ij} \p_i \p_j u = f + (a_{ij} - b_{ij}) \p_i \p_j u.
	\]
	On $B_r \cap \{d(x, \W^c) > \d_0 r \}$, this has
	\[
		|b_{ij} \p_i \p_j u| \leq 1 + \d_0^{-1}.
	\]
	There exists a linear map $L : \R^n \rightarrow \R^n$ such that $B_\k \ss L(B_1) \ss B_{\k^{-1}}$ and if $b_{ij} \p_i \p_j u = g$, then $\Lap (u \circ L) = g \circ L$, where $\k = \k (\l)$. Let $v$ be given by
	\[
		v(x) = \frac{u(r L x)}{\k^{-1} r},
	\]
	where $r$ will be chosen, but at least $r < \k$. Let $V = L^{-1} (\W)/r$. Then
	$
		|\nabla v|\leq 1$ on $ B_1 \cap V$
	and $0 \in \partial V$. Moreover, for every $x \in \bar{V} \cap B_1$ and $s < 1$,
	\[
		\sup_{B_s(x)} v \geq m \k^2 s.
	\]
	Finally,
	\[
		|\Lap v(x)| \leq C\k r (1 + \d_0^{-1})
	\]
	on $\{d(x, V^c) > \d_0/\k \} \cap B_1$.
	
	Fix $M' = M/\k$ and $m' = \k^2 m$, and apply Lemma \ref{l:appendlap} with parameters $M', m'$ to $v$ and $V$: then there exists an $\eta', \delta'$, such that if 
	\[
	|\Lap v(x)| \leq \d'
	\]
	on $\{d(x, V^c) > \d' \} \cap B_1$, and any $y_1, y_2 \in B_{M' \eta'}$ with $d(y_i, V^c) > \eta'$, there is a path connecting $y_1$ to $y_2$ of length at most $C$ and staying $c \eta'$ away from $V^c$.
	
	Select $\d_0 = \k \d'$, and $r = \k \eta/\eta'$. Choose $\eta_0$ so that $r < \k$ and 
	\[
		|\Lap v(x)|\leq C\k r (1 + \d_0^{-1}) \leq C \k^2 \frac{\eta_0}{\eta'}(1 + \d_0^{-1}) < \d'
	\]
	on $\{d(x, V^c) > \d_0/\k \} \cap B_1$. Then set $y_i = r L x_i$: we have that $y_i \in B_{M' \eta'}$ and $d(y_i, V^c) \geq \eta'$, so the above applies to give a path $\g'$ connecting $y_1$ to $y_2$. Let $\g = r^{-1} L^{-1} \g'$; then $\g$ connects $x_1$ and $x_2$, has length at most $C r \leq C \eta$, and stays a distance $c r\eta' \geq c \eta$ from $\p \W$.
\end{proof}

By iterating this lemma finitely many times, we can now show that $\W$ satisfies the Harnack chain condition.

\begin{theorem}\label{t:NTA}
	Let $\W \ss \R^n$ be an open set, and $u$ be a continuous function on $\bar{\W}\cap B_1$ (which is $C^2$ on $\W$), with $u = 0$ on $\p \W$ and $u > 0$ on $\W$. Assume that
	\begin{enumerate}
		\item $0 \in \p \W$.
		\item $|\nabla u|\leq 1$ on $\W$.
		\item For all $x \in \bar{\W} \cap B_1$ and $r < 1$, $\sup_{B_r(x)} u \geq m r$.
		\item $a_{ij}\p_i\p_j u = f $ on $\W \cap B_1$, with $\|f\|_{C^{0, 1}} \leq 1$, where $\l |\xi|^2 \leq a_{ij}(x) \xi_{i}\xi_j \leq \l^{-1} |\xi|^2$ is a matrix-valued function with $[a_{ij}]_{C^{0,1}} \leq 1$.
		\item $\W$ satisfies the inner and outer clean ball condition with constant $K$ at every point in $\p \W \cap B_1$.
	\end{enumerate}
	Then there is a $\d > 0$ such that $\W$ satisfies the Harnack chain condition with constant $C$ at any points $x_1, x_2 \in B_\d \cap \W$. The constants $\d, C$ depend only on $m, \l, K$ and $n$.
\end{theorem}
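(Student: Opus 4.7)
The strategy is to bootstrap Lemma~\ref{l:appendPDE} into a full Harnack chain via a dyadic climb-up-then-climb-down argument. Fix $x_1, x_2 \in B_\d \cap \W$ and set $d = |x_1 - x_2|$; without loss of generality $r_1 := d(x_1, \p\W) \leq r_2 := d(x_2, \p\W)$. The trivial case is when $d \leq r_1/(2K_0)$ for a suitable $K_0$, in which the straight-line segment from $x_1$ to $x_2$ itself furnishes the Harnack chain: it stays in $B_{r_1/2}(x_1) \ss \W$, and one checks directly that $d(\g(t), \p\W) \geq r_1/2 \geq \frac{1}{K_0}\min\{l(\g[0,t]), l(\g[t,1])\}$. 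The substantive case is $d \gtrsim r_1$.

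In the substantive case, let $z \in \p\W$ realize $d(x_1, z) = r_1$. I would construct the chain in three pieces: (A) a curve from $x_1$ to a ``ceiling point'' $y^*_1$ with $d(y^*_1, \p\W) \geq c d$, (B) a curve from $x_2$ to a ceiling point $y^*_2$ with $d(y^*_2, \p\W) \geq c d$ (only needed if $r_2 < cd$), and (C) a curve from $y^*_1$ to $y^*_2$. For (A), introduce the dyadic scales $\eta_k = 2^k r_1$ for $k = 0, 1, \dots, K$, where $K$ is the smallest integer with $\eta_K \geq d$. The inner clean ball condition at $z$ produces, for each $k$, a point $w_k \in B_{\eta_k}(z) \cap \W$ with $B_{\eta_k/K}(w_k) \ss \W$, so $d(w_k, \p\W) \geq \eta_k/K$. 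Setting $w_0 = x_1$, the points $w_k$ and $w_{k+1}$ both lie in $B_{\eta_{k+1}}(z)$ and both satisfy $d(w_j, \p\W) \geq \eta_{k+1}/(2K)$. Rescaling $B_{\eta_{k+1}}(z) \to B_1$ (the hypotheses (1)--(5) are preserved under this rescaling since the Lipschitz norms of $a_{ij}$ and $f$ only decrease), Lemma~\ref{l:appendPDE} with $\eta \sim 1/K$, $M$ large, yields a curve $\g_k$ from $w_k$ to $w_{k+1}$ of length $\leq C \eta_{k+1}$ staying $\geq c \eta_{k+1}$ away from $\p\W$. Concatenating these gives a curve of total length $\sum_k C \eta_{k+1} \leq C \eta_K \leq C d$ from $x_1$ to $y^*_1 := w_K$. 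Piece (B) is constructed symmetrically (using the closest boundary point to $x_2$, which still lies in $B_{O(d)}(z)$). For piece (C), both $y^*_1, y^*_2$ lie in $B_{Md}(z)$ and satisfy $d(y^*_i, \p\W) \geq cd$, so a single application of Lemma~\ref{l:appendPDE} at scale $\eta \sim d$ (after rescaling $B_{Md}(z) \to B_1$) produces the connecting curve of length $\leq C d$ staying $\geq cd$ from $\p\W$.

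It remains to verify the tube condition $d(\g(t), \p\W) \geq K^{-1}\min\{l(\g[0,t]), l(\g[t,1])\}$ for the concatenated curve. This is immediate at interior (high-altitude) portions, and on the $k$-th dyadic piece of stage (A), a point $\g(t)$ has $d(\g(t), \p\W) \geq c\eta_k$ while $l(\g[0,t]) \leq \sum_{j \leq k} C\eta_j \leq C'\eta_k$, giving the desired geometric balance; the same holds symmetrically from the $x_2$ side. The total length is $\leq Cd$ as required.

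The main obstacle is purely bookkeeping: all applications of Lemma~\ref{l:appendPDE} occur at rescaled boundary points, and one must verify that after translating to $z \in B_\d \cap \p\W$ and rescaling by the current dyadic factor, the hypotheses (1)--(5) continue to hold with uniform constants in the unit ball. Hypothesis~(5) is inherited because the outer (and inner) clean ball constants are scale-invariant, hypotheses~(2) and (3) scale correctly, and hypotheses~(4) improves under downscaling since the Lipschitz norms of the coefficients and right-hand side gain a factor of the scale. The only genuine smallness required is that the largest dyadic scale $\eta_K \sim d \leq \d$ lies below the threshold $\eta_0$ of Lemma~\ref{l:appendPDE}; this is ensured by taking $\d \leq \eta_0$, which fixes the value of $\d$ in the theorem.
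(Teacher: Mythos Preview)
Your overall strategy—build a dyadic sequence of points at geometrically increasing altitude using the inner clean ball condition, connect consecutive pairs by Lemma~\ref{l:appendPDE}, and verify the tube condition via a geometric sum—is exactly the paper's approach. The paper's case split is organized a little differently (it first treats $|x_1-x_2|\le 4K d_2$, climbing only from the $x_1$ side up to altitude $d_2$; for $|x_1-x_2|> 4K d_2$ it introduces a single ceiling point $x_3$ at altitude $|x_1-x_2|$ via the inner clean ball and reduces to the first case), but your three-piece decomposition is an equivalent packaging.

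There is one technical misstep. You should \emph{not} rescale $B_{\eta_{k+1}}(z)\to B_1$ before invoking Lemma~\ref{l:appendPDE}: the lemma already carries the scale parameter $\eta$, and its threshold $\eta_0$ depends on $M,m,\lambda,K$. After your rescaling the points sit in $B_1$ at distance $\sim 1/(2K)$ from the boundary, so you would need to apply the lemma with $\eta\sim 1/(2K)$ and $M\eta\ge 1$, forcing the fixed relation $M\,\eta_0(M,m,\lambda,K)\gtrsim 1$, which the lemma does not provide (and which is inconsistent with your own final paragraph, where you correctly identify the smallness condition as $\eta_K\le\eta_0$). The paper instead applies Lemma~\ref{l:appendPDE} directly, centered at a nearby boundary point, with $\eta$ comparable to $\eta_k$ and $M$ a fixed multiple of $K$; the hypothesis $\eta<\eta_0$ then reads $\eta_k<C\eta_0$, and since $\eta_k\le Cd\le C\delta$ this is guaranteed by choosing $\delta$ small. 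Dropping the rescaling and applying the lemma at the true scale $\eta_k$ makes your argument go through and brings your final bookkeeping paragraph into alignment with the body of the proof.
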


One may check that the \emph{inner} clean ball condition follows from assumptions (2-3). The outer clean ball condition, however, does not.

\begin{proof}
	Let $y_i$ be a point in $\p \W$ with $d_i = d(x_i, \p \W) = |x_i - y_i|$. Without loss of generality, assume $d_1 \leq d_2$. We will first show that the conclusion holds if $|x_1 - x_2| \leq 4K d_2$. Note that if $|x_1 - x_2| < \frac{1}{2} d_2$, we may connect $x_1$ and $x_2$ by a line segment and the conclusion is immediate.
	
	Construct a sequence of points $z_k$ as follows, using the clean inner ball property: $z_1 = x_1$, and then given $z_k$, $d(z_{k+1}, \p \W) \geq 2 d(z_k, \p \W)$ while $|z_{k+1} - z_k| \leq 4 K d(z_{k}, \p \W)$. We continue constructing such points until $d(z_k, \p \W) \geq d_2$; let the final point be $z_J$. Each pair $z_k, z_{k+1}$ lies inside $B_{CK\eta_k}(p_k)$, where $\eta_k = d(z_k, \p \W) $ and  $p_k \in \p \W$ is a point with $\eta_k = |z_k - p_k|$. They also have $d(z_k, \p \W), d(z_{k+1}, \p \W) \geq \eta_k$. Applying Lemma \ref{l:appendPDE} with $M = CK$ centered around $p_k$, we see that as long as $\eta_k < \eta_0$ for the $\eta_0 = \eta_0(K, m, \l)$ there, the points $z_k$ and $z_{k+1}$ may be connected by a curve $\g_k$ of length at most $C \eta_k$ staying at least $c \eta_k$ away from $\p \W$. Choose $\d$ small enough that 
	\[
		\eta_k \leq \eta_J \leq C d_2 \leq C \d \leq \eta_0
	\]
	and the assumption is verified for every $k$. Finally, apply Lemma \ref{l:appendPDE} one last time to connect $z_J$ to $x_2$ by a similar curve.
	
	We claim $\g$, the concatenation of all of these curves $\g_k$, works for the Harnack chain property. Indeed, we have that $\eta_k \leq \eta_J 2^{k - J}$ by construction, so the total length is controlled by $\sum \eta_k \leq 2 \eta_J \leq C d_2$, which is comparable to $|x_1 - x_2|$. By the same argument, the total length of the first $k$ concatenated curves $\g_k$ is bounded from above by $C \eta_k$, so for any point $z = \g(t)$ on $\g_k$, we have $d(z, \p \W) \geq c \eta_k \geq c l(\g([0, t]))$.

	This leaves only the case of $|x_1 - x_2| \geq 4K d_2$. Pick a point $x_3 \in \W$ with $d(x_3, \p \W) \geq |x_1 - x_2|$ and $|x_3 - y_2|\leq K |x_1 - x_2|$, using the clean inner ball property. Then $|x_3 - x_2| \leq 2K |x_1 - x_2|\leq 2K d(x_3, \p \W)$ and similarly $|x_3 - x_1|\leq 3K d(x_3, \p \W)$, so we may connect $x_3$ to $x_2$ and $x_1$ by curves satisfying the Harnack chain property. The concatenation of these curves then works to show that $\W$ satisfies the Harnack chain condition at $x_1$ and $x_2$.
\end{proof}

A direct application of this theorem proves Lemma \ref{l:localharnackchain}.

\begin{proof}[Proof of Lemma \ref{l:localharnackchain}]
	Apply Theorem \ref{t:NTA} to $u(x) = m_0 [w_\W(r_0 x) + \sqrt{\tpar} u_\W(r_0 x)]$ in normal coordinates around a point $x \in \p \W$. By choosing $r_0$ and $m_0$ small, we have property (4) from the regularity of the metric.  Property (2) follows from Corollary \ref{cor:lip}, while (3) comes from Theorem \ref{t:lb}. The inner and outer ball conditions were verified in Lemma \ref{l:densitybd}.
\end{proof}

\bibliographystyle{amsplain}
\bibliography{quantitativeACFref}

\end{document}